\newcommand{\Ob}{\mathrm{Ob}}
\newcommand{\Hom}{\mathrm{Hom}}
\newcommand{\Mat}{\boldsymbol{\mathrm{Mat}}}
\newcommand{\Mati}{\boldsymbol{\mathrm{Mat}}^{\mathrm{fs}}_{\infty}}
\newcommand{\Kar}{\boldsymbol{\mathrm{Kar}}}
\newcommand{\End}{\mathrm{End}}
\newcommand{\Endbf}{\boldsymbol{\mathrm{End}}}
\newcommand{\circv}{\circ_{\boldsymbol{v}}}
\newcommand{\circh}{\circ_{\boldsymbol{h}}}
\newcommand{\bN}{\mathbb{Z}_{\geq 0}}
\newcommand{\bZ}{\mathbb{Z}}
\newcommand{\bQ}{\mathbb{Q}}
\newcommand{\bC}{\mathbb{Q}(q)}
\newcommand{\bV}{\mathbb{T}_m}
\newcommand{\bVi}{\mathbb{T}_{\infty}}
\newcommand{\hooklongrightarrow}{\lhook\joinrel\longrightarrow}
\newcommand{\twoheadlongrightarrow}{\relbar\joinrel\twoheadrightarrow}
\newcommand{\qbin}[2]{
\left[
 \begin{array}{c}
 #1 \\
 #2 \\
 \end{array}
 \right]
}
\newcommand{\Uu}{\boldsymbol{\mathrm{U}}}
\newcommand{\Uq}{\boldsymbol{\mathrm{U}}_q}
\newcommand{\Uv}{\boldsymbol{\mathrm{U}}_v}
\newcommand{\Uvd}{\dot{\boldsymbol{\mathrm{U}}}_v}
\newcommand{\Ua}{\boldsymbol{\mathrm{U}}_{\mathrm{A}}}
\newcommand{\bbi}{\boldsymbol{\mathrm{i}}}
\newcommand{\bbj}{\boldsymbol{\mathrm{j}}}
\newcommand{\bbin}{\boldsymbol{\infty}}
\newcommand{\bbiu}{\boldsymbol{\mathrm{i}+1}}
\newcommand{\bbid}{\boldsymbol{\mathrm{i}-1}}
\newcommand{\bbz}{\boldsymbol{0}}
\newcommand{\bbo}{\boldsymbol{1}}
\newcommand{\bbt}{\boldsymbol{2}}
\newcommand{\bbmd}{\boldsymbol{\mathrm{m}-1}}
\newcommand{\bbmdd}{\boldsymbol{\mathrm{m}-2}}
\newcommand{\bbm}{\boldsymbol{\mathrm{m}}}
\newcommand{\bbmu}{\boldsymbol{\mathrm{m}+1}}
\newcommand{\JWr}{\mathrm{JW}^r}
\newcommand{\JWg}{\mathrm{JW}^g}
\newcommand{\JWx}{\mathrm{JW}x}
\newcommand{\Vn}[1]{\Delta_q({#1})}
\newcommand{\dVn}[1]{\nabla_q({#1})}
\newcommand{\Ln}[1]{L_q({#1})}
\newcommand{\Tn}[1]{T_q({#1})}
\newcommand{\Am}{Q_{m}}
\newcommand{\Ai}{Q_{\infty}}
\newcommand{\At}{Q_m^{\mathrm{triv}}}
\newcommand{\Ait}{Q_{\infty}^{\mathrm{triv}}}
\newcommand{\Att}{Q_{\infty}^{\mathrm{triv}}}
\newcommand{\Endo}{p\boldsymbol{\mathrm{End}}}
\newcommand{\pMod}{p\boldsymbol{\mathrm{Mod}}}
\newcommand{\pModgr}{p\boldsymbol{\mathrm{Mod}}_{\mathrm{gr}}}
\newcommand{\ModAm}{\boldsymbol{\mathrm{Mod}}\text{-}Q_m}
\newcommand{\pModAm}{p\boldsymbol{\mathrm{Mod}}\text{-}Q_m}
\newcommand{\ModAi}{\boldsymbol{\mathrm{Mod}}\text{-}Q_{\infty}}
\newcommand{\pModAi}{p\boldsymbol{\mathrm{Mod}}\text{-}Q_{\infty}}
\newcommand{\ModA}{\boldsymbol{\mathrm{Mod}}\text{-}A}
\newcommand{\AModgr}{A\text{-}\boldsymbol{\mathrm{Mod}}_{\mathrm{gr}}}
\newcommand{\pModgrA}{p\boldsymbol{\mathrm{Mod}}_{\mathrm{gr}}\text{-}A}
\newcommand{\ModgrA}{\boldsymbol{\mathrm{Mod}}_{\mathrm{gr}}\text{-}A}
\newcommand{\ModgrAm}{\boldsymbol{\mathrm{Mod}}_{\mathrm{gr}}\text{-}Q_m}
\newcommand{\AipModgrAi}{Q_{\infty}\text{-}p\boldsymbol{\mathrm{Mod}}_{\mathrm{gr}}\text{-}Q_{\infty}}
\newcommand{\T}{\boldsymbol{\mathfrak{T}}}
\newcommand{\Tgr}{\boldsymbol{\mathfrak{T}}^{\mathrm{gr}}}
\newcommand{\Tgrl}{\boldsymbol{\mathfrak{T}}_{\lambda}^{\mathrm{gr}}}
\newcommand{\D}{\boldsymbol{\mathfrak{D}}(\infty)}
\newcommand{\QD}{\boldsymbol{\mathfrak{QD}}(\infty)}
\newcommand{\Dk}{\widehat{\boldsymbol{\mathfrak{D}}}(\infty)}
\newcommand{\QDk}{\widehat{\boldsymbol{\mathfrak{QD}}}(\infty)}
\newcommand{\ModgrAt}{\boldsymbol{\mathrm{Mod}}_{\mathrm{gr}}\text{-}Q_m^{\mathrm{triv}}}
\newcommand{\pModgrAt}{p\boldsymbol{\mathrm{Mod}}_{\mathrm{gr}}\text{-}Q_m^{\mathrm{triv}}}
\newcommand{\pModAt}{p\boldsymbol{\mathrm{Mod}}\text{-}Q_m^{\mathrm{triv}}}
\newcommand{\pModAtt}{p\boldsymbol{\mathrm{Mod}}\text{-}Q_{\infty}^{\mathrm{triv}}}
\newcommand{\pModgrAm}{p\boldsymbol{\mathrm{Mod}}_{\mathrm{gr}}\text{-}Q_m}
\definecolor{mycolor}{rgb}{0.9,0,0}
\definecolor{colormy}{rgb}{0,0.85,0}
\definecolor{mycolormy}{rgb}{0.9,0,0.85}
\theoremstyle{definition}
\newtheorem{thm}{Theorem}[section]
\newtheorem{cor}[thm]{Corollary}
\newtheorem{lem}[thm]{Lemma}
\newtheorem{prop}[thm]{Proposition}
\declaretheorem[style=definition,name=Notation,qed=$\blacktriangle$,numberlike=thm]{nota}
\declaretheorem[style=definition,name=Remark,qed=$\blacktriangle$,numberlike=thm]{rem}
\declaretheorem[style=definition,name=Example,qed=$\blacktriangle$,numberlike=thm]{ex}
\declaretheorem[style=definition,name=Definition,qed=$\blacktriangle$,numberlike=thm]{defn}
\declaretheorem[style=definition,name=Definition,numberlike=thm]{defnn}
\newcommand{\makeqed}{\hfill\ensuremath{\square}}
\newcommand{\maketriqed}{\hfill\ensuremath{\blacktriangle}}
\title[Diagram categories for $\Uq$-tilting modules at roots of unity]{Diagram categories for $\Uq$-tilting modules at roots of unity}
\author{Henning Haahr Andersen}
\address{H.H.A.: Centre for Quantum Geometry of Moduli Spaces, Aarhus University, Ny Munkegade 118, building 1530, room 327, 8000 Aarhus C, Denmark}
\email{mathha@qgm.au.dk}
\author{Daniel Tubbenhauer}
\address{D.T.: Centre for Quantum Geometry of Moduli Spaces, Aarhus University, Ny Munkegade 118, building 1530, room 316, 8000 Aarhus C, Denmark}
\email{dtubben@qgm.au.dk}
\thanks{The authors were partially supported by the center of excellence grant ``Centre for Quantum Geometry of Moduli Spaces (QGM)'' from the ``Danish National Research Foundation (DNRF)''}
\begin{document}
\begin{abstract}
We give a diagrammatic presentation of the category 
of $\Uq(\mathfrak{sl}_2)$-tilting modules $\T$ 
for $q$ being a root of unity and 
introduce a grading on $\T$. This grading is 
a ``root of unity phenomenon'' and might lead 
to new insights about link and 
$3$-manifold invariants deduced from $\T$.
We also give a diagrammatic 
category for the (graded) projective 
endofunctors on $\T$, indicate how 
our results could generalize 
and collect some ``well-known'' 
facts to give a reasonably self-contained exposition.
\end{abstract}
\maketitle

\vspace*{-.5cm}

\tableofcontents

\vspace*{-.5cm}

\section{Introduction}\label{sec-intro}
\subsection{The framework}\label{sub-intropart1}

In this paper we study the quantum group 
$\Uq=\Uq(\mathfrak{sl}_2)$, where $q$ is 
a root of unity, its category of 
tilting modules $\T$ and the category 
of projective endofunctors $\Endo(\T)$ 
combinatorially and diagrammatically.

Everything we do is completely explicit and ``down to earth'', 
but motivated and deduced from a general machinery that 
comes, from the side of representation theory, from 
pioneering work of Soergel~\cite{soe1}, 
Beilinson-Ginzburg-Soergel~\cite{bgs}, 
Kazhdan-Lusztig~\cite{kalu} and 
Stroppel~\cite{st2}, and from the side of 
combinatorics and diagrammatics, from work of
Soergel~\cite{soe2}, Khovanov-Lauda~\cite{kl5} 
and Elias-Khovanov~\cite{ek1}.

Let us motivate and explain our approach.

\subsubsection{Quantum groups at roots of unity: non-semisimplicity, modular representation theory and affine Weyl groups}\label{subsub-intropart1}
Fix a simple complex Lie algebra $\mathfrak{g}$. 
Then the finite-dimensional\footnote{If not otherwise stated: 
modules in this paper are assumed to be finite-dimensional. 
There are only few exceptions in this paper, e.g. $\Tn{\bbin}$ 
from Definition~\ref{defn-tiltgen}. We hope it is clear from the context.} representation 
theory of the quantum deformation $\Uv(\mathfrak{g})$ 
of $\Uu(\mathfrak{g})$ is, for \textit{generic} parameter $v$, 
semisimple and very similar to the classical representation theory for $\Uu(\mathfrak{g})$.

This drastically changes when specializing $v$ 
to an $l$-th (we allow any $l>2$ throughout the paper) 
root of unity $q$: the representation theory 
of $\Uq(\mathfrak{g})$ is non-semisimple. 
This is mostly due to the fact that the so-called 
Weyl modules at roots of unity are, in 
general, \textit{not simple} and filtrations by 
Weyl modules \textit{do not necessarily split}. 
A lot of questions remain open about the 
representation theory at roots of unity. 
In fact, the representation 
theory of $\Uq(\mathfrak{g})$ over $\mathbb{C}$ has 
many similarities to the representation theory 
of a corresponding almost simple, simply connected 
algebraic group $G$ over an algebraically closed 
field $K$ of prime characteristic, see 
for example~\cite{ajs} or~\cite{lu1}.

It turns out, when studying the 
representation theory of $\Uq(\mathfrak{g})$, 
a certain category of \textit{tilting modules} $\T$ 
comes up naturally. The category $\T$ is inspired 
by the corresponding category of tilting modules 
for reductive algebraic groups due to Donkin~\cite{don} 
(see also Ringel~\cite{ringel}) and shares most of its 
properties, see for example~\cite{an1}. It is our main object under study in this paper.

Note that the ``combinatorics'' (the irreducible characters) 
of $G$ and $\Uq(\mathfrak{g})$ was conjectured by 
Lusztig (see~\cite{lu3} for $G$ and~\cite{lu1} for 
$\Uq(\mathfrak{g})$) to be related to values at $1$ 
of the Kazhdan-Lusztig polynomial associated to the 
\textit{affine} Weyl group for $\mathfrak{g}$. In addition, 
Kazhdan and Lusztig proved later that the category 
of finite-dimensional $\Uq(\mathfrak{g})$-modules 
(of type $1$) is equivalent to a category of 
modules for the corresponding affine Kac-Moody algebra, see~\cite{kalu}.

This combined with the solution of Kashiwara 
and Tanisaki~\cite{kt} of the Kazhdan-Lusztig 
conjecture in the affine Kac-Moody case then 
solved the above mentioned conjecture for the 
irreducible characters of $\Uq(\mathfrak{g})$. 
Even closer related to our work: Soergel first 
conjectured in~\cite{soe3} and later proved 
in~\cite{soe4} a corresponding statement about 
indecomposable tilting modules. In our 
little $\mathfrak{sl}_2$ case we do not need 
these deep results because we can work out 
both, the irreducible characters and the 
indecomposable tilting modules, \textit{``by hand''}.

\subsubsection{Categorification and graded categories}\label{subsub-intropart3}

A ground-breaking development towards 
giving an 
\textit{algebraic} proof of the Kazhdan-Lusztig conjectures 
was initiated by Soergel in~\cite{soe2}. He defines a 
combinatorial category $\mathcal S$ consisting of 
objects that are bimodules over a polynomial ring $R$. 
These bimodules are nowadays commonly called 
Soergel bimodules and are indecomposable 
direct summands of tensor products of certain 
algebraically/combinatorially defined $R$-bimodules.

His category is additive, monoidal and graded 
and he proves that its Grothendieck group $K_0$ 
is isomorphic to an integral form of the Hecke algebra 
$H_v(W)$ associated to the Weyl group $W$ of the simple 
Lie algebra $\mathfrak{g}$ in question. Here the grading 
and the corresponding shifting functors give on the level 
of Grothendieck groups rise to the indeterminate $v$ of $H_v(W)$.
Thus, we can say that Soergel's has 
\textit{categorified} $H_v(W)$ (actually, the 
categorification works for any Coxeter group $W$ 
and its associated Hecke algebra $H_v(W)$).

In fact, in the spirit of categorification 
outlined by Crane and Frenkel in the mid-nineties, 
\textit{graded} categories $\mathcal C$ 
(or \textit{graded} $2$-categories) give rise to a structure 
of a $\bZ[v,v^{-1}]$-module on $K_0(\mathcal C)$ 
(where ``shifting decategorifies'' to multiplication by $v$). 
Many examples of this kind of categorification are 
known. For example, Khovanov's categorification 
of the Jones polynomial (sometimes also called 
$\mathfrak{sl}_2$-polynomial)~\cite{kh1}, Khovanov-Rozansky's categorification 
of the $\mathfrak{sl}_n$-polynomial in~\cite{kr1} (all of these, 
although originally defined differently, 
can be obtained using $2$-functors on graded $2$-categories), 
or Khovanov-Lauda and Rouquier's categorification of 
$\Uv(\mathfrak{g})$ and its highest weight modules, 
see for example~\cite{kl5} and~\cite{rou} using the so-called 
Khovanov-Lauda Rouquier algebra, are among 
the more popular ones and have opened new directions of research.

Thus, it is natural to ask if we can introduce 
a \textit{non-trivial grading} on $\T$ as well. 
We do this in Section~\ref{sec-quiver} by using 
an argument pioneered by Soergel (see~\cite{soe1}) 
in the ungraded and Stroppel (see~\cite{st2}) in 
the graded case for category $\mathcal O$. Namely, 
the usage of Soergel's combinatorial functor $\mathbb{V}$ 
that gives rise to an equivalence of a block of 
$\mathcal O$ (for $\mathfrak{g}$) and a certain 
full subcategory of $\ModA$. The algebra $A$ is 
the endomorphism ring of the \textit{anti-dominant projective} 
in the block and it can be explicitly (when the block is regular) 
identified with the algebra of coinvariants for the Weyl group 
associated to $\mathfrak{g}$. This algebra can be given a 
\textit{$\bZ$-grading} and, as Stroppel explains in~\cite{st2}, 
this set-up gives rise to \textit{graded} versions of 
blocks of category $\mathcal O$ and the categories of 
\textit{graded} endofunctors on these blocks.
In fact, as Stroppel explains in~\cite{st2}, her 
approach is a combinatorial alternative to the 
approach of Beilinson, Ginzburg and Soergel given in~\cite{bgs}.

As in the other cases above, the \textit{grading} 
is the crucial point: in category $\mathcal O$ the 
multiplicity $[\Vn{\lambda}:\Ln{\mu}]$ of the 
simple module $\Ln{\mu}$ inside of the Verma 
$\Vn{\lambda}$ is given by evaluating the 
corresponding Kazhdan-Lusztig polynomial at $1$. 
The Kazhdan-Lusztig polynomial is a polynomial 
and not just a number and the grading of $\mathcal O$ 
``explains'' now the \textit{individual coefficients} of these polynomials as well.

In our case the role of $A$ is, as we explain 
in Section~\ref{sec-quiver}, played by an 
``infinite version'' $\Ai$ of a quiver algebra 
$\Am$ that Khovanov and Seidel introduced in~\cite{ks1} 
in their study of Floer homology\footnote{Around the 
same time this quiver algebra was considered 
independently by Braden in~\cite{bra}. We stay with Khovanov and Seidel's formulation 
(but write $\Am$ instead of $A_m$ as they do)
because the functors they considered on the category 
of representations of this algebra are closely 
connected to translation functors in our context.}. 
Its ``Koszul version'' appears in various contexts 
related to symplectic topology, algebraic geometry and representation theory. 
In particular, it appears as a subquotient of Khovanov's arc algebra that 
he introduced in~\cite{kh4} to give an algebraic structure 
underlying Khovanov homology and whose representation 
theory is known to be highly interesting as outlined 
in a series of papers by Brundan and Stroppel, 
see~\cite{bs1},~\cite{bs2},~\cite{bs3},~\cite{bs4} 
and~\cite{bs5}. The grading can be seen as coming from the 
corresponding grading of some Khovanov-Lauda Rouquier algebra and we use it 
to introduce the grading on $\T$ (for each block $\T_{\lambda}$) 
and we obtain a graded category $\Tgr$.

We stress that this grading is a ``\textit{root of unity phenomenon}'': 
the category of finite-dimensional $\Uv$-modules is 
semisimple and has therefore no interesting grading. 
On the other hand, the grading on $\Tgr$ is non-trivial 
and gives for example rise (as mentioned above) to a 
grading for similar modules of reductive algebraic groups over 
algebraical closed fields $K$ of prime characteristic.

Thus, an intriguing question is if one can use 
the grading on $\Tgr$ to obtain new information 
about link invariants or about the 
Witten-Reshetikhin-Turaev invariants 
of $3$-manifolds that can be deduced from $\T$, see~\cite{an1} 
or~\cite{tur}. Note 
that, as we deduce in Remark~\ref{rem-grothendieck1a}, 
each block $\Tgrl$ of $\Tgr$ decategorifies to the 
Burau representation of the braid group 
$B_{\infty}$ with $\infty$-many strands 
(the split Grothendieck group $K_0^{\oplus}$ carries an action of $B_{\infty}$), 
which already gives a hint how these topological invariants could be related to our work.

\subsubsection{Diagram categories, biadjoint functors and diagrammatic categorification}\label{subsub-intropart4}

Khovanov and Lauda have categorified 
$\Uv(\mathfrak{sl}_k)$, see~\cite{kl5}. 
Their approach, that has turned out to be very fruitful, was to 
use \textit{diagrammatic categorification}: 
they defined a certain $2$-category $\mathcal U(\mathfrak{sl}_k)$ 
consisting of a certain type of so-called string diagrams 
whose (split) Grothendieck group 
$K_0^{\oplus}(\mathcal U(\mathfrak{sl}_k))$ gives the idempotented, integral form $\Uvd(\mathfrak{sl}_k)_{\bZ}$ of $\Uv(\mathfrak{sl}_k)$. One of 
their main observations was that the $E$'s and $F$'s of 
$\Uvd(\mathfrak{sl}_k)_{\bZ}$ behave like biadjoint 
induction and restriction functors on certain categories. 
As outlined in an even more general framework by Khovanov 
in~\cite{kh5} (although it was folklore knowledge for some 
years and appears in a more rigorous form in for 
example~\cite{js} or~\cite{mueg}), biadjoint functors 
have a \textit{``built-in topology''} since, roughly, 
biadjointness means that we can straighten out diagrams.

Let us denote by $B_i$ the bimodules from Soergel's 
categorification of the Hecke algebra $H_v(W)$ from above.
A main feature of the $B_i$'s is 
that tensoring with $B_i$ is a self-adjoint endofunctor 
and, even stronger, a \textit{Frobenius object}, i.e. there are morphisms
\[
B_i\to R,\quad R\to B_i,\quad B_i\to B_i\otimes_R B_i\quad\text{and}\quad B_i\otimes_R B_i \to B_i
\]
pictured as (we read from bottom to top and right to left)
\[
\xy
(0,-3)*{\includegraphics[scale=0.8]{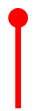}};
(0,8)*{\scriptstyle R};
(0,-9)*{\scriptstyle B_i};
\endxy\quad,\quad
\raisebox{-0.025cm}{\xy
(0,3)*{\rotatebox{180}{\includegraphics[scale=0.8]{res/figs/diacath/iso3}}};
(0,9)*{\scriptstyle B_i};
(0,-8)*{\scriptstyle R};
\endxy}\quad,\quad
\xy
(0,0)*{\includegraphics[scale=0.8]{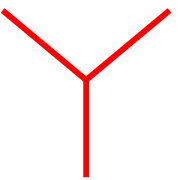}};
(-6.5,8)*{\scriptstyle B_i};
(0,8)*{\scriptstyle \otimes};
(6.5,8)*{\scriptstyle B_i};
(0,-9)*{\scriptstyle B_i};
\endxy
\quad,\quad
\xy
(0,0)*{\rotatebox{180}{\includegraphics[scale=0.8]{res/figs/diacath/iso4}}};
(-6.5,-9)*{\scriptstyle B_i};
(0,-9)*{\scriptstyle \otimes};
(6.5,-9)*{\scriptstyle B_i};
(0,8)*{\scriptstyle B_i};
\endxy
\]
that satisfy the \textit{Frobenius relations} (plus reflections of these)
\[
\text{\textbf{Frob1}}:\quad
\xy
(0,0)*{\includegraphics[scale=0.85]{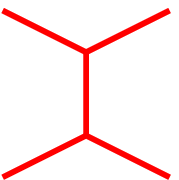}};
\endxy=
\xy
(0,0)*{\includegraphics[scale=0.85]{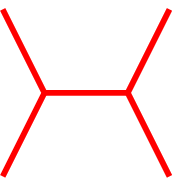}};
\endxy\quad,\quad\text{\textbf{Frob2}}:\quad
\xy
(0,0)*{\includegraphics[scale=0.85]{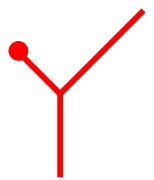}};
\endxy\;=\;
\xy
(0,0)*{\includegraphics[scale=0.85]{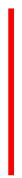}};
\endxy
\;=\;
\xy
(0,0)*{\includegraphics[scale=0.85]{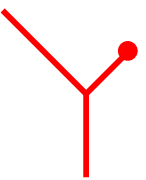}};
\endxy
\]
It is tempting to ask if one can give a 
\textit{diagrammatic} categorification in 
the spirit of $\Uu_v(\mathfrak{sl}_n)$-string 
diagrams of Khovanov-Lauda (see~\cite{kl5}) for
Soergel's categorification as well. The observations 
from above, as Khovanov explains in~\cite[Section~3]{kh5}, 
were the main reason why Elias and Khovanov started to look for such a description.

They were (very) successful in their search 
and their diagrammatic categorification given 
in~\cite{ek1} (in fact, the diagrams 
above are the ones they use) has inspired many successive works. 
Most mentionable for this paper: Elias' 
categorification $\D$ of the Hecke algebra 
$H_v(D_{\infty})$ 
(where $D_{\infty}$ is the infinite dihedral group) 
from~\cite{el1} called 
the \textit{dihedral cathedral} or, alternatively, the 
presentation in terms of
generators and relations 
of the same category given by Libedinsky~\cite{lib}.

This has already led to seminal results: 
as Elias and Williamson explain in~\cite[Subsection~1.3]{ew2}, 
their algebraic proof that the Kazhdan-Lusztig polynomials have positive 
coefficients for \textit{arbitrary} Coxeter 
systems was discovered using the diagrammatic framework 
from~\cite{ew1} and~\cite{ew3}.

In our context: the combinatorics of the 
blocks $\T_{\lambda}$, as explained in Subsection~\ref{sub-translation}, 
is mostly governed by two functors $\Theta_s$ 
and $\Theta_t$ called translation through the 
$s$ and $t$-wall respectively. Here 
$s$ and $t$ are the two reflections that generate 
the \textit{affine} Weyl group 
$W_l\cong D_{\infty}$ of $\mathfrak{sl}_2$.
These functors, motivated from the category $\mathcal O$ 
analogs, are \textit{biadjoint} and satisfy 
\textit{Frobenius relations}. Moreover, we show in 
Lemma~\ref{lem-adjoint} and Theorem~\ref{thm-struktursatz2} 
that this \textit{still holds} in the graded setting.
Thus, it seems reasonable to expect that 
$\Tgrl$ and its category of (graded) projective endofunctors 
$\Endo(\Tgrl)$ have a \textit{diagrammatic} 
description as well. And, since $W_l\cong D_{\infty}$, 
it seems reasonable to expect a relation to Elias' \textit{dihedral cathedral} 
$\D$ from~\cite{el1}. We prove this in Section~\ref{sec-dia} by 
introducing certain quotients of $\D$ giving rise to a diagrammatic presentation 
of $\Tgrl$ and $\Endo(\Tgrl)$.

\subsection{An outline of the paper}\label{sub-intropart2}
The paper is organized as follows.
\begin{itemize}
\item Section~\ref{sec-tilting} contains 
the definition and some basic properties of the category 
of $\Uq$-tilting modules $\T$ as well as the category of projective 
endofunctors of it. Most of Section~\ref{sec-tilting} is known, 
but we have included some new observations.
\item In Section~\ref{sec-quiver} we recall Khovanov-Seidel's $m$-quiver 
algebra $\Am$ and introduce the ``infinite version'' $\Ai$. 
We show that the regular blocks of 
$\T$ embed into the
module category of $\Ai$. 
Since $\Ai$ is naturally graded, we thus, define 
blockwise a grading on $\T$ and $\Endo(\T_{\lambda})$.
\item Section~\ref{sec-dia} finally provides the diagrammatic interpretation of the results from before.
That is, we introduce a diagram category $\QD$ which is a quotient of the diagram 
category studied by Elias~\cite{el1} and show that the additive closure of $\QD$ and 
$\Tgrl$ are equivalent as graded categories. Similarly, we enrich $\QD$ and obtain a 
diagrammatic description of the (graded) category of endofunctors as well.
\end{itemize}
Everything is very explicit and we try to 
illustrate this with examples along the way.

\begin{rem}\label{remark-colors}
We use colors in this paper. Some of them are not important and only 
for illustration. But in Section~\ref{sec-dia} we 
need red and green and it is crucial 
that they are different.
The reader 
who has a black-and-white version can distinguish these 
since green appears lightly shaded.
\end{rem}
\noindent \textbf{Acknowledgements:} We thank 
Christian Blanchet, Ben Elias, Gregor Masbaum, Catharina Stroppel, Pedro Vaz and 
Geordie Williamson for many helpful discussions. 
Special thanks to Geordie Williamson for pointing 
out a connection of the tilting category to an 
anti-spherical quotient of Elias' dihedral cathedral. We are also 
grateful to the anonymous referees for many careful comments and corrections.
D.T. thanks the Danish summer for, 
positively formulated, not distracting him from typing this paper.
\section{The tilting category \texorpdfstring{$\T$}{T}}\label{sec-tilting}
In this section we describe the category $\T$ of 
$\Uq=\Uq(\mathfrak{sl}_2)$-tilting modules for the quantum enveloping 
algebra\footnote{We like to work over the cyclotomic field $\bC$, but any field of characteristic zero containing $q$ would work.} of $\mathfrak{sl}_2$ at a fixed root of unity $q$.
We give a hopefully self-contained summary of the results in the $\mathfrak{sl}_2$ case 
since most results are either spread out over the 
literature or only mentioned implicitly. But 
we have also included some new observations related to our context.

We mostly follow~\cite[Chapters~1,~2,~3]{ja} with our 
notation, $v$ denotes an indeterminate, $q\in\bC$ denotes a 
fixed root of unity and all module categories 
in this section are categories of \textit{left} 
modules.

\subsection{Quantum groups at roots of unity}\label{sub-qroot}
Before we start, let us fix some notions.
Given $a\in\bZ$ and $b\in\bN$, let
\begin{gather*}
[a]=\frac{v^a-v^{-a}}{v-v^{-1}}=v^{a-1}+v^{a-3}+\dots+v^{-a+1}+v^{-a+1},\quad\quad[b]!=[1][2]\cdots [b-1][b],\\
\qbin{a}{b}=\frac{[a][a-1]\cdots[a-b+2][a-b+1]}{[b]!}\in\bZ[v,v^{-1}].
\end{gather*}
be the \textit{quantum integer}, the \textit{quantum factorial} and 
the \textit{quantum binomial}.
By convention, $[0]!=1$.

\begin{defn}\label{def-qv} The \textit{quantum special linear algebra} $\Uv(\mathfrak{sl}_2)$ 
is the associative, unital $\bQ(v)$-algebra 
generated by $K,K^{-1},E$ and $F$ subject to the relations
\begin{gather*}
KK^{-1}=K^{-1}K=1,\quad\quad KE=v^2EK,\quad\quad KF=v^{-2}FK,\quad\quad
EF-FE =\dfrac{K-K^{-1}}{v-v^{-1}}.
\end{gather*}
We write $\Uv=\Uv(\mathfrak{sl}_2)$ for short.
\end{defn}

Recall that $\Uv$ is a Hopf algebra with coproduct $\Delta$, 
antipode $S$ and the counit $\varepsilon$ given by
\begin{gather*}
\Delta(E)=E\otimes 1+K\otimes E,\quad\quad\Delta(F)=F\otimes K^{-1}+1\otimes F,\quad\quad\Delta(K)=K\otimes K,\\
S(E)=-K^{-1}E,\;\;\;\; S(F)=-FK,\;\;\;\; S(K)=K^{-1},\quad\quad\varepsilon(E)=\varepsilon(F)=0,\;\;\;\;\varepsilon(K)=1.
\end{gather*}
This allows to extend actions 
to tensor products and 
to duals, and there is a trivial $\Uv$-module.

We want to ``specialize'' the $v$ to be a root of unity $q\in\bC$. 
To this end, 
let $\mathrm{A}=\bZ[v,v^{-1}]$ and we consider 
Lusztig's $\mathrm{A}$-form $\Ua$ from~\cite{lu2}.

\begin{defn}(\textbf{Lusztig's $\mathrm{A}$-form $\Ua$})\label{def-qA} Define for 
$j\in\bZ_{>0}$ the \textit{$j$-th divided powers}
\[
E^{(j)}=\frac{E^{j}}{[j]!},\quad\quad F^{(j)}=\frac{F^{j}}{[j]!}.
\]
$\Ua$ is 
defined as the $\mathrm{A}$-subalgebra of 
$\Uv$ generated by $K,K^{-1}$, $E^{(j)}$ and $F^{(j)}$.
\end{defn}

\begin{defn}\label{def-qq} Fix a root of unity 
$q\in\bC$, $q\neq \pm 1$ and denote by 
$l$ the order\footnote{The square is only important for roots 
of unity of even order, but not 
for roots of unity of odd order (for these the order of $q^2$ is the same as 
the order of $q$), e.g. we have $l=3$ for third as well as for sixth roots of unity.} of $q^2$. Consider 
$\bC$ as an $\mathrm{A}$-module by specializing $v$ to $q$. Define
\[
\Uq=\Ua\otimes_{\mathrm{A}}\bC.
\]
We abuse notation and write $E^{(j)}$ instead 
of $E^{(j)}\otimes 1$. Analogously for the other generators.
\end{defn}

\begin{rem}\label{rem-indhopf}
It is easy to check that $\Ua$ is a Hopf 
subalgebra of $\Uv$. Thus, $\Uq$ inherits a Hopf algebra structure from $\Uv$. 
In particular, if one has a $\Uq$-module $M$, then 
$M^*=\Hom_{\Uq}(M,\bC)$ has the usual induced action. 
It follows that, if $m\in M$ is an eigenvector of $K$ with 
eigenvalue $\alpha$ and $m^*\in M^*$ a dual vector with respect to 
some $K$-stable complement of $\bC m$ in $M$, then
\begin{equation}\label{eq-eigen}
Km=\alpha m\Longleftrightarrow Km^*=\alpha^{-1}m^*.
\end{equation}
Moreover, note that $[j]=0\in\Uq$ iff $l|j$. 
This implies 
$E^l=[l]!E^{(l)}=0$ and $F^l=[l]!F^{(l)}=0$. 
It is also true that $K^{2l}=1$, see~\cite[Lemma~4.4~(a)]{lu1}.
\end{rem}

\subsection{Weyl modules, dual Weyl modules and simple modules}\label{sub-wsmod}

\begin{defn}(\textbf{Weyl, dual Weyl and simple modules})\label{defn-weyl}
Let $i\in\bN$ and denote 
by $\Vn i$ the $i$\textit{-th Weyl module}. 
This is the $\bC$-vector space 
with basis $m_0,\dots,m_i$ and an $\Uq$-action defined by
\[
Km_k=q^{i-2k}m_k,\quad\quad E^{(j)}m_k=\qbin{i-k+j}{j}m_{k-j},\quad\quad F^{(j)}m_k=\qbin{k+j}{j}m_{k+j},
\]
with the convention that $m_{<0}=m_{>i}=0$. 
The \textit{$i$-th dual Weyl module}, 
denoted by $\dVn i$, is obtained from 
$\Vn i$ by taking the dual, that is, 
$\Hom_{\mathbb{Q}(q)}(\Vn i,\bC)=(\Delta_q(i))^*=\dVn i$.
\end{defn}

It is easy to check directly that 
$\Vn i$ has a \textit{unique simple head} $\Ln i$ which 
is also the \textit{unique simple socle} of $\dVn i$. 
See also~\cite[Section~4]{apw}.

\begin{ex}\label{ex-weyl}
Let $q=\frac{-1+\sqrt{-3}}{2}$ (we use this $q$ in all examples with $l=3$ in what follows).

The Weyl module $\Vn 3$ can be visualized as
\begin{equation}\label{eq-example1}
\xymatrix{
  m_3  \ar@<2pt>[r]^{+1}\ar@(ul,ur)|{q^{-3}}  &  {\color{colormy}m_2}  \ar@[mycolor]@<2pt>[l]^{{\color{mycolor} 0}}\ar@[colormy]@<2pt>[r]^{{\color{colormy} -1}}\ar@[colormy]@(ul,ur)|{{\color{colormy}q^{-1}}} & {\color{colormy}m_1}  \ar@[mycolor]@<2pt>[r]^{{\color{mycolor} 0}}\ar@[colormy]@<2pt>[l]^{{\color{colormy}-1}}\ar@[colormy]@(ul,ur)|{{\color{colormy}q^{+1}}}  &  m_0.  \ar@<2pt>[l]^{+1}\ar@(ul,ur)|{q^{+3}} \\
}
\end{equation}
Here the action of 
$E$ points right, the action of 
$F$ left, $E^{(3)}m_3=m_0$, $F^{(3)}m_0=m_3$, $K$ acts as a loop 
and all other actions are zero. 
Thus, the $\mathrm{A}$-span of $\{m_1,m_2\}$ is now, 
in contrast to the classical case, stable under the action of $\Uq$. The complement 
however is not an $\Uq$-submodule.

Another example (where we have excluded the actions of the divided powers) is $\Vn{4}$:
\begin{equation}\label{eq-example2}
\xymatrix{
  m_4  \ar@<2pt>[r]^{+1}\ar@(ul,ur)|{q^{-4}}  &  m_3  \ar@<2pt>[l]^{+1}\ar@<2pt>[r]^{-1}\ar@(ul,ur)|{q^{-2}} & {\color{green} m_2}  \ar@[mycolor]@<2pt>[r]^{{\color{mycolor} 0}}\ar@[red]@<2pt>[l]^{{\color{mycolor} 0}}\ar@[colormy]@(ul,ur)|{{\color{colormy}q^{0}}} & m_1  \ar@<2pt>[r]^{+1}\ar@<2pt>[l]^{-1}\ar@(ul,ur)|{q^{+2}} &  m_0.  \ar@<2pt>[l]^{+1}\ar@(ul,ur)|{q^{+4}} \\
}
\end{equation}
We note that $\Vn{4}$ has the 
trivial $\Uq$-module spanned by $m_2$ as a $\Uq$-submodule.
\end{ex}

$K$ acts on $\Vn i$ via the eigenvalues $q^{-i},q^{-i+2},\dots,q^{i-2},q^{+i}$. 
Thus, by~\eqref{eq-eigen}, the same is 
true for $\dVn i$. Moreover, the 
$\Ln i$ are self-dual, see e.g.~\cite[Section~4]{apw}.

\begin{prop}\label{prop-weyl}
We have the following.
\begin{itemize}
\item[(a)] $\Vn i\cong\Ln i$ iff $i<l$ or $i\equiv -1\mod l$.
\item[(b)] Suppose $i=al+b$ for 
some $a,b\in\bN$ with $b\leq l-2$. 
Set $i^{\prime}=(a+2)l-b-2$. Then there exists a short exact sequence
\[
0\longrightarrow \Ln i\hooklongrightarrow\Vn{i^{\prime}}\twoheadlongrightarrow \Ln{i^{\prime}}\longrightarrow 0.
\]
Moreover, $\Ln{i^{\prime}}$ is the 
head and $\Ln i$ is the socle of $\Vn{i^{\prime}}$.\makeqed
\end{itemize}
\end{prop}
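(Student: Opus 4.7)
The whole argument is an explicit computation combining the formulas of Definition~\ref{defn-weyl} with the quantum Lucas formula at an $l$-th root of unity
\[
\qbin{Al+r}{Bl+s}_q = \binom{A}{B}\qbin{r}{s}_q,\qquad 0\le r,s\le l-1,
\]
in which the right-hand quantum factor vanishes iff $s>r$. Since the $K$-weight spaces of $\Vn i$ are all one-dimensional, every highest weight vector (vector killed by every $E^{(j)}$, $j\ge 1$) is automatically a scalar multiple of some $m_k$, which reduces both statements to the task of locating those $m_k$ that are highest weight.

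\emph{Part (a).} A proper submodule of $\Vn i$ must be generated by a highest weight vector $m_k$ with $k>0$. The equation $Em_k=[i-k+1]m_{k-1}=0$ forces $l\mid i-k+1$. When $i<l$ this is impossible, so $\Vn i=\Ln i$. When $i=(a+1)l-1$ the only candidates are $k\in\{l,2l,\dots,al\}$, and Lucas gives $E^{(l)}m_{nl}=(a+1-n)\,m_{(n-1)l}\ne 0$ for $1\le n\le a$, ruling them all out. Conversely, when $i=al+b$ with $a\ge1$ and $0\le b\le l-2$, I would exhibit $m_{b+1}$ as a nontrivial highest weight vector of weight $q^{al-b-2}\ne q^i$: Lucas transforms $E^{(j)}m_{b+1}=\qbin{al+(j-1)}{j}_q m_{b+1-j}$ into $\qbin{j-1}{j}_q\, m_{b+1-j}=0$ for every $1\le j\le b+1\le l-1$, yielding the required proper submodule.

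\emph{Part (b).} The very same pattern shows that $m_{l-b-1}\in\Vn{i'}$, which sits in weight $q^i$, is a highest weight vector, since $E^{(j)}m_{l-b-1}=\qbin{(a+1)l-1+j}{j}_q m_{l-b-1-j}=0$ for $1\le j\le l-b-1$ by Lucas. Let $N\subseteq\Vn{i'}$ be the submodule it generates. A PBW argument combined with $E^{(j)}m_{l-b-1}=0$ reduces $N$ to the span of the nonzero $F^{(j)}m_{l-b-1}=\qbin{l-b-1+j}{j}_q m_{l-b-1+j}$; Lucas identifies these as the $j$ with $(l-b-1+j)\bmod l\ge l-b-1$, and the range constraint $l-b-1+j\le i'$ then gives $\dim N=(a+1)(b+1)$. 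Because $m_0\mapsto m_{l-b-1}$ defines a surjection $\Vn i\twoheadrightarrow N$, the module $N$ has simple head $\Ln i$, and combining this with the identification $\dim\Ln i=(a+1)(b+1)$ forces $N\cong\Ln i$. A dimension count $(a+2)l-b-1-(a+1)(b+1)=(a+2)(l-b-1)=\dim\Ln{i'}$, together with the fact that $\Vn{i'}/N$ still has simple head $\Ln{i'}$, gives $\Vn{i'}/N\cong\Ln{i'}$, and simplicity of $N$ places it as the socle.

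The genuine obstacle is pinning down $\dim\Ln i=(a+1)(b+1)$. I would address this by a joint strong induction on $i$ of part~(a) together with the stronger statement that $\Vn i$ has length at most $2$ with socle $\Ln{al-b-2}$ whenever $i=al+b$ with $a\ge1$ and $0\le b\le l-2$; the highest weight vector $m_{b+1}$ identified in part~(a) pins down this socle and closes the inductive loop. Alternatively, one can simply invoke the Steinberg tensor product decomposition $\Ln i\cong\Ln b\otimes\Ln a^{[1]}$ to obtain the dimension directly.
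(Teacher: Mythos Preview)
Your argument is far more explicit than the paper's, which simply cites \cite[Corollary~4.6]{apw} and moves on. The Lucas-based location of highest weight vectors and the dimension computations are all correct, and the Steinberg tensor-product alternative cleanly delivers $\dim\Ln i=(a+1)(b+1)$ and $\dim\Ln{i'}=(a+2)(l-b-1)$, after which your dimension comparisons finish part~(b) immediately.

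The purely inductive route, however, has one genuine gap. In the inductive step for $\Vn i$ (with $i=al+b$, $a\ge 1$), knowing that $N'=\langle m_{b+1}\rangle$ is simple and equals the socle is not enough to force length $\le 2$: simple head plus simple socle does not preclude a longer composition series, and you cannot yet appeal to $\dim\Ln i$ since that is precisely what the step is meant to produce. (Read as the inductive step at index $i'$, your dimension count in part~(b) invokes both $\dim\Ln i$ and $\dim\Ln{i'}$; the latter is circular.) The fix is to show directly that $\Vn i/N'$ has no nontrivial highest weight vector. Its basis is $\{\bar m_k : 0\le k\le i,\ k\bmod l\le b\}$; for $1\le k\bmod l\le b$ one has $E\bar m_k=[i-k+1]\bar m_{k-1}\ne 0$ (the bracket is nonzero and $\bar m_{k-1}$ survives in the quotient), while for $k=nl$ with $1\le n\le a$ Lucas gives $E^{(l)}\bar m_{nl}=(a-n+1)\bar m_{(n-1)l}\ne 0$. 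Hence the quotient is simple, $\dim\Ln i=(a+1)(b+1)$ follows, and the induction closes.
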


\begin{proof}
This is~\cite[Corollary~4.6]{apw}.
\end{proof} 
\begin{cor}\label{cor-weyl}
We have the following.
\begin{itemize}
\item[(a)] $\dVn i\cong\Ln i$ iff $i<l$ or $i\equiv -1\mod l$.
\item[(b)] Suppose $i=al+b$ for some 
$a,b\in\bN$ with $b\leq l-2$. Set $i^{\prime}=(a+2)l-b-2$. 
Then there exists a short exact sequence
\[
0\longrightarrow L_q(i^{\prime})\hooklongrightarrow\nabla_q(i^{\prime})\twoheadlongrightarrow \Ln i\longrightarrow 0.
\]
Moreover, $\Ln{i^{\prime}}$ is the socle and 
$\Ln i$ is the head of $\dVn{i^{\prime}}$.\makeqed
\end{itemize}
\end{cor}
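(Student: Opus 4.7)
The plan is to deduce the corollary directly from Proposition~\ref{prop-weyl} by dualizing, exploiting the fact that $\dVn i$ is defined as $(\Vn i)^*$ and that the simples $\Ln i$ are self-dual (as noted just before the statement of Proposition~\ref{prop-weyl}).

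First I would handle part (a). Since $(-)^*=\Hom_{\bC}(-,\bC)$ with the induced $\Uq$-action (via the antipode $S$) is an exact contravariant functor on finite-dimensional $\Uq$-modules, it sends isomorphisms to isomorphisms. Thus $\Vn i\cong\Ln i$ implies $\dVn i=(\Vn i)^*\cong(\Ln i)^*\cong\Ln i$, and conversely $\dVn i\cong\Ln i$ implies $\Vn i=(\dVn i)^*\cong(\Ln i)^*\cong\Ln i$ (using that the double dual is naturally the identity on finite-dimensional modules). Hence part (a) reduces immediately to Proposition~\ref{prop-weyl}(a).

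For part (b), I would take the short exact sequence from Proposition~\ref{prop-weyl}(b),
\[
0\longrightarrow \Ln i\hooklongrightarrow\Vn{i^{\prime}}\twoheadlongrightarrow \Ln{i^{\prime}}\longrightarrow 0,
\]
and apply $(-)^*$. Exactness of the dual on finite-dimensional modules, together with $\Vn{i^{\prime}}^*=\dVn{i^{\prime}}$ and $\Ln j^*\cong\Ln j$, yields the desired sequence
\[
0\longrightarrow \Ln{i^{\prime}}\hooklongrightarrow\dVn{i^{\prime}}\twoheadlongrightarrow \Ln{i}\longrightarrow 0.
\]
The identification of head and socle then follows formally: dualizing swaps head and socle, so since $\Ln{i^{\prime}}$ is the head and $\Ln i$ is the socle of $\Vn{i^{\prime}}$, we obtain that $\Ln{i^{\prime}}$ is the socle and $\Ln i$ is the head of $\dVn{i^{\prime}}$.

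There is essentially no obstacle; the only thing to be slightly careful about is that the contravariant duality on finite-dimensional $\Uq$-modules is well-defined, exact, and sends simples to simples (with $\Ln i^*\cong\Ln i$). All three facts are standard consequences of $\Uq$ being a Hopf algebra together with the observation in~\eqref{eq-eigen}, and the self-duality of simples was already recorded in the paragraph preceding Proposition~\ref{prop-weyl} with a reference to~\cite[Section~4]{apw}.
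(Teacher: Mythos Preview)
Your proposal is correct and follows essentially the same approach as the paper: the paper's proof is the one-line observation that the corollary follows from $\Ln i\cong(\Ln i)^*$ together with the fact that ${}^*$ is an exact, contravariant functor, which is exactly the dualization argument you spelled out in more detail.
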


\begin{proof}
This follows from $\Ln i\cong(\Ln i)^*$ and 
the fact that ${}^*$ is an exact, contravariant functor.
\end{proof}

\begin{ex}\label{ex-weyl2}
For $i=0$ and $l=3$ we have $i^{\prime}=4$. 
The trivial $\Uq$-module $\Ln 0$ appears as a 
submodule of $\Vn 4$, compare to~\eqref{eq-example2}.
\end{ex}

We should note here that 
there are two different 
types of $\Uq$-modules known as \textit{type} $1$ \textit{and} $-1$, 
see for example~\cite[Section 1]{apw}. 
For us the difference between the two types is not important 
in this paper and we \textit{only} consider $\Uq$-modules of type $1$. 
The (more general) treatment in~\cite[Section 1]{apw} 
ensures that the consideration of only type 
$1$ is still enough to get results for both types.

\begin{cor}\label{cor-simples}
The set $\{\Ln i\mid i\in\bN\}$ is a  
complete set of simple, pairwise non-isomorphic, 
finite-dimensional $\Uq$-modules (of type $1$).\makeqed
\end{cor}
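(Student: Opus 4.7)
The corollary bundles three assertions: each $\Ln i$ is simple (immediate from its definition as the unique simple head of $\Vn i$), the $\Ln i$ for $i\in\bN$ are pairwise non-isomorphic, and every finite-dimensional simple $\Uq$-module of type~$1$ arises as some $\Ln i$. I focus on the last two claims, treating them together by attaching a canonical integer invariant to every such module.

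Let $M$ be a finite-dimensional simple $\Uq$-module of type~$1$. By Remark~\ref{rem-indhopf}, $K$ acts semisimply on $M$ with eigenvalues in $\{q^j\}_{j\in\bZ}$, while the divided powers $E^{(r)}$, $r\geq 1$, are nilpotent and mutually commuting (via $E^{(r)}E^{(s)}=\qbin{r+s}{r}E^{(r+s)}$). Hence their joint kernel $M^+:=\bigcap_{r\geq 1}\Ker(E^{(r)})\subseteq M$ is a non-zero, $K$-stable subspace, in which I choose a $K$-eigenvector $v$ with $Kv=q^i v$ for some $i\in\bZ$. Setting $n:=\max\{k\geq 0 : F^{(k)}v\neq 0\}$ (finite by nilpotency of $F$) and applying the commutator $[E,F^{(n+1)}]=F^{(n)}[K;-n]$ to $v$ together with $F^{(n+1)}v=0$ yields $[i-n]\,F^{(n)}v=0$, hence $l\mid (i-n)$. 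Replacing $i$ by its canonical representative $i=n\in\bN$ preserves the eigenvalue $q^i$, so Definition~\ref{defn-weyl} turns the assignment $m_0\mapsto v$ into a $\Uq$-linear map $\Vn i\twoheadrightarrow M$, surjective by simplicity of $M$; thus $M\cong\Ln i$, the unique simple quotient of $\Vn i$.

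To deduce pairwise non-isomorphism I apply this construction to $M=\Ln i$ with $v:=\bar m_0$. The $K$-eigenvalue is $q^i$ by Definition~\ref{defn-weyl}, and the essential point is that $F^{(i)}\bar m_0=\bar m_i\neq 0$ in $\Ln i$ (so that $n=i$ is indeed the correct invariant). This follows from Proposition~\ref{prop-weyl}: when $\Vn i$ is reducible with $i=(a+2)l-b-2$, the radical $\Ln{i^*}$ is the $\Uq$-submodule generated by the highest weight vector $m_{l-b-1}$, whose $\Uq$-orbit is spanned by $F^{(k)}m_{l-b-1}=\qbin{l-b-1+k}{k}m_{l-b-1+k}$. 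At the unique value $k=(a+1)l-1$ where $l-b-1+k=i$, a Lucas-type computation of $\qbin{(a+2)l-b-2}{(a+1)l-1}_q$ at the root of unity gives $0$, so $m_i\notin\Ln{i^*}$ and hence $\bar m_i\neq 0$. Consequently $\Ln i\cong\Ln j$ forces $i=j$.

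The main obstacle lies in the highest-weight production for $M$: at a root of unity, $E^{(r)}$ with $r\geq l$ is genuinely independent of $E$, so the vanishing of $E$ on a single vector does \emph{not} imply the vanishing of every $E^{(r)}$, making the joint nilpotency/commutativity argument for the full family $\{E^{(r)}\}_{r\geq 1}$ essential. Moreover, because the $K$-eigenvalue $q^i$ determines $i$ only modulo the order of $q$ (which divides $2l$), the compatibility relation $l\mid(i-n)$ coming from $F^{(n+1)}v=0$ is what pins down the canonical non-negative integer $i=n$ attached to $M$.
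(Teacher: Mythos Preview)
The paper's own proof is merely a citation to~\cite[Corollary~6.3]{apw}, so there is nothing detailed to compare against; your highest-weight approach is the standard one underlying that reference. However, there is a genuine gap in how you pin down the integer~$i$.

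You work only with the $K$-eigenvalue, choosing $v\in M^+$ with $Kv=q^iv$ for some $i\in\bZ$. In Lusztig's form at a root of unity, $\Uq^0$ is strictly larger than the subalgebra generated by $K^{\pm1}$: it also contains the elements $\qbin{K;c}{t}$, and for $t\geq l$ these are \emph{not} polynomials in $K$ after specialization. The characters $\chi_j$ of Remark~\ref{rem-tria} are pairwise distinct for all $j\in\bZ$, whereas the $K$-eigenvalues $q^j$ repeat with period $\mathrm{ord}(q)\in\{l,2l\}$. Your final paragraph asserts that $l\mid(i-n)$ combined with the $\mathrm{ord}(q)$-ambiguity in $i$ lets you take $i=n$; but this fails when $\mathrm{ord}(q)=2l$, since then $l\mid(i-n)$ is consistent with $q^{i}=q^{n+l}=-q^{n}$, in which case the assignment $m_0\mapsto v$ is not even $K$-equivariant. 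Even when $\mathrm{ord}(q)=l$ so that $q^i=q^n$, a $K$-eigenvector with eigenvalue $q^n$ can mix several weight spaces $M_j$ (all $j\equiv n\bmod l$), on which $\qbin{K;0}{l}$ acts by the \emph{distinct} scalars $\qbin{j}{l}$; so there is no reason for $m_0\mapsto v$ to extend to a $\Uq$-module map out of $\Vn{n}$.

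The fix is to use what ``type~$1$'' actually provides (cf.~\cite[Section~1]{apw} and Remark~\ref{rem-tria}): $M$ carries a weight decomposition $M=\bigoplus_{j\in\bZ}M_j$ with all of $\Uq^0$ acting on $M_j$ via $\chi_j$. Take $i$ to be the maximal weight and $0\neq v\in M_i$; then $E^{(r)}v=0$ automatically, $i$ is unambiguous, and the universal property in Remark~\ref{rem-tria} gives $\Vn{i}\twoheadrightarrow M$ once $i\geq 0$ is established (this last point still needs a short separate argument). Your commutator identity then confirms $n=i$ a posteriori. As a side remark, the quantum-Lucas computation for $\bar m_i\neq 0$ in $\Ln{i}$ is heavier than needed: when $\Vn{i}$ is reducible its socle is a self-dual simple of strictly smaller highest weight $i^*<i$, so all its weights lie in $[-i^*,i^*]$, and the weight-$(-i)$ vector $m_i$ cannot belong to it.
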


\begin{proof}
See~\cite[Corollary~6.3]{apw}.
\end{proof}

\begin{rem}\label{rem-semisimple}
The category of $\Uq$-modules is 
\textit{far} from being semisimple: Proposition~\ref{prop-weyl} 
says that $\Vn i$ is never simple whenever we are in the case (b). 
See also~\eqref{eq-example1}.
\end{rem}

\subsection{Tilting modules and the tilting category \texorpdfstring{$\T$}{T}}\label{sub-tiltmod}
\begin{defn}(\textbf{$\Delta$- and $\nabla$-filtration})\label{defn-filt}
We say that a $\Uq$-module $M$ has 
a $\Delta$\textit{-filtration} if there exists a descending sequence of 
$\Uq$-submodules
\[
M=F_{0}\supset F_1\supset\cdots\supset F_i\supset\dots,\quad\quad\bigcap_{i=0}^{\infty}F_i=0,
\]
such that for all $i=0,1\dots$ we have 
$F_{i}/F_{i+1}\cong \Vn{i^{\prime}}$ for 
some $i^{\prime}\in\bN$. A $\nabla$\textit{-filtration} is 
defined similarly, but using 
$\dVn{i^{\prime}}$ instead of 
$\Vn{i^{\prime}}$ and an ascending sequence of $\Uq$-submodules, that is
\[
0=F_{0}\subset F_1\subset\cdots\subset F_i\subset\dots,\quad\quad\bigcup_{i=0}^{\infty}F_i=M,
\]
such that for all $i=0,1\dots$ we 
have $F_{i+1}/F_{i}\cong \dVn{i^{\prime}}$ for some $i^{\prime}\in\bN$.
\end{defn}

One can prove, following similar arguments as in~\cite[Proposition~II.4.16]{jarag},
that such filtrations are unique up to reordering. It is clear that 
a finite-dimensional $\Uq$-module $M$ has a 
$\Delta$-filtration iff $M^*$ has a $\nabla$-filtration.

\begin{defn}(\textbf{Tilting modules})\label{defn-tilt}
We call a $\Uq$-module $M$ a $\Uq$-\textit{tilting module} 
if it has a $\Delta$- and a $\nabla$-filtration. 
We say for short that $M$ is \textit{tilting}.
\end{defn}

\begin{ex}\label{ex-tilt}
Recall that the category of finite-dimensional $\Uv$-modules is 
semisimple. It follows that all finite-dimensional 
$\Uv$-modules are tilting. Moreover, for $i<l$ or $i\equiv -1 \mod l$, 
by Proposition~\ref{prop-weyl} and 
Corollary~\ref{cor-weyl}, we see that $\Vn i\cong\Ln i\cong\dVn i$ is tilting.
\end{ex}

It is easy to see 
(in our $\mathfrak{sl}_2$ case - in general this 
is non-trivial, see e.g.~\cite[Theorem~3.3]{par}) 
that $\Vn i\otimes_{\bC}\Vn{i^{\prime}}$ has a $\Delta$-filtration with factors 
(likewise for $\dVn i\otimes_{\bC}\dVn{i^{\prime}}$)
\begin{equation}\label{eq-tiltfact}
\Vn{|i-i^{\prime}|},\,\Vn{|i-i^{\prime}|+2},\,\dots, \Vn{i+i^{\prime}-2},\,\Vn{i+i^{\prime}},\quad\quad i,i^{\prime}\geq 0.
\end{equation}

\begin{defn}(\textbf{Full tilting category})\label{defn-ftiltcat}
We denote by $\T^{\mathrm{all}}$ the full 
subcategory of all $\Uq$-modules that are tilting. 
We denote its class of objects by $\Ob(\T^{\mathrm{all}})$.
\end{defn}

\begin{prop}\label{prop-basicp}
The category $\T^{\mathrm{all}}$ has the following properties.
\begin{itemize}
\item[(a)] If $M,M^{\prime}\in\Ob(\T^{\mathrm{all}})$, then $M\oplus M^{\prime}\in\Ob(\T^{\mathrm{all}})$.
\item[(b)] If $M\oplus M^{\prime}\in\Ob(\T^{\mathrm{all}})$, then $M,M^{\prime}\in\Ob(\T^{\mathrm{all}})$.
\item[(c)] If $M,M^{\prime}\in\Ob(\T^{\mathrm{all}})$, then $M\otimes_{\bC} M^{\prime}\in\Ob(\T^{\mathrm{all}})$.\makeqed
\end{itemize}
\end{prop}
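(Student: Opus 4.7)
My plan is to dispatch (a) and (c) by direct filtration constructions and then to obtain (b) via a standard homological characterisation of tilting modules.

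\textbf{Step 1 (part (a)).} Given $\Delta$-filtrations $M=F_0\supset F_1\supset\cdots$ and $M'=F_0'\supset F_1'\supset\cdots$ with $\bigcap F_i=0=\bigcap F_i'$ and $F_i/F_{i+1}\cong\Vn{i'}$, $F_j'/F_{j+1}'\cong\Vn{j'}$, I would first form $G_i=F_i\oplus F_i'$. This is a descending chain with $\bigcap G_i=0$ and successive quotient $\Vn{i'}\oplus\Vn{j'}$. Inserting the intermediate step $F_i\oplus F_{i+1}'$ between $G_i$ and $G_{i+1}$ refines the chain so that every new successive quotient is a \emph{single} Weyl module, as required by Definition~\ref{defn-filt}. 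The same argument, with ascending chains, handles $\nabla$-filtrations, so $M\oplus M'\in\Ob(\T^{\mathrm{all}})$.

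\textbf{Step 2 (part (c)).} I would build a $\Delta$-filtration of $M\otimes_{\bC} M'$ in two layers. The $\Delta$-filtration $\{F_i\}$ of $M$ gives a descending chain $F_i\otimes M'$ of submodules of $M\otimes M'$ (tensoring over $\bC$ is exact) with intersection zero and successive quotients $\Vn{i'}\otimes M'$. Using the $\nabla$-/$\Delta$-filtration of $M'$ on each factor, each $\Vn{i'}\otimes M'$ admits a filtration with subquotients $\Vn{i'}\otimes\Vn{j'}$, and by \eqref{eq-tiltfact} each such tensor product has a $\Delta$-filtration. Splicing and refining these three layers produces a $\Delta$-filtration of $M\otimes M'$. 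The $\nabla$-filtration is obtained dually by dualising via $(-)^*$ or by the analogous statement for $\dVn{i}\otimes\dVn{j}$ alluded to after \eqref{eq-tiltfact}.

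\textbf{Step 3 (part (b)).} This is the main obstacle, since the hypothesis on $M\oplus M'$ does not immediately produce filtrations of the summands. The standard remedy is Ext-vanishing: I would establish the characterisation
\[
M\in\Ob(\T^{\mathrm{all}})\iff\mathrm{Ext}^{1}_{\Uq}(M,\dVn{j})=0=\mathrm{Ext}^{1}_{\Uq}(\Vn{j},M)\ \text{for all } j\in\bN.
\]
For this I would first prove the orthogonality relation $\mathrm{Ext}^{1}_{\Uq}(\Vn{i},\dVn{j})=0$ using the short exact sequences of Proposition~\ref{prop-weyl} and Corollary~\ref{cor-weyl}, then deduce by induction on $\Delta$-/$\nabla$-filtrations that tilting modules satisfy the Ext-vanishing. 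Conversely, starting from a module with vanishing $\mathrm{Ext}^{1}(M,\dVn{j})$, I would construct the filtration inductively by choosing a quotient $M\twoheadrightarrow\Vn{i^{\prime}}$ of minimal weight and using the Ext-vanishing to lift kernels; this part mirrors the $\mathfrak{sl}_2$-specialisation of the arguments in~\cite[Chapter~II.4]{jarag}. Given this criterion, (b) is immediate: $\mathrm{Ext}^{1}_{\Uq}(-,\dVn{j})$ and $\mathrm{Ext}^{1}_{\Uq}(\Vn{j},-)$ are additive, so $M\oplus M'\in\Ob(\T^{\mathrm{all}})$ forces both $M$ and $M'$ to satisfy the Ext-vanishing, hence to lie in $\T^{\mathrm{all}}$.

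The hard step is clearly Step 3: establishing the Ext-vanishing characterisation, and in particular the converse direction, is where most of the work lies. Parts (a) and (c) are essentially refinements of filtrations plus the tensor product formula \eqref{eq-tiltfact}, while (b) genuinely requires the homological input that is available thanks to $\T^{\mathrm{all}}$ being a highest weight category.
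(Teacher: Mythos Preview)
Your plan is correct. Parts (a) and (c) match the paper's approach exactly: (a) is immediate by refining the direct sum of filtrations, and (c) follows from \eqref{eq-tiltfact} together with its dual statement, just as the paper says.

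For (b) the paper simply cites \cite[Proposition~3.7]{hum}, whose proof is \emph{not} the Ext-vanishing characterisation but a direct inductive argument: one takes a maximal weight $\lambda$ occurring in $M\oplus M'$, uses the universal property of $\Vn{\lambda}$ to embed it into $M\oplus M'$, observes that a highest weight module maps entirely into one summand (its generating vector lies in $M$ or $M'$), and then inducts on the filtration length of the quotient. Your homological route via the criterion $\mathrm{Ext}^1(M,\dVn{j})=0=\mathrm{Ext}^1(\Vn{j},M)$ is equally standard and equally valid; it has the advantage of giving a clean additive characterisation of $\T^{\mathrm{all}}$ that immediately yields (b), at the cost of having to set up the converse implication (Ext-vanishing $\Rightarrow$ filtration), which is indeed the bulk of the work. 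The Humphreys-style argument is more elementary in that it avoids $\mathrm{Ext}$ altogether but is specific to highest weight situations. One small slip in your Step~3: to build the $\Delta$-filtration from Ext-vanishing one normally embeds $\Vn{i'}\hookrightarrow M$ for $i'$ \emph{maximal} (via the universal property in Remark~\ref{rem-tria}) rather than taking a quotient of minimal weight; the rest of your outline is fine.
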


\begin{proof}
Part (a) is immediate. For part (b) one can, for 
example, argue as in~\cite[Proposition~3.7]{hum}, and the non-trivial 
part (c) can be deduced from~\eqref{eq-tiltfact} 
and the corresponding statement for the dual Weyl modules $\dVn i$.
\end{proof}

\begin{defn}(\textbf{The tilting category $\T$})\label{defn-tiltcat}
Let $\T$ be the full subcategory of $\T^{\mathrm{all}}$ consisting of:
\begin{itemize}
\item The objects $\Ob(\T)$ are all 
finite-dimensional $\Uq$-tilting modules $M\in\Ob(\T^{\mathrm{all}})$.
\item The morphisms $\Hom_{\T}(M,N)$ are all $\Uq$-intertwiners $f\in\Hom_{\Uq}(M,N)$. 
\end{itemize}
The objects of $\T$ have finite filtrations, 
i.e. $F_N=F_{N+i}$ for some $N\in\bN$ and all $i\in\bN$.
\end{defn}

Recall that an additive category is \textit{Krull-Schmidt}, 
if each object can be uniquely decomposed (up to permutation) 
into a finite direct sum of \textit{indecomposable objects}, i.e. objects 
$O$ such that $O\cong O^{\prime}\oplus O^{\prime\prime}$ implies that 
$O^{\prime}\cong 0$ or $O^{\prime\prime}\cong 0$.

\begin{lem}\label{lem-tiltcat}
We have the following.
\begin{itemize}
\item[(a)] The category $\T$ is additive (but not abelian), closed under 
finite direct sums, finite tensor products and under duals.
\item[(b)] $\T$ is a Krull-Schmidt category whose 
indecomposable objects are parametrized by $\bN$.
\item[(c)] $\Vn i$, $\dVn i$ and $\Ln i$ are in $\T$ 
iff $i<l$ or $i\equiv-1\mod l$.\makeqed
\end{itemize}
\end{lem}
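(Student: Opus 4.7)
For part (a), I plan to inherit the additive structure directly from the ambient category of $\Uq$-modules: morphism sets are $\Uq$-linear $\Hom$-groups, composition is bilinear, the zero module lies in $\T$, and Proposition~\ref{prop-basicp}(a) supplies biproducts; closure under tensor products is Proposition~\ref{prop-basicp}(c). For closure under duals I would dualize a $\Delta$-filtration term by term and use the identification $(\Vn{j})^{*}\cong\dVn{j}$ to produce a $\nabla$-filtration of $M^{*}$, and argue symmetrically. To witness non-abelianness I would exhibit a morphism in $\T$ whose kernel (or cokernel) is not tilting; concretely, any non-invertible endomorphism of an indecomposable tilting whose head equals its socle (such as $\Tn{l}$, produced in~(b), with head and socle both $\Ln{l-2}$) factors as projection to head followed by inclusion of socle and has kernel containing the non-tilting Weyl module $\Vn{l}$ from part~(c).

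For part (b), Krull-Schmidt will be immediate: every $M\in\T$ is finite-dimensional over $\bC$, so $\End_{\Uq}(M)$ is a finite-dimensional algebra and Fitting's lemma yields a unique decomposition into indecomposables with local endomorphism rings. For the parametrization I plan to inductively construct an indecomposable tilting module with highest weight $i$: set $\Tn{0}:=\Vn{0}$, and for $i\geq 1$ form $\Vn{1}\otimes_{\bC}\Tn{i-1}$, which is tilting by Proposition~\ref{prop-basicp}(c) and by~\eqref{eq-tiltfact} contains $\Vn{i}$ as its top $\Delta$-factor, so exactly one Krull-Schmidt summand contains a highest-weight vector of weight $i$; this will be $\Tn{i}$. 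The main technical step is uniqueness up to isomorphism of an indecomposable tilting with a given highest weight, which I would deduce from the standard vanishing $\mathrm{Ext}^{1}_{\Uq}(\Vn{j},\dVn{k})=0$ for all $j,k$ (a consequence of $\Delta$-/$\nabla$-structure), ensuring that any highest-weight-preserving map between two such candidates lifts to an isomorphism. Exhaustion will then follow because every indecomposable has some highest weight $i$ and must agree with $\Tn{i}$.

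For part (c), the reverse implication is immediate: Proposition~\ref{prop-weyl}(a) and Corollary~\ref{cor-weyl}(a) give $\Vn{i}\cong\dVn{i}\cong\Ln{i}$ in the stated range, which is trivially its own $\Delta$- and $\nabla$-filtration. For the forward direction on $\Vn{i}$, I would assume a $\nabla$-filtration $0=F_{0}\subset\cdots\subset F_{n}=\Vn{i}$ with factors $\dVn{j_{k}}$ and analyze the top factor $\dVn{j_{n}}=\Vn{i}/F_{n-1}$: its head is a quotient of $\mathrm{head}(\Vn{i})=\Ln{i}$, hence equals $\Ln{i}$; combined with the description of $\mathrm{head}(\dVn{j_{n}})$ from Corollary~\ref{cor-weyl} and the dimension bound $\dim\dVn{j_{n}}\leq\dim\Vn{i}$, this forces $j_{n}=i$ and then, by equality of dimensions, $F_{n-1}=0$, so $\Vn{i}\cong\dVn{i}$. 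But Proposition~\ref{prop-weyl}(b) shows that a non-simple $\Vn{i}$ has socle $\Ln{i''}$ with $i''<i$, contradicting $\mathrm{socle}(\dVn{i})=\Ln{i}$; hence $\Vn{i}$ must be simple, i.e.\ $i<l$ or $i\equiv -1\mod l$. The $\dVn{i}$ case follows by duality from~(a), and any $\Delta$-filtration of the simple module $\Ln{i}$ is necessarily trivial, giving $\Ln{i}\cong\Vn{j}$ with $j=i$ by highest-weight matching, which reduces to the $\Vn{i}$ case. The principal obstacle throughout is the uniqueness step in~(b), which rests on the $\mathrm{Ext}^{1}$-vanishing input; the remaining ingredients are essentially bookkeeping from what is already established in the excerpt.
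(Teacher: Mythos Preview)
Your proposal is correct and follows essentially the same route as the paper, which is very terse: for (a) the paper simply cites Proposition~\ref{prop-basicp} and the compatibility of duals with finite sums; for (b) it invokes finite-dimensionality together with Proposition~\ref{prop-basicp}(b) for Krull--Schmidt and then forward-references Proposition~\ref{prop-tilt} for the parametrization by $\bN$; for (c) it just cites Proposition~\ref{prop-weyl} and Corollary~\ref{cor-weyl}. Your write-up spells out what the paper leaves implicit---in particular the socle/head comparison forcing $\Vn{i}\cong\dVn{i}$ in (c), and the $\mathrm{Ext}^1$-vanishing underpinning uniqueness in (b)---but these are exactly the ingredients hidden behind the paper's citations (the paper's own proof of Proposition~\ref{prop-tilt}(c) likewise defers the uniqueness to the standard literature argument), so there is no genuine divergence in strategy.
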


\begin{proof}
%
\textbf{(a).} From Proposition~\ref{prop-basicp} and the fact that 
$\Hom_{\Uq}(\cdot,\bC)$ commutes with finite sums.

\textbf{(b).} 
The Krull-Schmidt property follows from finite-dimensionality and 
(b) of Proposition~\ref{prop-basicp}.
By Proposition~\ref{prop-tilt} below, all finite-dimensional 
indecomposable tiltings are of the form $\Tn i$.

\textbf{(c).} A direct consequence of Proposition~\ref{prop-weyl} and Corollary~\ref{cor-weyl}.
\end{proof}

\begin{rem}\label{rem-ribbon}
It follows from Lemma~\ref{lem-tiltcat} part (a) 
that $\T$ is a \textit{rigid category} 
(a monoidal category with duals and certain compatibility properties). 
Moreover, $\T$ is even a \textit{ribbon (tensor) category} and 
$\T$ gives rise to a \textit{modular category} 
(roughly: one mods out by tiltings whose quantum trace is zero) 
and thus, gives a $2+1$-dimensional TQFT and can 
be used to define the \textit{Witten-Reshetikhin-Turaev invariants} of $3$-manifolds. 
Good treatments of this are~\cite[Section~4]{an1} or~\cite[Section~7]{saw}.
\end{rem}

Define, using Propositions~\ref{prop-weyl} 
and~\ref{prop-basicp} and~\eqref{eq-tiltfact}, 
a family $(\Tn i)_{i\in\bN}$ of indecomposable tiltings as follows. 
We start by setting $\Tn 0=\Vn 0$ and $\Tn 1=\Vn 1$. 
Then we denote by $m_q\in\Tn 1$ any eigenvector for $K$ with eigenvalue $q$. 
For each $i>1$ we define $\Tn i$ to be the indecomposable 
summand of $(\Tn 1)^{\otimes i}$ which contains 
the vector $m_q\otimes\cdots\otimes m_q\in (\Tn 1)^{\otimes i}$.

Denote by $(T:\Vn{i})\in\bN$ the \textit{filtration multiplicity} for an $\Uq$-tilting module $T$. It is clear, by using~\eqref{eq-tiltfact}, that
\begin{equation}\label{eq-tiltfact2}
((\Tn 1)^{\otimes i}:\Vn i)=1,\quad\quad((\Tn 1)^{\otimes i}:\Vn{i^{\prime}})=0,\quad\quad\text{for } i^{\prime}>i.
\end{equation}
Hence, $\Tn i$ may also be described as 
the unique indecomposable summand of 
$(\Tn 1)^{\otimes i}$ that contains $\Vn i$. We note the following more precise statement.

\begin{prop}\label{prop-tilt}
\item[(a)] $\Vn i\cong\Ln i\cong\Tn i\cong\dVn i$ iff $i<l$ or $i\equiv -1\mod l$.
\item[(b)] Suppose $i=al+b$ for some $a,b\in\bN$ with 
$b\leq l-2$. Set $i^{\prime}=(a+2)l-b-2$. Then there exist short exact sequences
\[
0\longrightarrow \Vn{i^{\prime}}\hooklongrightarrow\Tn{i^{\prime}}\twoheadlongrightarrow \Vn{i}\longrightarrow 0,\quad\quad
0\longrightarrow \dVn{i}\hooklongrightarrow\Tn{i^{\prime}}\twoheadlongrightarrow \dVn{i^{\prime}}\longrightarrow 0.
\]
\item[(c)] We have $\Tn i\cong \Tn{i^{\prime}}$ 
iff $i=i^{\prime}$. Moreover, if $M\in\Ob(\T)$ 
is indecomposable, then there 
exists an $i\in\bN$ such that $M\cong \Tn i$.\makeqed
\end{prop}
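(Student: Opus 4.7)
The plan is to prove (a), (b), (c) in sequence, relying on Proposition~\ref{prop-weyl}, Corollary~\ref{cor-weyl} and the construction of $\Tn i$ as the indecomposable summand of $(\Tn 1)^{\otimes i}$ containing $m_q^{\otimes i}$.

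For part~(a), the equivalences $\Vn i\cong\Ln i$ and $\dVn i\cong\Ln i$ are Proposition~\ref{prop-weyl}(a) and Corollary~\ref{cor-weyl}(a), with converses from the contrapositive of Proposition~\ref{prop-weyl}(b). Under the hypothesis, $\Vn i\cong\dVn i$ is simple, hence tilting with trivial $\Delta$- and $\nabla$-filtrations, and thus an indecomposable tilting module. The vector $m_q^{\otimes i}\in(\Tn 1)^{\otimes i}$ is a highest weight vector of weight $q^i$ generating a submodule isomorphic to $\Vn i$ inside $\Tn i$. Since $\Vn i=\dVn i$ is tilting, the standard vanishing $\mathrm{Ext}^1_{\Uq}(X,\Vn i)=0$ for any $X$ with a $\Delta$-filtration forces this inclusion to split, so $\Vn i$ is itself an indecomposable tilting summand of $(\Tn 1)^{\otimes i}$ containing $m_q^{\otimes i}$, giving $\Tn i\cong\Vn i$.

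For part~(b), $m_q^{\otimes i'}$ generates a submodule $\Vn{i'}\subseteq\Tn{i'}$ and~\eqref{eq-tiltfact2} gives $(\Tn{i'}:\Vn{i'})=1$. By Proposition~\ref{prop-weyl}(b), $\Vn{i'}$ is not simple, hence not tilting, so $\Tn{i'}\supsetneq\Vn{i'}$ and additional $\Delta$-factors appear. The linkage principle (weights in a single block lie in a single $W_l$-orbit) restricts the additional factors to $W_l\cdot i'$ strictly below $i'$. Induction using the tensor product formula~\eqref{eq-tiltfact}, starting from the base case $\Tn l\cong\Vn{l-1}\otimes\Vn 1$ (visibly indecomposable, with submodule $\Vn l$ and quotient $\Vn{l-2}$, since the non-vanishing of $\mathrm{Ext}^1(\Vn{l-2},\Vn l)$ precludes a splitting), pins down exactly one further factor $\Vn i$ with $(\Tn{i'}:\Vn i)=1$; indecomposability of $\Tn{i'}$ makes the extension non-split, yielding the first SES. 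The second follows by dualizing: $(\Tn{i'})^*$ is indecomposable tilting of highest weight $i'$, hence $(\Tn{i'})^*\cong\Tn{i'}$, and applying $(-)^*$ to the first SES converts $\Delta$-factors to $\nabla$-factors via $(\Vn j)^*=\dVn j$.

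For part~(c), uniqueness is immediate: $i$ is recovered from $\Tn i$ as the maximum $j$ with $(\Tn i:\Vn j)\neq 0$, an isomorphism invariant. For completeness, let $M\in\Ob(\T)$ be indecomposable with highest weight $j$; the $\Uq$-submodule of $M$ generated by a highest weight vector is isomorphic to $\Vn j$ (it is a highest weight quotient of $\Vn j$, and by inspection of the $\Delta$-filtration of $M$ in which $\Vn j$ sits at the bottom, it agrees with that bottom factor). Ext-vanishing (applied to $0\to\Vn j\to\Tn j\to Q\to 0$ with $Q=\Tn j/\Vn j$ inheriting a $\Delta$-filtration) yields morphisms $\phi\colon\Tn j\to M$ and $\psi\colon M\to\Tn j$ each lifting the identity on $\Vn j$; then $\psi\circ\phi$ restricts to the identity on $\Vn j$, hence is an automorphism of $\Tn j$ by Fitting's lemma, so $\phi$ is split mono and $M\cong\Tn j$ by indecomposability of $M$. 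The main obstacle is part~(b): proving that the orbit descent in the $\Delta$-filtration stops after exactly one further step. A clean resolution uses translation functors (developed later in the paper); a direct combinatorial argument must track indecomposable summands of iterated tensor products, where multiplicities can exceed one (e.g.\ $\Tn l\otimes\Tn 1\cong\Tn{l+1}\oplus\Tn{l-1}\oplus\Tn{l-1}$), requiring care to isolate the correct summand.
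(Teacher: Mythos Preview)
Your treatment of (a) is correct and essentially matches the paper's (which just cites Proposition~\ref{prop-weyl} and Corollary~\ref{cor-weyl}). Your argument for (c) is also correct and in fact more self-contained than the paper's, which merely refers to standard sources such as \cite[Section~11.2]{hum} or \cite[Chapter~II, Section~E.6]{jarag} for the completeness statement; your Fitting-lemma argument is the usual one hiding behind those citations.

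The genuine problem lies in your argument for (b). You invoke the linkage principle to restrict the additional $\Delta$-factors of $\Tn{i'}$ to the orbit $W_l\cdot i'$. In this paper the linkage principle is Theorem~\ref{thm-link}, and its proof \emph{uses} Proposition~\ref{prop-tilt}. So your argument is circular as written. One might hope to extract a weak linkage statement directly from Proposition~\ref{prop-weyl} (for composition factors of Weyl modules), but what you actually need is linkage for the $\Delta$-factors of the unknown summand $\Tn{i'}\subset (\Tn 1)^{\otimes i'}$, and the tensor power itself has $\Delta$-factors $\Vn j$ for all $j\le i'$ of the right parity, not only those in the orbit; separating out the summand without already knowing its structure is exactly the difficulty. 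You acknowledge at the end that the ``orbit descent stops after one step'' is the obstacle and suggest translation functors---but those too are developed after this proposition.

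The paper's approach to (b) avoids linkage entirely by an explicit, constructive downward induction on $b$ (within each fixed $a$). For the base case $b=l-2$ one has $i'=(a+1)l$, and the paper takes $T=\Vn{(a+1)l-1}\otimes\Vn 1=\Tn{(a+1)l-1}\otimes\Tn 1$; by~\eqref{eq-tiltfact} this $T$ has exactly the two $\Delta$-factors $\Vn{(a+1)l}$ and $\Vn{(a+1)l-2}$, and it is indecomposable since otherwise $\Vn{(a+1)l}$ would be a tilting summand, forcing it to be simple, contradicting Proposition~\ref{prop-weyl}(b). For $b<l-2$ the paper shows that $\Tn{i'-1}\otimes\Tn 1\cong\Tn{i'-2}\oplus T$ with $T$ tilting, by producing split maps $\Tn{i'-2}\rightleftarrows\Tn{i'-1}\otimes\Tn 1$ whose composite is nonzero on the top weight space; counting $\Delta$-factors via~\eqref{eq-tiltfact} and induction then forces $T$ to have precisely $\Vn{i'}$ and $\Vn i$ as factors, and indecomposability follows as before. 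This is the missing concrete mechanism your sketch gestures at.
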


\begin{proof}
\textbf{(a).} Clear by Proposition~\ref{prop-weyl} and Corollary~\ref{cor-weyl}.

\textbf{(b).} First let $T=\Vn{(a+1)l-1}\otimes \Vn 1$.  
Then $T=\Tn{(a+1)l-1}\otimes \Tn 1$ by (a), and, by~\eqref{eq-tiltfact}, 
we see that $T$ has a $\Delta_q$-filtration giving us the short exact sequence
\[
0\longrightarrow \Vn{(a+1)l}\hooklongrightarrow T\twoheadlongrightarrow \Vn{(a+1)l-2}\longrightarrow 0.
\] 
By duality, $T$ has an analogous $\nabla_q$-filtration, showing that $T$ is tilting. 
It is also indecomposable since otherwise the summand $\Tn{(a+1)l}$ 
of $T$ would be equal to $\Vn{(a+1)l}$, forcing it to be simple, which contradicts 
(b) of Proposition~\ref{prop-weyl}.
This proves the claim in case $b=l-2$.

Now, if $b<l-2$, then consider the diagram
\[
\xy
\xymatrix{
0\ar[r] & \Vn{i^{\prime}}\ar@{^{(}->}[r] & \Vn{i^{\prime}-1}\otimes \Vn 1\ar@{->>}[r]\ar@{^{(}->}[d] & \Vn{i^{\prime}-2}\ar[r] & 0\\
 &  & \Tn{i^{\prime}-1}\otimes \Tn 1\ar@{->>}[d] &  & \\
0\ar[r] & \dVn{i^{\prime}-2}\ar@{^{(}->}[r] & \dVn{i^{\prime}-1}\otimes \dVn 1\ar@{->>}[r] & \dVn{i^{\prime}}\ar[r] & 0.\\
}
\endxy
\]
Here the bottom and top sequences come from~\eqref{eq-tiltfact}. 
A computation shows that these split. This 
gives an inclusion $\Vn{i^{\prime}-2}\hookrightarrow\Tn{i^{\prime}-1}\otimes \Tn 1$ 
which extends to $\Tn{i^{\prime}-2}$, and a surjection 
$\Tn{i^{\prime}-1}\otimes \Tn 1\twoheadrightarrow\dVn{i^{\prime}-2}$ 
factoring through $\Tn{i^{\prime}-2}$. The resulting endomorphism of $\Tn{i^{\prime}-2}$
is non-zero on the 
$i^{\prime}-2$ weight space (the reader unfamiliar 
with this notion might consider Remark~\ref{rem-tria} 
where the notation is $M_i$ for the $i$-th weight space) 
and thus, an isomorphism. 
Hence, 
$\Tn{i^{\prime}-1}\otimes \Tn 1\cong\Tn{i^{\prime}-2}\oplus T$ 
for some $\Uq$-module $T$, which 
is tilting by (a) of Lemma~\ref{lem-tiltcat}. 
Using an inductive argument, we have that $T$ has two $\Delta_q$- 
and $\nabla_q$-factors as in the statement of (b). As above, we see that 
$T=\Tn{i^{\prime}}$, which proves the claim.

\textbf{(c).} To see the first statement note that it suffices to show 
that $\Tn i\not\cong \Tn{i^{\prime}}$ for $i\neq i^{\prime}$. To this end, assume without 
loss of generality that $i^{\prime}>i$. 
By~\eqref{eq-tiltfact2}, we see 
that $\Vn{i^{\prime}}$ appears with multiplicity 
$1$ in $\Tn{i^{\prime}}$, but not in $\Tn i$. Thus,  
$\Tn i\not\cong \Tn{i^{\prime}}$.
That every finite-dimensional 
indecomposable tilting is of this 
form needs a little bit more treatment. 
But it is a standard argument that appears 
in various contexts and can be 
adopted for example 
from~\cite[Section~11.2]{hum} or, alternatively, 
from~\cite[Chapter~II, Section~E.6]{jarag}.
\end{proof}

\begin{ex}\label{ex-tilt1}
Let $l=3$ again. The $\Uq$-tilting module $\Tn 2$ can be 
visualized as
\begin{equation*}
\begin{gathered}
\xymatrix{
 \otimes \ar@{.}[r]\ar@{.}[d]& m_1 \ar@<2pt>[r]^{1}\ar@(ul,ur)|{q^{-1}}  &  m_0  \ar@<2pt>[l]^1\ar@(ul,ur)|{q^{+1}}  \\
 m_1\ar@<2pt>[d]^{1}\ar@(ul,dl)|{q^{-1}} & m_{11} \ar@<2pt>[r]^{1}\ar@<2pt>[d]^{1}\ar@(ul,dl)|{q^{-2}}  &  m_{01}  \ar@<2pt>[l]^1\ar@<2pt>[d]^{1}\ar@{.}[dl]|{+}\ar@(ur,dr)|{q^{0}}  \\
 m_0\ar@<2pt>[u]^{1}\ar@(ul,dl)|{q^{+1}} & m_{10} \ar@<2pt>[r]^{1}\ar@<2pt>[u]^{1}\ar@(ul,dl)|{q^{0}}  &  m_{00}  \ar@<2pt>[l]^1\ar@<2pt>[u]^{1}\ar@(ur,dr)|{q^{+2}}  \\
}
\end{gathered},
\end{equation*}
where we use $m_{ij}=m_i\otimes m_j$. 
By construction, $\Tn 2$ contains $m_{00}$ and it therefore has to be the span of $\{m_{00},q^{-1}m_{10}+m_{01},m_{11}\}$ as indicated above (which is isomorphic to $\Ln 2$).
\end{ex}

\begin{rem}\label{rem-proj}
One can 
show that the category of all 
finite-dimensional projective $\Uq$-modules
is a full 
subcategory of $\T$. 
The same is true for finite-dimensional 
injective $\Uq$-modules: $\T$ has enough injectives and projectives. 
In fact, $\Tn{i}$ is injective and projective 
for all $i\geq l-1$. Hence, 
$\{\Tn{i}\mid i\geq l-1\}$ is a complete set of 
pairwise non-isomorphic, projective-injective
indecomposable, finite-dimensional $\Uq$-modules. 
See for example~\cite[Section~5]{an1}
\end{rem}

\subsection{The linkage principle and blocks}\label{sub-blocks}
Consider the alcove $\mathcal A_0$ 
and its closure $\bar{\mathcal A}_0$ given by
\[
\mathcal A_0=\{k\in\bZ\mid -1<k<l-1\},\quad\quad\bar{\mathcal A}_0=\{k\in\bZ\mid -1\leq k \leq l-1\}.
\]
We call $\mathcal A_0$ the \textit{fundamental alcove}. 
Any other alcove in $\bZ_{\geq -1}$ is 
clearly of the form $\mathcal A_0+il$ for 
some $i\in\bN$. Moreover, we call 
$-1,l-1\in\bar{\mathcal A}_0-\mathcal A_0$ \textit{walls of $\mathcal A_0$}.

The \textit{affine Weyl group} is 
$W_l=\langle s_{r},t_{r}\mid r\in\bZ\rangle$ where $s_{r}$ and $t_r$ act on $\bZ$ via
\[
s_{r}.k=-k-2+(4r+2)l,\quad\quad t_{r}.k=-k-2+4rl,\quad k\in\bZ
\]
Note that $W_l$ is isomorphic 
to the infinite dihedral group $D_{\infty}$: 
we have $W_l\cong \bZ/2\bZ\ltimes l\bZ\cong D_{\infty}$ 
(with $\bZ/2\bZ$ being the Weyl group of $\mathfrak{sl}_2$) and we denote 
the two generators of $D_{\infty}$ by $s$ and $t$ (with the evident association). 
They act ``alcove-wise'' as indicated in~\eqref{eq-alcove}.

We denote by $W_l.x$ the orbits 
in $\bZ_{\geq -1}$ under the 
action of the affine Weyl 
group acting on $x\in\bZ_{\geq -1}$. 
For $l=3$ this can be visualized as
\begin{equation}\label{eq-alcove}
\xy
(0,0)*{\includegraphics[scale=0.75]{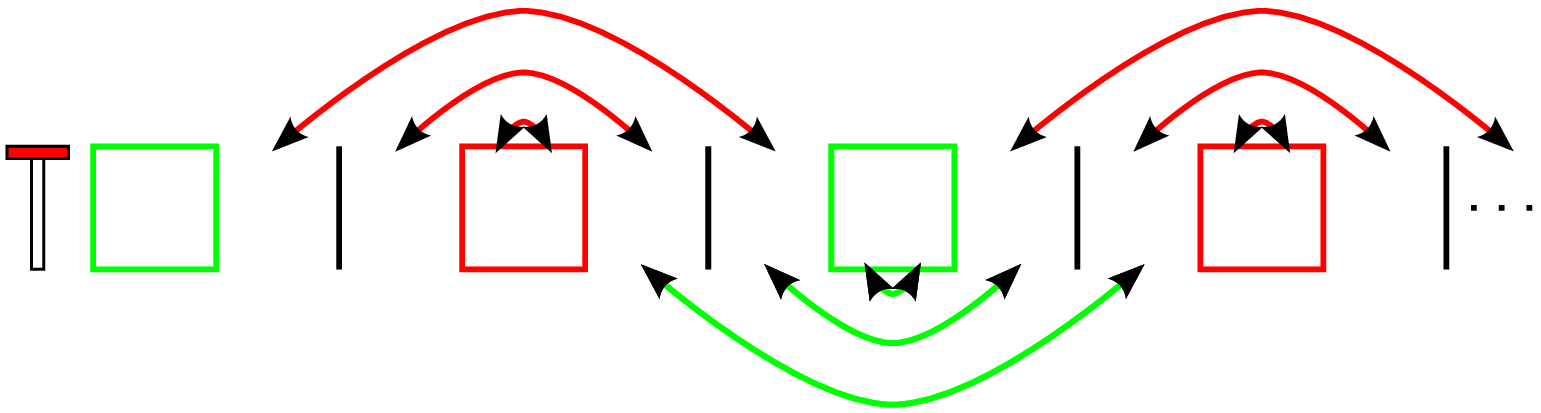}};
(-47,0)*{-1};
(-37.5,0)*{0};
(-28,0)*{1};
(-18.5,0)*{2};
(-9.75,0)*{3};
(0,0)*{4};
(9.5,0)*{5};
(18.5,0)*{6};
(28,0)*{7};
(37.5,0)*{8};
(47,0)*{9};
(-18.5,-7.5)*{s\text{-wall}};
(37.5,-7.5)*{s\text{-wall}};
(9.5,7.5)*{t\text{-wall}};
(-18.5,12)*{r=0};
(37.5,12)*{r=1};
(9.5,-12)*{r=1};
(-50,-7.5)*{\text{dead-end}};
\endxy
\end{equation}
with red (or top) action 
by $s$ and green (or bottom) 
action by $t$.

We say $i\in\bN$ is \textit{linked} to $i^{\prime}\in\bN$ 
if there exists $w\in W_l$ such that $w.i=i^{\prime}$. We, by 
convention, set all $\Uq$-modules indexed by negative numbers to be zero.

The following is known as the 
\textit{$\mathfrak{sl}_2$-linkage principle}. 
The more 
general (and highly non-trivial) 
statement can be found in~\cite[Corollaries~4.4 and~4.6]{an2}.

\begin{thm}(\textbf{The linkage principle})\label{thm-link}
All composition factors of $\Tn i$ have maximal weights $i^{\prime}$ linked to 
$i$. Moreover, $\Tn i$ is a simple $\Uq$-module if $i\in W_l.(\bar{\mathcal A}_0-\mathcal{A}_0)$.

In addition, if $i$ is linked to an element of $\mathcal{A}_0$, then 
$\Tn i$ is a simple $\Uq$-module iff $i\in\mathcal{A}_0$.\makeqed
\end{thm}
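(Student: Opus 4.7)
The plan is to extract the theorem directly from Propositions~\ref{prop-weyl} and~\ref{prop-tilt}, after identifying the $W_l$-orbits on $\bZ_{\geq -1}$ explicitly.

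First, I would read off the fixed points of the generators from the formulas $s_r.k = -k-2+(4r+2)l$ and $t_r.k = -k-2+4rl$: they are the integers $(2r+1)l-1$ and $2rl-1$ respectively, and together they comprise exactly the walls $\{kl-1 : k\geq 0\}$ of the alcoves in $\bZ_{\geq -1}$. Consequently
\[
W_l.(\bar{\mathcal A}_0-\mathcal A_0) = \{-1\}\cup\{i\geq 0 : i\equiv -1\pmod l\},
\]
and $W_l.\mathcal A_0$ is the complement in $\bN$.

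For the simplicity assertions I would invoke Proposition~\ref{prop-tilt}. If $i<l$ or $i\equiv -1\pmod l$, part~(a) gives $\Vn i\cong\Ln i\cong\Tn i\cong\dVn i$, so $\Tn i$ is simple; this simultaneously handles $i\in W_l.(\bar{\mathcal A}_0-\mathcal A_0)$ and the ``if'' direction in the case $i\in\mathcal A_0$. Conversely, if $i\in W_l.\mathcal A_0\setminus\mathcal A_0$, then $i\geq l$ and $i\not\equiv -1\pmod l$, so one may uniquely write $i=(a+2)l-b-2$ with $a\geq 0$ and $0\leq b\leq l-2$. Proposition~\ref{prop-tilt}(b) then presents $\Tn i$ as a non-split extension of $\Vn{al+b}$ by $\Vn i$, with $al+b<i$; hence $\Ln{al+b}\neq \Ln i$ is a composition factor of $\Tn i$ and $\Tn i$ is not simple.

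For the composition-factor statement I would trace through the $\Delta$-filtration of $\Tn i$. In the non-simple case above, the two layers are $\Vn i$ and $\Vn{al+b}$, and the identity
\[
i + (al+b) = (a+2)l-b-2 + al+b = 2\bigl((a+1)l-1\bigr)
\]
shows that $al+b$ is the reflection of $i$ through the wall $(a+1)l-1$. That wall is fixed by exactly one of the generators $s_r,t_r$, determined by the parity of $a+1$, so $al+b\in W_l.i$. Proposition~\ref{prop-weyl} then contributes at most one further composition factor to each of $\Vn i$ and $\Vn{al+b}$, and in both cases the new factor is again obtained by reflection through a wall of the form $kl-1$; these weights also lie in $W_l.i$. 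Assembling the pieces, every composition factor $\Ln{i'}$ of $\Tn i$ satisfies $i'\in W_l.i$.

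The main obstacle is not structural but combinatorial: one has to match each reflection that appears in the statements of Propositions~\ref{prop-weyl} and~\ref{prop-tilt} with a specific generator $s_r$ or $t_r$ of $W_l$. The midpoint identity above renders this bookkeeping essentially mechanical, and the rest is routine case analysis modulo $l$.
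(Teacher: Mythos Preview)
Your proof is correct and follows essentially the same approach as the paper, which simply invokes Propositions~\ref{prop-weyl} and~\ref{prop-tilt} and Corollary~\ref{cor-weyl}. You have spelled out in detail the combinatorial bookkeeping that the paper leaves implicit, namely the explicit identification of the $W_l$-orbits via the midpoint/reflection computation and the tracing of composition factors through the $\Delta$-filtration of $\Tn i$.
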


\begin{proof}
Use Propositions~\ref{prop-weyl} and~\ref{prop-tilt} and Corollary~\ref{cor-weyl}.
\end{proof}

\begin{nota}\label{no-link}
We \textit{always} use $\lambda$ for 
elements in the fundamental alcove 
$\mathcal{A}_0$ and $\mu$ for elements 
on the walls of the fundamental alcove 
$\bar{\mathcal A}_0-\mathcal A_0$. Moreover, set 
\[
\mathcal A_i=\{i^{\prime}\mid il-1<i^{\prime}<(i+1)l-1\}=\mathcal A_0+il,\quad
\bar{\mathcal A}_i=\{i^{\prime}\mid il-1\leq i^{\prime}\leq (i+1)l-1\}=\bar{\mathcal A}_0+il,
\] 
for each $i\in\bN$. Suppose $\lambda\in\mathcal A_0$ 
and $\mu\in\bar{\mathcal A}_0-\mathcal A_0$. We write 
$\lambda_i$ and $\mu_i$ for the unique elements in 
$\mathcal A_i\cap W_l.\lambda$ and $\bar{\mathcal A}_i\cap W_l.\mu$. 
Note that, if $\mu=\mu_0=-1$, then $\mu_1=\mu_2$, $\mu_3=\mu_4$ and so 
forth, while for $\mu=\mu_0=l-1$ 
we have $\mu_0=\mu_1$, $\mu_2=\mu_3$ etc. See also~\eqref{eq-alcove}.
\end{nota}

We define the tilting generator $\Tn{\bbin}$ and 
its cut-off's $\Tn{\leq \bbm}$ that will play a 
crucial role in Section~\ref{sec-quiver}. By 
abuse of notation, we denote them in the same 
way for all $\lambda$ and $\mu$ and hope that 
it is clear from the context which ones we consider.

\begin{defn}(\textbf{The tilting generator})\label{defn-tiltgen}
Let $m\in\bN\cup\{\infty\}$. We call the $\Uq$-modules given by
\[
\Tn{\bbin}=\bigoplus_{i=0}^{\infty}\Tn{\lambda_i}\quad\text{and}\quad\Tn{\leq \bbm}=\bigoplus_{i=0}^m\Tn{\lambda_i}
\] 
the \textit{tilting generator} and its \textit{$m$-th cut-off}, respectively. 
Likewise for $\mu$'s instead of $\lambda$'s.
\end{defn}

Denote by $\T_{\lambda}$ for $\lambda\in\mathcal A_0$ 
the $\lambda$ \textit{block of} $\T$: all indecomposable summands of 
objects of $\T_{\lambda}$ should be of 
the form $\Tn i$ for $i\in W_l.\lambda$. Note that 
blocks are in general not indecomposable as categories.
We, using a similar notation on walls, note the following.

\begin{lem}\label{lem-block}
We have
\[
\T=\bigoplus_{\lambda\in\mathcal A_0}\T_{\lambda}\oplus \T_{-1}\oplus \T_{l-1}
\]
with semisimple categories 
$\T_{-1}$ and $\T_{l-1}$ equivalent 
to the corresponding $\Uv$-module categories. 
We have for all $\lambda\in\mathcal A_0$
(similar for $\T_{-1}$ and $\T_{l-1}$):
\begin{itemize}
\item[(a)] The categories $\T_{\lambda}$ 
are additive, closed under finite direct sums and under duals.
\item[(b)] The categories $\T_{\lambda}$ 
are full Krull-Schmidt subcategories of $\T$. 
Moreover, every indecomposable tilting 
in $\T_{\lambda}$ is of the form $\Tn i$ for some $i\in\bN$ and $i\in W_l.\lambda$.\makeqed
\end{itemize}
\end{lem}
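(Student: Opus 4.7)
The plan is to reduce everything to the linkage principle (Theorem~\ref{thm-link}), the Krull-Schmidt property of $\T$ (Lemma~\ref{lem-tiltcat}(b)) and the classification of indecomposable tiltings (Proposition~\ref{prop-tilt}(c)). The central ingredient is a Hom-vanishing between indecomposables lying in different $W_l$-orbits, from which the block decomposition, properties (a), (b) and semisimplicity on the walls all follow.

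First, I would establish the key vanishing: $\Hom_{\Uq}(\Tn i,\Tn j)=0$ whenever $W_l.i\neq W_l.j$. Any non-zero $f\colon\Tn i\to\Tn j$ would produce a non-zero $\Image(f)\subseteq\Tn j$ that is simultaneously a quotient of $\Tn i$, so any simple composition factor $\Ln k$ of $\Image(f)$ would occur as a composition factor of both $\Tn i$ and $\Tn j$. The linkage principle then forces $k\in W_l.i\cap W_l.j$, contradicting the orbit hypothesis. Granted this, I would define $\T_{\lambda}$ for $\lambda\in\mathcal A_0$ (and analogously $\T_{-1}$, $\T_{l-1}$) as the full subcategory of $\T$ whose objects are isomorphic to direct sums of $\Tn i$ with $i\in W_l.\lambda$. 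For an arbitrary $M\in\Ob(\T)$, Krull-Schmidt yields a finite decomposition $M\cong\bigoplus_j\Tn{i_j}$; grouping summands according to the $W_l$-orbit of each $i_j$, and splitting morphisms into block-matrices using the Hom-vanishing, gives the categorical decomposition $\T=\bigoplus_{\lambda\in\mathcal A_0}\T_{\lambda}\oplus\T_{-1}\oplus\T_{l-1}$.

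Properties (a) and (b) then follow almost formally. Additivity and closure under finite direct sums are inherited from Proposition~\ref{prop-basicp}(a). Closure under duals uses that $(\Tn i)^*$ is again an indecomposable tilting and shares its composition factors with $\Tn i$ (since each $\Ln k$ is self-dual), which forces $(\Tn i)^*\cong\Tn i$ by Proposition~\ref{prop-tilt}(c); in particular it stays in the same block. The Krull-Schmidt property of each $\T_{\lambda}$ is inherited from $\T$ because $\T_{\lambda}$ is a full subcategory closed under taking indecomposable summands, and the classification of its indecomposables as $\{\Tn i\mid i\in W_l.\lambda\}$ is built into the definition together with Proposition~\ref{prop-tilt}(c). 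Semisimplicity of $\T_{-1}$ and $\T_{l-1}$ is a direct consequence of the second part of Theorem~\ref{thm-link}: every $\Tn i$ with $i\in W_l.(-1)$ or $i\in W_l.(l-1)$ already equals $\Ln i$, so every object in these blocks is a direct sum of simples, and the equivalence with the corresponding (semisimple) $\Uv$-module category reduces to matching the countable parametrising sets of simples.

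The main obstacle is the Hom-vanishing step: one has to invoke the correct form of the linkage principle, controlling \emph{all} composition factors rather than only the highest weight occurring in $\Tn i$. This is precisely what Theorem~\ref{thm-link} supplies; after that, every remaining assertion is a straightforward unwinding of Krull-Schmidt together with Propositions~\ref{prop-basicp} and~\ref{prop-tilt}.
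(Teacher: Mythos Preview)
Your proposal is correct and follows essentially the same route as the paper: the paper's proof is a one-sentence appeal to Theorem~\ref{thm-link} and Lemma~\ref{lem-tiltcat}, together with Proposition~\ref{prop-tilt}(a) for the equivalence on the walls, and you have simply made explicit the Hom-vanishing and block-matrix argument that these citations leave implicit. The only minor difference is that for the semisimplicity and $\Uv$-equivalence on the walls the paper points to Proposition~\ref{prop-tilt}(a) directly, whereas you go via the second part of Theorem~\ref{thm-link} and then match parametrising sets; both are fine.
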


\begin{proof}
This is now only a 
combination of Theorem~\ref{thm-link} and 
Lemma~\ref{lem-tiltcat}. Note that 
$\T_{-1}$ and $\T_{l-1}$ are equivalent 
to categories of $\Uv$-modules by Proposition~\ref{prop-tilt} part (a).
\end{proof}

\begin{nota}\label{no-linkb}
If it is clear from the context 
which $\lambda$ we consider, then we, 
by abuse of notation, write 
$\Tn{\lambda_i}=\Tn{\bbi}$ for short. Similarly for simple and (dual) 
Weyl modules, and on walls.
\end{nota}

\begin{ex}\label{ex-block}
Take $l=3$ again. Then we only have to consider
$\lambda=0,1$ and $\mu=-1,2$.

The indecomposable tiltings in $\T_0$ are 
$\Tn{i}$ for $i=0,4,6,\dots$ as a look 
at~\eqref{eq-alcove} indicates. For $\T_1$ 
they are $\Tn{i}$ for $i=1,3,7,\dots$. The 
two blocks $\T_{-1}$ and $\T_2$ are semisimple 
and consist of direct sums of $\Ln i\cong\Tn{i}$ 
for $i=5,11,\dots$ and for $i=2,8,\dots$ respectively.
\end{ex}

\begin{rem}\label{rem-tria}
As in the usual case for an 
indeterminate $v$, we have a 
triangular decomposition
$\Uq=\Uq^-\Uq^0\Uq^+$, see for example~\cite[Section~1]{apw}. 
There is a character $\chi_i\colon \Uq^0\to\bC$ for any $i\in\bN$ 
(how the character is 
determined by $i$ can be found in~\cite[Lemma~1.1]{apw}).
We note that the Weyl module $\Vn{i}$ 
has the following \textit{universal property}: 
for any $\Uq$-module $M$ there is an isomorphism of vector spaces
\[
\Hom_{\Uq}(\Vn{i},M)\cong\{m\in M_i\mid E^{(j)}m=0\text{ for all }j\in\bN\},
\]
where $M_i=\{m\in M\mid um=\chi_i(u)m,\; u\in\Uq^0\}$ is the $i$ weight space 
of $M$.
This together with Definition~\ref{defn-weyl}, 
Proposition~\ref{prop-weyl} and Corollary~\ref{cor-weyl} imply that
\[
\Hom_{\Uq}(\Vn{i},\Ln{j})\cong\Hom_{\Uq}(\Vn{i},\dVn{j})\cong\delta_{ij}\bC
\]
for all $i,j\in\bN$. In particular,
\begin{equation}\label{eq-SchurLemma}
\Hom_{\Uq}(\Ln{i},\Ln{j})\cong\delta_{ij}\bC,
\end{equation}
i.e. Schur's Lemma holds in our set-up (although $\bC$ is not 
algebraically closed). Note that this is true for general quantum groups over 
arbitrary fields, see~\cite[Corollary~7.4]{apw}.
In addition\footnote{We point out that this fails in general. An explicit counterexample can be found for example in~\cite[Section~5]{ak}.}, we get
\[
\Hom_{\Uq}(\dVn{i},\Vn{i})\cong\bC.
\]
for all $i\in\bN$ (by using 
Proposition~\ref{prop-weyl} and Corollary~\ref{cor-weyl}).
\end{rem}

We call the following maps 
\textit{up} $u^{\lambda}_i$, 
\textit{down} $d^{\lambda}_i$ and 
\textit{loop} $\varepsilon^{\lambda}_i$ 
(\textit{or simply} $u_i,d_i$ and $\varepsilon_i$) respectively.

\begin{prop}\label{prop-link}
There exist up to scalars unique 
$\Uq$-intertwiners $u^{\lambda}_i,d^{\lambda}_i$ with
\begin{gather*}
u^{\lambda}_i\colon \Tn{\lambda_i}\to\Tn{\lambda_{i+1}},\; i=0,1,\dots,\quad\quad d^{\lambda}_i\colon \Tn{\lambda_i}\to\Tn{\lambda_{i-1}},\; i=1,2,\dots,
\\
u^{\lambda}_{i}\circ u^{\lambda}_{i-1}=0=d^{\lambda}_i\circ d^{\lambda}_{i+1},\; i=1,2,\dots,\quad\quad d^{\lambda}_{i+1}\circ u^{\lambda}_i=\varepsilon_i^{\lambda}=u^{\lambda}_{i-1}\circ d^{\lambda}_{i},\; i=1,2,\dots
\\
\varepsilon_i^{\lambda}\circ\varepsilon_i^{\lambda}=0,\; i=1,2,\dots
,\quad\quad d^{\lambda}_1\circ u^{\lambda}_0=0,
\end{gather*}
for $\lambda\in\mathcal A_0$. 
The equation on the bottom right is called 
the \textit{dead-end relation}.\makeqed
\end{prop}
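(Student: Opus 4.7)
The plan is to read the maps and relations off the detailed submodule structure of $\Tn{\lambda_i}$ coming from Proposition~\ref{prop-tilt}(b), combined with the $\Hom$ calculations of Remark~\ref{rem-tria}. More precisely, the short exact sequences
\[
0\to\Vn{\lambda_i}\to\Tn{\lambda_i}\to\Vn{\lambda_{i-1}}\to 0,\quad 0\to\dVn{\lambda_{i-1}}\to\Tn{\lambda_i}\to\dVn{\lambda_i}\to 0,
\]
together with Proposition~\ref{prop-weyl} and Corollary~\ref{cor-weyl}, show that $\Tn{\lambda_0}=\Ln{\lambda_0}$ is simple, that $\Tn{\lambda_1}$ is uniserial with composition factors $\Ln{\lambda_0},\Ln{\lambda_1},\Ln{\lambda_0}$ from top to bottom, and that for $i\geq 2$ the module $\Tn{\lambda_i}$ has simple head and simple socle both isomorphic to $\Ln{\lambda_{i-1}}$ with middle Loewy layer $\Ln{\lambda_{i-2}}\oplus\Ln{\lambda_i}$.

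Next I would define the morphisms as
\[
u_i^{\lambda}\colon \Tn{\lambda_i}\twoheadlongrightarrow\dVn{\lambda_i}\hooklongrightarrow\Tn{\lambda_{i+1}},\quad d_i^{\lambda}\colon \Tn{\lambda_i}\twoheadlongrightarrow\Vn{\lambda_{i-1}}\hooklongrightarrow\Tn{\lambda_{i-1}},
\]
using the two filtrations on source and target. Uniqueness up to scalar reduces to $\dim\Hom_{\Uq}(\Tn{\lambda_i},\Tn{\lambda_{i\pm 1}})=1$, which follows from a standard diagram chase with the two filtrations together with $\Hom_{\Uq}(\Vn{\nu},\dVn{\nu})\cong\bC$ from Remark~\ref{rem-tria}: the $\Delta_q$-factors of $\Tn{\lambda_i}$ and the $\nabla_q$-factors of $\Tn{\lambda_{i\pm 1}}$ overlap in exactly one weight.

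For the relations, the image of $u_{i-1}^{\lambda}$ is the sub-$\nabla_q$ of $\Tn{\lambda_i}$, which is precisely the kernel of the projection used to define $u_i^{\lambda}$, so $u_i^{\lambda}\circ u_{i-1}^{\lambda}=0$; the equality $d_i^{\lambda}\circ d_{i+1}^{\lambda}=0$ is dual. The same count gives $\dim\End_{\Uq}(\Tn{\lambda_i})=2$ for $i\geq 1$, and indecomposability of $\Tn{\lambda_i}$ (together with Schur from Remark~\ref{rem-tria}) forces this endomorphism ring to be isomorphic to $\bC[x]/(x^2)$; pick $\varepsilon_i^{\lambda}$ to be any generator of its radical. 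A direct check using the Loewy picture shows that $d_{i+1}^{\lambda}\circ u_i^{\lambda}$ and $u_{i-1}^{\lambda}\circ d_i^{\lambda}$ are both nonzero with image equal to the simple socle $\Ln{\lambda_{i-1}}$ of $\Tn{\lambda_i}$, hence both lie in the one-dimensional radical and are nonzero scalar multiples of $\varepsilon_i^{\lambda}$; rescaling the $u_i^{\lambda}$'s and $d_i^{\lambda}$'s inductively in $i$ enforces $d_{i+1}^{\lambda}\circ u_i^{\lambda}=u_{i-1}^{\lambda}\circ d_i^{\lambda}=\varepsilon_i^{\lambda}$ on the nose, and $(\varepsilon_i^{\lambda})^2=0$ is then automatic from the structure of $\bC[x]/(x^2)$. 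Finally, $d_1^{\lambda}\circ u_0^{\lambda}=0$ is immediate: $u_0^{\lambda}$ identifies $\Ln{\lambda_0}$ with the socle of $\Tn{\lambda_1}$, whereas $d_1^{\lambda}$ projects onto its head and thus annihilates the entire radical.

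The main bookkeeping point is the simultaneous normalization of all the $u_i^{\lambda}$'s and $d_i^{\lambda}$'s so that $d_{i+1}^{\lambda}\circ u_i^{\lambda}=u_{i-1}^{\lambda}\circ d_i^{\lambda}$ holds for every $i\geq 1$; I would handle this inductively, fixing the $\varepsilon_i^{\lambda}$'s first and then adjusting scalars on the $u_i^{\lambda}$'s and $d_i^{\lambda}$'s one index at a time.
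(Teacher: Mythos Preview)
Your proposal is correct and follows essentially the same approach as the paper: both define $u_i^\lambda$ and $d_i^\lambda$ as the composites $\Tn{\lambda_i}\twoheadrightarrow\dVn{\lambda_i}\hookrightarrow\Tn{\lambda_{i+1}}$ and $\Tn{\lambda_i}\twoheadrightarrow\Vn{\lambda_{i-1}}\hookrightarrow\Tn{\lambda_{i-1}}$ coming from Proposition~\ref{prop-tilt}(b), and both extract uniqueness and the relations from the filtration structure together with the $\Hom$-calculations of Remark~\ref{rem-tria}. Your presentation is a bit more explicit in two places: you spell out the Loewy structure of $\Tn{\lambda_i}$ and you identify $\End_{\Uq}(\Tn{\lambda_i})\cong\bC[x]/(x^2)$ directly, whereas the paper argues via a diagram tracking which simple $\Ln{\lambda_{i-1}}$ the composite factors through; these are the same content phrased differently.
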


\begin{proof}
First let us assume that we are not in case of the 
dead-end relation (in fact, we leave it to 
the reader to verify this special case), 
that is, the indices $i$ are at least $1$. 
We want to use Proposition~\ref{prop-weyl} 
and Corollary~\ref{cor-weyl}. Moreover, we 
note that the $\Uq$-morphisms below will 
only be unique up to scalars and their 
precise form does not matter. We only 
assume that they are non-zero. In fact, 
by abuse of notation, we always use the 
same symbols, but the maps are of course different in general.

We consider
\[
\xymatrix{
    & \Ln{\lambda_i}\phantom{.} \ar@{^{(}->}[r] & \dVn{\lambda_i}\phantom{.} \ar@{->>}[r]^a & \Ln{\lambda_{i-1}}
                \ar@{->} `r/8pt[d] `/10pt[l] `^dl[ll]|b `^r/3pt[dll] [dll] \\
             & \Ln{\lambda_{i-1}} \ar@{^{(}->}[r]^c & \Vn{\lambda_{i}} \ar@{->>}[r]^d & \Ln{\lambda_{i}}
                \ar@{->} `r/8pt[d] `/10pt[l] `^dl[ll]|e `^r/3pt[dll] [dll] \\
             & \Ln{\lambda_{i}} \ar@{^{(}->}[r]^f & \dVn{\lambda_{i}} \ar@{->>}[r] & \Ln{\lambda_{i-1}}.
    }
\]
Hence, we see that $\Hom_{\Uq}(\dVn{\lambda_i},\Vn{\lambda_i})$ is 
one dimensional: as noted in Remark~\ref{rem-tria}, the 
hom-spaces between Weyl and dual Weyl modules are at 
most $1$-dimensional. The composite $cba$ ensures 
that the dimension is exactly $1$ since it spans the hom-space.

Likewise for $\Hom_{\Uq}(\Vn{\lambda_i},\dVn{\lambda_i})$ by 
using $fed$. Note that the composite $fedcba=0$ due to 
the fact that the middle row is exact. The same holds when the roles 
of $\Vn{\lambda_i}$ and $\dVn{\lambda_i}$ are exchanged. 
Moreover, note that the morphism from $\dVn{\lambda_i}$ to 
$\Vn{\lambda_i}$ uses only $\Ln{\lambda_{i-1}}$ while the 
other way around uses only $\Ln{\lambda_{i}}$. Thus, up 
to scalars, morphisms from $\dVn{\lambda_i}$ to $\Vn{\lambda_i}$ are 
the ``same'' as morphisms from $\Vn{\lambda_{i-1}}$ to $\dVn{\lambda_{i-1}}$.
We can now construct, by using Proposition~\ref{prop-tilt}, 
up $u_i^{\lambda}$ and 
down $d_i^{\lambda}$ 
as a composition of the following maps.
\[
\xymatrix{
    & \Vn{\lambda_i}\phantom{.} \ar@{^{(}->}[r] & \Tn{\lambda_i}\phantom{.} \ar@{->>}[r]^a & \Vn{\lambda_{i-1}}
                \ar@{->} `r/8pt[d] `/10pt[l] `^dl[ll]|b `^r/3pt[dll] [dll] \\
             & \Vn{\lambda_{i-1}} \ar@{^{(}->}[r]^c & \Tn{\lambda_{i-1}} \ar@{->>}[r]^d & \Vn{\lambda_{i-2}}
             \ar@{->} `r/8pt[d] `/10pt[l] `^dl[ll]|e `^r/3pt[dll] [dll] \\
             & \Vn{\lambda_{i-2}} \ar@{^{(}->}[r]^f & \Tn{\lambda_{i-2}} \ar@{->>}[r] & \Vn{\lambda_{i-3}}.
    }
\]
We define $d_i^{\lambda}$ to be the 
composite $cba$. Similar for $u_i^{\lambda}$, 
but using the right-hand side of part (b) of 
Proposition~\ref{prop-tilt}. By the same 
reasoning as above we see that they are unique up to scalars.

We have to check the relations between the 
various up $u_i^{\lambda}$ and down $d_i^{\lambda}$ maps 
now. To see that $u^{\lambda}_{i}\circ u^{\lambda}_{i-1}=0$ 
and $d^{\lambda}_i\circ d^{\lambda}_{i+1}=0$ we can simply 
use the second diagram above and its dual 
counterpart and the fact that the rows are exact. Now consider
\[
\xymatrix{
\Tn{\lambda_{i-1}}\ar@{<->}[d]|{\mathrm{id}} & \Vn{\lambda_{i-1}}\ar@{_{(}->}[l]\ar[d]|{\Ln{\lambda_{i-1}}\to\Ln{\lambda_{i-1}}} & \Tn{\lambda_{i}}\ar@{<->}[d]|{\mathrm{id}}\ar@{->>}[r]\ar@{->>}[l] & \dVn{\lambda_{i}}\ar@{^{(}->}[r]\ar[d]|{\Ln{\lambda_{i-1}}\to\Ln{\lambda_{i-1}}} & \Tn{\lambda_{i+1}}\ar@{<->}[d]|{\mathrm{id}}\\
\Tn{\lambda_{i-1}}\ar@{->>}[r] & \dVn{\lambda_{i-1}}\ar@{^{(}->}[r] & \Tn{\lambda_{i}} & \Vn{\lambda_{i}}\ar@{_{(}->}[l] & \Tn{\lambda_{i+1}}\ar@{->>}[l].
}
\]
Combining everything, we see that 
(up to scalars) $d^{\lambda}_{i+1}\circ u^{\lambda}_i=\varepsilon_i^{\lambda}=u^{\lambda}_{i-1}\circ d^{\lambda}_{i}\neq 0$. Moreover, by 
the reasoning above, the $\varepsilon_i^{\lambda}$ 
is an up to scalars unique non-zero 
$\Uq$-morphism $\Tn{\lambda_i}$ to $\Tn{\lambda_i}$ that squares to zero.
\end{proof}

\newpage

\begin{cor}\label{cor-homspaces}
Let $i,i^{\prime}\in\bN$. Then we have the following.
\begin{itemize}
\item[(a)] Outside of walls:
\[
\Hom_{\Uq}(\Tn{\lambda_i},\Tn{\lambda_{i^{\prime}}})\cong\begin{cases}\bC[\varepsilon],  & \text{if }|i-i^{\prime}|=0\text{ and }i=i^{\prime}\neq 0,\\
  \bC,  & \text{if }|i-i^{\prime}|=1 \text{ or }i=i^{\prime}=0,\\
  0,  & \text{if }|i-i^{\prime}|>1,\end{cases}
\]
where $\bC[\varepsilon]\cong \bC[X]/(X^2)$ denotes the $\bC$-algebra of dual numbers.
\item[(b)] On walls:
\[
\Hom_{\Uq}(\Tn{\mu_i},\Tn{\mu_{i^{\prime}}})\cong\begin{cases}\bC,  & \text{if }|i-i^{\prime}|=0,\\
  0,  & \text{if }|i-i^{\prime}|>0,\end{cases}
\]
where $\bC$ is the trivial $\Uq$-module.\makeqed
\end{itemize}
\end{cor}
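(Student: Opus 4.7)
The plan is to sandwich the dimension of each hom-space between a lower bound produced by the explicit intertwiners of Proposition~\ref{prop-link} and an upper bound obtained from a filtration counting argument using Proposition~\ref{prop-tilt}, and to then check the algebra structure separately.

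\textbf{Part (b)} is immediate: since $\mu\in\bar{\mathcal A}_0-\mathcal A_0$, every $\mu_i$ lies in $W_l.(\bar{\mathcal A}_0-\mathcal A_0)$, so by Theorem~\ref{thm-link} each $\Tn{\mu_i}$ is simple. Schur's Lemma in the form \eqref{eq-SchurLemma} gives the claim.

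\textbf{Part (a).} The upper bound rests on the inequality
\[
\dim \Hom_{\Uq}(T,T')\;\leq\;\sum_{j\in\bN}(T:\Vn{j})\,(T':\dVn{j}),
\]
valid for any $T$ with a $\Delta$-filtration and $T'$ with a $\nabla$-filtration. I would prove this by induction on the length of the $\Delta$-filtration of $T$: the left-exactness of $\Hom_{\Uq}(-,T')$ applied to a short exact sequence $0\to F_{k+1}\to F_k\to \Vn{j}\to 0$ yields $\dim\Hom_{\Uq}(F_k,T')\leq \dim\Hom_{\Uq}(\Vn{j},T')+\dim\Hom_{\Uq}(F_{k+1},T')$. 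A parallel induction along the $\nabla$-filtration of $T'$, using left-exactness of $\Hom_{\Uq}(\Vn{j},-)$ together with the identity $\dim\Hom_{\Uq}(\Vn{j},\dVn{k})=\delta_{jk}$ from Remark~\ref{rem-tria}, delivers the bound.

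Plugging in the filtration factors of $\Tn{\lambda_i}$ from Proposition~\ref{prop-tilt}, namely the $\Delta$-factors $\Vn{\lambda_{i-1}},\Vn{\lambda_i}$ (only $\Vn{\lambda_0}$ if $i=0$) and the analogous $\nabla$-factors, and separating the cases by $|i-i'|$, produces the upper bounds $2$ when $i=i'\geq 1$, $1$ when $|i-i'|=1$ or $i=i'=0$, and $0$ when $|i-i'|\geq 2$. Proposition~\ref{prop-link} then supplies matching linearly independent intertwiners in each case: $\mathrm{id}$ and $\varepsilon_i^{\lambda}$ when $i=i'\geq 1$ (independence follows since $\varepsilon_i^{\lambda}$ factors nontrivially through a Weyl subquotient while $\mathrm{id}$ does not), $u_i^{\lambda}$ or $d_i^{\lambda}$ when $|i-i'|=1$, and the identity alone when $i=i'=0$.

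For the algebra structure in the endomorphism case $i=i'\geq 1$, the relation $\varepsilon_i^{\lambda}\circ\varepsilon_i^{\lambda}=0$ from Proposition~\ref{prop-link} identifies $\End_{\Uq}(\Tn{\lambda_i})$ with $\bC[X]/(X^2)\cong\bC[\varepsilon]$. The main obstacle is the upper bound inequality, which must be carried out without appealing to $\mathrm{Ext}^1$-vanishing (not developed here); the iterated left-exactness argument above sidesteps this cleanly.
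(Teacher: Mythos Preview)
Your proof is correct and aligns with the paper's approach. Part (b) is handled identically via Schur's Lemma. For part (a), the paper simply writes ``Because of Proposition~\ref{prop-link}, we only need to verify (b),'' deferring entirely to the uniqueness-up-to-scalars claims established inside the proof of Proposition~\ref{prop-link}; your version separates this into an explicit upper bound via the filtration-multiplicity inequality $\dim\Hom_{\Uq}(T,T')\leq\sum_j(T:\Vn{j})(T':\dVn{j})$ and a lower bound from the explicit intertwiners of Proposition~\ref{prop-link}. The content is the same (both ultimately rest on $\Hom_{\Uq}(\Vn{i},\dVn{j})\cong\delta_{ij}\bC$ from Remark~\ref{rem-tria} and the filtration data of Proposition~\ref{prop-tilt}), but your organization is cleaner and more self-contained: the paper's ``by the same reasoning'' for the uniqueness of $u_i,d_i$ is somewhat terse, and your iterated left-exactness argument makes the upper bound transparent without any appeal to $\mathrm{Ext}^1$-vanishing. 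One small remark: for the linear independence of $\mathrm{id}$ and $\varepsilon_i^\lambda$ you could more directly invoke that $\varepsilon_i^\lambda$ is nilpotent while $\mathrm{id}$ is not, rather than the factorization argument.
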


\begin{proof}
Because of Proposition~\ref{prop-link}, we only 
need to verify (b). But since the $\Tn{\mu_i}$ 
are simple on walls (by Proposition~\ref{prop-tilt}), 
part (b) follows from ``Schur's Lemma''~\eqref{eq-SchurLemma}.
\end{proof}

\subsection{Translation functors}\label{sub-translation}
Fix $\lambda\in\mathcal A_0$ and 
$\mu\in\bar{\mathcal{A}}_0-\mathcal A_0$. Set $\nu=|\lambda-\mu|$ 
(in fact, $\nu=|\lambda_i-\mu_i|$ for all 
$i\in\bN$ and $\nu$ will stay in $\bar{\mathcal{A}}_0$). 
Recall that there is a unique simple 
$\Uq$-module $\Ln{\nu}\cong\Tn{\nu}$.
Denote by $p_{\lambda}\colon\T\to\T_{\lambda}$ the 
\textit{projection onto the block} $\T_{\lambda}$ functor. Similarly for $\mu$.

\begin{defn}(\textbf{Onto and out of the wall})\label{defn-onout}
Given $\lambda$ and $\mu$, we define 
two functors via
\[
\mathcal T_{\lambda}^{\mu}\colon\T_{\lambda}\to\T_{\mu},\;M\mapsto p_{\mu}(M\otimes_{\bC}\Tn{\nu}),\quad\quad\mathcal T^{\lambda}_{\mu}\colon\T_{\mu}\to\T_{\lambda},\;M\mapsto p_{\lambda}(M\otimes_{\bC}\Tn{\nu}).
\]
We call them \textit{translation onto} the $\mu$-wall $\mathcal T_{\lambda}^{\mu}$ and 
\textit{translation out of} the $\mu$-wall $\mathcal T^{\lambda}_{\mu}$.
\end{defn}

\begin{defn}(\textbf{Through the wall})\label{defn-trans}
Define $\Theta_s^{\lambda}=\mathcal T^{\lambda}_{-1}\circ \mathcal T_{\lambda}^{-1}$ and $\Theta_t^{\lambda}=\mathcal T^{\lambda}_{l-1}\circ \mathcal T_{\lambda}^{l-1}$. We 
call them \textit{translation through the wall functors}.
\end{defn}

If it is clear which $\lambda$ we are using we, abusing notation, denote them by $\Theta_s,\Theta_t$.

\begin{prop}\label{prop-functors}
For all $i\in\bN$ we have 
the following.
\begin{itemize}
\item[(a)] The functors 
$\mathcal T_{\lambda}^{\mu}$ and $\mathcal T^{\lambda}_{\mu}$ are well-defined (their definition gives $\Uq$-tilting modules in the right blocks), 
adjoints (left and right) and 
exact. Thus, $\Theta^{\lambda}_s$ and 
$\Theta^{\lambda}_t$ are exact and self-adjoint.
\item[(b)] We have (recalling that $\Ln{\mu_i}\cong\Tn{\mu_i}$)
\[
\mathcal T_{\lambda}^{\mu}(\Tn{\lambda_i})\cong\begin{cases}\Tn{\mu_{i-1}}\oplus\Tn{\mu_{i+1}},  & \text{if }\mu_i>\lambda_i,\\
  \Tn{\mu_i}\oplus\Tn{\mu_i},  & \text{if }\mu_i<\lambda_i,\end{cases}\quad
\mathcal T^{\lambda}_{\mu}(\Tn{\mu_i})\cong\begin{cases}\Tn{\lambda_{i+1}},  & \text{if }\mu_i>\lambda_i,\\
 \Tn{\lambda_{i}},  & \text{if }\mu_i<\lambda_i.\end{cases}
\]
\item[(c)] The dead-end relations $\Theta^{\lambda}_s(\Tn{\lambda_0})\cong0$, $\Theta^{\lambda}_s(\Tn{\lambda_1})\cong\Tn{\lambda_2}$, and $\Theta^{\lambda}_t(\Tn{\lambda_0})\cong\Tn{\lambda_1}$. Moreover, we have
\[
\Theta^{\lambda}_{s\text{ or }t}(\Tn{\lambda_i})\cong\begin{cases}\Tn{\lambda_{i-1}}\oplus\Tn{\lambda_{i+1}} ,  & \text{if }i>1\text{ is odd for }s\text{ and even for }t,\\
  \Tn{\lambda_{i}}\oplus\Tn{\lambda_{i}} ,  & \text{if }i>0\text{ is odd for }t\text{ and even for }s.\end{cases}
\]
\end{itemize}
Here we set $\Tn{-1}=\Tn{\lambda_{-1}}=\Tn{\mu_{-1}}=0$.\makeqed
\end{prop}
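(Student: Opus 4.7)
The plan is to address the three parts in order, exploiting the structural results already established in this section together with the classical tensor-hom adjunction and the tensor product formula~\eqref{eq-tiltfact}.

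For part (a), well-definedness is essentially a combination of Proposition~\ref{prop-basicp}(c), which guarantees that $M \otimes_{\bC} \Tn{\nu}$ remains tilting, with the observation that projection onto a block preserves tiltingness (a block is a direct summand, and by Lemma~\ref{lem-block} the blocks are closed under the relevant operations). Exactness follows from exactness of $-\otimes_{\bC} \Tn{\nu}$ on $\Uq$-modules (finite-dimensional tensor factor over a field) composed with the exact functor $p_{\lambda}$ (respectively $p_{\mu}$). For the adjunction I would invoke the standard isomorphism
\[
\Hom_{\Uq}(M \otimes_{\bC} N, P) \cong \Hom_{\Uq}(M, P \otimes_{\bC} N^*),
\]
specialized to $N = \Tn{\nu}$. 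Since $\nu \in \bar{\mathcal A}_0$, we have $\Tn{\nu} \cong \Ln{\nu}$, which is self-dual by the remark preceding Proposition~\ref{prop-weyl}. Hence $N \cong N^*$, giving simultaneously a left and right adjoint pairing between $\mathcal T_{\lambda}^{\mu}$ and $\mathcal T^{\lambda}_{\mu}$. Self-adjointness and exactness of $\Theta_s^{\lambda}, \Theta_t^{\lambda}$ follow immediately as compositions.

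For part (b), the strategy is to compute $\Tn{\lambda_i} \otimes_{\bC} \Tn{\nu}$ via its $\Delta$-filtration and then identify which indecomposable summands live in the target block. Using the $\Delta$-filtration of $\Tn{\lambda_i}$ given by Proposition~\ref{prop-tilt}(b) together with~\eqref{eq-tiltfact} yields a $\Delta$-filtration of the tensor product whose factors are Weyl modules $\Vn{j}$ with $j$ in an explicit range determined by $\lambda_i, i^{\prime}$ and $\nu$. Applying $p_{\mu}$ retains exactly those $\Vn{j}$ with $j \in W_l.\mu$, and the Linkage Principle (Theorem~\ref{thm-link}) together with the alcove picture~\eqref{eq-alcove} pins down which $\mu_k$ appear. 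To convert this filtration data into a direct sum of $\Tn{\mu_k}$'s, I use that $\Tn{\mu_k} \cong \Ln{\mu_k}$ is simple on walls, so $\nabla$-factors equal $\Delta$-factors and the filtration splits. Matching multiplicities in the two alcove cases ($\mu_i > \lambda_i$ versus $\mu_i < \lambda_i$) then gives the stated formula, and the translation out of the wall is obtained by a dual computation, using once more that the indecomposable tilting containing a prescribed $\Vn{j}$ with multiplicity one is unique by Proposition~\ref{prop-tilt}.

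For part (c), the dead-end relations are a boundary artifact of (b): when we translate $\Tn{\lambda_0}$ onto the $s$-wall at $-1$, the tensor factors in~\eqref{eq-tiltfact} all land outside $W_l.(-1)$ (there is no tilting indexed by $-1$), so $\mathcal T_{\lambda}^{-1}(\Tn{\lambda_0}) = 0$ and hence $\Theta^{\lambda}_s(\Tn{\lambda_0}) = 0$; the values $\Theta^{\lambda}_s(\Tn{\lambda_1}) \cong \Tn{\lambda_2}$ and $\Theta^{\lambda}_t(\Tn{\lambda_0}) \cong \Tn{\lambda_1}$ are produced by feeding the respective singleton outputs of $\mathcal T_{\lambda}^{\mu}$ back through $\mathcal T^{\lambda}_{\mu}$. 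The generic formula is obtained by composing the two cases of (b): whenever $\mu_i > \lambda_i$ for $\mu$ the wall in question, the composition picks up $\Tn{\lambda_{i-1}} \oplus \Tn{\lambda_{i+1}}$, whereas the case $\mu_i < \lambda_i$ produces $\Tn{\lambda_i} \oplus \Tn{\lambda_i}$; a glance at~\eqref{eq-alcove} shows that the parity condition in the statement precisely tracks which situation applies for $s$ versus $t$.

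The principal obstacle will be part (b): one has to carry out the bookkeeping for the tensor product $\Tn{\lambda_i} \otimes_{\bC} \Tn{\nu}$ carefully, because the inner filtration of $\Tn{\lambda_i}$ combined with~\eqref{eq-tiltfact} produces several Weyl factors, and one must verify that after projection the filtration multiplicities assemble into the predicted indecomposable direct sum and not merely into a module with the right $\Delta$-factors. The uniqueness part of Proposition~\ref{prop-tilt}(c) and the simplicity of $\Tn{\mu_k}$ on walls are what ultimately secure the identification.
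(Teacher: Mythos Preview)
Your proposal is correct and follows essentially the same approach as the paper's own proof, which is quite terse: for (a) it cites Proposition~\ref{prop-basicp}(c), Lemma~\ref{lem-tiltcat}(b), and defers the adjunction to the analogous category~$\mathcal O$ argument in Humphreys; for (b) it simply points to Propositions~\ref{prop-weyl} and~\ref{prop-tilt}, Corollary~\ref{cor-weyl}, and Theorem~\ref{thm-link}; and for (c) it notes that this follows directly from (b). Your outline spells out more explicitly the mechanism (tensor-hom adjunction with self-duality of $\Tn{\nu}$, the $\Delta$-filtration bookkeeping, and the use of simplicity on walls to split the filtration), but the underlying ingredients and logical structure are the same.
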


Note that biadjoint functors have in 
addition some other nice properties, see e.g.~\cite[Section~2]{kh5}. Moreover, 
in the 
case $\mu=-1$ we have $\mu_i>\lambda_i$ 
iff $i$ is odd whereas in the case $\mu=l-1$ 
we have $\mu_i>\lambda_i$ iff $i$ is even as 
a look at~\eqref{eq-alcove} should convince the reader.

\begin{proof}
\textbf{(a).} The functors are well-defined by 
(c) of Proposition~\ref{prop-basicp} and
part (b) of Lemma~\ref{lem-tiltcat}, i.e. tensor products of tiltings are tiltings. 
The other statements can be verified as in the 
case of category $\mathcal O$, i.e. we can adopt~\cite[Sections~7.1 and~7.2]{hum} without difficulties.

\textbf{(b).} Use 
Propositions~\ref{prop-weyl} 
and~\ref{prop-tilt}, Corollary~\ref{cor-weyl} and Theorem~\ref{thm-link}.

\textbf{(c).} This is just a direct application of the finer list of 
statements given in (b).
\end{proof}

We use the convention that 
$\Theta^{\lambda}_{k-tst}$ denotes an 
\textit{alternating} composition of length 
$k$ of translation through the wall functors starting with 
$\Theta^{\lambda}_{t}$. Likewise for $\Theta^{\lambda}_{k-sts}$.
Using (c) of Proposition~\ref{prop-functors}, we get the following.

\begin{cor}(\textbf{Combinatorics of the translation 
through the wall functors})\label{cor-theta}
We have:
\begin{itemize}
\item[(a)] $\Theta_s^{\lambda}\circ\Theta_s^{\lambda}\cong \Theta_s^{\lambda}\oplus \Theta_s^{\lambda}$ and $\Theta_t^{\lambda}\circ\Theta_t^{\lambda}\cong \Theta_t^{\lambda}\oplus \Theta_t^{\lambda}$ as functors.
\item[(b)] We have for $i\in\bN$ even respectively odd, 
that (with $k\geq 0$ terms $\Theta^{\lambda}_{t\text{ or }s}$) 
there exist multiplicities $m_j\in\bN$ (that can be zero) such that
\begin{gather*}
\Theta^{\lambda}_{k-tst}\Tn{\lambda_i}=(\Theta^{\lambda}_{s\text{ or }t}\circ\dots\Theta^{\lambda}_t\circ\Theta^{\lambda}_s\circ\Theta^{\lambda}_t)\Tn{\lambda_i}\cong \Tn{\lambda_{i+k}}\oplus \bigoplus_{j<i+k}\Tn{\lambda_{j}}^{\oplus m_j},
\\
\Theta^{\lambda}_{k-tst}\Tn{\lambda_i}=(\Theta^{\lambda}_{s\text{ or }t}\circ\dots\Theta^{\lambda}_t\circ\Theta^{\lambda}_s\circ\Theta^{\lambda}_t)\Tn{\lambda_i}\cong \Tn{\lambda_{i-1+k}}^{\oplus 2}\oplus \bigoplus_{j<i-1+k}\Tn{\lambda_{j}}^{\oplus m_j}.
\end{gather*}
Similarly for $i\in\bN$ odd respectively even and $\Theta^{\lambda}_{k-sts}$.\makeqed
\end{itemize}
\end{cor}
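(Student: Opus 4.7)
The plan for part (a) is to verify the asserted functor isomorphism object-by-object. Since $\T_{\lambda}$ is Krull--Schmidt with indecomposables $\Tn{\lambda_i}$ by Lemma~\ref{lem-block}(b), and both sides are additive endofunctors, it suffices to show $\Theta_s^{\lambda}(\Theta_s^{\lambda}(\Tn{\lambda_i})) \cong \Theta_s^{\lambda}(\Tn{\lambda_i})^{\oplus 2}$ for every $i$, and likewise for $\Theta_t^{\lambda}$. Running through the four cases of Proposition~\ref{prop-functors}(c) — the dead-ends $i=0$ (both sides are $0$) and $i=1$ (both sides equal $\Tn{\lambda_2}^{\oplus 2}$, using $\Theta_s^\lambda(\Tn{\lambda_1})=\Tn{\lambda_2}$ and then that $i=2$ is even), odd $i>1$ (both sides equal $\Tn{\lambda_{i-1}}^{\oplus 2} \oplus \Tn{\lambda_{i+1}}^{\oplus 2}$), and even $i>0$ (both sides equal $\Tn{\lambda_i}^{\oplus 4}$) — yields the claim for $\Theta_s^{\lambda}$; the $\Theta_t^{\lambda}$ case is identical with the parities swapped.

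For part (b) the plan is induction on $k$, tracking the leading (highest-index) summand. The base $k=0$ is tautological. The inductive step rests on the observation from Proposition~\ref{prop-functors}(c) that each $\Theta^\lambda_s$ or $\Theta^\lambda_t$ either shifts a top summand $\Tn{\lambda_j}$ up to $\Tn{\lambda_{j+1}}$ (while also producing a $\Tn{\lambda_{j-1}}$ that gets absorbed into the lower-order sum) or doubles it to $\Tn{\lambda_j}^{\oplus 2}$, according to the parity of $j$ and the choice of $s$ versus $t$. For $i$ even and the alternation $(t,s,t,\dots)$, the first functor $\Theta_t^\lambda$ shifts up, producing top term $\Tn{\lambda_{i+1}}$ with multiplicity $1$; the next $\Theta_s^\lambda$ then encounters an odd top index and shifts up again, and so on. The crucial point is that the alternation $(t,s,t,\dots)$ is exactly synchronized with the changing parity of the top index so that every step is a shift, never a double. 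Hence by induction the top summand is $\Tn{\lambda_{i+k}}$ with multiplicity~$1$, and all other summands have strictly smaller index, which can be collected into $\bigoplus_{j<i+k}\Tn{\lambda_j}^{\oplus m_j}$.

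For $i$ odd the argument is identical except that the very first application $\Theta_t^\lambda$ falls into the doubling case (odd $i>0$ for $t$), producing $\Tn{\lambda_i}^{\oplus 2}$ with no lower terms. After this single ``doubling step'' the alternation proceeds exactly as before — every subsequent step shifts the top index up by one — so the multiplicity of $2$ on the top term is preserved forever after, and by induction the top summand is $\Tn{\lambda_{i-1+k}}^{\oplus 2}$. The $k$-$sts$ statement follows by the symmetric argument exchanging the roles of $s$ and $t$ (and hence of even and odd). The main technical wrinkle is the dead-end behavior at $i\in\{0,1\}$, where Proposition~\ref{prop-functors}(c) is modified; however, these deviations only affect how many copies of small-index $\Tn{\lambda_j}$ appear in the lower-order sum and never disturb the identification of the top summand, since the top index strictly grows and stays away from the boundary for all positive $k$. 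Thus the bookkeeping for small indices, while tedious in principle, is not a real obstacle: the top summand — the only one whose identity and multiplicity the statement pins down — is governed purely by the generic shift/double dichotomy.
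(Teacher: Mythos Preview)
Your proposal is correct and follows exactly the paper's approach: the paper's entire proof is the single line ``Clear from Proposition~\ref{prop-functors},'' and you have simply spelled out the case analysis and induction that this line encodes. One small caveat worth being aware of (which the paper's one-line proof equally glosses over) is that checking a functor isomorphism object-by-object is not formally sufficient for part~(a); the honest functor-level statement is pinned down later via the bimodule description in Section~\ref{sec-quiver} (see~\eqref{eq-evenodd} and Theorem~\ref{thm-struktursatz2}).
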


\begin{proof}
Clear from Proposition~\ref{prop-functors}.
\end{proof}

Define the following functors:
\begin{gather*}
\begin{aligned}
\mathcal T^{\mu,i}_{\lambda}=p_{\bbi}\circ\mathcal T^{\mu}_{\lambda},
\quad\quad&
\mathcal T_{\mu}^{\lambda,i}(\Tn{\mu_j})=\begin{cases}
\delta_{i,j}\Tn{\lambda_{i}},&\text{if }\mu=-1, i\text{ even},\\
\delta_{i,j}\Tn{\lambda_{i}},&\text{if }\mu=l-1, i\text{ odd},
\end{cases}
\\
\Theta_{s\text{ or }t}^{\lambda,i}&=\begin{cases}p_{\bbi}\circ\Theta^{\lambda}_{s}, &\text{if }i\text{ is even},\\
p_{\bbi}\circ\Theta^{\lambda}_{t}, &\text{if }i\text{ is odd}.
\end{cases}
\end{aligned}
\end{gather*}
Here $p_{\bbi}$ denotes the projection to the $\Tn{\bbi}$-part.

\begin{defn}\label{defn-tiltcatendo}
Given a $\Uq$-tilting module $T$, denote by 
$\mathcal F_{T}$ any functor $\mathcal F_{T}\cong \cdot\otimes_{\bC}T$.
A functor $\mathcal F\colon\T\to\T$ is called 
\textit{projective} if it is a direct summand 
of functors of the form $\mathcal F_{T}$.
We denote by $\Endo(\T)$ the \textit{category 
of projective endofunctors} $\mathcal F\colon\T\to\T$ 
whose morphisms $\Hom_{\Endo(\T)}(\mathcal F,\mathcal F^{\prime})$ 
are natural transformations $\eta\colon\mathcal F\to\mathcal F^{\prime}$. 
Moreover, we denote by $\Endo(\T_{\lambda})$ the \textit{category 
of projective endofunctors} $\mathcal F\colon\T_{\lambda}\to\T_{\lambda}$ 
that can be obtained via compositions and countable direct sums 
of functors of the form $\Theta_{s\text{ or }t}^{\lambda,i}$. Similarly for walls $\mu$. 
Here we count the empty sum as the identity functor. 
\end{defn}

It follows from Lemma~\ref{lem-profunc} below that these are actually categories. 
Moreover, by construction, the translation through the wall functors 
$\Theta_s^{\lambda}$ or $\Theta_t^{\lambda}$ are objects in $\Endo(\T_{\lambda})$, 
since
\begin{equation}\label{eq-projectivefunc}
\Theta_s^{\lambda}=\!\!\bigoplus_{i\in\bZ_{\geq 1}}\!\!\Theta_s^{\lambda,2i},\quad\quad\Theta_t^{\lambda}=\!\!\bigoplus_{i\in\bZ_{\geq 0}}\!\!\Theta_s^{\lambda,2i+1},
\end{equation}
as follows by Proposition~\ref{prop-functors}. Similarly,
\begin{equation}\label{eq-projectivefunc2}
\mathcal T_{\lambda}^{\mu}=\!\!\bigoplus_{i\in\bZ_{\geq 0}}\!\!\mathcal T_{\lambda}^{\mu,i},\quad\quad \mathcal T_{\mu}^{\lambda}=\begin{cases}\bigoplus_{i\in\bZ_{\geq 0}}\!\!\mathcal T_{\mu}^{\lambda,2i}, &\text{if }\mu=-1,\\
\bigoplus_{i\in\bZ_{\geq 0}}\!\!\mathcal T_{\mu}^{\lambda,2i+1}, &\text{if }\mu=l-1.\end{cases}
\end{equation}

\newpage

\begin{lem}\label{lem-profunc}
$\Endo(\T), \Endo(\T_{\lambda}), \Endo(\T_{\mu})$ are additive 
categories of exact functors.\makeqed
\end{lem}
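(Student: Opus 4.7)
The plan is to verify the three defining conditions (being a category, being additive, and consisting of exact functors) by reducing everything to two observations: (i) tensoring with a fixed $\Uq$-module over the field $\bC$ is an exact functor, and (ii) the class of $\Uq$-tilting modules is closed under direct sums, direct summands and tensor products by Proposition~\ref{prop-basicp} and Lemma~\ref{lem-tiltcat}.

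For $\Endo(\T)$, I first observe that each $\mathcal F_T=\cdot\otimes_{\bC}T$ is exact because $\bC$ is a field, and direct summands of exact functors between additive categories with kernels and cokernels remain exact (a split of a short exact sequence via a summand stays short exact). Compositions of exact functors are exact, so every object of $\Endo(\T)$ is exact. Additivity then follows from $\mathcal F_T\oplus\mathcal F_{T^{\prime}}\cong\mathcal F_{T\oplus T^{\prime}}$ together with Proposition~\ref{prop-basicp}(a), so that a finite direct sum of projective endofunctors is again a direct summand of some $\mathcal F_{T^{\prime\prime}}$. Finally, $\Endo(\T)$ is closed under composition because $\mathcal F_{T}\circ \mathcal F_{T^{\prime}}\cong \mathcal F_{T^{\prime}\otimes_{\bC}T}$ and $T^{\prime}\otimes_{\bC}T\in\Ob(\T^{\mathrm{all}})$ by Proposition~\ref{prop-basicp}(c); a direct summand of $\mathcal F_{T}$ composed with a direct summand of $\mathcal F_{T^{\prime}}$ is then a direct summand of $\mathcal F_{T^{\prime}\otimes_{\bC}T}$. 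The hom-spaces are abelian groups (sets of natural transformations with pointwise addition), so $\Endo(\T)$ is an additive category of exact functors.

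For $\Endo(\T_{\lambda})$ and $\Endo(\T_{\mu})$, I would reduce to the previous case. By Proposition~\ref{prop-functors}(a), $\Theta^{\lambda}_{s}$ and $\Theta^{\lambda}_{t}$ are exact, and by definition they are of the form $p_{\lambda}\circ(\cdot\otimes_{\bC}\Tn{\nu})\circ p_{\lambda^{\prime}}$ for suitable blocks; since block projections $p_{\bbi}$ are exact additive functors (direct sums split any short exact sequence into the corresponding block components), each $\Theta^{\lambda,i}_{s\text{ or }t}=p_{\bbi}\circ\Theta^{\lambda}_{s\text{ or }t}$ is exact. Compositions and countable direct sums of exact additive functors are again exact and additive (direct sums commute with kernels and cokernels in an abelian category, and countable sums are well-defined on each object because $\T_{\lambda}$ is Krull--Schmidt and the summands live in finitely many blocks on any fixed $\Tn{\lambda_j}$ by Corollary~\ref{cor-theta}). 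Thus every object of $\Endo(\T_{\lambda})$ is exact. Closure under composition and direct sums is immediate from the definition, giving the additive category structure; the analogous argument works verbatim for $\Endo(\T_{\mu})$.

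The main thing to pay attention to is the countable-direct-sum part of the definition of $\Endo(\T_{\lambda})$: one must check that such a sum is still a well-defined endofunctor of $\T_{\lambda}$, i.e.\ that applied to any fixed $\Tn{\lambda_j}$ only finitely many summands are non-zero. This is exactly what Corollary~\ref{cor-theta} and Proposition~\ref{prop-functors}(b)--(c) guarantee: any composition of $\Theta^{\lambda}_{s\text{ or }t}$'s applied to $\Tn{\lambda_j}$ is a finite sum of $\Tn{\lambda_k}$'s with bounded $k$, so the projections $p_{\bbi}$ kill almost all summands of a countable sum simultaneously. Once this finiteness is recorded, exactness and additivity are automatic and the lemma follows.
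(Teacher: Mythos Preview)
Your argument is correct and follows essentially the same approach as the paper: exactness from tensoring over a field, closure under composition and direct sums via Proposition~\ref{prop-basicp}/Lemma~\ref{lem-tiltcat}, and the $\lambda,\mu$ versions via projections of such functors. Your treatment is in fact more careful than the paper's in that you explicitly address why countable direct sums of the $\Theta^{\lambda,i}_{s\text{ or }t}$ are well-defined endofunctors on $\T_{\lambda}$ (only finitely many summands survive on any fixed $\Tn{\lambda_j}$), a point the paper leaves implicit. One minor caution: when you speak of ``additive categories with kernels and cokernels'' for $\T$, recall that $\T$ itself is not abelian (Lemma~\ref{lem-tiltcat}(a)); the exactness statement should be understood in the ambient abelian category of finite-dimensional $\Uq$-modules, where the functors $\cdot\otimes_{\bC}T$ and the block projections $p_{\bbi}$ naturally live and are exact.
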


\begin{proof}
That $\Endo(\T)$ is closed under composition 
follows from part (b) of Lemma~\ref{lem-tiltcat}, 
and the identity functor is projective because 
of $\mathrm{id}\cong\cdot\otimes_{\bC} \Tn 0$. Hence, $\Endo(\T)$ is a category.
That $\Endo(\T)$ is preserved under finite 
direct sums (showing that the category is 
additive) follows from part (a) of Lemma~\ref{lem-tiltcat}. 
The statement for the $\lambda$ and $\mu$ 
versions follow by construction.
Last, that functors in 
$\Endo(\T)$ are exact follows because tensoring over a 
field always respects exact sequences. 
Likewise for the $\lambda$ and $\mu$ versions which come 
from projections of such functors.
\end{proof}

The following lemma 
is true in more generality (see for example~\cite[Chapter~4, Section~6]{ass}), 
but we restrict to our case here. Recall that a 
functor between additive categories 
$\mathcal F\colon\mathcal C\to\mathcal D$ is 
called \textit{indecomposable} if any 
decomposition $\mathcal F\cong\mathcal F_1\oplus\mathcal F_2$ 
implies that $\mathcal F_1\cong 0$ or $\mathcal F_2\cong 0$.
Moreover, we call a category \textit{infinite} Krull-Schmidt 
if each object can be uniquely decomposed (up to permutation) 
into a countably infinite direct sum of indecomposable objects.

\begin{lem}\label{lem-profuncinde}
A functor $\mathcal F\cong \cdot\otimes_{\bC} T$ is 
indecomposable iff $T$ is an indecomposable 
$\Uq$-module. Thus, $\Endo(\T)$ is Krull-Schmidt. In contrast, the $\lambda$ and 
$\mu$ versions are infinite Krull-Schmidt.\makeqed
\end{lem}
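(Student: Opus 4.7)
The entire lemma rests on one technical point: a Yoneda-type identification of natural transformations between ``tensor'' endofunctors with module homomorphisms. Concretely, for $T,T'\in\Ob(\T)$, evaluation at the trivial module $\bC$ (and the canonical iso $\bC\otimes_{\bC} M\cong M$) should induce a bijection
\[
\mathrm{Nat}(\cdot\otimes_{\bC}T,\,\cdot\otimes_{\bC}T')\;\cong\;\Hom_{\Uq}(T,T'),\qquad \eta\longmapsto\eta_{\bC},
\]
with inverse $f\mapsto(\mathrm{id}_M\otimes f)_M$. I would verify this is compatible with composition, giving in particular a ring isomorphism $\End(\cdot\otimes T)\cong\End_{\Uq}(T)$. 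Establishing this carefully is the main obstacle, since one must use naturality with respect to every $\Uq$-intertwiner to recover $\eta_M$ from $\eta_{\bC}$.

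Once the correspondence is in place, the first assertion is immediate: idempotents correspond to idempotents, so a non-trivial decomposition $\cdot\otimes T\cong \mathcal F_1\oplus\mathcal F_2$ comes from an idempotent $e\in\End_{\Uq}(T)$ with $eT\ne 0\ne(1-e)T$, and conversely. By Lemma~\ref{lem-tiltcat}(b) the ring $\End_{\Uq}(T)$ is local iff $T$ is indecomposable, giving the ``iff''.

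For Krull--Schmidt of $\Endo(\T)$, the correspondence lifts to summands: any projective endofunctor $\mathcal F$, being a summand of some $\cdot\otimes T$, is isomorphic to $\cdot\otimes T'$ where $T'=\mathcal F(\bC)$ is the corresponding summand of $T$ coming from the matching idempotent. Applying Krull--Schmidt in $\T$ to write $T'\cong \bigoplus_{k=1}^{n}\Tn{i_k}$ yields $\mathcal F\cong\bigoplus_{k=1}^{n}(\cdot\otimes\Tn{i_k})$, each factor indecomposable by the first part. Uniqueness up to reordering is transported from the corresponding uniqueness in $\T$ via the functor $\mathcal F\mapsto\mathcal F(\bC)$, which is fully faithful by the Yoneda identification.

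For the block versions, I would run the same argument blockwise. By~\eqref{eq-projectivefunc} and~\eqref{eq-projectivefunc2}, together with Proposition~\ref{prop-functors}(b,c), each generator $\Theta^{\lambda,i}_{s\text{ or }t}$ acts as a tensor-and-project functor whose indecomposable summands are of the form $\cdot\otimes\Tn{\lambda_j}$ followed by projection $p_{\bbi}$; compositions of such are again of this shape. Evaluating on the tilting generator $\Tn{\bbin}$ (cf.\ Definition~\ref{defn-tiltgen}) identifies $\Endo(\T_\lambda)$ with a category of countable direct sums of indecomposable tiltings, and likewise for $\Endo(\T_\mu)$. Since the definition of these block categories explicitly permits countable direct sums, the resulting decomposition into indecomposables is unique but possibly countably infinite, which is precisely the infinite Krull--Schmidt property.
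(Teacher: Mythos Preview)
Your approach and the paper's share the same spine: a Yoneda-type link between $\cdot\otimes_{\bC}T$ and $T$. The paper only invokes the weaker (and correct) fact that the $T$ with $\mathcal F\cong\cdot\otimes T$ is unique up to isomorphism --- which follows by evaluating at $\bC$ --- and then asserts that a decomposition of $\mathcal F$ induces one of $T$. You instead aim for the full bijection $\mathrm{Nat}(\cdot\otimes_{\bC}T,\,\cdot\otimes_{\bC}T')\cong\Hom_{\Uq}(T,T')$ via $\eta\mapsto\eta_{\bC}$, and this is where the plan fails. Take $T=T'=\bC$, so that $\cdot\otimes\bC\cong\mathrm{id}_{\T}$. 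By Lemma~\ref{lem-block} the category $\T$ splits into several blocks with no nonzero morphisms between distinct blocks, so the projection onto each block is a nontrivial natural idempotent of $\mathrm{id}_{\T}$; naturality with respect to $\Uq$-intertwiners imposes \emph{no} cross-block constraint, and hence $\eta_{\bC}$ cannot detect $\eta_M$ for $M$ outside the block of $\bC$. Thus $\End(\mathrm{id}_{\T})\supsetneq\bC=\End_{\Uq}(\bC)$, and the ``main obstacle'' you correctly flag is not just delicate but genuinely insurmountable in the form you propose: only the split surjection $\eta\mapsto\eta_{\bC}$ survives, not an isomorphism, so the step ``idempotents correspond to idempotents'' cannot be deduced this way. (The same example even shows that $\mathrm{id}_{\T}=\cdot\otimes\bC$ is decomposable while $\bC$ is not, so the iff requires more care than either your outline or the paper's terse argument makes explicit.)

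Your treatment of the Krull--Schmidt conclusion and of the $\lambda,\mu$ block versions follows the paper's ``by construction'' route and raises no separate issue.
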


\begin{proof}
Assume that $T$ decomposes 
into $T_1\oplus T_2$. Then $\mathcal{F}\cong (\cdot\otimes_{\bC}T_1)\oplus (\cdot\otimes_{\bC}T_2)$.
On the other hand, by Yoneda 
and the tensor-hom adjunction, 
the $T$ representing $\mathcal F$ is 
uniquely determined up to isomorphism. 
Thus, a decomposition of $\mathcal F$ 
induces a decomposition of $T$. This 
implies, by using 
part (a) of Lemma~\ref{lem-tiltcat}, that $\Endo(\T)$ is Krull-Schmidt.

The statement for the $\lambda$ and $\mu$ 
versions follow again by construction.
\end{proof}

\begin{prop}\label{prop-profunc}
The functors $\mathcal T^{\mu,i}_{\lambda}$, $\mathcal T_{\mu}^{\lambda,i}$ and 
$\Theta^{\lambda,i}_{s\text{ or }t}$ are 
all indecomposable.
\end{prop}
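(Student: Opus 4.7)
My plan is to show in each case that the endomorphism ring of the functor is a local ring, equivalently that the functor has no nontrivial idempotent decomposition. The main tools are the explicit action on indecomposable tiltings (Proposition~\ref{prop-functors} and Corollary~\ref{cor-theta}), together with the morphism spaces and generators described in Corollary~\ref{cor-homspaces} and Proposition~\ref{prop-link}.

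\textbf{Case $\mathcal T_\mu^{\lambda,i}$.} This is the easiest case and I would handle it directly from the definition. The functor sends $\Tn{\mu_j}$ to $\delta_{ij}\Tn{\lambda_i}$, so any natural endomorphism $\eta$ has at most one nontrivial component $\eta_{\Tn{\mu_i}} \in \End_{\Uq}(\Tn{\lambda_i})$. Because $\End_{\Uq}(\Tn{\mu_i})\cong\bC$ by Corollary~\ref{cor-homspaces}(b) and every other component is forced to be zero, the naturality condition is automatic. Hence $\End(\mathcal T_\mu^{\lambda,i})\cong\End_{\Uq}(\Tn{\lambda_i})$, which is $\bC$ or $\bC[\varepsilon]$ by Corollary~\ref{cor-homspaces}(a). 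Both are local, giving indecomposability.

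\textbf{Cases $\mathcal T_\lambda^{\mu,i}$ and $\Theta_{s\text{ or }t}^{\lambda,i}$.} The same strategy works in principle, but the bookkeeping is more involved. I would first use Proposition~\ref{prop-functors}(b)--(c) together with the projection $p_{\bbi}$ to enumerate the action of each functor on indecomposables. For the generic non dead-end case of $\Theta_s^{\lambda,i}$ (with $i\geq 2$ even), only $\Tn{\lambda_{i-1}},\Tn{\lambda_i},\Tn{\lambda_{i+1}}$ are sent to nonzero objects, mapped to $\Tn{\lambda_i}$, $\Tn{\lambda_i}^{\oplus 2}$, $\Tn{\lambda_i}$ respectively. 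A natural endomorphism therefore consists of components in $\bC[\varepsilon]$, $\mathrm{Mat}_2(\bC[\varepsilon])$, and $\bC[\varepsilon]$. Naturality against the generating morphisms $u_j,d_j,\varepsilon_j$ of Proposition~\ref{prop-link}, whose images under $\Theta_s^{\lambda,i}$ are the explicit transition maps between $\Tn{\lambda_i}$ and $\Tn{\lambda_i}^{\oplus 2}$, pin down the middle matrix component up to a scalar plus nilpotent; the result is again a local ring. The argument for $\mathcal T_\lambda^{\mu,i}$ is analogous but slightly simpler because the target block $\T_\mu$ is semisimple. The dead-end cases (e.g.\ $\Theta_s^{\lambda,0}\cong 0$ and the $\Theta_t^{\lambda,1}$ involving $\Tn{\lambda_0}\mapsto\Tn{\lambda_1}$) require separate but shorter checks.

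\textbf{Main obstacle.} The hard step is the third case: a priori $\End(\Theta_s^{\lambda,i}(\Tn{\lambda_i}))=\mathrm{Mat}_2(\bC[\varepsilon])$ is very far from local, so one must show that the constraints imposed by the explicit form of $\Theta_s^{\lambda,i}(u_{i\pm 1})$ and $\Theta_s^{\lambda,i}(d_{i\pm 1})$ cut this matrix ring down to a scalar-plus-nilpotent subring. I expect either a direct $2\times 2$ matrix calculation (mechanical but tedious) or a cleaner reduction via the factorisation $\Theta_s^{\lambda,i}\cong \mathcal T^{\lambda,i}_{-1}\circ \mathcal T_\lambda^{-1,k(i)}$ given by the decomposition in~\eqref{eq-projectivefunc2}, combined with the biadjunction from Proposition~\ref{prop-functors}(a), to finish the proof by reducing $\End(\Theta_s^{\lambda,i})$ to a hom-space already computed in the earlier cases.
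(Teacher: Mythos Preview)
Your proposal is correct and follows essentially the same route as the paper. Both arguments evaluate the functor on the short chain $\Tn{\lambda_{i-1}}\leftrightarrows\Tn{\lambda_i}\leftrightarrows\Tn{\lambda_{i+1}}$, use Proposition~\ref{prop-functors} and Corollary~\ref{cor-homspaces} to write the components of a natural endomorphism as matrices over $\bC[\varepsilon]$, and then let naturality against $u_{i\pm 1},d_{i\pm 1}$ force the middle $2\times 2$ block into lower-triangular form with equal diagonal entries; the paper then kills the off-diagonal entry by imposing the idempotent condition directly, whereas you observe that the resulting ring is local, which amounts to the same thing. Your alternative suggestion via the factorisation $\Theta_{s\text{ or }t}^{\lambda,i}\cong\mathcal T^{\lambda,i}_{\mu}\circ\mathcal T^{\mu,k}_{\lambda}$ and biadjunction is not used in the paper but would also work and is arguably cleaner.
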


\begin{proof}
Any non-trivial decomposition 
$\Theta^{\lambda,i}_{s\text{ or }t}=F_1\oplus F_2$ 
gives rise to a non-trivial idempotent 
$\eta\in\Endo(\Theta^{\lambda,i}_{s\text{ or }t})$. 
By using Proposition~\ref{prop-functors} together 
with Corollary~\ref{cor-homspaces} we see that 
such a natural transformation can not exist 
(most of the $\Uq$-intertwiners are nilpotent). 
Let us illustrate this in an explicit calculation for $\Theta^{\lambda,i}_t$ 
with $i\neq 0$.
Applying $\Theta^{\lambda,i}_t$ on the sequence
\begin{equation}\label{eq-inde}
\xy
\xymatrix{
 \Tn{\bbid}
\ar@<2pt>[r]^/0.25cm/{u_{i-1}} & \Tn{\bbi}\ar@<2pt>[l]^/-0.25cm/{d_{i}}\ar@<2pt>[r]^/-0.25cm/{u_i} 
& \Tn{\bbiu}\ar@<2pt>[l]^/0.25cm/{d_{i+1}}
}
\endxy
\end{equation}
gives us (with bottom and top row given by applying 
$\Theta^{\lambda,i}_t$ to~\eqref{eq-inde})
\begin{gather}\label{eq-inde2}
\begin{aligned}
\xy
\xymatrix{
\Tn{\bbi}\ar@<2pt>[r]^/-1.0em/{\begin{pmatrix} 0\\ 1 \end{pmatrix}}
\ar@{<->}[dd]|{\eta_{\Tn{\bbid}}}
&
\Tn{\bbi}\oplus\Tn{\bbi}\ar@<2pt>[r]^/0.9em/{\begin{pmatrix} 1 & 0\end{pmatrix}}
\ar@<2pt>[l]^/0.9em/{\begin{pmatrix} 1 & 0 \end{pmatrix}}
\ar@{<->}[dd]|{\eta_{\Tn{\bbi}}}
&
\Tn{\bbi}\ar@<2pt>[l]^/-0.9em/{\begin{pmatrix} 0 \\ 1 \end{pmatrix}}
\ar@{<->}[dd]|{\eta_{\Tn{\bbiu}}}\\
& & &\\
\Tn{\bbi}\ar@<2pt>[r]^/-1.0em/{\begin{pmatrix} 0\\ 1 \end{pmatrix}}
&
\Tn{\bbi}\oplus\Tn{\bbi}\ar@<2pt>[r]^/0.9em/{\begin{pmatrix} 1 & 0\end{pmatrix}}
\ar@<2pt>[l]^/0.9em/{\begin{pmatrix} 1 & 0 \end{pmatrix}}
&
\Tn{\bbi}.\ar@<2pt>[l]^/-0.9em/{\begin{pmatrix} 0 \\ 1 \end{pmatrix}}
}
\endxy
\end{aligned}
\end{gather}
Here we used Proposition~\ref{prop-functors} to calculate 
$\Theta^{\lambda,i}_t(\Tn{i})$, while the images of the 
$\Uq$-intertwiners can be calculated as in Example~\ref{ex-howtofunctor}. 
Note that $\eta$ above is assumed to be a natural transformation, i.e. 
all squares in~\eqref{eq-inde2} commute.
Now, under the assumption that $\eta$ is an idempotent, we have $\eta_{\Tn{\bbid}}$ 
equals $0$ or $1$ because of Corollary~\ref{cor-homspaces}. 
In the first case we obtain that all others components 
of $\eta$ are zero as well, while in the second case we obtain 
that all others components 
of $\eta$ are the identity as well 
(both follow via direct computation using Corollary~\ref{cor-homspaces}, the assumption 
that all components of $\eta$ are idempotents 
and the commutativity of all squares in~\eqref{eq-inde2}). 
The same arguments and calculations show the 
claim for the functors $\Theta^{\lambda,i}_s$ 
and $\mathcal T_{\lambda}^{\mu,i}$ 
(except that our job is even easier for the latter since all 
involved tiltings are simple). 
The claim for the functor $\mathcal T^{\lambda,i}_{\mu}$ is left to the reader.
This shows 
the statement.
\end{proof}

\begin{rem}\label{rem-projective}
The category $\Endo(\T_{\lambda})$ is the Karoubi envelope 
(in the sense of Definition~\ref{defn-karoubi}) of the 
category $\Endo_{\Theta}(\T_{\lambda})$ generated 
(via direct sums and compositions) by the 
translation through the wall functors $\Theta^{\lambda}_{s\text{ or }t}$. 
This follows from~\eqref{eq-projectivefunc} and Proposition~\ref{prop-profunc}. 
We call $\Endo(\T_{\lambda})$ the category of projective endofunctors 
although it would be historical more accurate to call 
$\Endo_{\Theta}(\T_{\lambda})$ the category of projective endofunctors. 
This convention is due to the fact that, in contrast to the case of category $\mathcal O$ 
(see e.g.~\cite{bg}), our buildings blocks are the 
various $\Theta^{\lambda,i}_{s\text{ or }t}$ 
and not the functors $\Theta^{\lambda}_{s\text{ or }t}$ themselves.
\end{rem}
\section{The Khovanov-Seidel quiver algebra and gradings}\label{sec-quiver}
In the present section\footnote{We use \textit{right} modules in this 
section and read from right to left: we think of the paths 
in the quiver as applying morphisms/functors. The reason for this 
will become clear in Subsection~\ref{sub-ainfty}.} we identify 
the quiver for the blocks $\T_{\lambda}$. We show how the 
quiver description can be used to define a grading on $\T$. We 
obtain a grading on $\Endo(\T)$ as well.

Additionally, we show how the translation functors of $\T_{\lambda}$ 
can be used to get a categorification of the action of the 
braid group on the (split) Grothendieck group of $\T_{\lambda}$ which realizes the 
Burau representation on the (split) Grothendieck group. Moreover, we show how 
the category of projective endofunctors on $\T_{\lambda}$ can be used to 
categorify 
(parts of) the Temperley-Lieb $\bQ(v)$-algebras (where the grading decategorifies to the 
indeterminate $v$).

By an algebra we always mean an associative, but not necessarily unital $\bC$-algebra.

\subsection{The quiver algebras \texorpdfstring{$\Am$}{Qm} and \texorpdfstring{$\Ai$}{Ain}}\label{sub-qalg}
Let $m\in\bN$. We consider the following quiver
\begin{equation}\label{eq-quiver}
\xymatrix{
  \raisebox{0.1cm}{\xy(0,0)*{\bullet};(0,-2.5)*{\scriptstyle \bbm};\endxy}  \ar@<-2pt>[r]_/-0.25em/{d_{m}}  &  \raisebox{0.1cm}{\xy(0,0)*{\bullet};(0,-2.5)*{\scriptstyle \bbmd};\endxy}  \ar@<-2pt>[l]_{u_{m-1}}\ar@<-2pt>[r]_{d_{m-1}} & 
\raisebox{0.1cm}{\xy(0,0)*{\bullet};(0,-2.5)*{\scriptstyle \bbmdd};\endxy}  \ar@<-2pt>[l]_{u_{m-2}}\ar@<-2pt>[r]_{d_{m-2}} &  
  \cdots \ar@<-2pt>[l]_{u_{m-3}}\ar@<-2pt>[r]_/+0.2em/{d_{3}} & \raisebox{0.1cm}{\xy(0,0)*{\bullet};(0,-2.5)*{\scriptstyle \bbt};\endxy}  \ar@<-2pt>[l]_/-0.2em/{u_{2}}\ar@<-2pt>[r]_{d_{2}} & \raisebox{0.1cm}{\xy(0,0)*{\bullet};(0,-2.5)*{\scriptstyle \bbo};\endxy}  \ar@<-2pt>[l]_{u_{1}}\ar@<-2pt>[r]_{d_{1}} & \raisebox{0.1cm}{\xy(0,0)*{\bullet};(0,-2.5)*{\scriptstyle \bbz};\endxy}  \ar@<-2pt>[l]_{u_0}\\
}
\end{equation}
having $m+1$ vertices $\bbz,\bbo,\dots,\bbm$ and $2m$ arrows called \textit{up and down}:
\[
u_i\colon \bbi\to \bbiu,\;i=0,\dots,m-1,\quad\quad d_i\colon \bbi\to \bbid,\;i=1,\dots,m.
\]

The \textit{path algebra} $P_m$ of the quiver 
from~\eqref{eq-quiver} is defined to 
be the $\bC$-algebra whose underlying 
$\bC$-module is the $\bC$-vector 
space spanned by all finite paths 
with multiplication given by composition 
of paths if possible and zero otherwise.

Recall that a $\bZ$\textit{-graded} 
$\bC$-algebra $A$ (or simply \textit{graded}) is a 
$\bC$-algebra $A=\oplus_{i\in\bZ} A^i$ such that $A^iA^j\subset A^{i+j}$ 
for all $i,j\in\bZ$.
The path algebra $P_m$ is a graded 
$\bC$-algebra\footnote{Note that Khovanov and Seidel 
use free $\bZ$-modules instead of $\bC$-vector spaces. 
In order to avoid too many different overlapping notations, 
we only use the $\bC$-vector space version, but it is not a big problem to work over $\bZ$.}, 
that is
\[
P_m=\bigoplus_{k\in\bZ_{\geq 0}}P^k_m,\quad\text{with}\; P^k_m=\{\text{All paths in }P_m\text{ of length }k\},
\]
since we clearly have 
$P^k_m\circ P^{k^{\prime}}_m\subset P^{k+k^{\prime}}_m$. We 
write $l(\cdot)$ for the grading on $P_m$ (the ``length'').

\begin{defn}\label{def-ksquiver1}(\textbf{Khovanov-Seidel's $m$-quiver algebra})
Let $\Am$ denote the quotient algebra obtained from the path algebra 
$P_m$ for the quiver from~\eqref{eq-quiver} by the defining relations
\begin{gather*}
u_{i}\circ u_{i-1}=0=d_i\circ d_{i+1},\;i=1,\dots,m-1,\quad\quad d_{i+1}\circ u_i=u_{i-1}\circ d_{i},\;i=1,\dots,m-1,\\
d_1\circ u_0=0.
\end{gather*}
The latter is called the \textit{dead-end relation}.
Given two paths $p,p^{\prime}\in \Am$ we write $p^{\prime}p$ 
instead of $p^{\prime}\circ p$. The algebra $\Am$ 
inherits the grading $l(\cdot)$ from $P_m$ 
since all the relations are homogeneous. 
We call $\Am$ for short \textit{KS} $m$\textit{-quiver algebra}.
\end{defn}

We denote, by abuse of notation, the path of length $0$ that 
starts and ends at $\bbi$ also by $\bbi$. Note that 
the $\bbi$ are projectors or idempotents, because 
$\bbi\bbi=\bbi$, $p\bbi=p$ if $p$ starts in $\bbi$ 
and $0$ else and $\bbi p=p$ if $p$ ends in $\bbi$ 
and $0$ else. Moreover, they form a complete set of 
pairwise orthogonal idempotents, that 
is $1=\bbz+\bbo+\dots+\bbm$ and $\bbi\bbj=\delta(\bbi,\bbj)$, 
where $1\in \Am$ is the unit. Note that $0\neq \bbz$, 
$1\neq \bbo$ and the $\bbi$'s are not central 
for $m>0$, since e.g. $0=\bbi u_i\neq u_i\bbi=u_i$ for $i=0,\dots,m-1$.

Moreover, we denote for $i=1,\dots,m$ by $\varepsilon_i=u_{i-1}d_i$ 
the loop that starts at $\bbi$ and goes via $d_i$ to $\bbid$ 
and then back via $u_{i-1}$. Note that the relations imply 
that $\varepsilon_i=d_{i+1}u_i$ (if possible, i.e. if $i+1\leq m$). 
Thus, the $\bC$-algebra $\Am$ has a basis given by $\bbi$ 
(for $i=0,\dots,m$) and $u_i$ (for $i=0,\dots,m-1$) and 
$d_i, \varepsilon_i$ (for $i=1,\dots,m$) with $l(\bbi)=0,l(u_i)=l(d_i)=1$ and $l(\varepsilon_i)=2$.

We call an algebra homomorphism $f\colon A\to B$ between 
graded algebras $A$ and $B$ a 
\textit{homomorphism of graded algebras}, if $f(A^k)\subset B^k$ for all $k\in\bZ$.
We have the following.

\begin{lem}\label{lem-inclusion}
There is a sequence of (non-unital!) inclusions of graded algebras
\[
\xymatrix{
  Q_0 \ar@^{(->}[r]^{\iota_0} &  Q_1  \ar@^{(->}[r]^{\iota_1}  & Q_2  \ar@^{(->}[r]^{\iota_2} & \dots,
}
\]
where $\iota_m\colon Q_m\to Q_{m+1}$ is defined by $\bbi,u_i,d_i,\varepsilon_i\mapsto \bbi,u_i,d_i,\varepsilon_i$ for all suitable indices $i$.\makeqed
\end{lem}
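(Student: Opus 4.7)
The plan is to verify the three things needed: that each $\iota_m$ is a well-defined algebra homomorphism, that it preserves the grading, and that it is injective. I will also record why the map cannot be unital.

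First I would check well-definedness by presenting $Q_m$ by generators and relations (Definition~\ref{def-ksquiver1}) and showing that the images of these generators inside $Q_{m+1}$ satisfy all the defining relations of $Q_m$. This is essentially tautological: the relations of $Q_m$ are $u_i u_{i-1}=0$ and $d_i d_{i+1}=0$ and $d_{i+1}u_i = u_{i-1}d_i$ for $1\leq i\leq m-1$ together with the dead-end relation $d_1 u_0=0$, and the same relations hold inside $Q_{m+1}$ for all these indices (they are a proper subset of the defining relations of $Q_{m+1}$). The orthogonality relations $\bbi\bbj=\delta_{\bbi\bbj}\bbi$ on the idempotents, and $p\bbi=p$ or $\bbi p=p$ when possible, also clearly transfer. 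Hence $\iota_m$ extends uniquely from the generators to a $\bC$-algebra homomorphism $Q_m\to Q_{m+1}$.

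Next I would check grading preservation. Since every generator $\bbi, u_i, d_i, \varepsilon_i$ of $Q_m$ is sent to the element of $Q_{m+1}$ with the same name, and since $l(\bbi)=0$, $l(u_i)=l(d_i)=1$, $l(\varepsilon_i)=2$ hold in both $Q_m$ and $Q_{m+1}$, the map $\iota_m$ is a homomorphism of graded algebras. For injectivity I would use the explicit basis description stated right before the lemma: $Q_m$ has $\bC$-basis
\[
\{\bbi\mid 0\leq i\leq m\}\cup\{u_i\mid 0\leq i\leq m-1\}\cup\{d_i,\varepsilon_i\mid 1\leq i\leq m\},
\]
and $\iota_m$ sends this basis injectively to a subset of the analogous basis of $Q_{m+1}$. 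Since the images are distinct basis elements of $Q_{m+1}$, they are linearly independent, so $\iota_m$ is injective.

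Finally, I would point out why the inclusion is necessarily non-unital: the unit of $Q_m$ is $\bbz+\bbo+\cdots+\bbm$, whereas the unit of $Q_{m+1}$ is $\bbz+\bbo+\cdots+\bbmu$, and these differ by the nonzero idempotent $\bbmu\in Q_{m+1}$. No step here is a real obstacle: once the generator-and-relation presentation is in hand, well-definedness is immediate from the fact that the defining relations of $Q_m$ form a subset of those of $Q_{m+1}$, and injectivity is immediate from the basis. The only conceptual subtlety is remembering to downgrade ``homomorphism'' to ``non-unital homomorphism,'' which is why the statement emphasizes this parenthetically.
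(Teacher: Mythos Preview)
Your proposal is correct and follows essentially the same approach as the paper: the paper's proof simply notes that $P_m$ includes into $P_{m+1}$ with the relations of $\Am$ contained in those of $Q_{m+1}$, and declares grading preservation and injectivity to be immediate. Your version is a more detailed unpacking of exactly these points (including the explicit basis argument for injectivity and the non-unital remark), so there is nothing to correct.
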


\begin{proof}
This follows because $P_m$ includes 
into $P_{m+1}$ and the set of relations for 
$\Am$ is included in the ones for $Q_{m+1}$. 
That these morphisms respect the grading and are injective is immediate.
\end{proof}

This motivates the following definition.
\begin{defn}\label{def-ksquiver2}(\textbf{Khovanov-Seidel's $\infty$-quiver algebra})
Define $\Ai$ to be the inductive limit 
of the sequence of inclusions of graded 
algebras from Lemma~\ref{lem-inclusion}, that is $\Ai=\varinjlim \Am$. 
We call it for short 
\textit{KS} $\infty$\textit{-quiver algebra}.
\end{defn}

Note that $\Ai$ is a graded $\bC$-vector 
space of countable dimension. Moreover, 
$\Ai$ is an algebra with a 
complete set of pairwise orthogonal 
idempotents $\{\bbi\mid i\in\bN\}$, 
but $\Ai$ is a \textit{non-unital} algebra, 
since the unit would have to be an infinite sum of the $\bbi$'s. Such 
an algebra is sometimes called \textit{idempotented}.

\begin{ex}\label{ex-quiver}
$Q_0$ consists just of $\bC$-multiples of 
$\bbz$. The algebra $Q_{m>0}$ can be visualized as
\[
\xymatrix{
  \raisebox{0.1cm}{\xy(0,0)*{\bullet};(0,-2.5)*{\scriptstyle \bbm};\endxy} \ar@(ul,ur)|{\varepsilon_m} \ar@<-2pt>[r]_{d_{m}} & \cdots \ar@<-2pt>[l]_{u_{m-1}}\ar@<-2pt>[r]_/+0.2em/{d_{i+1}} &  \raisebox{0.1cm}{\xy(0,0)*{\bullet};(0,-2.5)*{\scriptstyle \bbi};\endxy} \ar@(ul,ur)|{\varepsilon_i} \ar@<-2pt>[l]_/-0.2em/{u_{i}}\ar@<-2pt>[r]_{d_{i}} & \cdots \ar@<-2pt>[l]_{u_{i-1}}\ar@<-2pt>[r]_/+0.2em/{d_{3}} & \raisebox{0.1cm}{\xy(0,0)*{\bullet};(0,-2.5)*{\scriptstyle \bbt};\endxy} \ar@(ul,ur)|{\varepsilon_2} \ar@<-2pt>[l]_/-0.2em/{u_{2}}\ar@<-2pt>[r]_{d_{2}} & \raisebox{0.1cm}{\xy(0,0)*{\bullet};(0,-2.5)*{\scriptstyle \bbo};\endxy} \ar@(ul,ur)|{\varepsilon_1} \ar@<-2pt>[l]_{u_{1}}\ar@<-2pt>[r]_{d_{1}} & \raisebox{0.1cm}{\xy(0,0)*{\bullet};(0,-2.5)*{\scriptstyle \bbz};\endxy}  \ar@<-2pt>[l]_{u_0}\\
}
\]
The KS $\infty$-quiver algebra $\Ai$ can be visualized as
\[
\xymatrix{
   \cdots\ar@<-2pt>[r]_/+0.2em/{d_{i+1}} &  \raisebox{0.1cm}{\xy(0,0)*{\bullet};(0,-2.5)*{\scriptstyle \bbi};\endxy} \ar@(ul,ur)|{\varepsilon_i} \ar@<-2pt>[l]_/-0.2em/{u_{i}}\ar@<-2pt>[r]_{d_{i}} & \cdots \ar@<-2pt>[l]_{u_{i-1}}\ar@<-2pt>[r]_/+0.2em/{d_{3}} & \raisebox{0.1cm}{\xy(0,0)*{\bullet};(0,-2.5)*{\scriptstyle \bbt};\endxy} \ar@(ul,ur)|{\varepsilon_2} \ar@<-2pt>[l]_/-0.2em/{u_{2}}\ar@<-2pt>[r]_{d_{2}} & \raisebox{0.1cm}{\xy(0,0)*{\bullet};(0,-2.5)*{\scriptstyle \bbo};\endxy} \ar@(ul,ur)|{\varepsilon_1} \ar@<-2pt>[l]_{u_{1}}\ar@<-2pt>[r]_{d_{1}} & \raisebox{0.1cm}{\xy(0,0)*{\bullet};(0,-2.5)*{\scriptstyle \bbz};\endxy}  \ar@<-2pt>[l]_{u_0}\\
}
\]
We point out that $\Ai$ only has one 
asymmetry coming from the dead-end 
relation. Moreover, $\Ai$ is graded by the 
path length $l(\cdot)$ due to the fact 
that $\Ai$ does not contain infinite paths.
\end{ex}

In order to be able to also consider the 
semisimple blocks $\T_{-1}$ and $\T_{l-1}$ of 
our category $\T$, we also introduce another 
quotient of the path algebra $P_m$, denoted 
by $\At$, (and take a limit as above). Since 
the corresponding module categories should be 
semisimple, the quotient $\At$ is rather trivial 
and we call it \textit{KS trivial $m$-quiver algebra}.

\begin{defn}\label{defn-trivial}(\textbf{Khovanov-Seidel's trivial $m$-quiver algebra})
Let $\At$ denote the quotient algebra 
obtained from the path algebra $P_m$ for 
the quiver from~\eqref{eq-quiver} by the defining relations
\[
u_{i}=0=d_{i+1},\;i=0,\dots,m-1.
\]
The algebra $\At$ inherits the grading 
$l(\cdot)$ from $P_m$, but this grading is trivial. 
$\At$ consists only of orthogonal idempotents $\bbi$ for $i=0,\dots, m$. These idempotents 
form a set of pairwise non-isomorphic, central, orthogonal idempotents which shows 
that $\At$ is a semisimple algebra. Moreover, the algebra $\At$ is clearly isomorphic to $\bC\times\dots\times\bC$ (with $m+1$ factors).

As before we define \textit{Khovanov-Seidel's trivial} $\infty$\textit{-quiver algebra} via $\Att=\varinjlim\At$. We note that this can be visualized as
\[
\xymatrix{
   \cdots &  \raisebox{0.1cm}{\xy(0,0)*{\bullet};(0,-2.5)*{\scriptstyle \bbi};\endxy}  & \cdots  & \raisebox{0.1cm}{\xy(0,0)*{\bullet};(0,-2.5)*{\scriptstyle \bbt};\endxy} & \raisebox{0.1cm}{\xy(0,0)*{\bullet};(0,-2.5)*{\scriptstyle \bbo};\endxy} & \raisebox{0.1cm}{\xy(0,0)*{\bullet};(0,-2.5)*{\scriptstyle \bbz};\endxy}\\
}
\]
where we take the inductive limit as above.
\end{defn}

For convenience we set $u_m=d_0=\varepsilon_0=0$ for all $\Am$ and $d_0=\varepsilon_0=0$ for $\Ai$ in the following.

\subsection{Combinatorics of the graded, right \texorpdfstring{$\Am$}{Qm}- and \texorpdfstring{$\Ai$}{Ain}-modules}\label{sub-ammod}
Recall that, if $A$ denotes some 
graded $\bC$-algebra, then a (right) 
$A$-module $M$ is called \textit{$\bZ$-graded} (or simply \textit{graded}), if
\[
M=\bigoplus_{k\in\bZ}M^k\quad\text{and}\quad M^k\cdot A^{k^{\prime}}\subset M^{k+k^{\prime}}.
\]
An $A$-module homomorphism between 
graded modules $f\colon M\to N$ is called 
\textit{degree preserving}, if $f(M^k)\subset N^k$ 
for all $k\in\bZ$ and \textit{homogeneous 
(of degree $d\in\bZ$)} if $f(M^k)\subset N^{k+d}$ for all $k\in\bZ$.
We denote by $\ModgrA$ the category of 
\textit{graded}, finitely generated $A$-modules 
whose morphisms from $M$ to $M^{\prime}$ are given by
\begin{equation}\label{eq-gradhom}
\Hom_{\ModgrA}(M,M^{\prime})=\bigoplus_{s\in\bZ}\Hom_{A}(M,M^{\prime}\langle s\rangle)_{0},
\end{equation}
where the zero should mean \textit{degree preserving} 
morphisms and $\langle s\rangle$ 
denotes a shift as below. Thus, all morphisms in $\ModgrA$ are finite 
direct sums of homogeneous morphisms.

The endofunctor $\cdot\langle s\rangle\colon\ModgrA\to\ModgrA$, 
called \textit{shift\footnote{In our convention a positive number shifts the degree \textit{up}.} 
by $s\in\bZ$} sends the $k-s$-th degree part of a module to the 
$k$-th of the shift $M\langle s\rangle$, that is, $M\langle s\rangle^k=M^{k-s}$.

We use similar notions for left modules 
(we denote such categories by e.g. $\AModgr$) 
or projective modules (we denote such categories 
by e.g. $\pModgrA$). We \textit{only} 
work in categories of (graded) finitely generated (projective) modules.

$\Am$ acts on itself from the right 
by \textit{pre}-composition and from the 
left by \textit{post}-composition of paths.
This makes a difference: 
let us denote by $P_i$ the 
left ideal of $\Am$ generated 
by $\bbi$. Similar, ${}_iP$ denotes 
the right ideal generated by $\bbi$. We have as $\bC$-vector spaces
\[
P_i=\bC\bbi\oplus\bC u_i\oplus\bC d_i\oplus\bC\varepsilon_i,\quad\quad{}_iP=\bC\bbi\oplus\bC u_{i-1}\oplus\bC d_{i+1}\oplus\bC\varepsilon_i,
\]
with homogeneous components of degree $0,1,1,2$ (from left to right).

The $P_i$ and the ${}_iP$ are graded 
$\Am$-modules (left and right respectively) that can be visualized as
\[
P_i=\xymatrix{
  \cdots  & \raisebox{0.1cm}{\xy(0,0)*{\bullet};(0,-2.5)*{\scriptstyle \bbi};\endxy}\ar@(ul,ur)|{\varepsilon_i}  \ar@<-2pt>[l]_/-0.2em/{u_i}\ar@<-2pt>[r]_/-0.2em/{d_i} &\cdots \\
},\quad\quad{}_iP=\xymatrix{
   \cdots \ar@<-2pt>[r]_{d_{i+1}} & \raisebox{0.1cm}{\xy(0,0)*{\bullet};(0,-2.5)*{\scriptstyle \bbi};\endxy}\ar@(ul,ur)|{\varepsilon_i}   &\cdots \ar@<-2pt>[l]_{u_{i-1}}.\\
}
\]
Thus, the $P_i$'s and the ${}_iP$'s 
are projective, since we have (as left, respectively right, $\Am$-modules)
\[
\Am=\bigoplus_{i=0}^m P_i,\quad\quad \Am=\bigoplus_{i=0}^m {}_iP.
\]
Moreover, they are all indecomposable because 
they are the projective covers of the simple left, respectively right, $\Am$-modules $L_i$ 
and ${}_iL$ 
obtained from $P_i$ 
and ${}_iP$ by killing $u_i,d_i$ and $\varepsilon_i$ 
respectively $u_{i-1}$, $d_{i+1}$ and $\varepsilon_i$.
Furthermore, it is easy to see that all 
indecomposable left, respectively right, 
$\Am$-modules are of the form $P_i\langle s\rangle$, 
respectively ${}_iP\langle s\rangle$, for some $s\in\bZ$: 
this follows directly from the fact that $\Am$ is 
finite-dimensional. Thus, all indecomposable projective $\Am$-modules are 
direct summands of $\Am$ considered as a (left or right) $\Am$-module.

\begin{ex}\label{ex-decomp}
One easily checks that the same is true for 
the corresponding notions for $\Ai$ and 
that one gets a decomposition into ${}_iP$'s as 
follows (we have indicated the neighbouring ${}_iP$'s 
using alternating colors and we for simplicity assume that the $i$ below is odd)
\[
\xymatrix{
   \cdots\ar@[blue]@<-2pt>[r]_/+0.2em/{{\color{blue}d_{i+1}}} &  {\color{blue}\raisebox{0.1cm}{\xy(0,0)*{\bullet};(0,-2.5)*{\scriptstyle \bbi};\endxy}} \ar@[blue]@(ul,ur)|{{\color{blue}\varepsilon_i}} \ar@[green]@<-2pt>[l]_/-0.2em/{{\color{green}u_{i}}}\ar@[green]@<-2pt>[r]_{{\color{green}d_{i}}} & \cdots \ar@[blue]@<-2pt>[l]_{{\color{blue}u_{i-1}}}\ar@[green]@<-2pt>[r]_/+0.2em/{{\color{green}d_{3}}} & {\color{green}\raisebox{0.1cm}{\xy(0,0)*{\bullet};(0,-2.5)*{\scriptstyle \bbt};\endxy}} \ar@[green]@(ul,ur)|{{\color{green}\varepsilon_2}} \ar@[blue]@<-2pt>[l]_/-0.2em/{{\color{blue}u_{2}}}\ar@[blue]@<-2pt>[r]_{{\color{blue}d_{2}}} & {\color{blue}\raisebox{0.1cm}{\xy(0,0)*{\bullet};(0,-2.5)*{\scriptstyle \bbo};\endxy}} \ar@[blue]@(ul,ur)|{{\color{blue}\varepsilon_1}} \ar@[green]@<-2pt>[l]_{{\color{green}u_{1}}}\ar@[green]@<-2pt>[r]_{{\color{green}d_{1}}} & {\color{green}\raisebox{0.1cm}{\xy(0,0)*{\bullet};(0,-2.5)*{\scriptstyle \bbz};\endxy}} \ar@[blue]@<-2pt>[l]_{{\color{blue}u_0}}\\
}
\]
We encourage the reader to draw the 
decomposition into $P_i$'s. Again, all 
finitely generated, graded projective, 
indecomposable right $\Ai$-modules are, up 
to a shift, of the form ${}_iP$. Same for 
the left modules and the $P_i$'s. This can 
be deduced directly, but is 
true in more generality for idempotented 
algebras, see for example~\cite[Proposition~5.3.1]{klms}.
\end{ex}

Similarly for $\At$ and $\Att$ which decompose into the 
corresponding (left and right) modules denoted 
by $P_i^{\mathrm{triv}}$ and ${}_iP^{\mathrm{triv}}$.

\subsection{Endofunctors on \texorpdfstring{$\pModgr$-$\Am$}{pMod-Qm} and \texorpdfstring{$\pModgr$-$\Ai$}{pMod-Qin}}\label{sub-endo}
Set\footnote{Note that we use a different convention than Khovanov and Seidel for the gradings by shifting ${}_iP$ down by one.} 
$B_i=P_i\otimes_{\bC}{}_iP\langle -1\rangle$ for all 
indices $i=1,\dots,m$. Note that the $B_i$'s are 
graded $\Am$-bimodules with the tensor product 
$p\otimes p^{\prime}$ of degree $l(p\otimes p^{\prime})=l(p)+l(p^{\prime})$ 
for $p$ of degree $l(p)$ and $p^{\prime}$ of degree $l(p^{\prime})$. 
We define functors
\[
\mathcal U_i\colon \pModgrAm\to \pModgrAm,\;\mathcal U_i= \cdot \otimes_{\Am}B_i\quad\text{for}\quad i=1,\dots,m.
\]
Note that we have
\begin{equation}\label{eq-ufunctors}
{}_{i^{\prime}}P\otimes_{\Am}P_i\cong\biggl\{
\begin{matrix}
 \bC\bbi\oplus\bC\varepsilon_i,\; \text{if }|i-i^{\prime}|=0, & & \bC u_{i},\;\text{if }i^{\prime}-i=1,\\
 \bC d_{i},\;\text{if }i-i^{\prime}=1, & & 0,\;\text{if }|i-i^{\prime}|>1.
\end{matrix}
\end{equation}
This can be easily seen by considering (here $i=i^{\prime}$ as an example)
\[
\xymatrix{
   \cdots \ar@<-2pt>[r]_{d_{i+1}} & \raisebox{0.1cm}{\xy(0,0)*{{\color{green}\bullet}};(0,-2.5)*{{\color{green}\scriptstyle \bbi}};\endxy}\ar@[green]@(ul,ur)|{{\color{green}\varepsilon_i}}   &\cdots \ar@<-2pt>[l]_{u_{i-1}}\\
}\otimes_{\Am}\xymatrix{
  \cdots  & \raisebox{0.1cm}{\xy(0,0)*{{\color{green}\bullet}};(0,-2.5)*{{\color{green}\scriptstyle \bbi}};\endxy}\ar@[green]@(ul,ur)|{{\color{green}\varepsilon_i}}  \ar@<-2pt>[l]_/-0.3em/{u_i}\ar@<-2pt>[r]_/-0.3em/{d_i} &\cdots, \\
}
\]
where we have illustrated the 
overlapping pieces of ${}_iP$ and $P_i$. This clearly implies
\begin{equation}\label{eq-tlcat}
\mathcal U_i ({}_{i^{\prime}}P)\cong\begin{cases}{}_iP\langle -1\rangle\oplus {}_iP\langle +1\rangle,&\text{if }|i-i^{\prime}|=0,\\
{}_iP,&\text{if }|i-i^{\prime}|=1,\\
0,&\text{if }|i-i^{\prime}|>1.\end{cases}
\end{equation}
We note that we can see ${}_{i^{\prime}}P\otimes_{\Am}P_i$ as a 
graded, right $\At$-module where the action of $\bbi$ kills 
everything that does not start in $\bbi$. Note that, in this 
notation, $\bC\bbi,\bQ u_{i},\bC d_{i}$ and $\bC\varepsilon_i$ 
are all one dimensional $\At$-modules concentrated 
in degrees $0,1,1$ and $2$ respectively.
Hence, as graded, 
right $\At$-modules, 
$\bC u_{i}\cong\bC \bbi\langle+1\rangle$, $\bC d_{i}\cong\bC \bbi\langle+1\rangle$ and $\bC \varepsilon_{i}\cong\bC \bbi\langle+2\rangle$.

\begin{cor}\label{cor-tlfunctor}
The functors $\mathcal U_i\colon\pModgrAm\to\pModgrAm$ satisfy the following.
\begin{itemize}
\item[(a)] $\mathcal U_i\circ\mathcal U_i\cong \mathcal U_i\langle -1\rangle\oplus \mathcal U_i\langle +1\rangle$ for $i=1,\dots,m$.
\item[(b)] Let $i^{\prime}=i\pm 1$ such that $i,i^{\prime}\in\{1,\dots,m\}$. Then $\mathcal U_i\circ\mathcal U_{i^{\prime}}\circ\mathcal U_i\cong\mathcal U_i$.
\item[(c)] $\mathcal U_i\circ\mathcal U_{i^{\prime}}\cong 0$, if $|i-i^{\prime}|>1$.\makeqed
\end{itemize}
\end{cor}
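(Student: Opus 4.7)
The plan is to reduce all three assertions to straightforward computations with the bimodules $B_i = P_i \otimes_{\bC} {}_iP\langle -1\rangle$, using the key calculation of ${}_{i'}P \otimes_{\Am} P_i$ given in~\eqref{eq-ufunctors}. Since tensor product of bimodules is associative, for any two indices we have
\[
B_{i'} \otimes_{\Am} B_i \;\cong\; P_{i'} \otimes_{\bC} \bigl({}_{i'}P \otimes_{\Am} P_i\bigr) \otimes_{\bC} {}_iP\langle -2\rangle,
\]
so everything boils down to understanding the middle factor as a graded vector space (the outer idempotents $\bbi$, $\bbi'$ force the $\At$-bimodule structure to be the obvious one).

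First I would do part (c): if $|i-i'|>1$, then \eqref{eq-ufunctors} gives ${}_{i'}P\otimes_{\Am}P_i = 0$, so $B_{i'}\otimes_{\Am}B_i = 0$, which gives $\mathcal U_i\circ\mathcal U_{i'}\cong 0$. Next, for part (a), \eqref{eq-ufunctors} yields ${}_iP\otimes_{\Am}P_i \cong \bC\bbi \oplus \bC\varepsilon_i$, whose homogeneous components live in degrees $0$ and $2$ respectively. Plugging in and using $P_i\otimes_{\bC}{}_iP = B_i\langle +1\rangle$ one obtains
\[
B_i \otimes_{\Am} B_i \;\cong\; B_i\langle -1\rangle \,\oplus\, B_i\langle +1\rangle,
\]
and tensoring on the right with $\cdot$ gives the desired decomposition of $\mathcal U_i\circ \mathcal U_i$.

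For part (b), with $i' = i\pm 1$, \eqref{eq-ufunctors} tells us that both ${}_iP\otimes_{\Am}P_{i'}$ and ${}_{i'}P\otimes_{\Am}P_i$ are one-dimensional sitting in degree $1$. Then
\[
B_i \otimes_{\Am} B_{i'} \otimes_{\Am} B_i \;\cong\; P_i \otimes_{\bC} \bigl({}_iP\otimes_{\Am}P_{i'}\bigr) \otimes_{\bC} \bigl({}_{i'}P\otimes_{\Am}P_i\bigr) \otimes_{\bC} {}_iP \langle -3 \rangle
\]
is isomorphic to $P_i \otimes_{\bC} {}_iP\langle -1\rangle = B_i$, since the middle two factors together contribute a one-dimensional space in degree $2$. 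This gives $\mathcal U_i\circ\mathcal U_{i'}\circ\mathcal U_i\cong\mathcal U_i$.

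There is no real obstacle: the only thing to be slightly careful about is the graded bimodule structure when applying \eqref{eq-ufunctors}, specifically keeping track of the degree shifts built into the definition of $B_i$ (the $\langle -1\rangle$ shift) and ensuring that the tensor product over $\Am$ of the outer pieces $P_{\bullet}$ and ${}_\bullet P$ indeed assembles back into a bimodule of the form $B_i$ after identifying $P_i\otimes_{\bC}{}_iP$ with $B_i\langle +1\rangle$.
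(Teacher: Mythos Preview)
Your proof is correct and is essentially the same argument as the paper's, just made explicit: the paper says only ``this follows directly from~\eqref{eq-tlcat}'', and what you have done is unpack that one-liner at the level of the underlying bimodules $B_i$ using~\eqref{eq-ufunctors} (which is precisely how~\eqref{eq-tlcat} was obtained). Working with the bimodules rather than their effect on indecomposable projectives is in fact the cleaner way to establish the \emph{functor} isomorphisms, so your write-up is if anything more complete than the paper's.
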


\begin{proof}
This follows directly from~\eqref{eq-tlcat}.
\end{proof}

Finally we set
\[
\mathcal U_{\mathrm{even}}=\!\!\bigoplus_{1\leq i, 2i\leq m}\!\!\mathcal U_{2i},\quad\quad\mathcal U_{\mathrm{odd}}=\!\!\bigoplus_{0\leq i, 2i+1\leq m}\!\!\mathcal U_{2i+1}.
\]
By Corollary~\ref{cor-tlfunctor} we see that
\begin{equation}\label{eq-evenodd}
\mathcal U_{\mathrm{even}}\circ \mathcal U_{\mathrm{even}}\cong \mathcal U_{\mathrm{even}}\langle -1\rangle\oplus \mathcal U_{\mathrm{even}}\langle +1\rangle,\quad\quad\mathcal U_{\mathrm{odd}}\circ \mathcal U_{\mathrm{odd}}\cong \mathcal U_{\mathrm{odd}}\langle -1\rangle\oplus \mathcal U_{\mathrm{odd}}\langle +1\rangle.
\end{equation}
It is clear that we can easily adapt 
the definition of the $\mathcal U_i$'s 
to the $\Ai$ case. We denote these 
functors by $\mathcal U^{\infty}_i$ 
for all $i\in\bZ_{>0}$. These functors satisfy completely similar relations as before.

We also define $\mathcal U^{\infty}_{\mathrm{even}}$ 
and $\mathcal U^{\infty}_{\mathrm{odd}}$ completely 
analogously by taking the direct sum over all 
$i\in\bZ_{>0}$. The reader is still invited to 
check that these two satisfy similar relations as in~\eqref{eq-evenodd}.

We also refine 
the definitions of the functors $\mathcal U_i$ 
by \textit{factoring through} the category of 
graded (right) $\At$-modules which we denote by 
$\ModgrAt$. To be precise, we define $\mathcal U_i^t$ and $\mathcal U^i_t$ via
\begin{gather*}
\mathcal U_i^t\colon\ModgrAm\to\ModgrAt,\; \mathcal U_i^t=\cdot\otimes_{\Am}P_i
\\
\mathcal U^i_t\colon\ModgrAt\to\ModgrAm,\; \mathcal U^i_t=\cdot\otimes_{\bC}{}_iP\langle -1\rangle.
\end{gather*}
It is immediate that $\mathcal U_i=\mathcal U^i_t\circ\mathcal U^t_i$. 
We define even and odd versions of these refinements 
as before where we use $\mathcal U^t_{\mathrm{even}}$ 
and $\mathcal U_t^{\mathrm{even}}$ as notations (similar for the odd versions).

The following lemma can be seen, by Remark~\ref{rem-cato} below, 
as an analog of~\cite[Theorem~8.4]{st2}.

\begin{lem}\label{lem-adjoint}
As graded functors: $\mathcal U^t_i\langle+1\rangle$ is 
the right adjoint of $\mathcal U_t^i$, $\mathcal U^t_i$ 
is the left adjoint of $\mathcal U_t^i\langle-1\rangle$. 
Similar for the even/odd versions. The functors $\mathcal U_i$, $\mathcal U_{\mathrm{even}}$ 
and $\mathcal U_{\mathrm{odd}}$ are all graded self adjoint.\makeqed
\end{lem}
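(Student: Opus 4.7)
The plan is to derive both adjunctions from standard tensor-Hom adjunction, using the identifications $P_i = \Am\bbi$ and ${}_iP = \bbi\Am$ of the relevant bimodules as indecomposable projectives on both sides. The central calculation is the idempotent-truncation isomorphism: for any right $\Am$-module $N$ there is a natural identification
\[
\mathcal{U}_i^t(N) = N\otimes_{\Am}P_i \;\cong\; N\bbi \;\cong\; \Hom_{\Am}({}_iP, N),
\]
valued in right $\At$-modules concentrated in the $\bbi$-component. View ${}_iP$ as a $\At$-$\Am$-bimodule (the left $\At$-action factoring through the idempotent $\bbi$); then $\mathcal{U}_t^i(M) = M\otimes_{\At}{}_iP\langle -1\rangle$, in agreement with the paper's $\otimes_{\bC}$ formula, since only the $\bbi$-component of $M$ contributes non-trivially.

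For the first adjunction, tensor-Hom for the bimodule ${}_iP\langle-1\rangle$ gives
\[
\Hom_{\ModgrAm}(M\otimes_{\At}{}_iP\langle-1\rangle, N) \;\cong\; \Hom_{\ModgrAt}\bigl(M,\, \Hom_{\ModgrAm}({}_iP\langle-1\rangle, N)\bigr),
\]
and the graded-Hom shift rule $\Hom(X\langle-1\rangle,Y)\cong \Hom(X,Y)\langle +1\rangle$ together with the idempotent-truncation isomorphism turns the inner Hom into $\mathcal{U}_i^t(N)\langle +1\rangle$, proving part~1. For the second adjunction, I use that ${}_iP = \bbi\Am$ is finitely generated projective as a right $\Am$-module with $\Am$-dual $\Hom_{\Am}({}_iP,\Am)\cong P_i$; the Frobenius-type pairing is mediated by the degree-$2$ loop $\varepsilon_i = u_{i-1}d_i = d_{i+1}u_i$, which accounts for the $\langle -2\rangle$ shift needed in the right-hand side. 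Equivalently, one can verify the adjunction directly on the projective generators ${}_jP$ using~\eqref{eq-ufunctors}: for each $j$ the graded dimensions of $\mathcal{U}_i^t({}_jP) \cong {}_jP\bbi$ as an $\At$-module match those of $\Hom_{\ModgrAm}({}_jP,{}_iP\langle-2\rangle)\cong({}_iP\langle-2\rangle)\bbj$ via Yoneda, and naturality extends the isomorphism to all modules.

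Self-adjointness of $\mathcal{U}_i = \mathcal{U}_t^i\circ\mathcal{U}_i^t$ is then formal: the right adjoint of a composition is the composition of right adjoints in reverse order. From parts~1 and~2 one extracts
\[
(\mathcal{U}_t^i)^R = \mathcal{U}_i^t\langle +1\rangle, \qquad (\mathcal{U}_i^t)^R = \mathcal{U}_t^i\langle -1\rangle,
\]
so $(\mathcal{U}_i)^R = \mathcal{U}_t^i\langle -1\rangle\circ\mathcal{U}_i^t\langle +1\rangle = \mathcal{U}_i$, the shifts canceling since $\mathcal{U}_t^i$ commutes with $\langle\pm 1\rangle$. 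The left adjoint is analogous. For $\mathcal{U}_{\mathrm{even}}$ and $\mathcal{U}_{\mathrm{odd}}$ the statement follows by additivity, since adjoint pairs are preserved under (even countable) direct sums of functors. The main bookkeeping hurdle is tracking the shifts carefully and pinning down the relation between $\otimes_{\bC}$ and $\otimes_{\At}$ in the paper's notation for $\mathcal{U}_t^i$; both reduce in the relevant cases to the graded $\bC$-tensor on the $\bbi$-component, which is what makes the Frobenius cancellation of shifts work cleanly.
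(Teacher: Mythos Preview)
Your proof is correct and takes a genuinely different route from the paper. The paper proceeds by a direct case-by-case check on the indecomposable projectives: it writes down the graded hom-spaces $\Hom_{\pModgrAm}({}_iP,{}_{i'}P)$ explicitly (the four cases according to $|i-i'|$) and then verifies by hand that $\Hom_{\pModgrAm}(\mathcal U_t^i(\bC\bbi),{}_{i'}P)\cong\Hom_{\pModgrAt}(\bC\bbi,\mathcal U_i^t\langle+1\rangle({}_{i'}P))$ in each case. Your argument instead invokes tensor--Hom adjunction abstractly, using the identification $\mathcal U_i^t(N)\cong N\bbi\cong\Hom_{\Am}({}_iP,N)$, and then derives self-adjointness of $\mathcal U_i$ formally from the cancellation of shifts. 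This is cleaner and explains \emph{why} the shifts are $\pm 1$ (they come from the $\langle-1\rangle$ in the definition of $\mathcal U_t^i$) and why $\mathcal U_i$ is exactly self-adjoint.

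One point of imprecision: for the second adjunction you invoke $\Hom_{\Am}({}_iP,\Am)\cong P_i$, but what the right-adjoint computation for $\mathcal U_i^t=-\otimes_{\Am}P_i$ actually requires is the $\At$-linear (equivalently graded $\bC$-linear) dual, namely $P_i^*\cong{}_iP\langle-2\rangle$ as right $\Am$-modules. These are different dualities. The identification you want does hold for $i\geq 1$ and is indeed mediated by the degree-$2$ socle $\varepsilon_i$ (send $\varepsilon_i^*\mapsto\bbi$, $\bbi^*\mapsto\varepsilon_i$, and match the degree-$1$ pieces), so your Frobenius remark is on target; it would just be worth stating the correct duality explicitly. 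Your fallback of checking on generators via~\eqref{eq-ufunctors} is exactly the paper's method, so either way the argument closes.
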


\begin{proof}
This is a case-by-case check. 
We use that, as graded $\bC$-vector spaces, we have
\begin{equation}\label{eq-homps}
\Hom_{\pModgrAm}({}_iP,{}_{i^{\prime}}P)\cong\biggl\{
\begin{matrix}
 \bC\bbi\oplus\bC\varepsilon_i,\; \text{if }|i-i^{\prime}|=0, & & \bC u_{i},\;\text{if }i^{\prime}-i=1,\\
 \bC d_{i},\;\text{if }i-i^{\prime}=1, & & 0,\;\text{if }|i-i^{\prime}|>1.
\end{matrix}
\end{equation}
The isomorphism is given by mapping a 
path $p$ on the right-hand side to the 
homomorphism ${}_iP\to {}_{i^{\prime}}P$ 
given by left multiplication 
(\textit{post}-composition) with $p$. 
Note that this is a homogeneous morphism of degree $l(p)$.

We only 
prove that $\mathcal U^t_i\langle+1\rangle$ is 
the right adjoint of $\mathcal U_t^i$ and leave the rest to the reader.
To this end, we have to show that there are isomorphisms of graded vector spaces
\[
\Hom_{\pModgrAm}(\mathcal U_t^i(\bC\bbi),{}_{i^{\prime}}P)\cong\Hom_{\pModgrAt}(\bC\bbi,\mathcal U^t_i\langle+1\rangle({}_{i^{\prime}}P)),
\]
where we can restrict to check only for objects 
as above by additivity. Note that we have three 
cases depending on $i-i^{\prime}$, since the case 
$|i-i^{\prime}|>1$ is clear.

$\boldsymbol{i=i^{\prime}}$. The left-hand side is 
$\Hom_{\pModgrAm}({}_iP\langle-1\rangle,{}_iP)\cong \bC\bbi\langle+1\rangle\oplus \bC\varepsilon_i\langle+1\rangle$, which follows from~\eqref{eq-homps}. 
Moreover, by~\eqref{eq-ufunctors} and the shift by $+1$, we get the same for the right side.

$\boldsymbol{i^{\prime}=i+1}$. The left-hand 
side gives $\Hom_{\pModgrAm}({}_iP\langle-1\rangle,{}_{i+1}P)\cong \bC u_i\langle+1\rangle$. This follows again by~\eqref{eq-homps}. The right-hand side gives, 
by~\eqref{eq-ufunctors} and the shift by 
$+1$, the same result.

$\boldsymbol{i^{\prime}=i-1}$. Similarly to the case $\boldsymbol{i^{\prime}=i+1}$.

Thus, we have an adjoint pair 
$(\mathcal U_t^i,\mathcal U^t_i\langle+1\rangle)$. 
The other statements follow analogously.
\end{proof}

The same holds, mutatis mutandis, for the infinity 
versions as well.


\subsection{\texorpdfstring{$\Ai$}{Ain} and the tilting category \texorpdfstring{$\T$}{T}}\label{sub-ainfty}
Recall that the category 
$\T_{\lambda}$ denotes the 
$\lambda$-block of $\T$ for $\lambda$ 
in the fundamental alcove $\mathcal A_0$, see Subsection~\ref{sub-blocks}. 
We fix any such $\lambda$ and denote everything 
using the simplified notation without the $\lambda$'s. In particular, recall our notation from Definition~\ref{defn-tiltgen} for the tilting generators $\Tn{\bbin}$ and $\Tn{\leq\bbm}$.
We denote by $\T_{\lambda}(\leq\bbm)$ the 
full subcategory of $\T_{\lambda}$ consisting of 
objects whose indecomposable summands are all from 
the set $\{\Tn{\lambda_0},\dots,\Tn{\lambda_m}\}$ for some fixed $m\in\bN$.
We also use completely similar notations for 
the walls $\mu$ by indicating this case with a superscript $t$. 

\begin{prop}\label{prop-isoofalg1}
We have
\[
\End_{\Uq}(\Tn{\leq\bbm})\cong \Am\quad\text{and}\quad\End_{\Uq}(\Tn{\leq\bbm}^t)\cong \At,
\]
which are isomorphisms of $\bC$-algebras.\makeqed
\end{prop}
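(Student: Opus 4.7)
The plan is to write down an explicit algebra homomorphism in each direction and match bases.

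For the first isomorphism, I would define a $\bC$-algebra map $\varphi\colon\Am\to\End_{\Uq}(\Tn{\leq\bbm})$ on generators by sending each idempotent $\bbi$ to the projection onto the summand $\Tn{\lambda_i}$ (composed with the inclusion back into $\Tn{\leq\bbm}$), and sending $u_i,d_i,\varepsilon_i$ to (a fixed choice of) the up, down and loop $\Uq$-intertwiners from Proposition~\ref{prop-link}. Well-definedness of $\varphi$ is the observation that the relations in Definition~\ref{def-ksquiver1} are exactly the relations verified by $u_i^\lambda,d_i^\lambda,\varepsilon_i^\lambda$ in Proposition~\ref{prop-link} (including the dead-end relation $d_1\circ u_0 = 0$ and the squaring-to-zero of $\varepsilon_i$, which is encoded on the quiver side by $u_{i-1}u_i = 0 = d_{i+1}d_i$ forcing $\varepsilon_i^2 = u_{i-1}d_iu_{i-1}d_i = u_{i-1}u_id_{i+1}d_i = 0$).

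To see $\varphi$ is an isomorphism, I would decompose both sides blockwise and count dimensions. On the right,
\[
\End_{\Uq}(\Tn{\leq\bbm}) = \bigoplus_{i,j=0}^{m}\Hom_{\Uq}(\Tn{\lambda_i},\Tn{\lambda_j}),
\]
and Corollary~\ref{cor-homspaces}(a) computes each summand: two-dimensional (spanned by $\mathrm{id}$ and $\varepsilon_i$) when $i=j\neq 0$, one-dimensional when $i=j=0$ or $|i-j|=1$, and zero otherwise. On the left, the basis of $\Am$ listed after Definition~\ref{def-ksquiver1} consists of the $\bbi$, the $u_i$, the $d_i$ and the $\varepsilon_i$, which distributes in exactly the same way across the $(i,j)$-components $\bbj\,\Am\,\bbi$. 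Since $\varphi$ sends basis to basis (the images being nonzero by Proposition~\ref{prop-link}) and matches dimensions in every component, it is bijective.

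For the second isomorphism, the argument is easier: on walls, Proposition~\ref{prop-tilt}(a) gives $\Tn{\mu_i}\cong\Ln{\mu_i}$, so the indecomposable summands of $\Tn{\leq\bbm}^t$ are simple and pairwise non-isomorphic. Schur's lemma in the form of~\eqref{eq-SchurLemma} (equivalently Corollary~\ref{cor-homspaces}(b)) yields
\[
\End_{\Uq}(\Tn{\leq\bbm}^t)\;\cong\;\bigoplus_{i=0}^{m}\bC\cdot\mathrm{id}_{\Tn{\mu_i}},
\]
and sending $\bbi$ to $\mathrm{id}_{\Tn{\mu_i}}$ (and each $u_i,d_i$ to $0$) exhibits this as $\At$, the product of $m+1$ copies of $\bC$ described in Definition~\ref{defn-trivial}.

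The only real subtlety is the one already handled in Proposition~\ref{prop-link}: the up, down, and loop morphisms are only defined up to a nonzero scalar, so one must fix a coherent normalization so that $d_{i+1}u_i = u_{i-1}d_i$ holds on the nose (not just up to scalar); this can be done inductively starting from $u_0,d_1$. Once this is arranged, the above assembly from Corollary~\ref{cor-homspaces} and Proposition~\ref{prop-link} is essentially mechanical.
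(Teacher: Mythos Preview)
Your proof is correct and follows essentially the same approach as the paper: define the explicit algebra map on generators using Proposition~\ref{prop-link}, and verify it is an isomorphism via the dimension count supplied by Corollary~\ref{cor-homspaces}. Your explicit remark about fixing a coherent normalization of the $u_i,d_i$ (so that $d_{i+1}u_i=u_{i-1}d_i$ holds on the nose) makes precise a point the paper leaves implicit.
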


\begin{proof}
We are going to construct these 
isomorphisms explicitly. Moreover, 
if we simply write $\bbi$ for $\Tn{\bbi}$, 
then we can visualize $\End_{\Uq}(\Tn{\leq\bbm})$ 
using Proposition~\ref{prop-link} for all $m>0$ as
\[
\xymatrix{
  \raisebox{0.1cm}{\xy(0,0)*{\bullet};(0,-2.5)*{\scriptstyle \bbm};\endxy} \ar@(ul,ur)|{\varepsilon_m} \ar@<-2pt>[r]_{d_{m}} & \cdots \ar@<-2pt>[l]_{u_{m-1}}\ar@<-2pt>[r]_/+0.2em/{d_{i+1}} &  \raisebox{0.1cm}{\xy(0,0)*{\bullet};(0,-2.5)*{\scriptstyle \bbi};\endxy} \ar@(ul,ur)|{\varepsilon_i} \ar@<-2pt>[l]_/-0.2em/{u_{i}}\ar@<-2pt>[r]_{d_{i}} & \cdots \ar@<-2pt>[l]_{u_{i-1}}\ar@<-2pt>[r]_/+0.2em/{d_{3}} & \raisebox{0.1cm}{\xy(0,0)*{\bullet};(0,-2.5)*{\scriptstyle \bbt};\endxy} \ar@(ul,ur)|{\varepsilon_2} \ar@<-2pt>[l]_/-0.2em/{u_{2}}\ar@<-2pt>[r]_{d_{2}} & \raisebox{0.1cm}{\xy(0,0)*{\bullet};(0,-2.5)*{\scriptstyle \bbo};\endxy} \ar@(ul,ur)|{\varepsilon_1} \ar@<-2pt>[l]_{u_{1}}\ar@<-2pt>[r]_{d_{1}} & \raisebox{0.1cm}{\xy(0,0)*{\bullet};(0,-2.5)*{\scriptstyle \bbz};\endxy}  \ar@<-2pt>[l]_{u_0}\\
}
\]
Thus, we consider the 
(by Proposition~\ref{prop-link} well-defined) $\bC$-algebra homomorphism
\[
\phi\colon \Am\to\End_{\Uq}(\Tn{\leq\bbm}), u_i\mapsto u_i, d_i\mapsto d_i,\varepsilon_i\mapsto \varepsilon_i
\]
for all suitable indices $i$. 
Using Corollary~\ref{cor-homspaces} 
we see that this is an isomorphism. 
The case $m=0$ and the 
``trivial case'' cases are immediate which proves the statement.
\end{proof}

The morphisms of $\End_{\Uq}(\Tn{\bbin})$ are 
``matrices'' $f=\prod_{i,i^{\prime}\in\bN} f_{i^{\prime}i}$ 
with $f_{i^{\prime}i}\colon \Tn{\bbi}\to\Tn{\bbi^{\prime}}$ and each 
row consists only of finitely many non-zero entries 
(this follows from Corollary~\ref{cor-homspaces}). 
Now $\End^{\mathrm{fs}}_{\Uq}(\Tn{\bbin})$ should be the 
(non-unital) subalgebra of $\End_{\Uq}(\Tn{\bbin})$ 
of all matrices with only finitely many non-zero rows, i.e.
\[
\End^{\mathrm{fs}}_{\Uq}(\Tn{\bbin})\cong \bigoplus_{i,i^{\prime}=0}^{\infty} \Hom_{\Uq}(\Tn{\bbi},\Tn{\bbi^{\prime}}).
\]
Similarly, of course, in the ``trivial'' case.

\begin{prop}\label{prop-isoofalg2}
We have
\begin{gather*}
\begin{aligned}
\End^{\mathrm{fs}}_{\Uq}(\Tn{\bbin})&\cong\bigoplus_{i,i^{\prime}=0}^{\infty}\Hom_{\Uq}(\Tn{\bbi},\Tn{\bbi^{\prime}})\cong \Ai,
\\
\End^{\mathrm{fs}}_{\Uq}(\Tn{\bbin}^t)&\cong\bigoplus_{i,i^{\prime}=0}^{\infty}\Hom_{\Uq}(\Tn{\bbi}^t,\Tn{\bbi^{\prime}}^t)\cong \Att,
\end{aligned}
\end{gather*}
which are isomorphisms of $\bC$-algebras.\makeqed
\end{prop}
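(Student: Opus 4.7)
The plan is to reduce the statement to the finite case handled in Proposition~\ref{prop-isoofalg1} and then pass to an inductive limit.

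First I would justify the first displayed isomorphism. By definition, an element $f \in \End^{\mathrm{fs}}_{\Uq}(\Tn{\bbin})$ has only finitely many non-zero rows $f_{i'i}$ with $i$ fixed and $i'$ varying (over the source index, by the convention stated just before the proposition). But Corollary~\ref{cor-homspaces} guarantees that $\Hom_{\Uq}(\Tn{\bbi},\Tn{\bbi'}) = 0$ unless $|i-i'| \leq 1$, so in fact each row already has only finitely many non-zero entries, and combined with the finite-support condition this forces the whole matrix to have finite support. This gives
\[
\End^{\mathrm{fs}}_{\Uq}(\Tn{\bbin}) \cong \bigoplus_{i,i'=0}^{\infty} \Hom_{\Uq}(\Tn{\bbi},\Tn{\bbi'}),
\]
with multiplication induced by composition. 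The same argument applies verbatim on walls, using Corollary~\ref{cor-homspaces}(b).

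For the second isomorphism, I would invoke Proposition~\ref{prop-isoofalg1}, which gives $\bC$-algebra isomorphisms $\phi_m \colon \Am \xrightarrow{\sim} \End_{\Uq}(\Tn{\leq\bbm})$ defined on generators by $u_i \mapsto u_i$, $d_i \mapsto d_i$, $\varepsilon_i \mapsto \varepsilon_i$. The key compatibility check is that the diagram
\[
\begin{array}{ccc}
\Am & \xhookrightarrow{\iota_m} & Q_{m+1} \\
\phi_m \downarrow & & \downarrow \phi_{m+1} \\
\End_{\Uq}(\Tn{\leq\bbm}) & \hookrightarrow & \End_{\Uq}(\Tn{\leq\bbmu})
\end{array}
\]
commutes, where the bottom inclusion sends $f$ to $f \oplus 0_{\Tn{\bbmu}}$. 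This is immediate from how $\phi_m$ and $\iota_m$ are defined on generators (both act as identities on labels), together with the observation that the morphisms $u_i, d_i, \varepsilon_i$ in $\End_{\Uq}(\Tn{\leq\bbm})$ are the very same $\Uq$-intertwiners from Proposition~\ref{prop-link} regardless of the ambient $m$.

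Taking inductive limits, and recalling that $\Ai = \varinjlim \Am$ by definition, I would then observe that $\varinjlim \End_{\Uq}(\Tn{\leq\bbm})$ coincides with $\bigoplus_{i,i'} \Hom_{\Uq}(\Tn{\bbi},\Tn{\bbi'})$: every element of the direct sum is supported on finitely many indices and thus lives in some $\End_{\Uq}(\Tn{\leq\bbm})$, while conversely the transition maps in the system are inclusions, so the colimit is the union. Since the inductive limit of $\bC$-algebra isomorphisms is a $\bC$-algebra isomorphism, we obtain $\Ai \cong \End^{\mathrm{fs}}_{\Uq}(\Tn{\bbin})$. The wall case follows identically using $\Att = \varinjlim \At$ and the second half of Proposition~\ref{prop-isoofalg1}.

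I do not anticipate a serious obstacle here; the only subtle point is verifying that the inductive limit on the endomorphism side genuinely picks out the \emph{finitely supported} endomorphisms and not all of $\End_{\Uq}(\Tn{\bbin})$, which is precisely why the algebra $\End^{\mathrm{fs}}_{\Uq}$ (rather than $\End_{\Uq}$) appears in the statement and matches the non-unital, idempotented nature of $\Ai$.
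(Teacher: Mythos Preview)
Your proposal is correct and follows essentially the same route as the paper: reduce to the finite case via Proposition~\ref{prop-isoofalg1} and then identify $\End^{\mathrm{fs}}_{\Uq}(\Tn{\bbin})$ with $\varinjlim \End_{\Uq}(\Tn{\leq\bbm})\cong\varinjlim\Am=\Ai$. You are somewhat more explicit than the paper about the compatibility square between the $\phi_m$ and the inclusions $\iota_m$, and about why the colimit on the endomorphism side picks out exactly the finitely supported matrices, but these are elaborations of the same argument rather than a different strategy.
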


\begin{proof}
We only show the first isomorphism since the other is clear.
By using Corollary~\ref{cor-homspaces} and the definition 
of $\End^{\mathrm{fs}}_{\Uq}(\Tn{\bbin})$ from above we 
see that, for each $f\in\End^{\mathrm{fs}}_{\Uq}(\Tn{\bbin})$, 
there exists some $m\in\bN$ such that $f_{i^{\prime}i}=0$ for $i,i^{\prime}>m$.
Those matrices describe elements in $\End_{\Uq}(\Tn{\leq\bbm})$ by ``forgetting'' the parts for $i,i^{\prime}>m$. Thus, we have
\[
\End^{\mathrm{fs}}_{\Uq}(\Tn{\bbin})\cong \varinjlim \End_{\Uq}(\Tn{\leq\bbm}).
\]
By Proposition~\ref{prop-isoofalg1} we see that this implies
\[
\End^{\mathrm{fs}}_{\Uq}(\Tn{\bbin})\cong \varinjlim \Am\cong\Ai.
\]
This finishes the proof.
\end{proof}

We are now able to relate 
$\T_{\lambda}(\leq \bbm)$ to 
$\pModAm$ and thus, $\T_{\lambda}$ to $\pModAi$. 
We define
\begin{gather*}
\bV\colon\T_{\lambda}(\leq \bbm)\to\ModAm, \bV(M)=\Hom_{\Uq}(\Tn{\leq\bbm},M),
\\
\bVi\colon\T_{\lambda}\to\ModAi, \bVi(M)=\Hom^{\mathrm{fs}}_{\Uq}(\Tn{\bbin},M)\cong \bigoplus_{i=0}^{\infty}\Hom_{\Uq}(\Tn{\bbi},M),
\end{gather*}
for $M$ in either $\Ob(\T(\leq \bbm))$ or $\Ob(\T_{\lambda})$ 
respectively where the isomorphism follows from 
Proposition~\ref{prop-isoofalg2}. Similarly for 
the walls $\mu$ where we indicate the functors 
$\bV^t$ and $\bVi^t$ (and everything else) with 
superscripts $t$. We note that the hom-spaces above 
are right $\Am$-modules (or $\Ai$, $\At$ or $\Ait$-modules) by \textit{pre}-composition.

Note that $\Theta_s,\Theta_t\in\Ob(\End(\T_{\lambda}))$ 
from Definition~\ref{defn-trans} clearly descent 
to $\T_{\lambda}(\leq\bbm)$ for all $m\in\bN$. 
We denote these restrictions, by abuse of notation, 
also by $\Theta_s,\Theta_t$. Furthermore, 
recall the notation $\Theta_{s\text{ or }t}^i$ for $i\in\bZ_{>0}$ 
which are defined via ($p_{\bbi}$ projects to the $\Tn{\bbi}$-part)
\[
\Theta_{s\text{ or }t}^i=\begin{cases}p_{\bbi}\circ\Theta_{s}, &\text{if }i\text{ is even },\\
p_{\bbi}\circ\Theta_{t}, &\text{if }i\text{ is odd }.
\end{cases}
\]
We use the 
same conventions and notations in the finite case by killing everything above $m$.

Similarly: the functors $\mathcal T_{\lambda}^{\mu}$ 
and $\mathcal T^{\lambda}_{\mu}$ from Definition~\ref{defn-onout} also 
descent to the finite case where we again denote them by the same symbols.

We consider the right side for the tilting embedding theorem 
(finite and infinite) as ungraded modules/functors. 
Moreover $\pMod$ always denotes categories of projective modules.

\begin{thm}(\textbf{Tilting embedding theorem - finite version})\label{thm-struktursatz1}
We have the following.
\begin{itemize}
\item[(a)] The functor $\bV$ is fully faithful.
\item[(b)] The functor $\bV$ sends objects 
$M\in\Ob(\T_{\lambda}(\leq\bbm))$ to projective 
$\Am$-modules and is an equivalence  between $\T_{\lambda}(\leq\bbm)$ and $\pModAm$.
\item[(c)] We have the following commuting diagram.
\[
\begin{xy}
  \xymatrix{
      \T_{\lambda}(\leq\bbm) \ar[r]^/-0.1em/{\bV} \ar[d]_{\Theta^i_{s\text{ or }t}}    &   \pModAm \ar[d]^{\mathcal U_{i}} \\
      \T_{\lambda}(\leq\bbm) \ar[r]_/-0.1em/{\bV}             &   \pModAm.  
  }
\end{xy}
\]
\item[(d)] We have the following commuting diagrams.
\[
\begin{xy}
  \xymatrix{
      \T_{\lambda}(\leq\bbm) \ar[r]^/-0.1em/{\bV} \ar[d]_{\Theta_s}    &   \pModAm \ar[d]^{\mathcal U_{\mathrm{even}}}  \\
      \T_{\lambda}(\leq\bbm) \ar[r]_/-0.1em/{\bV}             &   \pModAm   
  }
\end{xy}
\quad\raisebox{-0.75cm}{\text{and}}\quad
\begin{xy}
  \xymatrix{
      \T_{\lambda}(\leq\bbm) \ar[r]^/-0.1em/{\bV} \ar[d]_{\Theta_t}    &   \pModAm \ar[d]^{\mathcal U_{\mathrm{odd}}}  \\
      \T_{\lambda}(\leq\bbm) \ar[r]_/-0.1em/{\bV}             &   \pModAm.  
  }
\end{xy}
\]
\item[(e)] The same as in (a)+(b) works for the walls $\mu$ as well by using $\bV^t$.
\item[(f)] We have the following commuting diagrams.
\[
\begin{xy}
  \xymatrix{
      \T_{\lambda}(\leq\bbm) \ar[r]^/-0.1em/{\bV} \ar[d]_{\mathcal{T}_{\lambda}^{-1}}    &   \pModAm \ar[d]^{\mathcal U_{\mathrm{even}}^t}  \\
      \T_{-1}(\leq\bbm)\ar[r]_/-0.1em/{\bV^t}             &   \pModAt  
  }
\end{xy}
\quad\raisebox{-0.75cm}{\text{and}}\quad
\begin{xy}
  \xymatrix{
      \T_{\lambda}(\leq\bbm) \ar[r]^/-0.1em/{\bV} \ar[d]_{\mathcal{T}_{\lambda}^{l-1}}    &   \pModAm \ar[d]^{\mathcal U_{\mathrm{odd}}^t}  \\
      \T_{l-1}(\leq\bbm)\ar[r]_/-0.1em/{\bV^t}             &   \pModAt.  
  }
\end{xy}
\]
\item[(g)] We have the following commuting diagrams.
\[
\begin{xy}
  \xymatrix{
      \T_{-1}(\leq\bbm) \ar[r]^/-0.1em/{\bV^t} \ar[d]_{\mathcal{T}^{\lambda}_{-1}}    &   \pModAt \ar[d]^{\mathcal U^{\mathrm{even}}_t}  \\
      \T_{\lambda}(\leq\bbm) \ar[r]_/-0.1em/{\bV}             &   \pModAm  
  }
\end{xy}
\quad\raisebox{-0.75cm}{\text{and}}\quad
\begin{xy}
  \xymatrix{
      \T_{l-1}(\leq\bbm) \ar[r]^/-0.1em/{\bV^t} \ar[d]_{\mathcal{T}^{\lambda}_{l-1}}    &   \pModAt \ar[d]^{\mathcal U^{\mathrm{odd}}_t}  \\
      \T_{\lambda}(\leq\bbm) \ar[r]_/-0.1em/{\bV}             &   \pModAm.  
  }
\end{xy}
\hspace{1.95cm}
\raisebox{-1.55cm}{\makeqed}
\hspace{-1.95cm} 
\]
\end{itemize}
\end{thm}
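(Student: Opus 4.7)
The plan is to first establish parts (a), (b), and (e), which are incarnations of Soergel's classical ``Struktursatz'' applied to our tilting setting: that the category of projective right modules over an algebra $A$ is equivalent, via $\Hom(P,-)$, to the additive category generated by a projective generator $P$ with $\End(P)\cong A$. Once this equivalence is in hand, the commuting diagrams (c), (d), (f), (g) can be deduced by checking the required natural isomorphisms on the indecomposable tilting modules $\Tn{\bbi}$, where both sides admit explicit decompositions already computed in the paper.

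\textbf{Parts (a), (b), (e).} By Lemma~\ref{lem-tiltcat}(b) and Krull--Schmidt, every object of $\T_{\lambda}(\leq\bbm)$ is a finite direct sum of the $\Tn{\bbi}$ for $i\leq m$, and hence a direct summand of some direct sum of copies of $\Tn{\leq\bbm}$. Directly from the definitions and Proposition~\ref{prop-isoofalg1}, one checks $\bV(\Tn{\bbi})\cong\bbi\,\Am={}_iP$ as a right $\Am$-module. Full faithfulness on morphism-spaces between direct sums of $\Tn{\bbi}$'s then follows by additivity together with the identification $\Hom_{\Uq}(\Tn{\bbi},\Tn{\bbj})\cong\Hom_{\Am}({}_iP,{}_jP)$ provided by Proposition~\ref{prop-isoofalg1}. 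Essential surjectivity onto $\pModAm$ holds because, as remarked after Example~\ref{ex-decomp}, the ${}_iP$'s with $i=0,\dots,m$ exhaust the indecomposable projective right $\Am$-modules (up to isomorphism, since we are in the ungraded setting). Part (e) is proved in exactly the same way with $\Tn{\leq\bbm}^t$ replacing $\Tn{\leq\bbm}$ and $\At$ replacing $\Am$; this case is in fact simpler since $\At$ is semisimple.

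\textbf{Parts (c), (d), (f), (g).} Because $\bV$ and $\bV^t$ are equivalences, it suffices to exhibit natural isomorphisms on the generating objects $\Tn{\bbi}$. For (c), Proposition~\ref{prop-functors}(c) decomposes $\Theta^i_{s\text{ or }t}(\Tn{\bbj})$ into a sum of $\Tn{\bbk}$'s whose pattern depends only on the parity of $i$ and on $|i-j|$; equation~\eqref{eq-tlcat} yields exactly the same decomposition for $\mathcal U_i({}_jP)$ after forgetting the grading shifts. This matches the two sides on objects term by term. Naturality on the generating intertwiners $u_j,d_j,\varepsilon_j$ reduces, via~\eqref{eq-ufunctors}, to showing that the canonical generators of ${}_jP\otimes_{\Am}P_i$ transform correctly under left multiplication by these paths, which is a direct computation. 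Part (d) then follows from (c) by summing over $i$ using~\eqref{eq-projectivefunc} and the analogous decomposition of $\mathcal U_{\mathrm{even/odd}}$. For (f) and (g) the same outline applies: Proposition~\ref{prop-functors}(b) computes the effect of $\mathcal T^{\mu}_{\lambda}$ and $\mathcal T^{\lambda}_{\mu}$ on indecomposables, which matches the effect of $\mathcal U_i^t$ and $\mathcal U^i_t$ on ${}_iP$ and on the simple $\At$-modules $\bC\bbi$ read off from Subsection~\ref{sub-endo}; summing over even/odd $i$ delivers the four diagrams.

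\textbf{Main obstacle.} The principal difficulty is purely one of bookkeeping: Proposition~\ref{prop-link} pins down the intertwiners $u_i^{\lambda},d_i^{\lambda},\varepsilon_i^{\lambda}$ only up to nonzero scalars, and similarly the isomorphism of Proposition~\ref{prop-isoofalg1} is canonical only up to rescaling the generators of $\Am$. To make the diagrams commute on the nose, rather than merely up to a coherent family of natural isomorphisms, one has to fix compatible normalisations of these scalars so that the generator $\bbi\otimes\bbi$ of the bimodule $B_i={}_iP\otimes_{\bC}P_i\langle -1\rangle$ corresponds under $\bV$ to the distinguished image of $u_i$ (resp.\ $d_i$) under the translation functor. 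Once these choices are made the remaining verifications are routine, and the naturality then propagates through the equivalence of (a)--(b) and (e) to all of (c)--(g).
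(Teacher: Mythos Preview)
Your proposal is correct and follows essentially the same approach as the paper's own proof: reduce (a) and (b) to the identification $\bV(\Tn{\bbi})\cong {}_iP$ via Proposition~\ref{prop-isoofalg1} and additivity, then deduce the commuting diagrams (c)--(g) by comparing the known decompositions of $\Theta^i_{s\text{ or }t}$, $\mathcal T_{\lambda}^{\mu}$, $\mathcal T^{\lambda}_{\mu}$ on $\Tn{\bbi}$ (Proposition~\ref{prop-functors}) with those of $\mathcal U_i$, $\mathcal U_i^t$, $\mathcal U^i_t$ on ${}_iP$ (equations~\eqref{eq-ufunctors} and~\eqref{eq-tlcat}). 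The paper's proof is in fact terser than yours and only verifies the diagrams on objects; your explicit attention to naturality on the generating morphisms $u_j,d_j,\varepsilon_j$ and to the scalar-normalisation issue in the ``Main obstacle'' paragraph is a welcome addition that the paper glosses over.
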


\begin{proof}
We are careless with the indices in the proof 
and hope that this does not cause confusion. All 
appearing indices should be such that everything 
below $0$ or above $m$ is defined to be zero.

\textbf{(a).} By Proposition~\ref{prop-tilt} part 
(c) and Lemma~\ref{lem-tiltcat} part (a) it is enough 
to verify the statement for $M=\Tn\bbi$ for $i=0,\dots,m$. 
Moreover, by using Corollary~\ref{cor-homspaces} together 
with the discussion in Subsection~\ref{sub-ammod}, we 
get isomorphisms $\bV(\Tn\bbi)=\Hom_{\Uq}(\Tn{\leq \bbm},\Tn\bbi)\cong {}_iP$. 
Thus, $\bV$ is fully faithful.

\textbf{(b).} From (a) by using the discussion in 
Subsection~\ref{sub-ammod}.

\textbf{(c).} This follows by combining 
$\bV(\Tn\bbi)=\Hom_{\Uq}(\Tn{\leq \bbm},\Tn\bbi)\cong {}_iP$ 
with~\eqref{eq-tlcat} and part (c) of Proposition~\ref{prop-functors}.

\textbf{(d).} This follows directly from (c) 
and the definition of $\mathcal U_{\mathrm{even}}$ and $\mathcal U_{\mathrm{odd}}$.

\textbf{(e).} The ``trivial'' cases are clear.

\textbf{(f).} We do the left case here. Note that $\bV(\Tn{\bbi})\cong {}_iP$. 
Thus, using~\eqref{eq-ufunctors}, we see that 
$\mathcal U^{\mathrm{even}}_t\bV^t$ sends 
$\Tn{\bbi}$ to $\bC\bbi\oplus\bC\varepsilon_i$ 
for even $i$ and $\bC u_{i-1}\oplus \bC d_{i+1}$ for odd $i$.
This, by the discussion in Subsection~\ref{sub-endo}, 
is isomorphic as right $\At$-modules to 
$\bC\bbi\oplus\bC\bbi$ and $\bC\bbid\oplus\bC\bbiu$ 
respectively. In addition, $\mathcal T^{-1}_{\lambda}$ 
sends $\Tn{\bbi}$ to $\Tn{\mu_i}\oplus\Tn{\mu_i}$ 
(for $i$ even) or to $\Tn{\mu_{i-1}}\oplus\Tn{\mu_{i+1}}$ 
(for $i$ odd). These are then sent to $\bC\bbi\oplus\bC\bbi$ (for $i$ even) 
respectively $\bC\bbid\oplus\bC\bbiu$ (for $i$ odd).

\textbf{(g)}: Analogously to (f).
\end{proof}

\begin{thm}(\textbf{Tilting embedding theorem - infinite version})\label{thm-struktursatz2}
We have the following.
\begin{itemize}
\item[(a)] The functor $\bVi$ is fully faithful.
\item[(b)] The functor $\bVi$ sends objects $M\in\Ob(\T)$ to projective $\Ai$-modules and is an equivalence  between $\T_{\lambda}$ and $\pModAi$.
\item[(c)] We have the following commuting diagram.
\[
\begin{xy}
  \xymatrix{
      \T_{\lambda} \ar[r]^/-0.75em/{\bVi} \ar[d]_{\Theta^i_{s\text{ or }t}}    &   \pModAi \ar[d]^{\mathcal U^{\infty}_{i}}  \\
      \T_{\lambda} \ar[r]_/-0.75em/{\bVi}             &   \pModAi.   
  }
\end{xy}
\]
\item[(d)] We have the following commuting diagrams.
\[
\begin{xy}
  \xymatrix{
      \T_{\lambda} \ar[r]^/-0.75em/{\bVi} \ar[d]_{\Theta_s}    &   \pModAi \ar[d]^{\mathcal U^{\infty}_{\mathrm{even}}}  \\
      \T_{\lambda} \ar[r]_/-0.75em/{\bVi}             &   \pModAi   
  }
\end{xy}
\quad\raisebox{-0.75cm}{\text{and}}\quad
\begin{xy}
  \xymatrix{
      \T_{\lambda} \ar[r]^/-0.75em/{\bVi} \ar[d]_{\Theta_t}    &   \pModAi \ar[d]^{\mathcal U^{\infty}_{\mathrm{odd}}}  \\
      \T_{\lambda} \ar[r]_/-0.75em/{\bVi}             &   \pModAi.  
  }
\end{xy}
\]
\item[(e)] The same as in (a)+(b) works for the walls $\mu$ as well by using $\bVi^t$.
\item[(f)] We have the following commuting diagrams.
\[
\begin{xy}
  \xymatrix{
      \T_{\lambda}(\bbin) \ar[r]^/-0.1em/{\bVi} \ar[d]_{\mathcal{T}_{\lambda}^{-1}}    &   \pModAi \ar[d]^{\mathcal U_{\mathrm{even}}^t}  \\
      \T_{-1}(\bbin) \ar[r]_/-0.1em/{\bVi^t}             &   \pModAtt
  }
\end{xy}
\quad\raisebox{-0.75cm}{\text{and}}\quad
\begin{xy}
  \xymatrix{
      \T_{\lambda}(\bbin) \ar[r]^/-0.1em/{\bVi} \ar[d]_{\mathcal{T}_{\lambda}^{l-1}}    &   \pModAi \ar[d]^{\mathcal U_{\mathrm{odd}}^t}  \\
      \T_{l-1}(\bbin) \ar[r]_/-0.1em/{\bVi^t}             &   \pModAtt.  
  }
\end{xy}
\]
\item[(g)] We have the following commuting diagrams.
\[
\begin{xy}
  \xymatrix{
      \T_{-1}(\bbin) \ar[r]^/-0.1em/{\bVi^t} \ar[d]_{\mathcal{T}^{\lambda}_{-1}}    &   \pModAtt \ar[d]^{\mathcal U^{\mathrm{even}}_t}  \\
      \T_{\lambda}(\bbin) \ar[r]_/-0.1em/{\bVi}             &   \pModAi  
  }
\end{xy}
\quad\raisebox{-0.75cm}{\text{and}}\quad
\begin{xy}
  \xymatrix{
      \T_{l-1}(\bbin) \ar[r]^/-0.1em/{\bVi^t} \ar[d]_{\mathcal{T}^{\lambda}_{l-1}}    &   \pModAtt \ar[d]^{\mathcal U^{\mathrm{odd}}_t}  \\
      \T_{\lambda}(\bbin) \ar[r]_/-0.1em/{\bVi}             &   \pModAi.  
  }
\end{xy}
\hspace{2.3cm}
\raisebox{-1.55cm}{\makeqed}
\hspace{-2.3cm}
\]
\end{itemize}
\end{thm}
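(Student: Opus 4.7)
The plan is to deduce Theorem~\ref{thm-struktursatz2} from the finite version Theorem~\ref{thm-struktursatz1} by passing to the inductive limit, exploiting that every object of $\T_\lambda$ is finite-dimensional and hence lies in some $\T_\lambda(\leq\bbm)$. Indeed, by Lemma~\ref{lem-tiltcat}~(b) the Krull-Schmidt decomposition of any $M\in\Ob(\T_\lambda)$ involves only finitely many summands of the form $\Tn{\bbi}$, so there exists $m=m(M)\in\bN$ with $M\in\Ob(\T_\lambda(\leq\bbm))$. By Corollary~\ref{cor-homspaces}, all terms $\Hom_{\Uq}(\Tn{\bbi},M)$ vanish for $i$ sufficiently large, so the ostensibly infinite direct sum defining $\bVi(M)$ is actually finite and coincides with $\bV(M)$ regarded as an $\Ai$-module supported on the idempotents $\{\bbz,\dots,\bbm\}$ (enlarging $m$ by one if necessary). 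The same holds for morphisms, so each $\bVi(M)$ is in fact finitely generated.

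For parts (a) and (b): given $M,N\in\Ob(\T_\lambda)$, choose a common $m$ with $M,N\in\Ob(\T_\lambda(\leq\bbm))$. Because the images under $\bVi$ are supported on finitely many idempotents, one gets $\Hom_{\Ai}(\bVi(M),\bVi(N))\cong\Hom_{\Am}(\bV(M),\bV(N))$, and Theorem~\ref{thm-struktursatz1}~(a) then yields (a). For (b), the same computation as in Proposition~\ref{prop-isoofalg2} gives $\bVi(\Tn{\bbi})\cong {}_iP$, so $\bVi$ carries indecomposable tiltings to the indecomposable projective $\Ai$-modules listed in Example~\ref{ex-decomp}; since, as noted there, every finitely generated indecomposable projective $\Ai$-module is of this form (up to isomorphism), $\bVi$ is essentially surjective onto $\pModAi$, giving the claimed equivalence.

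For the commuting diagrams in (c), (d), (f) and (g): fix $M\in\Ob(\T_\lambda(\leq\bbm))$. Each functor on the tilting side — $\Theta^i_{s\text{ or }t}$, $\Theta_s$, $\Theta_t$, $\mathcal T_{\lambda}^{-1}$, $\mathcal T_{\lambda}^{l-1}$, $\mathcal T^{\lambda}_{-1}$, $\mathcal T^{\lambda}_{l-1}$ — maps $M$ into $\T_\lambda(\leq\bbmu)$ or the corresponding wall cut-off by Proposition~\ref{prop-functors}, while the matching module-side functors $\mathcal U^{\infty}_i$, $\mathcal U^{\infty}_{\mathrm{even}}$, $\mathcal U^{\infty}_{\mathrm{odd}}$, $\mathcal U^t_{\mathrm{even}}$, $\mathcal U^t_{\mathrm{odd}}$, $\mathcal U^{\mathrm{even}}_t$, $\mathcal U^{\mathrm{odd}}_t$ are constructed as inductive limits and hence restrict, under the inclusions $\iota_m$ of Lemma~\ref{lem-inclusion}, to their finite counterparts $\mathcal U_i$, $\mathcal U_{\mathrm{even}}$, $\mathcal U_{\mathrm{odd}}$, etc. So each diagram collapses to its finite counterpart, which commutes by Theorem~\ref{thm-struktursatz1}~(c)--(g). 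Part (e) is immediate since $\T_{-1}$ and $\T_{l-1}$ are semisimple by Lemma~\ref{lem-block} and $\Ait$ is just a countable direct product of copies of $\bC$, so the trivial wall version of (a) and (b) is obvious.

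The main obstacle is purely bookkeeping: one must verify carefully that $\pModAi$ really is the $2$-colimit of the categories $\pModAm$ along the non-unital inclusions $\iota_m$, so that every finitely generated projective $\Ai$-module is supported on finitely many idempotents $\bbi$ and comes from some $\pModAm$. This is the standard feature of modules over idempotented algebras alluded to at the end of Example~\ref{ex-decomp}, but it is the one non-formal input that is required to make the whole limit argument transfer cleanly.
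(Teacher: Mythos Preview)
Your proposal is correct and follows essentially the same approach as the paper: the paper's proof simply states that parts (a)--(g) are ``similar to the proofs given in Theorem~\ref{thm-struktursatz1} by using arguments as in Proposition~\ref{prop-isoofalg2}'' and leaves the details to the reader, which is precisely the reduction-to-finite-case-via-inductive-limit strategy you have spelled out. Your write-up in fact supplies more detail than the paper does, including the explicit observation that every object of $\T_\lambda$ lives in some cut-off and that the module-side functors restrict compatibly along the inclusions $\iota_m$.
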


\begin{proof}
The proofs of (a)-(g) is similar to 
the proofs given in Theorem~\ref{thm-struktursatz1} 
by using arguments as in Proposition~\ref{prop-isoofalg2}. We leave it to the reader.
\end{proof}

\begin{rem}\label{rem-othermaps}
We encourage the reader to work out the 
translations onto and out of the wall 
commuting diagrams similar to part (c) of the theorems above.
\end{rem}

We note the following corollary for completeness.
\begin{cor}\label{cor-struktursatz}
For all $\lambda,\lambda^{\prime}\in\mathcal A_0$ we have $\T_{\lambda}\cong \T_{\lambda^{\prime}}$. Thus,
\[
\T(\leq\bbm)\cong \bigoplus_{i=0}^{l-2}\pModAm\oplus\bigoplus_{i=0}^{1}\pModAt,
\quad\T\cong \bigoplus_{i=0}^{l-2}\pModAi\oplus\bigoplus_{i=0}^{1}\pModAtt,
\]
with semisimple categories $\pModAt$ and $\pModAtt$.\makeqed
\end{cor}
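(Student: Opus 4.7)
The plan is to bundle together all the structural results already established and read off the corollary. The independence of $\T_\lambda$ from the choice of $\lambda\in\mathcal{A}_0$ and the decomposition formulas are not really new statements — they are just what one obtains by plugging the block decomposition of $\T$ into the tilting embedding theorems.

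First I would invoke Lemma~\ref{lem-block}, which gives the block decomposition
\[
\T=\bigoplus_{\lambda\in\mathcal A_0}\T_{\lambda}\oplus \T_{-1}\oplus \T_{l-1},
\]
and identifies $\T_{-1}$ and $\T_{l-1}$ as semisimple categories. Note that $\mathcal{A}_0=\{0,1,\ldots,l-2\}$ has exactly $l-1$ elements, while the walls contribute two summands ($\mu=-1$ and $\mu=l-1$). Since $\oplus$ of categories is compatible with restricting to full subcategories of modules with indecomposable summands among a fixed finite set, this decomposition also descends to the cut-offs $\T(\leq\bbm)$.

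Next I would apply Theorem~\ref{thm-struktursatz2}(b) (respectively Theorem~\ref{thm-struktursatz1}(b)) to each regular block: for any $\lambda\in\mathcal{A}_0$, the functor $\bVi\colon \T_{\lambda}\to\pModAi$ is an equivalence, and the construction of this equivalence depended only on the combinatorics of the orbit $W_l.\lambda$, not on $\lambda$ itself. In particular, the algebra $\End^{\mathrm{fs}}_{\Uq}(\Tn{\bbin})\cong\Ai$ from Proposition~\ref{prop-isoofalg2} is the \emph{same} algebra for every $\lambda$, so the equivalence $\T_{\lambda}\cong\pModAi$ immediately yields $\T_{\lambda}\cong\T_{\lambda'}$ for all $\lambda,\lambda'\in\mathcal{A}_0$. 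The analogous argument with parts (e) of the two theorems handles the wall blocks $\T_{-1},\T_{l-1}\cong\pModAtt$ (respectively $\pModAt$ in the finite version), and the fact that $\Att$ (and $\At$) is a product of copies of $\bC$, as observed in Definition~\ref{defn-trivial}, confirms that these categories are semisimple.

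Combining the two steps gives the claimed decompositions of $\T(\leq\bbm)$ and $\T$, with $l-1$ summands of the regular type (one for each $\lambda\in\mathcal{A}_0=\{0,\ldots,l-2\}$) and $2$ summands of the trivial type (one for each wall). There is no genuine obstacle here; the only thing one must be careful about is checking that the equivalences are truly independent of $\lambda$, which in turn reduces to the observation made in Notation~\ref{no-link}/Proposition~\ref{prop-link} that the quiver presentation of $\End_{\Uq}(\Tn{\leq\bbm})$ produced in Proposition~\ref{prop-isoofalg1} is exactly $\Am$ regardless of which regular $\lambda$ we start from.
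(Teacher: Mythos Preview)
Your proposal is correct and follows essentially the same approach as the paper: invoke the block decomposition from Lemma~\ref{lem-block}, apply Theorems~\ref{thm-struktursatz1} and~\ref{thm-struktursatz2} to each block, and observe that the target categories $\pModAm$, $\pModAi$, $\pModAt$, $\pModAtt$ do not depend on the choice of $\lambda\in\mathcal A_0$ (or $\mu$). The paper's proof is a one-sentence version of exactly this argument; your additional remarks on counting summands and on the $\lambda$-independence of Proposition~\ref{prop-isoofalg1} are correct elaborations.
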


\begin{proof}
This follows from Lemma~\ref{lem-block} 
and Theorems~\ref{thm-struktursatz1} 
and~\ref{thm-struktursatz2} because the 
right-hand sides are always the same independent of $\lambda\in\mathcal A_0$.
\end{proof}

\begin{ex}(\textbf{How to compute the images of the translation functors on $\Uq$-morphisms})\label{ex-howtofunctor}
Combining the commuting diagrams of the 
Theorems~\ref{thm-struktursatz1} and~\ref{thm-struktursatz2} 
we obtain an explicit way to compute $\Theta_s$ and $\Theta_t$ 
not just on the $\Tn{\bbi}$'s (as we already know by part (c) 
of Proposition~\ref{prop-functors}), but also 
on the $\Uq$-intertwiners in $\Hom_{\Uq}(\Tn{\bbi},\Tn{\bbi^{\prime}})$.

This follows by using~\eqref{eq-homps} together 
with $\Theta_s\leftrightsquigarrow \mathcal U_{\mathrm{even}}$ 
and $\Theta_t\leftrightsquigarrow \mathcal U_{\mathrm{odd}}$. 
For example, given $u_0\colon\Tn{\bbz}\to\Tn{\bbo}$, we can 
compute $\Theta_t(u_0)$ as follows. First note that 
$\bVi\Theta_t(\Tn{\bbz})\cong {}_1P$ and $\bVi\Theta_t(\Tn{\bbo})\cong {}_1P\oplus {}_1P$ by~\eqref{eq-tlcat} and Theorem~\ref{thm-struktursatz2}. By using~\eqref{eq-homps} 
we see
\[
\Theta_t(u_0)=\begin{pmatrix}
 0 \\
 \mathrm{id}
\end{pmatrix}\colon \Tn{\bbo}\to\Tn{\bbo}\oplus \Tn{\bbo}.
\]
We see that this is actually
\[
\Theta_t(u_0)=\begin{pmatrix}
 0 \\
 \mathrm{id} 
\end{pmatrix}\colon \Tn{\bbo}\to\Tn{\bbo}\langle-1\rangle\oplus \Tn{\bbo}\langle+1\rangle,
\]
by using the results we are going to explain in Subsection~\ref{sub-grad}. Indeed, 
the grading is very helpful to calculate the images of the $\Uq$-interwiners, e.g. 
$\Theta_t(u_0)$ can only be zero on the first component, since we will show 
that there will not be  suitable $\Uq$-intertwiners lowering the degree.
\end{ex}

\begin{rem}\label{rem-cato}
Let $\mathfrak{g}=\mathfrak{sl}_{m+1}$ and 
$\mathfrak{p}=\mathfrak{p}_1$ be a maximal 
parabolic subalgebra for $S_1\times S_{m-1}$ 
(for a definition see for example~\cite[Chapter~9]{hum}). 
Denote by $\mathcal O^{\mathfrak{p}}$ the corresponding 
parabolic category $\mathcal O$ (for a definition and 
some properties see e.g.~\cite[Section~9.3]{hum}). Moreover, 
denote by $\mathcal O^{\mathfrak{p}}_0$ the principle block. 
As Khovanov and Seidel show in~\cite[Proposition~2.9]{ks1} 
there is an equivalence of categories 
$\mathcal O^{\mathfrak{p}}_0\cong \ModAm$. 
Thus, by Theorems~\ref{thm-struktursatz1} 
and~\ref{thm-struktursatz2}, we see that 
$\T_{\lambda}$ governs $\mathcal O^{\mathfrak{p}}_0$ for all $m$.
Similarly for the projective endofunctors: 
as explained in~\cite[below Proposition~2.9]{ks1}, 
the functors $\mathcal U_i$ correspond to the translation 
through the $i$-th wall (note that, under the equivalence 
above, the $\mathfrak{sl}_{m+1}$ has Weyl group generators 
$s_1,\dots,s_m$ and therefore translation functors indexed 
by $i=1,\dots,m$). Moreover, $\mathcal U_i^t$ corresponds 
to the translation onto and $\mathcal U^i_t$ corresponds to 
the translation out off the wall functors.
Hence, by Theorems~\ref{thm-struktursatz1} 
and~\ref{thm-struktursatz2}, we can say 
that these are governed by the combinatorics 
of $\mathcal T_{\lambda}^{\mu}$, 
$\mathcal T^{\lambda}_{\mu}$ 
and $\Theta^{\lambda}_{s\text{ or }t}$ from our tilting case.

Another point is, as Khovanov and Seidel explain 
after~\cite[Proposition~2.9]{ks1}, that the path length grading gives rise 
to a \textit{Koszul grading} in the sense of~\cite{bgs} on 
the $\Am$'s, see also~\cite[Example~1.1]{st1}. 
The same holds for $\Ai$. This can either be 
seen ``by hand'' or by using a more general 
theory for (quotients of) not necessary 
finite quiver algebras that can be found for example in~\cite{mos}.
\end{rem}

\begin{rem}\label{rem-webalg}
Khovanov and Seidel's quiver is also 
related to Khovanov's arc algebra 
$H^m$ (for $m$ even) that he introduced 
in~\cite{kh4} to give an algebraic interpretation 
of Khovanov homology. The arc algebra categorifies the 
invariant tensors of $V^{\otimes m}$ where $V$ is the 
two dimensional vector representation of $\Uv$.
In addition, together with Chen he extended in~\cite{chkh} 
this categorification to the full tensor 
product $V^{\otimes m}$ by defining a certain 
subquotient $\prod_k A^{k,m-k}$ (this quotient can 
be seen as the quasi-hereditary cover of $H^m$). 
All of these have a topological interpretation 
as algebras consisting of cobordisms, 
see e.g.~\cite[Section~2]{chkh}, and they are 
graded by \textit{the Euler characteristic} of these cobordisms.
As explained in~\cite[Section~3]{chkh}, 
the $A^{1,m-1}$-part of this subquotient is graded 
isomorphic to $\Am$. Thus, we see that $\T$ governs $A^{1,m-1}$ for all $m$.
Moreover, this gives a hint for a  
generalization of our work: we think 
that analogs of the Theorems~\ref{thm-struktursatz1} and~\ref{thm-struktursatz2} 
can be proven for certain subquotients of the $\mathfrak{sl}_n$ 
generalizations of $H^m$ studied in e.g.~\cite{mack1},~\cite{mpt},~\cite{tub3} or~\cite{tub4}.
\end{rem}

\subsection{The tilting category \texorpdfstring{$\T$}{T} and its graded counterpart \texorpdfstring{$\Tgr$}{Tgr}}\label{sub-grad}
In this subsection we discuss the 
consequences of Theorems~\ref{thm-struktursatz1} 
and~\ref{thm-struktursatz2} with respect to the 
question how the path length grading $l(\cdot)$ 
(see Definition~\ref{def-ksquiver1}) of 
Khovanov-Seidel's $\infty$-quiver algebra relates 
to the tilting category $\T$. We point out that this is non-trivial since Theorems~\ref{thm-struktursatz1} and~\ref{thm-struktursatz2} only say that 
$\T_{\lambda}(\leq \bbm)$ and $\T_{\lambda}$ are 
isomorphic to subcategories of $\pModAm$ and $\pModAi$.

Recall that, for every graded algebra $A$, there is a 
\textit{forgetful functor} 
$\mathrm{forget}\colon\ModgrA\to\ModA$ 
that forgets 
the grading. Using this 
functor, we say that an $A$-module 
$M\in\Ob(\ModA)$ is \textit{gradable} if there 
exists an $A$-module $\tilde M\in\Ob(\ModgrA)$ 
such that $\mathrm{forget}(\tilde M)=M$. Note 
that, by abuse of notation, we usually do not distinguish between $\tilde M$ and $M$.

Recall that $\Hom_{\ModA}(M,M^{\prime})$ 
consists of all (right) $A$-module 
homomorphisms $f\colon M\to M^{\prime}$ 
and $\Hom_{\ModgrA}(\tilde M,\tilde M^{\prime})$ 
consists of (right) $A$-module homomorphisms 
$\tilde f\colon \tilde M\to \tilde M^{\prime}$ as 
in~\eqref{eq-gradhom}. Hence, in the same vein as 
above, we can call an $A$-module homomorphism 
$f\colon M\to M^{\prime}$ \textit{gradable}, 
if there exists $\tilde f\colon\tilde M\to \tilde M^{\prime}$ 
such that $\mathrm{forget}(\tilde f)=f$.
Consequently, we define the following.

\begin{defn}\label{defn-gradable}
We call a (left) $\Uq$-tilting module 
$M\in\Ob(\T)$ \textit{gradable}, if $\bVi(M)$ is 
gradable viewed as a (right) $\Ai$-module. We call a 
$\Uq$-intertwiner $f\colon M\to M^{\prime}\in\Hom_{\T}(M,M^{\prime})$ 
\textit{gradable}, if $\bVi(f)\colon \bVi(M)\to \bVi(M^{\prime})$ is 
a gradable $\Ai$-module homomorphism.

We denote by $\Tgr$ the subcategory of $\T$ consisting 
of gradable $\Uq$-tilting modules and gradable 
$\Uq$-intertwiners. We use similar notations and 
conventions for $\leq\bbm$ and for fixed $\lambda$ or $\mu$.
\end{defn}

The question concerning which objects $M\in\Ob(\T)$ 
are gradable turns out to be surprisingly 
simple in our case. We note that, with respect to 
the rather complicated situation for 
graded category $\mathcal O_0^{\mathfrak{p}}$ 
discussed in e.g.~\cite[Theorem~4.1]{st2}, 
the following proposition is quite remarkable.

\begin{prop}\label{prop-grading}
All objects $M\in\Ob(\T_{\lambda}(\leq\bbm))$, 
$M\in\Ob(\T_{\lambda})$ and $M\in\Ob(\T)$ are gradable. The grading of 
each indecomposable module $\Tn{\bbi}$ is unique up to shifts.\makeqed
\end{prop}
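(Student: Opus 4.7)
The plan is to push the gradability question through the equivalences $\bV$ and $\bVi$ of Theorems~\ref{thm-struktursatz1} and~\ref{thm-struktursatz2}, which by Definition~\ref{defn-gradable} is precisely what ``gradable'' means in this setting. On the module side, the algebras $\Am$ and $\Ai$ are themselves graded by the path length $l(\cdot)$ (Definitions~\ref{def-ksquiver1} and~\ref{def-ksquiver2}), so their decompositions as right modules over themselves, $\Am\cong\bigoplus_{i=0}^m{}_iP$ and $\Ai\cong\bigoplus_{i\in\bN}{}_iP$ from Subsection~\ref{sub-ammod}, are decompositions of graded modules. Hence each indecomposable projective ${}_iP$ carries a canonical grading with the idempotent $\bbi$ placed in degree $0$, which via $\bVi$ transports to a canonical grading on the corresponding indecomposable $\Tn{\bbi}$.

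Given this, the three gradability claims collapse into a Krull-Schmidt argument. I would invoke Lemma~\ref{lem-tiltcat}(b) together with its countable analogue built into Lemma~\ref{lem-profuncinde} to decompose any $M\in\Ob(\T_{\lambda}(\leq\bbm))$ or $M\in\Ob(\T_{\lambda})$ into a finite, respectively countable, direct sum of indecomposable tiltings $\Tn{\bbi}$; grading each summand by the canonical grading of ${}_iP$ (with an arbitrary shift) and taking the direct sum yields a graded lift of $M$. The extension from blocks to all of $\T$ follows from the block decomposition of Lemma~\ref{lem-block}: the wall blocks $\T_{-1}$ and $\T_{l-1}$ are handled identically via $\At$ and $\Att$, whose gradings are concentrated in degree $0$.

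For the uniqueness clause, my strategy is to exploit that ${}_iP$ is a cyclic right $\Ai$-module generated by the single idempotent $\bbi$. In any compatible grading, specifying the degree of $\bbi$ forces the degrees of the remaining basis elements $u_{i-1}, d_{i+1}, \varepsilon_i$, since each is obtained from $\bbi$ by right multiplication by a homogeneous element of fixed degree $1,1$ or $2$. Two choices of the degree of $\bbi$ therefore produce gradings differing only by a global shift $\langle s\rangle$ for some $s\in\bZ$, proving uniqueness up to shifts for each $\Tn{\bbi}$.

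The main obstacle I expect is purely bookkeeping: one must verify that ``gradable as a $\Uq$-tilting module'' on the $\T$-side, as declared in Definition~\ref{defn-gradable}, genuinely matches the notion of admitting a graded lift on the module side through $\bVi$, and that the commuting diagrams of Theorem~\ref{thm-struktursatz2} are compatible with the forgetful functor. Once this translation is set up, the argument is remarkably soft, in stark contrast to the analogous statement for graded parabolic category $\mathcal O_0^{\mathfrak{p}}$ referenced in Remark~\ref{rem-cato} and~\cite[Theorem~4.1]{st2}; the ease stems from $\Ai$ being a positively graded path algebra whose indecomposable projectives are transparently graded.
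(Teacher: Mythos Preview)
Your gradability argument is essentially identical to the paper's: both reduce to indecomposables via Krull--Schmidt (the paper invokes Lemma~\ref{lem-tiltcat}(a), you invoke (b), but the point is the same), identify $\bVi(\Tn{\bbi})\cong{}_iP$ from the proof of Theorem~\ref{thm-struktursatz2}, and then use the path-length grading on ${}_iP$ from Subsection~\ref{sub-ammod}. One small correction: your appeal to Lemma~\ref{lem-profuncinde} is misplaced, since that lemma concerns endofunctors; objects of $\T_{\lambda}$ are finite-dimensional, so ordinary Krull--Schmidt already gives a \emph{finite} decomposition and no countable version is needed.

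For uniqueness the paper takes the short route and simply cites~\cite[Lemma~2.5.3]{bgs}, the general fact that any two gradings on an indecomposable module over a graded algebra agree up to shift. Your cyclicity argument is more explicit but has a genuine gap: you tacitly assume that $\bbi$ is homogeneous in an arbitrary grading on ${}_iP$, and this is not automatic. For instance, setting $M^{k_0}=\bC(\bbi+c\,\varepsilon_i)$, $M^{k_0+1}=\bC u_{i-1}\oplus\bC d_{i+1}$, $M^{k_0+2}=\bC\varepsilon_i$ for any $c\in\bC$ defines a perfectly good grading in which $\bbi$ is \emph{not} homogeneous. The repair is standard: the head ${}_iP/({}_iP\cdot\Ai^{>0})\cong{}_iL$ is one-dimensional, hence concentrated in a single degree $d$ in any grading; a homogeneous degree-$d$ preimage then generates ${}_iP$ and can play the role of your $\bbi$, after which your argument goes through and shows the resulting graded module is isomorphic to the standard one shifted by $d$. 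Alternatively, just quote the BGS lemma as the paper does.
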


\begin{proof}
We only prove the $\infty$ case and leave 
the other case to the reader. We start by 
considering $\lambda\in\mathcal A_0$ and 
discuss the semisimple case on the walls afterwards.

It is easy to check that, for any graded algebra 
$A$ and any two gradable $A$-modules $M$ and 
$M^{\prime}$, the direct sum $M\oplus M^{\prime}$ 
is also gradable. Thus, by Lemma~\ref{lem-tiltcat} 
part (a), it is enough to consider only the 
$\Tn{\bbi}$'s. As in the proof of the part (b) 
of Theorem~\ref{thm-struktursatz2} 
(or Theorem~\ref{thm-struktursatz1} in the finite case) 
we see that $\bVi(\Tn{\bbi})\cong {}_iP$. As discussed 
in Subsection~\ref{sub-ammod}, the ${}_iP$ can be given 
a grading coming from the path length $l(\cdot)$. 
Thus, all the $\Tn{\bbi}$'s are gradable.

On the walls: Since this is the semisimple 
case by Lemma~\ref{lem-block}, we can just 
assign to any simple $\bVi(\Tn{\mu_i})$ a 
degree by demanding that all elements of 
$\bVi(\Tn{\mu_i})$ are concentrated in 
this particular degree. Thus, all 
$\Tn{\mu_i}$ are gradable and concentrated in (up to shifts) degree zero.

The uniqueness (up to shifts) can be proven as in~\cite[Lemma~2.5.3]{bgs}.
\end{proof}

\begin{cor}\label{cor-gradings2}
We have isomorphisms of ungraded categories 
$\T_{\lambda}(\leq\bbm)\cong_{\mathrm{iso}}\Tgr(\leq\bbm)$ 
and $\T\cong_{\mathrm{iso}}\Tgr$. That is, 
also all morphisms are gradable and the gradings are unique up to a shift.\makeqed
\end{cor}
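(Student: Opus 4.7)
The plan is to combine the tilting embedding theorems (Theorems~\ref{thm-struktursatz1} and~\ref{thm-struktursatz2}) with Proposition~\ref{prop-grading}, which reduces the entire question to an explicit calculation inside $\pModAi$ and $\pModAm$. Since $\bVi$ is fully faithful and Proposition~\ref{prop-grading} already tells us that every object of $\T$ is gradable (indeed $\bVi(\Tn{\bbi})\cong{}_iP$ carries the natural path length grading), the only point that is not immediate is gradability of all morphisms. By Definition~\ref{defn-gradable}, a $\Uq$-intertwiner $f\colon M\to M'$ is gradable precisely when, for some choice of gradings on $\bVi(M)$ and $\bVi(M')$, the image $\bVi(f)$ is a finite sum of homogeneous $\Ai$-module homomorphisms in the sense of~\eqref{eq-gradhom}.

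I would then use additivity of $\bVi$ together with the Krull--Schmidt property (Lemma~\ref{lem-tiltcat}) to decompose $M$ and $M'$ into indecomposable summands and reduce to showing that every morphism between the projectives ${}_iP$ is a finite sum of homogeneous maps. This is read off directly from~\eqref{eq-homps}: each hom-space $\Hom_{\pModAi}({}_iP,{}_{i'}P)$ has a finite basis consisting of the path elements $\bbi$, $u_i$, $d_i$, $\varepsilon_i$ (for the appropriate values of $i,i'$), and each of these generators is by construction homogeneous for the path length grading $l(\cdot)$. Hence every morphism is a finite $\bC$-linear combination of homogeneous morphisms and is therefore gradable.

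For the wall blocks $\T_{-1}$ and $\T_{l-1}$ the argument is trivial: by Lemma~\ref{lem-block} these blocks are semisimple and the corresponding algebras $\Att$ carry the trivial grading, so every module and every morphism sits in degree zero. The finite case $\T_\lambda(\leq\bbm)$ is handled by the same argument, using Theorem~\ref{thm-struktursatz1} and the finite version of~\eqref{eq-homps}. Uniqueness of the grading on an object up to shift reduces, via Krull--Schmidt, to uniqueness on indecomposables, which was already established in Proposition~\ref{prop-grading}.

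I do not expect a serious obstacle here, since once the embedding theorems are available, everything is controlled by the explicit homogeneous bases in~\eqref{eq-homps}. The only mild subtlety worth a sentence of care is that after choosing shifts on indecomposable summands of the source and target of $f$, the entries of $\bVi(f)$ expressed in the basis of~\eqref{eq-homps} will sit in different degrees depending on the choices made; but since any shift of a homogeneous morphism is still homogeneous, the property of being a finite sum of homogeneous pieces is independent of these choices, which is precisely what is needed to match Definition~\ref{defn-gradable}.
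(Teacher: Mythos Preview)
Your proposal is correct and follows essentially the same approach as the paper: reduce via additivity and Krull--Schmidt to hom-spaces between indecomposables, and then observe that these are spanned by the homogeneous elements $\bbi,u_i,d_i,\varepsilon_i$. The only cosmetic difference is that the paper cites Corollary~\ref{cor-homspaces} and Proposition~\ref{prop-isoofalg2} on the $\Uq$-side, whereas you phrase the same computation via~\eqref{eq-homps} on the $\Ai$-side after applying $\bVi$.
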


\begin{proof}
As before, for any 
two gradable $A$-homomorphisms $f$ and 
$f^{\prime}$, the direct sum $f\oplus f^{\prime}$ 
is also gradable. Thus, by Lemma~\ref{lem-tiltcat} 
part (a) again together with Proposition~\ref{prop-grading}, 
it is enough to consider only 
$\Hom_{\Uq}(\Tn{\bbi},\Tn{\bbi^{\prime}})$. 
By Corollary~\ref{cor-homspaces} we see that 
this space has a basis consisting of a subset 
of $\bbi,u_i,d_i$ and $\varepsilon_i$ (in the non-semisimple case) who are all gradable by Proposition~\ref{prop-isoofalg2}. The semisimple 
case and the uniqueness of the grading follow as 
above. This gives rise to an 
isomorphism after collapsing the grading of $\Tgr$ 
(since each object $M$ of $\T$ corresponds to objects of $\Tgr$ 
of the form $M\langle s\rangle$ for $s\in\bZ$).
\end{proof}

Using Corollary~\ref{cor-gradings2} and 
abuse of language, we do not distinguish 
between $\Uq$-tilting modules or their graded versions.
Moreover, as a consequence of 
Corollary~\ref{cor-gradings2}, we can 
choose a \textit{standard grading} 
(since it will be unique up to shifts) 
by demanding that simple $\Uq$-tilting modules 
are concentrated in degree zero and 
proceed inductively along the quiver. 
Note that, by Propositions~\ref{prop-weyl} 
and~\ref{prop-tilt} and Corollary~\ref{cor-weyl}, 
this gives inductively a graded lift of 
$\Ln{\bbi}$, $\Vn{\bbi}$ and $\dVn{\bbi}$ as well. 
This choice gives rise to the following graded 
refinements of Propositions~\ref{prop-weyl} 
and~\ref{prop-tilt} and Corollary~\ref{cor-weyl}.

\begin{prop}\label{prop-refine}
Suppose $i=al+b$ for some $a,b\in\bN$ with 
$b\leq l-2$. Set $i^{\prime}=(a+2)l-b-2$. 
Then there
exist short exact sequences
\begin{gather*}
0\longrightarrow \Ln{i}\langle+1\rangle\hooklongrightarrow\Vn{i^{\prime}}\twoheadlongrightarrow \Ln{i^{\prime}}\longrightarrow 0,
\hspace*{0.25cm}
0\longrightarrow \Ln{i^{\prime}}\hooklongrightarrow\dVn{i^{\prime}}\twoheadlongrightarrow \Ln{i}\langle-1\rangle\longrightarrow 0,
\\
0\longrightarrow \Vn{i^{\prime}}\hooklongrightarrow\Tn{i^{\prime}}\twoheadlongrightarrow \Vn{i}\langle-1\rangle\longrightarrow 0,
\hspace*{0.25cm}
0\longrightarrow \dVn{i}\langle+1\rangle\hooklongrightarrow\Tn{i^{\prime}}\twoheadlongrightarrow \dVn{i^{\prime}}\longrightarrow 0.
\end{gather*}
Moreover, all involved morphisms are degree preserving.\makeqed
\end{prop}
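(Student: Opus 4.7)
The plan is to derive the four graded short exact sequences from their ungraded counterparts (established in Propositions~\ref{prop-weyl}, \ref{prop-tilt}(b), and Corollary~\ref{cor-weyl}) by pinning down the precise shifts via the tilting embedding theorem. The key tool is the equivalence $\bVi\colon\Tgr_{\lambda}\xrightarrow{\sim}\pModgrAi$ from Theorem~\ref{thm-struktursatz2}, under which each indecomposable tilting $\Tn{\bbj}$ corresponds to ${}_{\bbj}P$ with its natural path-length grading.

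First I would fix the standard grading of Corollary~\ref{cor-gradings2}: simple $\Uq$-tiltings are placed in degree zero, and the grading is propagated inductively along the quiver by Proposition~\ref{prop-grading}. The graded lifts of $\Ln{\bbj}, \Vn{\bbj}, \dVn{\bbj}$ (which are in general not tilting) are then defined via their appearance as subquotients in the Weyl (respectively dual Weyl) filtrations of tilting modules. Concretely, one identifies $\Vn{\bbj}$ and $\dVn{\bbj}$ on the quiver side with the standard and costandard modules for the quasi-hereditary structure on $\Ai$, so that the $\bVi$-image of the sought sequences becomes a sequence of graded $\Ai$-modules. The uniqueness (up to overall shift) from Proposition~\ref{prop-grading} then forces the specific shifts once any one is fixed.

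Next I would verify sequences (3) and (4) directly. The surjection $\Tn{\bbiu}\twoheadrightarrow\Vn{\bbi}$ factors, under $\bVi$, through composition with the length-$1$ quiver arrow $d_{\bbiu}$ from Proposition~\ref{prop-link}, which is a morphism of degree $+1$; to make this degree preserving the codomain must be shifted to $\Vn{\bbi}\langle -1\rangle$, while the inclusion $\Vn{\bbiu}\hookrightarrow\Tn{\bbiu}$ already has degree zero, giving (3). Symmetrically, the dual Weyl inclusion $\dVn{\bbi}\hookrightarrow\Tn{\bbiu}$ is realised by the length-$1$ arrow $u_{\bbi}$, forcing the shift $\langle +1\rangle$ on $\dVn{\bbi}$ and giving (4). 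Sequences (1) and (2) are then handled by the head/socle analysis of $\Vn{\bbiu}$ and $\dVn{\bbiu}$: on the quiver side the socle of the standard module sits one degree above the head (it is reached by applying a length-$1$ path to the generator), which yields the shift $\langle +1\rangle$ in (1); sequence (2) is the $\ast$-dual of (1) and gives the shift $\langle -1\rangle$.

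The final step is a consistency check: computing the graded class $[\Tn{\bbiu}]$ in the Grothendieck group via (3) and via (4) gives the same decomposition in terms of $[L_q]$'s after expanding the Weyl/dual Weyl classes via (1) and (2); this verifies that the four shifts are compatible. The main obstacle I foresee is the careful bookkeeping required to grade the non-tilting modules $\Ln{\bbj}, \Vn{\bbj}, \dVn{\bbj}$: because $\bVi$ is only defined on $\T_{\lambda}$, one must identify these modules with specific subquotients in the quasi-hereditary structure of $\Ai$ (or pass to an appropriate graded extension) before the shifts can be read off from path lengths. Once this identification is made, the remaining calculations are routine applications of Proposition~\ref{prop-link} and the computations of Subsection~\ref{sub-endo}.
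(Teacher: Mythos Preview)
Your proposal is correct and follows essentially the same route as the paper: both arguments use the equivalence $\bVi$ from Theorem~\ref{thm-struktursatz2} to identify $\Tn{\bbj}$ with ${}_jP$, then read off the shifts from the fact that the quiver arrows $u_i,d_i$ realising the maps in Proposition~\ref{prop-link} have path length $1$ under the standard grading convention. The paper's proof is terser (it only spells out sequence~(4) and leaves the rest ``similarly''), while you add the quasi-hereditary identification of $\Vn{\bbj},\dVn{\bbj}$ and a Grothendieck-group consistency check, but these are elaborations of the same idea rather than a different method.
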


\begin{proof}
This follows from our choice for the grading convention 
and the degree (under the equivalence in 
Theorems~\ref{thm-struktursatz1} and~\ref{thm-struktursatz2}) 
of the morphisms in $\Hom_{\Uq}(\Tn{\bbi},\Tn{\bbi^{\prime}})$. 
To be more precise, we know that the $\Tn{\bbi}$'s will 
be, under the functor $\bVi$, 
mapped to ${}_iP$. Then for example, 
as explained in Proposition~\ref{prop-link}, the unique morphism
\[
\Tn{\bbi}\twoheadrightarrow\dVn{\bbi}\hookrightarrow\Tn{\bbiu}\leftrightsquigarrow\xymatrix{
  \raisebox{0.1cm}{\xy(0,0)*{\bullet};(0,-2.5)*{\scriptstyle \bbiu};\endxy}\!\!   &  \raisebox{0.1cm}{\xy(0,0)*{\bullet};(0,-2.5)*{\scriptstyle \bbi};\endxy} \ar[l]_/-0.2em/{u_i}\\
}
\]
is of degree $1$. By our convention 
we see that moving to the left along 
the KS $\infty$-quiver always increases 
the degree (starting in degree zero for the first simple module). Thus,
\[
0\longrightarrow \dVn{i}\langle+1\rangle\hooklongrightarrow\Tn{i^{\prime}}\twoheadlongrightarrow \dVn{i^{\prime}}\longrightarrow 0.
\]
All other cases follow similarly 
and are left to the reader. Note again 
that moving to the left along the KS 
$\infty$-quiver always increases the degree, 
but, by duality, moving right decreases it.
\end{proof}

\begin{rem}\label{rem-webalg2}
As we already mentioned in 
Remark~\ref{rem-webalg} the grading induced on 
$\T$ comes from an Euler characteristic on a 
certain cobordism category associated to 
Khovanov's arc algebra $H^m$. Thus, this is 
a ``natural'' grading from the viewpoint of topology.

As we mentioned above, we think that this 
should generalize in type $\boldsymbol{\mathrm{A}}$. The degree will 
there be given by an Euler characteristic on 
a certain ``foam'' category associated to the 
$\mathfrak{sl}_n$-web algebras which generalize Khovanov's arc algebra. See for example~\cite{lqr1},~\cite{mpt},~\cite{qr1} or~\cite{tub4}. 
Or alternatively, by a grading coming from certain cyclotomic KL-R algebras in the sense of~\cite{kl5} or~\cite{rou}.
\end{rem}

We conclude this section by lifting the translation 
onto $\mathcal T^{\mu}_{\lambda}$, the translation out 
of $\mathcal T_{\mu}^{\lambda}$ and the 
translation through the wall $\Theta_s$ and $\Theta_t$ 
to their graded versions (we use the same notation for these). 
To understand our notation, recall that a category 
$\tilde{\mathcal C}$ is called a \textit{graded} 
category, if there exists a functor 
$\mathrm{deg}\colon\tilde{\mathcal C}\to\mathcal Z$ 
where we consider $\bZ$ as a category $\mathcal Z$ 
with $\Ob(\mathcal Z)=\{\bullet\}$ and 
$\End_{\mathcal Z}(\bullet)=\bZ$. In addition, there is a 
$\bZ$-action $\langle s\rangle$ on the objects 
$O\in\Ob(\tilde{\mathcal C})$, called \textit{shift} 
by $s\in\bZ$, such that a given morphism 
$f\in\Hom_{\tilde{\mathcal C}}(O,O^{\prime})$ of 
degree $d$ is of degree $d+s^{\prime}-s$ in 
$\Hom_{\tilde{\mathcal C}}(O\langle s\rangle,O^{\prime}\langle s^{\prime}\rangle)$ 
(said otherwise, 
every morphism in $\tilde{\mathcal C}$ has a degree which behaves 
additively under composition and objects can be shifted via 
$\langle s\rangle$). Moreover, denote by $\mathcal C$ the 
\textit{ungraded version}. We say a functor 
$\mathcal F\colon\mathcal C\to\mathcal D$ is gradable 
if there exists a \textit{lift} 
$\tilde{\mathcal F}\colon\tilde{\mathcal C}\to\tilde{\mathcal D}$ such that
$\mathrm{forget}\circ\tilde{\mathcal F}=\mathcal F\circ\mathrm{forget}$.
We say that $\tilde{\mathcal F}$ 
and $\tilde{\mathcal F}^{\prime}$ are the 
\textit{same up to a shift $s\in\bZ$} if there 
is a natural isomorphism\footnote{In the $2$-category 
where objects are graded categories, where morphisms 
are graded functors between these categories and 
where $2$-morphisms are graded natural transformations.} 
between $\tilde{\mathcal F}$ and $\tilde{\mathcal F}^{\prime}\langle s\rangle$.
As before, we are usually very 
careless with our distinction 
of $\tilde{\mathcal F}$ and $\mathcal F$.

\begin{cor}\label{cor-funcgrad2}
The functors 
$\mathcal T_{\lambda}^{\mu,i},\mathcal T_{\mu}^{\lambda,i},\mathcal T_{\lambda}^{\mu},\mathcal T_{\mu}^{\lambda}\in\Ob(\Endo(\T(\leq\bbm)))$
are gradable (same in the infinite case). 
Moreover, any two lifts of 
$\mathcal T_{\lambda}^{\mu,i},\mathcal T^{\lambda,i}_{\mu}$ are the same up to shifts.\makeqed
\end{cor}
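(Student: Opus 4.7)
The plan is to transfer the question through the tilting embedding theorems to the module side, where gradability is built into the definitions. By parts (f) and (g) of Theorems~\ref{thm-struktursatz1} and~\ref{thm-struktursatz2}, the translation onto and out of the wall functors $\mathcal T_\lambda^\mu$ and $\mathcal T_\mu^\lambda$ correspond under $\bV$ and $\bVi$ to the functors $\mathcal U^t_{\mathrm{even/odd}}$ and $\mathcal U^{\mathrm{even/odd}}_t$ defined in Subsection~\ref{sub-endo}. By Proposition~\ref{prop-grading} and Corollary~\ref{cor-gradings2}, $\bV$ and $\bVi$ themselves upgrade to equivalences of graded categories between $\Tgr(\leq\bbm)$ (resp.\ $\Tgr$) and the corresponding graded projective module categories, with analogous statements on the walls.

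Next, on the module side, the functors $\mathcal U_i^t=\cdot\otimes_{\Am}P_i$ and $\mathcal U^i_t=\cdot\otimes_{\bC}{}_iP\langle-1\rangle$ are defined by tensoring with graded bimodules (and the shift $\langle-1\rangle$ is itself a graded operation), so they send graded modules to graded modules in a functorial way and thus admit tautological graded lifts. The same is true for their even/odd direct sums and for the infinite analogues. Transporting these lifts back through the graded equivalences $\bV$ and $\bVi$ yields graded lifts of $\mathcal T_\lambda^{\mu,i}$ and $\mathcal T_\mu^{\lambda,i}$, and taking direct sums as in~\eqref{eq-projectivefunc2} then produces graded lifts of $\mathcal T_\lambda^\mu$ and $\mathcal T_\mu^\lambda$.

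For the uniqueness up to shifts of the lifts of $\mathcal T_\lambda^{\mu,i}$ and $\mathcal T_\mu^{\lambda,i}$, I would use the indecomposability established in Proposition~\ref{prop-profunc} together with the Krull-Schmidt property of Lemma~\ref{lem-profuncinde}. As noted in Subsection~\ref{sub-ammod} and Example~\ref{ex-decomp}, every indecomposable graded projective right module over $\Am$ or $\Ai$ (and similarly over $\At$ or $\Ait$) is of the form ${}_jP\langle s\rangle$ for a uniquely determined $j\in\bN$ and some $s\in\bZ$. Hence any graded lift of an indecomposable functor is determined on a single non-zero indecomposable input up to the choice of $s$, so two lifts of the same ungraded functor must differ by at most an overall shift. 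That a \emph{single} shift works uniformly is forced by naturality together with Corollary~\ref{cor-gradings2}, which guarantees that the $\Uq$-intertwiners linking the various $\Tn{\bbi}$'s admit degree-preserving lifts.

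The only mild bookkeeping obstacle is tracking the explicit shift $\langle-1\rangle$ in the definition of ${}_iP\langle-1\rangle$ (together with the shifts visible in~\eqref{eq-tlcat} and~\eqref{eq-ufunctors}), which is what pins down a natural normalization for the induced grading on the translation functors; once one fixes the standard grading of Corollary~\ref{cor-gradings2}, this normalization is unambiguous.
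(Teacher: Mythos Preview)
Your proposal is correct and follows essentially the same approach as the paper: transfer to the module side via the tilting embedding theorems, use that the $\mathcal U_i^t$ and $\mathcal U^i_t$ are manifestly graded, and then assemble the full translation functors via~\eqref{eq-projectivefunc2}. The only minor differences are that the paper invokes Remark~\ref{rem-othermaps} (the $i$-indexed commuting diagrams) rather than parts (f) and (g) directly for the refined functors, and for uniqueness the paper simply cites~\cite[Lemma~2.5.3]{bgs} together with the indecomposability from Proposition~\ref{prop-profunc}, whereas you spell out a hands-on version of that argument via the classification of indecomposable graded projectives.
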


\begin{proof}
This follows by Remark~\ref{rem-othermaps} 
and the fact that $\mathcal U_{i}^t$ and $\mathcal U^i_t$ are 
graded functors (similarly in the $\infty$ case). 
The statement for 
$\mathcal T_{\lambda}^{\mu}$ and $\mathcal T_{\mu}^{\lambda}$ follows now 
by~\eqref{eq-projectivefunc2}, since clearly sums of gradable, 
additive functors in an additive category are 
gradable. For the infinite case note that any sum (finite or not) 
of gradable functors is gradable and the claim 
follows from~\eqref{eq-projectivefunc2} 
and the finite case.
The uniqueness up to shifts can again be verified 
as in~\cite[Lemma 2.5.3]{bgs}, since 
$\mathcal T_{\lambda}^{\mu,i}$ and 
$\mathcal T_{\mu}^{\lambda,i}$ are 
indecomposable by Proposition~\ref{prop-profunc}. 
\end{proof}

\begin{cor}\label{cor-funcgrad1}
The functors 
$\Theta^{\lambda,i}_s,\Theta^{\lambda,i}_t,\Theta^{\lambda}_s,\Theta^{\lambda}_t\in\Ob(\Endo(\T_{\lambda}(\leq\bbm)))$ 
are 
gradable (same in the infinite case). 
Moreover, any two lifts of $\Theta^{\lambda,i}_{s\text{ or }t}$ are the same up to a 
shift.\makeqed
\end{cor}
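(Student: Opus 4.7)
The plan is to reduce everything to the preceding Corollary~\ref{cor-funcgrad2}. Recall from Definition~\ref{defn-trans} that $\Theta^{\lambda}_s=\mathcal{T}^{\lambda}_{-1}\circ\mathcal{T}^{-1}_{\lambda}$ and $\Theta^{\lambda}_t=\mathcal{T}^{\lambda}_{l-1}\circ\mathcal{T}^{l-1}_{\lambda}$. By Corollary~\ref{cor-funcgrad2} the translation onto and out of the wall functors $\mathcal{T}^{\mu}_{\lambda}$ and $\mathcal{T}^{\lambda}_{\mu}$ are gradable, so I would pick graded lifts $\tilde{\mathcal{T}}^{\mu}_{\lambda}$ and $\tilde{\mathcal{T}}^{\lambda}_{\mu}$ and define $\tilde{\Theta}^{\lambda}_s,\tilde{\Theta}^{\lambda}_t$ as the corresponding compositions. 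Since the composition of two functors of graded categories that each commute with $\mathrm{forget}$ again commutes with $\mathrm{forget}$, these yield graded lifts of $\Theta^{\lambda}_s$ and $\Theta^{\lambda}_t$. The infinite case works identically.

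Next I would handle the blockwise pieces $\Theta^{\lambda,i}_{s\text{ or }t}=p_{\bbi}\circ\Theta^{\lambda}_{s\text{ or }t}$. Under the equivalence $\bVi$ of Theorem~\ref{thm-struktursatz2}, the projection $p_{\bbi}$ onto the $\Tn{\bbi}$-isotypic component corresponds to the projection of $\pModAi$ onto the isotypic component generated by ${}_iP$. This is gradable because the idempotent $\bbi\in\Ai$ is homogeneous of degree $0$ (by Definition~\ref{def-ksquiver1} and~\ref{def-ksquiver2}), so multiplication by $\bbi$ is a degree preserving functor. Alternatively, by~\eqref{eq-projectivefunc} we have $\Theta^{\lambda}_{s\text{ or }t}\cong\bigoplus_i\Theta^{\lambda,i}_{s\text{ or }t}$, and any graded lift of the total functor restricts to a graded lift on each summand since the decomposition respects the gradable direct sum decomposition $\Tgrl\cong\bigoplus_i\pModgrAi\cdot\bbi$ coming from Proposition~\ref{prop-grading}.

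For the uniqueness statement, I would invoke indecomposability: Proposition~\ref{prop-profunc} tells us that each $\Theta^{\lambda,i}_{s\text{ or }t}$ is an indecomposable functor. Thus any two graded lifts of $\Theta^{\lambda,i}_{s\text{ or }t}$ differ by at most an overall shift by the same general argument used in Proposition~\ref{prop-grading} and Corollary~\ref{cor-funcgrad2}, which in turn is modelled on~\cite[Lemma~2.5.3]{bgs}: if $\tilde{F}$ and $\tilde{F}'$ are two graded lifts of an indecomposable functor, then the identity natural isomorphism between their underlying ungraded functors decomposes in the graded setting into homogeneous pieces, and indecomposability forces a single such piece to be an isomorphism, realising the claimed shift.

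The main obstacle I anticipate is the middle step — checking that the projection $p_{\bbi}$ is genuinely gradable and interacts correctly with the composition $\mathcal{T}^{\lambda}_{\mu}\circ\mathcal{T}^{\mu}_{\lambda}$. However, this is essentially bookkeeping once we transport everything to $\pModgrAi$ via $\bVi$, where the isotypic decomposition is visibly homogeneous. Note that the theorem does \emph{not} assert uniqueness of lifts for $\Theta^{\lambda}_{s\text{ or }t}$ itself — and indeed this is not expected, since these functors are far from indecomposable (the summands $\Theta^{\lambda,i}_{s\text{ or }t}$ can each be shifted independently).
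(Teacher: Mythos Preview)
Your proposal is correct and follows essentially the same route as the paper: reduce gradability to Corollary~\ref{cor-funcgrad2} via compositions and sums, then invoke indecomposability (Proposition~\ref{prop-profunc}) together with the argument from~\cite[Lemma~2.5.3]{bgs} for uniqueness up to shift. The only cosmetic difference is that the paper assembles $\Theta^{\lambda,i}_{s\text{ or }t}$ directly from the refined pieces $\mathcal{T}^{\mu,i}_{\lambda}$ and $\mathcal{T}^{\lambda,i}_{\mu}$ (already shown gradable in Corollary~\ref{cor-funcgrad2}), thereby bypassing your separate verification that $p_{\bbi}$ is gradable; your detour through the homogeneity of the idempotent $\bbi$ is perfectly valid but unnecessary.
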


\begin{proof}
We can use Corollary~\ref{cor-funcgrad2} 
since all 
functors involved can be obtained via sums and compositions of the functors 
$\mathcal T_{\lambda,i}^{\mu}$ and $\mathcal T^{\lambda,i}_{\mu}$. The infinite case follows as above, but using~\eqref{eq-projectivefunc} 
instead of~\eqref{eq-projectivefunc2}. 
By Proposition~\ref{lem-profuncinde} the functors $\Theta^{\lambda,i}_{s\text{ or }t}$ are 
indecomposable and the uniqueness up to shifts follows as before. 
\end{proof}

We choose the graded versions of 
$\mathcal T_{\lambda}^{\mu,i}$ and out of $\mathcal T^{\lambda,i}_{\mu}$ 
in the evident way as induced by the gradings on $\mathcal U_i^t$ or 
$\mathcal U^i_t$.
We get the following refinement of Proposition~\ref{prop-functors}.
\begin{cor}\label{cor-funcgrad3}
In the graded category $\Tgr$ we have for all 
$i\in\bN$ the following.
\begin{itemize}
\item[(a)] The functors 
$\mathcal T_{\lambda}^{\mu}$ and $\mathcal T^{\lambda}_{\mu}$ are 
well-defined (their definition gives gradable $\Uq$-tilting modules in the 
right blocks), (up to shifts) 
adjoints (left and right) and 
exact. Thus, $\Theta^{\lambda}_s$ and $\Theta^{\lambda}_t$ are exact and graded self-adjoint.
\item[(b)] We have (recalling that $\Ln{\mu_i}\cong\Tn{\mu_i}$)
\begin{gather*}
\mathcal T_{\lambda}^{\mu}(\Tn{\lambda_i})\cong\begin{cases}\Tn{\mu_{i-1}}\langle+1\rangle\oplus\Tn{\mu_{i+1}}\langle+1\rangle,  & \text{if }\mu_i>\lambda_i,\\
  \Tn{\mu_i}\oplus\Tn{\mu_i}\langle+2\rangle,  & \text{if }\mu_i<\lambda_i,\end{cases}
\\
\mathcal T^{\lambda}_{\mu}(\Tn{\mu_i})\cong\begin{cases}\Tn{\lambda_{i+1}}\langle-1\rangle,  & \text{if }\mu_i>\lambda_i,\\
 \Tn{\lambda_{i}}\langle-1\rangle,  & \text{if }\mu_i<\lambda_i.\end{cases}
\end{gather*}
\item[(c)] The dead-end relations $\Theta^{\lambda}_s(\Tn{\lambda_0})\cong0$, $\Theta^{\lambda}_s(\Tn{\lambda_1})\cong\Tn{\lambda_2}$ and $\Theta^{\lambda}_t(\Tn{\lambda_0})\cong\Tn{\lambda_1}$. Moreover, we have
\[
\Theta^{\lambda}_{s\text{ or }t}(\Tn{\lambda_i})\cong\begin{cases}\Tn{\lambda_{i-1}}\oplus\Tn{\lambda_{i+1}} ,  & \text{if }i>1\text{ is odd for }s\text{ and even for }t,\\
  \Tn{\lambda_{i}}\langle-1\rangle\oplus\Tn{\lambda_{i}}\langle+1\rangle ,  & \text{if }i>0\text{ is odd for }t\text{ and even for }s.\end{cases}
\]
\end{itemize}
Here we set $\Tn{-1}=\Tn{\lambda_{-1}}=\Tn{\mu_{-1}}=0$.\makeqed
\end{cor}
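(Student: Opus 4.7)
The plan is to transport every assertion to the quiver side via Theorem~\ref{thm-struktursatz2} and then read the shifts directly from the explicit bimodule formulas in Subsection~\ref{sub-endo}. The ungraded statements are already Proposition~\ref{prop-functors}, so the whole task is the bookkeeping of degrees.

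For (a), I would note that well-definedness, exactness, and indecomposability of the graded lifts are immediate from Proposition~\ref{prop-functors} together with Corollaries~\ref{cor-funcgrad2} and~\ref{cor-funcgrad1}, which already establish that $\mathcal T_{\lambda}^{\mu}$, $\mathcal T^{\lambda}_{\mu}$, and the $\Theta^{\lambda}_{s\text{ or }t}$ admit graded lifts that are unique up to shift. The graded biadjointness (up to shift) of $\mathcal U_t^i$ and $\mathcal U^t_i$ is exactly Lemma~\ref{lem-adjoint}; summing and composing gives graded self-adjointness of $\mathcal U_{\mathrm{even}}$ and $\mathcal U_{\mathrm{odd}}$. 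Transporting this back through parts (d), (f), (g) of Theorem~\ref{thm-struktursatz2} yields the claim.

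For (b), I would use parts (f) and (g) of Theorem~\ref{thm-struktursatz2} to identify $\mathcal T_{\lambda}^{\mu}$ and $\mathcal T^{\lambda}_{\mu}$ with $\mathcal U^t_{\text{even/odd}}$ and $\mathcal U_t^{\text{even/odd}}$, respectively. On an indecomposable ${}_iP$, Equation~\eqref{eq-ufunctors}, combined with the identification of the result as a graded right $\At$-module given at the end of Subsection~\ref{sub-endo} (so that $\bC u_i, \bC d_i, \bC \varepsilon_i$ live in degrees $1,1,2$ respectively), gives
\[
\mathcal U_i^t({}_iP) \cong \bC\bbi \oplus \bC\bbi\langle +2\rangle, \qquad \mathcal U_i^t({}_{i\pm 1}P) \cong \bC\bbi\langle +1\rangle.
\]
Incorporating the built-in $\langle -1\rangle$ in $\mathcal U^i_t = \cdot \otimes_{\bC} {}_iP\langle -1\rangle$ produces the shifts $\langle -1\rangle$ for $\mathcal T^{\lambda}_{\mu}$ and the pattern $\langle 0\rangle, \langle +2\rangle$ (on the wall) and $\langle +1\rangle$ (away from it) for $\mathcal T_{\lambda}^{\mu}$. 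Fixing the standard grading on $\Tgr$ (simples in degree zero) pins down the shifts uniquely by Corollary~\ref{cor-funcgrad2}.

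For (c), I would use part (d) of Theorem~\ref{thm-struktursatz2} to identify $\Theta_s^{\lambda}$ and $\Theta_t^{\lambda}$ with $\mathcal U_{\mathrm{even}}$ and $\mathcal U_{\mathrm{odd}}$. The graded decomposition of $\mathcal U_i({}_{i'}P)$ recorded in Equation~\eqref{eq-tlcat} is already the one we want:
\[
\mathcal U_i({}_{i'}P) \cong \begin{cases} {}_iP\langle -1\rangle \oplus {}_iP\langle +1\rangle, & \text{if } i = i', \\ {}_iP, & \text{if } |i - i'| = 1, \\ 0, & \text{if } |i - i'| > 1. \end{cases}
\]
Transporting under $\bVi^{-1}$ gives the formula outside the dead-end. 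The three dead-end identities follow from $\mathcal U_{\mathrm{even}}({}_0P) = 0$, $\mathcal U_{\mathrm{odd}}({}_0P)\cong {}_1P$, and $\mathcal U_{\mathrm{even}}({}_1P)\cong {}_2P$, each a direct instance of the above formula together with the relation $d_1 u_0 = 0$ in $\Ai$.

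The main obstacle, and essentially the only real content beyond Proposition~\ref{prop-functors}, is checking that the normalization of the $\langle -1\rangle$ twist in $\mathcal U^i_t$ is the one compatible with the standard grading on $\Tgr$, so that the two summands in (c) are symmetric $\langle -1\rangle \oplus \langle +1\rangle$ rather than a one-sided shift. This is forced by the uniqueness-up-to-shift assertion in Proposition~\ref{prop-grading} together with the graded self-adjointness of $\Theta^{\lambda}_s$ and $\Theta^{\lambda}_t$ obtained in (a): a self-adjoint endofunctor whose value on $\Tn{\bbi}$ decomposes as two shifted copies of $\Tn{\bbi}$ must place those shifts symmetrically around $0$.
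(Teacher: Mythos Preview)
Your proposal is correct and follows essentially the same route as the paper: the paper's own proof is a single sentence pointing to Theorem~\ref{thm-struktursatz2} parts (d), (f), (g), equation~\eqref{eq-ufunctors}, and the discussion in Subsection~\ref{sub-endo} (in particular Lemma~\ref{lem-adjoint}), which is precisely the transport-to-the-quiver-side strategy you spell out in detail. Your closing paragraph on why the shifts in (c) must be symmetric around zero is a pleasant addition that the paper leaves implicit.
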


\begin{proof}
This follows directly from 
Theorem~\ref{thm-struktursatz2} 
parts (d), (f) and (g) together 
with~\eqref{eq-ufunctors} and the 
discussion in Subsection~\ref{sub-endo}, for example Lemma~\ref{lem-adjoint}.
\end{proof}

\begin{prop}\label{prop-endograd}
All functors in $\Endo(\T)$ are gradable. 
Moreover, the grading is unique up to shifts 
on the indecomposable projective functors. 
Similar for $\Endo(\T_{\lambda})$.\makeqed
\end{prop}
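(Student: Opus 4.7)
The strategy is to bootstrap from the gradability of the translation functors (already established in Corollaries~\ref{cor-funcgrad1} and~\ref{cor-funcgrad2}), using that the class of gradable functors is closed under compositions, countable direct sums, and direct summands. The last closure property requires a small graded Krull-Schmidt argument, which I expect to be the main obstacle.

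First I would dispose of $\Endo(\T_{\lambda})$ (and analogously $\Endo(\T_{\mu})$). By Definition~\ref{defn-tiltcatendo}, every object there is built from the indecomposable functors $\Theta^{\lambda,i}_{s\text{ or }t}$ via compositions and countable direct sums. Corollary~\ref{cor-funcgrad1} equips each generator with a graded lift; composition of two graded functors is graded (degrees add under composition), and a countable direct sum of graded functors carries an evident diagonal grading. This immediately yields gradability of every object of $\Endo(\T_{\lambda})$.

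For $\Endo(\T)$ I would argue as follows. By Lemma~\ref{lem-profuncinde} and the Krull-Schmidt property, every object is a finite direct sum of indecomposables of the form $\cdot\otimes_{\bC}\Tn{i}$. Such a functor splits along the block decomposition of Lemma~\ref{lem-block} into components sending one block to another, and by Definition~\ref{defn-onout} the component of $\cdot\otimes_{\bC}\Tn{\nu}$ going from $\T_{\lambda}$ to $\T_{\mu}$ is precisely $\mathcal{T}^{\mu}_{\lambda}$ (and symmetrically for the other block-crossing pairs). Using~\eqref{eq-tiltfact} together with the fact that $\Tn{i}$ is a direct summand of $\Tn{1}^{\otimes i}$, every block-to-block component of $\cdot\otimes_{\bC}\Tn{i}$ is a direct summand of an iterated composition of the translation-onto-the-wall and translation-out-of-the-wall functors $\mathcal{T}^{\mu}_{\lambda}$ and $\mathcal{T}^{\lambda}_{\mu}$. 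Corollary~\ref{cor-funcgrad2} provides graded lifts of these basic building blocks, and these assemble into a graded lift of any composition, reducing the problem to passage to direct summands.

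The main technical obstacle is precisely this passage, i.e.\ showing that a direct summand of a graded functor is graded. It amounts to lifting an idempotent in the ungraded endomorphism ring to a homogeneous degree-zero idempotent. Krull--Schmidt (Lemma~\ref{lem-profuncinde}) reduces this to primitive idempotents projecting onto indecomposable summands, and for those I would argue as in~\cite[Lemma~2.5.3]{bgs}: the graded endomorphism ring of an indecomposable object has only scalar units in degree zero and no units in non-zero degree, forcing any primitive idempotent to be concentrated in degree zero. The same observation yields the uniqueness-up-to-shift claim: for an indecomposable $\mathcal{F}\cong\cdot\otimes_{\bC}\Tn{i}$, two graded lifts of $\mathcal F$ correspond via the Yoneda/tensor-hom correspondence to two graded lifts of $\Tn{i}$, which differ by a unique shift by Proposition~\ref{prop-grading}, and this shift transports back to a shift between the two graded lifts of $\mathcal{F}$. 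Exactly the same argument in $\Endo(\T_{\lambda})$ handles uniqueness on indecomposables there, completing the proof.
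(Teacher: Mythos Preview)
Your treatment of $\Endo(\T_{\lambda})$ matches the paper's exactly: both simply invoke Corollary~\ref{cor-funcgrad1} and close under composition and direct sums.

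For $\Endo(\T)$ you take a genuinely different route. The paper reduces (as you do) to indecomposables $\cdot\otimes_{\bC}\Tn{i}$ via Krull--Schmidt, but then finishes in one line: since every object of $\T$ is gradable (Proposition~\ref{prop-grading}) and $\T$ is closed under tensor products (Lemma~\ref{lem-tiltcat}), the functor $\cdot\otimes_{\bC}\Tn{i}$ is gradable directly. No block decomposition, no embedding into iterated translations, no idempotent lifting. Your approach of realising $\cdot\otimes_{\bC}\Tn{i}$ as a summand of $(\cdot\otimes_{\bC}\Tn{1})^{\circ i}$ and then passing to the block-wise translation functors from Corollary~\ref{cor-funcgrad2} is correct and has the virtue of making everything explicit in terms of already-graded building blocks; the price is the extra summand step.

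On that step, one caution: your phrasing ``forcing any primitive idempotent to be concentrated in degree zero'' is not quite the right mechanism. The primitive idempotent lives in the endomorphism ring of the \emph{big} (decomposable) functor, not of the indecomposable summand, and there is no reason a priori for it to be homogeneous. The standard fix is graded Krull--Schmidt: decompose the graded lift into graded indecomposables, observe (this is where \cite[Lemma~2.5.3]{bgs} enters) that graded indecomposables stay indecomposable after forgetting the grading, and then match against the ungraded Krull--Schmidt decomposition to conclude each ungraded summand is gradable. This is surely what you intend, but it should be said that way. For uniqueness, both you and the paper appeal to the same \cite[Lemma~2.5.3]{bgs}; your additional Yoneda/tensor-hom reduction to Proposition~\ref{prop-grading} is a nice touch but not needed.
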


\begin{proof}
The uniqueness, up to shifts, of the grading on the indecomposable 
factors can be proven as before. By the Krull-Schmidt property of $\Endo(\T)$ 
(see Lemma~\ref{lem-profunc}) and the fact that any direct sum of gradable, 
additive functors in an additive category is 
gradable, it suffices to show that indecomposable 
projective functors are gradable. By 
Lemma~\ref{lem-profuncinde} these are given 
by tensoring with a $\Tn i$ for some $i\in\bN$. 
By Proposition~\ref{prop-grading} all objects 
of $\T$ are gradable and, by Lemma~\ref{lem-tiltcat} 
part (b), the category $\T$ is preserved by finite 
tensor products. Thus, the indecomposable projective functors are gradable. 
For 
$\Endo(\T_{\lambda})$ this follows from Corollary~\ref{cor-funcgrad1}.
\end{proof}

Proposition~\ref{prop-endograd} motivates 
the definition of $\Endo(\Tgrl)$ which can be 
defined in the spirit of~\eqref{eq-gradhom} (thus, using 
\textit{graded} hom-spaces of natural 
transformations). This is the same as 
saying that the category $\Endo(\Tgrl)$ is graded in the above sense.

In order to state the proposition, we write 
$\bVi\mathcal F(\Tn{\bbin})$ short for 
$\bigoplus_i\bVi\mathcal F(\Tn{\bbi})$. This 
is an $\Ai$-bimodule via pre-composition 
(right) and post-composition (left) with 
$\mathcal F(\cdot)$ or $\mathcal F^{(\prime)}(\cdot)$.

Moreover, recall that the superscript fs 
indicates that we are only considering finitely 
supported homomorphisms. We have the following 
(similar of course for $\Endo(\Tgr_{\lambda}(\leq\bbm))$).

\begin{prop}\label{prop-endograd2}
Let $\mathcal F,\mathcal F^{\prime}$ be two 
functors in $\Endo(\Tgrl)$. Then there exists 
an isomorphism of graded $\bC$-vector spaces
\[
\Hom^{\mathrm{fs}}_{\Endo(\Tgrl)}(\mathcal F,\mathcal F^{\prime})\cong \Hom^{\mathrm{fs}}_{\AipModgrAi}(\bVi\mathcal F(\Tn{\bbin}),\bVi\mathcal F^{\prime}(\Tn{\bbin})).
\]
This is an isomorphism of graded rings if $\mathcal F=\mathcal F^{\prime}$.\makeqed
\end{prop}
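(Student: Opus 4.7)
The plan is to exhibit a Yoneda-style isomorphism $\Phi$ given by ``restriction to the pro-generator'' $\Tn{\bbin}$. Given a natural transformation $\eta\colon\mathcal F\to\mathcal F^{\prime}$, I would set $\Phi(\eta)=\bVi(\eta_{\Tn{\bbin}})$, acting on a morphism $f\colon\Tn{\bbin}\to\mathcal F(\Tn{\bbin})$ by $f\mapsto\eta_{\Tn{\bbin}}\circ f$. Right $\Ai$-linearity is pure associativity (using $\Ai\cong\End^{\mathrm{fs}}_{\Uq}(\Tn{\bbin})$ from Proposition~\ref{prop-isoofalg2}), while left $\Ai$-linearity is precisely the naturality of $\eta$ applied to endomorphisms of $\Tn{\bbin}$. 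Since $\bVi$ is a graded equivalence (Theorem~\ref{thm-struktursatz2}(b) combined with Corollary~\ref{cor-gradings2}), $\Phi$ is a degree-preserving graded $\bC$-linear map, and the fs conditions on both sides match under the orthogonal idempotent decomposition $\bVi\mathcal F(\Tn{\bbin})=\bigoplus_i\bVi\mathcal F(\Tn{\bbi})\bbi$.

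To verify bijectivity I would argue as follows. For injectivity: if $\Phi(\eta)=0$ then each $\eta_{\Tn{\bbi}}=0$ (splitting off $\Tn{\bbi}$ via the orthogonal idempotent $\bbi$), and since $\Tgrl$ is Krull--Schmidt with indecomposables $\Tn{\bbi}\langle s\rangle$ (Lemma~\ref{lem-tiltcat} together with Corollary~\ref{cor-gradings2}), a natural transformation is determined by its values on these, so $\eta=0$. For surjectivity: given a graded bimodule map $\phi$, cut it into components $\phi_{ji}$ via the idempotents $\bbi,\bbj$, lift these through $\bVi^{-1}$ to obtain $\eta_{\Tn{\bbi}}\colon\mathcal F(\Tn{\bbi})\to\mathcal F^{\prime}(\Tn{\bbi})$, and extend additively along the Krull--Schmidt decomposition to all of $\Tgrl$. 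Naturality of the resulting $\eta$ with respect to an arbitrary morphism $f\in\Hom_{\Uq}(\Tn{\bbi},\Tn{\bbj}\langle s\rangle)$ is precisely the left $\Ai$-linearity of $\phi$ on the $(\bbi,\bbj)$-strip. The ring claim when $\mathcal F=\mathcal F^{\prime}$ then falls out of the functoriality of $\bVi$, since $\Phi(\eta\circ\eta^{\prime})=\bVi(\eta_{\Tn{\bbin}}\circ\eta^{\prime}_{\Tn{\bbin}})=\Phi(\eta)\circ\Phi(\eta^{\prime})$ and $\Phi(\mathrm{id})=\mathrm{id}$.

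The main obstacle, as I see it, is simply the bookkeeping around the non-unital / idempotented nature of $\Ai$: one must check that the decomposition into $\bbi$-components interacts correctly both with the graded structure (via the shifts $\langle s\rangle$ appearing in~\eqref{eq-gradhom}) and with the fs-convention, so that the surjective construction genuinely produces an honest natural transformation of finite support, and conversely so that the fs bimodule maps are precisely those arising from fs natural transformations. Past this, the argument is a mechanical unwinding of Yoneda's lemma together with the tilting embedding of Theorem~\ref{thm-struktursatz2}.
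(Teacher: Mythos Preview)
Your argument is correct and takes a genuinely different route from the paper's own proof. The paper does not construct the isomorphism directly; instead it reduces (by additivity) to the building-block functors $\mathcal T^{\mu}_{\lambda}$, $\mathcal T_{\mu}^{\lambda}$, $\Theta^{\lambda}_{s\text{ or }t}$, transports these via Theorem~\ref{thm-struktursatz2} to the functors $\mathcal U^t_i$, $\mathcal U_t^i$, $\mathcal U_i$ on $\pModgrAm$, and then invokes the equivalence of Khovanov--Seidel between $\pModAm$ and a parabolic block $\mathcal O^{\mathfrak p}_0$ together with Stroppel's graded Struktursatz (\cite[Theorem~1.12]{st1}) to conclude in the cut-off case, finally passing to the inductive limit. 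Your approach, by contrast, is a self-contained Yoneda argument using only that $\bVi$ is a graded equivalence onto $\pModgrAi$ and that $\Tgrl$ is Krull--Schmidt with indecomposables $\Tn{\bbi}\langle s\rangle$: no category $\mathcal O$ machinery enters. This is more elementary and arguably more transparent for the present setup; the paper's route has the advantage of situating the result within the broader Soergel--Stroppel framework (cf.\ Remark~\ref{rem-cato}), but at the cost of importing external results.

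One small notational point: in your surjectivity step you write ``cut $\phi$ into components $\phi_{ji}$ via the idempotents $\bbi,\bbj$'', but in fact left $\Ai$-linearity with respect to the idempotents already forces $\phi$ to be block-\emph{diagonal} in the decomposition $\bVi\mathcal F(\Tn{\bbin})=\bigoplus_i\bVi\mathcal F(\Tn{\bbi})$ (since $\bbi\cdot\phi(x)=\phi(\bbi\cdot x)$), giving maps $\phi_i\colon\bVi\mathcal F(\Tn{\bbi})\to\bVi\mathcal F^{\prime}(\Tn{\bbi})$ directly; the off-diagonal elements of $\Ai$ then supply naturality exactly as you say. This does not affect the validity of the argument.
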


\begin{proof}
It 
suffices to verify the statement for the 
translation onto $\mathcal T^{\mu}_{\lambda}$, 
out of $\mathcal T_{\mu}^{\lambda}$ and the 
through the wall functors $\Theta^{\lambda}_{s\text{ or }t}$. 
By Theorem~\ref{thm-struktursatz2} part (d), (f) and (g) 
it suffices to show the statement for the corresponding $\mathcal U$'s.

Without loss of generality, we only discuss the 
``even'' case, that is, $\mathcal U^t_{\mathrm{even}}$, $\mathcal U_t^{\mathrm{even}}$ 
and $\mathcal U^{\infty}_{\mathrm{even}}$ which correspond to 
$\mathcal T^{-1}_{\lambda}$, $\mathcal T_{-1}^{\lambda}$ and 
$\Theta^{\lambda}_{s}$ respectively. Thus, by additivity, it 
suffices to show the statement for $\mathcal U^t_{i}$, 
$\mathcal U_t^{i}$ and $\mathcal U_i$ for $i>0$ even.
Fix $i$ and assume that $i\ll m$. Recall 
that Khovanov and Seidel have shown 
in~\cite[Proposition~2.9]{ks1} that 
$\pModAm$ and $\mathcal O^{\mathfrak{p}}_0$ 
for $\mathfrak{sl}_{m+1}$ and parabolic 
$\mathfrak{p}$ of type $S_1\times S_{m-1}$ 
are equivalent. Moreover, as they explain 
below~\cite[Proposition~2.9]{ks1}, 
$\mathcal U^t_{i}$, $\mathcal U_t^{i}$ and 
$\mathcal U_i$ correspond to the translation onto, out off 
and through the $i$-th wall functors.

Note now that the same holds for the graded 
version of $\mathcal O^{\mathfrak{p}}_0$ 
introduced in~\cite{st2} as Stroppel explains in~\cite[Section~1]{st1}. 
Thus, we can use~\cite[Theorem~1.12]{st1} to finish the proof in the 
``cut-off case''. Taking the inductive limit now shows the statement.
\end{proof}

\begin{cor}\label{cor-center}
There is an isomorphism of graded rings
\[
\End^{\mathrm{fs}}_{\mathrm{gr}}(\mathrm{id})\cong Z(\Ai),
\]
where $\mathrm{id}$ is the identity functor on 
$\Tgrl$, $Z(\Ai)$ is the center of 
$\Ai$ and the endomorphism ring is to be taken in $\Endo(\Tgrl)$.\makeqed
\end{cor}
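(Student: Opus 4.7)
The plan is to apply Proposition~\ref{prop-endograd2} to the identity functor and then identify the resulting bimodule endomorphism ring with the center.

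First, I would specialize Proposition~\ref{prop-endograd2} to the case $\mathcal F = \mathcal F' = \mathrm{id}$. Since $\mathrm{id}(\Tn{\bbin}) = \Tn{\bbin}$, we have $\bVi\mathrm{id}(\Tn{\bbin}) = \bVi(\Tn{\bbin})$, and by Proposition~\ref{prop-isoofalg2} this is isomorphic to $\Ai$ as a graded $\Ai$-bimodule (the left/right actions from pre- and post-composition with identity are the usual regular actions). Consequently we obtain an isomorphism of graded rings
\[
\End^{\mathrm{fs}}_{\mathrm{gr}}(\mathrm{id}) \;\cong\; \End^{\mathrm{fs}}_{\AipModgrAi}(\Ai).
\]

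The remaining task is to exhibit an isomorphism of graded rings between $\End^{\mathrm{fs}}_{\AipModgrAi}(\Ai)$ and $Z(\Ai)$. The natural candidate is the map $\phi\colon Z(\Ai)\to \End^{\mathrm{fs}}_{\AipModgrAi}(\Ai)$ sending $z$ to left multiplication $L_z\colon a\mapsto za$. Centrality ensures $L_z$ is simultaneously a left and right $\Ai$-module map, and since $z=\sum_{i\in S}\bbi z\bbi$ is a finite sum of idempotent-framed pieces (as $z\in\Ai$), only finitely many $L_z(\bbi)=z\bbi$ are nonzero, so $L_z$ is finitely supported. Multiplicativity $L_z\circ L_w = L_{zw}$ is immediate, and the map clearly preserves the path length grading.

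Conversely, any $f\in\End^{\mathrm{fs}}_{\AipModgrAi}(\Ai)$ is determined by the collection $\{z_i:=f(\bbi)\}$. Bimodularity forces $z_i=\bbi z_i\bbi$, and matching $f$ applied to $\bbi a\bbj$ via left versus right linearity gives the relation $z_i a = a z_j$ for every $a\in\bbi\Ai\bbj$. Setting $z_f:=\sum_i z_i$ (a finite sum by the finite support assumption), these relations are exactly the statement that $z_f$ commutes with every element of $\Ai$, i.e., $z_f\in Z(\Ai)$; and one checks $f=L_{z_f}$. This yields a two-sided inverse to $\phi$, completing the proof.

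The only real subtlety to watch is the non-unital (idempotented) nature of $\Ai$: one cannot simply recover $f$ from $f(1)$, so the argument must proceed component-wise through the idempotents $\bbi$, and the correspondence between ``finitely supported bimodule maps'' and ``elements of $Z(\Ai)$'' (as opposed to formal sums $\prod_i \bbi Z(\Ai) \bbi$) rests crucially on the fact that elements of $\Ai$ are themselves finite sums, which matches the finite-support hypothesis on $f$. Once this bookkeeping is in place, the ring and grading structures transport without further incident.
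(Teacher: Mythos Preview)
Your proposal is correct and follows essentially the same route as the paper: specialize Proposition~\ref{prop-endograd2} to $\mathcal F=\mathcal F'=\mathrm{id}$, invoke Proposition~\ref{prop-isoofalg2} to identify $\bVi(\Tn{\bbin})$ with $\Ai$ as a bimodule, and then observe that bimodule endomorphisms of $\Ai$ are given by the center. The paper's own proof simply states this last identification in one sentence, whereas you spell out the map $z\mapsto L_z$ and its inverse carefully, including the idempotented bookkeeping needed in the absence of a unit; this extra detail is entirely appropriate and does not constitute a different approach.
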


\begin{proof}
From Proposition~\ref{prop-isoofalg2} 
combined with Proposition~\ref{prop-endograd2}. 
Note that we get the center $Z(\Ai)$ and not $\Ai$ 
itself because the endomorphism ring from 
Proposition~\ref{prop-isoofalg2} is as 
$\Uq$-intertwiners while the one induced from 
Proposition~\ref{prop-endograd2} is as $\AipModgrAi$-bimodule intertwiners.
\end{proof}

\begin{rem}\label{rem-grothendieck1a}
It is known 
that the split Grothendieck group 
$K_0^{\oplus}(\cdot)=K_0^{\oplus}(\cdot)\otimes_{\bZ[v,v^{-1}]}\bC$ 
(viewed as a module over $\bZ[v,v^{-1}]$ where the 
formal parameter $v$ comes for the grading) of 
$\pModgrAm$ \textit{categorifies} the Burau 
representation $\mathcal B_{m+1}$ of the 
$m+1$-strand braid group $B_{m+1}$ 
(the action of $B_{m+1}$ is induced via 
functors constructed from $\mathcal U_i$), see~\cite[Proposition~2.8 and Subsection 2.d]{ks1}. 
Using Theorem~\ref{thm-struktursatz1}, we see that the 
same is true for $\Tgrl(\leq\bbm)$, that is 
$K_0^{\oplus}(\Tgrl(\leq\bbm))\cong\mathcal B_{m+1}$ as 
$B_{m+1}$-modules. Note now that the limit therefore 
categorifies the corresponding $\infty$ version of 
the Burau representation of the braid 
group $B_{\infty}$ with $\infty$-many strands.
\end{rem}

\begin{rem}\label{rem-grothendieck1}
Fix $\mathfrak{sl}_{m+1}$ and denote 
by $\mathfrak{p}_i$ the parabolic for 
$S_i\times S_{m-i}$. Moreover, we denote 
by $\mathcal O^{\mathfrak{p}}_{\mathrm{max}}=\bigoplus_{i=0}^m\mathcal O^{\mathfrak{p}_i}_{0}$ 
the direct sum. We note that, as Bernstein-Frenkel-Khovanov 
conjectured in~\cite[Conjectures~1-4]{bfk} and Stroppel proved 
in~\cite{st3}, parabolic category 
$\mathcal O^{\mathfrak{p}}_{\mathrm{max}}$ 
for $\mathfrak{sl}_{m+1}$ (compare to Remark~\ref{rem-cato}) 
can be used to \textit{categorify} the $m+1$-strand 
Temperley-Lieb algebra $TL^v_{m+1}$. The split 
Grothendieck group $K_0^{\oplus}$ of the category 
of projective endofunctors on 
$\mathcal O^{\mathfrak{p}}_{\mathrm{max}}$ gives\footnote{The parameter $q$ in the notation of Bernstein-Frenkel-Khovanov comes from the 
grading.} $TL^v_{m+1}$. 
The Grothendieck group behaves additive, i.e.
\[
TL^v_{m+1}\cong K_0^{\oplus}(\Endo(\mathcal O^{\mathfrak{p}}_{\mathrm{max}}))\cong \bigoplus_{i=0}^mK_0^{\oplus}(\Endo(\mathcal O^{\mathfrak{p}}_{i})).
\]
Thus, $\Endo(\mathcal O^{\mathfrak{p}}_{1})$ 
gives a summand $\overline{TL}^v_{m+1}$ of $TL^v_{m+1}$ 
that corresponds to the ``next to highest weight'' 
summand of $V^{\otimes m}$ (with notation as in 
Remark~\ref{rem-webalg}), see~\cite[Section~6]{chkh}.

Thus, as explained in Remark~\ref{rem-cato}, 
by Proposition~\ref{prop-endograd2}, the same 
is true for $\Endo(\Tgrl(\leq\bbm))$. Moreover, 
it is easy to see that the embedding of categories
\[
\Endo(\Tgrl(\leq\bbm))\hookrightarrow\Endo(\Tgrl(\leq\bbmu))
\]
gives rise to an embedding
\[
\overline{TL}^v_{m+1}\cong K_0^{\oplus}(\Endo(\Tgrl(\leq\bbm)))\hookrightarrow K_0^{\oplus}(\Endo(\Tgrl(\leq\bbmu)))\cong \overline{TL}^v_{m+2}.
\]
Thus, by Proposition~\ref{prop-isoofalg2}, $\Endo(\Tgrl)$ 
governs all of them at once.
\end{rem}
\section{Diagrams for the graded tilting category \texorpdfstring{$\Tgr$}{Tgr}}\label{sec-dia}
In this section we give the diagrammatic presentation of 
the (graded) tilting category $\Tgrl$ and the category of 
its projective endofunctors $\Endo(\Tgrl)$.

\subsection{The basics: diagrammatic categories, additive closures and Karoubi envelopes}\label{sub-basiccats}
We consider \textit{diagrammatic} categories 
in the following. We encourage the reader 
to take a look in the rather extensive literature 
(in the authors opinion, a good start is~\cite[Section 2]{lau1}).
Although these categories 
have a natural $2$-categorical structure, 
we phrase everything in terms of $1$-categories.

Furthermore, we note that we \textit{only} use 
$\bC$-linear categories. Recall that a category 
$\mathcal C$ is called $\bC$\textit{-linear} if each 
hom-space $\Hom_{\mathcal C}(O,O^{\prime})$ for 
$O,O^{\prime}\in\Ob(\mathcal C)$ has the 
structure of a $\bC$-vector space and the composition of morphisms is $\bC$-bilinear 
(this includes the existence of 
a zero morphism $0$). 
We use $\bC$\textit{-linear functors} for such categories: 
functors which induce $\bC$-linear maps on each hom-space. 
We, as in Subsection~\ref{sub-grad}, \textit{only} consider ($\bZ$-)graded categories.

\begin{defnn}(\textbf{Additive closure of $\mathcal C$})\label{defn-addclosure} 
Given a $\bC$-linear category $\mathcal C$, we define its \textit{additive closure}, 
denoted by $\Mat(\mathcal C)$, to be the $\bC$-linear category consisting of the following.
\begin{itemize}
\item The objects are finite (possibly empty), 
formal direct sums $\oplus_{i=1}^NO_i$ with $O_i\in\Ob(\mathcal C)$ 
(note that 
the empty object is the zero object).
\item Given two objects $O=\oplus_{k=1}^NO_k,\,O^{\prime}=\oplus_{k=1}^{N^{\prime}}O^{\prime}_k$, then a morphism $F\in\Hom_{\Mat(\mathcal C)}(O,O^{\prime})$ is a $N\times N^{\prime}$ matrix $F=(f_{i^{\prime}i})$ consisting of morphisms $f_{i^{\prime}i}\in\Hom_{\mathcal C}(O_i,O^{\prime}_{i^{\prime}})$.
\item One can add the matrices component-wise and 
scalar multiplication with elements from $\bC$ is also component-wise.
\item Composition of morphisms is multiplication 
of matrices.\maketriqed
\end{itemize}
\end{defnn}

It is easy to check that this definition gives a category.

\begin{defnn}(\textbf{Karoubi envelope of $\mathcal C$})\label{defn-karoubi}
Let $\mathcal C$ be a category and 
$A\in\Ob(\mathcal C)$ be an object of 
$\mathcal C$. Let $e,e^{\prime}\colon O\to O$ 
denote idempotents in $\End_{\mathcal C}(O)$. 
The \textit{Karoubi envelope of} $\mathcal C$, 
denoted by $\Kar(\mathcal C)$, is the following category.
\begin{itemize}
\item Objects are ordered pairs $(O,e)$ 
consisting of an object $O$ and an idempotent $e\in\End_{\mathcal C}(O)$.
\item Morphisms 
$f\colon(O,e)\to(O^{\prime},e^{\prime})$ in $\Kar(\mathcal C)$ 
are all morphisms $f\colon O\to O^{\prime}$ of $\mathcal C$ such that the equations $f=f\circ e=e^{\prime}\circ f$ hold.
\item Compositions are induced by compositions in $\mathcal C$.\maketriqed
\end{itemize}
\end{defnn}

Again, it is immediate that this 
is indeed a category.

The 
identity on $(O,e)$ is $e$ itself. 
Moreover, there is a faithful functor 
$\mathrm{im}\colon \mathcal C\to\Kar(\mathcal C)$,
called \textit{the image}, that sends $A$ to 
$(O,\mathrm{id})$ and $f\colon O\to O^{\prime}$ 
to $f\colon(O,\mathrm{id})\to(O^{\prime},\mathrm{id})$.
Categories $\mathcal C$ such that $C\cong\Kar(\mathcal C)$ 
are called \textit{idempotent complete}.

We point out that $\Mat(\mathcal{C})$ 
is ``combinatorial the same'' as $\mathcal{C}$ -- in contrast to 
$\Kar(\mathcal{C})$ which is usually very hard to describe 
combinatorially/diagrammatically.

\subsection{The dihedral cathedral \texorpdfstring{$\D$}{Dinfty}}\label{sub-dcath}
For the reader familiar with~\cite{el1}: be careful that Elias' 
``root of unity case'' \textit{does not} 
correspond to our root of unity case from Section~\ref{sec-tilting}. 
Moreover, we slightly rescale his 
``barbell forcing relation'', see \textbf{BF2}~\eqref{eq-barbforc}.

Following~\cite{el1}, we encode the two different generators of 
$W_l=\langle s,t\rangle$ using two colors. 
Our convention, that is different from the one used 
by Elias, is that $s$ is displayed 
in \textit{red} and $t$ in \textit{green}. 
The ``third color'' is for $1\in W_l$ which is displayed as 
\textit{white colored or empty}.

\begin{defn}(\textbf{Soergel graph})\label{defn-soergeldia}
A \textit{pre}-Soergel graph 
$\tilde G$ is a colored, planar graph
embedded in $[0,1]\times[0,1]$ 
whose bottom (or source) boundary is in 
$[0,1]\times\{0\}$ and whose top (or target) 
boundary is in $[0,1]\times\{1\}$. The only 
vertices are univalent (called \textit{dots}) 
or trivalent. The edges of a Soergel graph 
are colored by red (or $r$) and green (or $g$) 
such that at each trivalent vertex all adjacent 
edges have the same color. We display dots 
and trivalent vertices locally as
\[
\text{Dots:}\quad
\xy
(0,0)*{\includegraphics[scale=0.8]{res/figs/diacath/iso3}};
(0,-8.5)*{l(\cdot)=1};
\endxy\hspace*{0.3cm}\text{or}\hspace*{0.3cm}
\xy
(0,0)*{\includegraphics[scale=0.8]{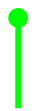}};
(0,-8.5)*{l(\cdot)=1};
\endxy\hspace*{1cm}\text{Trivalent vertices:}\quad
\xy
(0,0)*{\includegraphics[scale=0.8]{res/figs/diacath/iso4}};
(0,-8.5)*{l(\cdot)=-1};
\endxy\hspace*{0.3cm}\text{or}\hspace*{0.3cm}
\xy
(0,0)*{\includegraphics[scale=0.85]{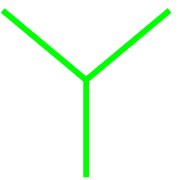}};
(0,-8.5)*{l(\cdot)=-1};
\endxy
\]
where we tend (as above) 
not to display the $r$ and $g$. 
Moreover, the illustration above 
is locally and Soergel graphs 
also include for example horizontal 
reflections of the ones above. 
In addition, given a pre-Soergel 
graph $\tilde G$, we define its 
\textit{degree $l(\tilde G)$} to be the sum of the local degrees as above.

A \textit{Soergel graph $G$} is an 
equivalence class of pre-Soergel graphs 
modulo boundary preserving isotopies of 
colored, planar graphs (which are 
all locally of the forms as in~\eqref{eq-iso}). Note that the 
degree is still well-defined for a 
Soergel graph and we denote it by $l(G)$. 
The empty diagram 
$\bbz$ is also a Soergel graph and of degree $l(\bbz)=0$.

Each Soergel graph gives rise 
to a bottom sequence $b(G)=b_i\dots b_2b_1$ and 
a top sequence $t(G)=t_{i^{\prime}}\dots t_2t_1$ 
of colors $b_k,t_k\in\{r,g\}$ by reading the colors 
at the bottom or top boundary from \textit{right to left} 
respectively. We also allow the empty sequence, if the 
boundary is empty (at bottom or top). We call a Soergel 
graph (or a local piece of it) \textit{floating}, if 
both boundaries are empty. We call floating Soergel 
graphs consisting of only one edge \textit{barbells}.
\end{defn}

Our reading conventions are thus 
from \textit{right to left} (we think of the pictures 
as applying functors to a module) and \textit{bottom to top}. 
Moreover, we think of Soergel graphs as embedded into a 
rectangle (although we never illustrate the rectangle), 
and thus, it make sense to speak about \textit{faces} of 
Soergel graphs. Each Soergel graph has a unique rightmost 
face $F_r$ and a unique leftmost face $F_l$.
\begin{ex}\label{ex-soergelgraph}
An example of two representatives of a Soergel graph $G$ are
\[
\xy
(0,0)*{\includegraphics[scale=0.85]{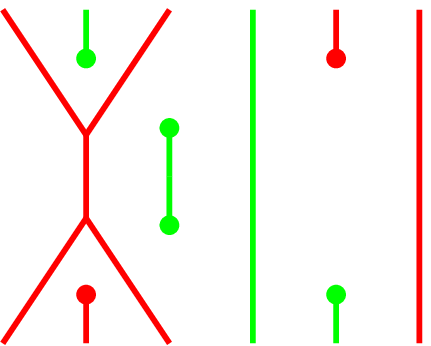}};
(-18,-16)*{\scriptstyle r};
(-10.75,-16)*{\scriptstyle r};
(-3.5,-16)*{\scriptstyle r};
(3.5,-16)*{\scriptstyle g};
(10.75,-16)*{\scriptstyle g};
(18,-16)*{\scriptstyle r};
(-18,16)*{\scriptstyle r};
(-10.75,16)*{\scriptstyle g};
(-3.5,16)*{\scriptstyle r};
(3.5,16)*{\scriptstyle g};
(10.75,16)*{\scriptstyle r};
(18,16)*{\scriptstyle r};
(-16,0)*{\scriptstyle F_l};
(20,0)*{\scriptstyle F_r};
\endxy=
\xy
(0,0)*{\includegraphics[scale=0.85]{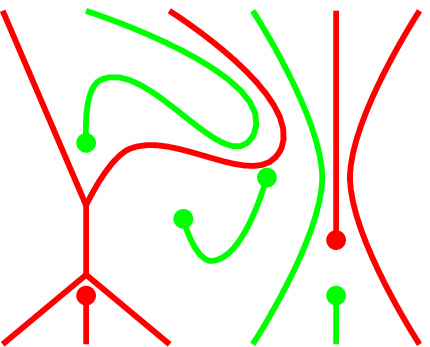}};
(-18,-16)*{\scriptstyle r};
(-10.75,-16)*{\scriptstyle r};
(-3.5,-16)*{\scriptstyle r};
(3.5,-16)*{\scriptstyle g};
(10.75,-16)*{\scriptstyle g};
(18,-16)*{\scriptstyle r};
(-18,16)*{\scriptstyle r};
(-10.75,16)*{\scriptstyle g};
(-3.5,16)*{\scriptstyle r};
(3.5,16)*{\scriptstyle g};
(10.75,16)*{\scriptstyle r};
(18,16)*{\scriptstyle r};
(-16,0)*{\scriptstyle F_l};
(16,0)*{\scriptstyle F_r};
\endxy
\]
Here $l(G)=4$, $b(G)=rrrggr$ and $t(G)=rgrgrr$. 
The graph above has one (green) barbell. We have 
marked the unique right- and leftmost faces with $F_r$ and $F_l$ respectively.
\end{ex}

Note that, since $G$ completely 
determines $b(G)$ and $t(G)$, we do not display these anymore.

\begin{defn}(\textbf{Elias' dihedral cathedral $\D$})\label{defn-diacath} We 
consider the $\bC$-linear, graded, monoidal category 
called the \textit{free} dihedral cathedral, 
denote by $\D_{f}$, consisting of the following.
\begin{itemize}
\item Objects $\Ob(\D_f)$ are finite 
sequences $x=x_i\dots x_2x_1$ with 
$x_i\in\{r,g\}$. Moreover, the empty sequence $\emptyset$ is also an object.
\item The space of morphisms 
$\Hom_{\D_f}(x,x^{\prime})$ for 
$x,x^{\prime}\in\Ob(\D_f)$ is the 
$\bC$-linear span of all Soergel graphs $G$ with $b(G)=x$ and $t(G)=x^{\prime}$.
\item Composition (vertical) of morphisms 
$G^{\prime}\circv G$ is defined by 
glueing $G^{\prime}$ on top of $G$.
\item The monoidal product (horizontal) $\circh$ is, 
for $x=x_i\dots x_2x_1$ and 
$x^{\prime}=x^{\prime}_{i^{\prime}}\dots x^{\prime}_2x^{\prime}_1$, 
given by concatenation $x^{\prime}\circh x=x^{\prime}x$, and 
for $G^{\prime}\circh G$ via placing 
$G^{\prime}$ to the left of $G$.
\item The spaces $\Hom_{\D_f}(x,x^{\prime})$ 
are graded $\bC$-vector spaces where the 
degree of a Soergel graph $G$ is given by $l(G)$. 
\end{itemize}
The category $\D$ is 
the quotient category of $\D_f$ obtained 
by taking the quotient by the following local 
relations and their 
\textit{color-inverted} (red\raisebox{-0.05cm}{$\leftrightarrows$}green) counterparts.

The \textit{two Frobenius relations}:
\begin{equation}\label{eq-frob}
\text{\textbf{Frob1}}:\quad
\xy
(0,0)*{\includegraphics[scale=0.85]{res/figs/diacath/frob2}};
\endxy=
\xy
(0,0)*{\includegraphics[scale=0.85]{res/figs/diacath/frob1}};
\endxy,\quad\quad\text{\textbf{Frob2}}:\quad
\xy
(0,0)*{\includegraphics[scale=0.85]{res/figs/diacath/frob3}};
\endxy\;=\;
\xy
(0,0)*{\includegraphics[scale=0.85]{res/figs/diacath/frob5}};
\endxy
\;=\;
\xy
(0,0)*{\includegraphics[scale=0.85]{res/figs/diacath/frob4}};
\endxy
\end{equation}
The \textit{needle relation}:
\begin{equation}\label{eq-needle}
\text{\textbf{Needle}}:\quad
\xy
(0,0)*{\includegraphics[scale=0.85]{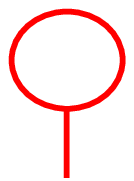}};
\endxy=0
\end{equation}
The \textit{barbell forcing relations}:
\begin{equation}\label{eq-barbforc}
\text{\textbf{BF1}}:\quad
\xy
(0,0)*{\includegraphics[scale=0.85]{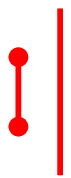}};
\endxy=2\cdot\xy
(0,0)*{\includegraphics[scale=0.85]{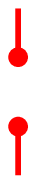}};
\endxy-\xy
(0,0)*{\includegraphics[scale=0.85]{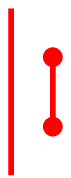}};
\endxy,\quad\quad\text{\textbf{BF2}}:\quad
\xy
(0,0)*{\includegraphics[scale=0.85]{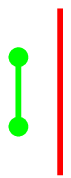}};
\endxy=\xy
(0,0)*{\includegraphics[scale=0.85]{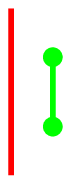}};
\endxy+2\cdot \xy
(0,0)*{\includegraphics[scale=0.85]{res/figs/diacath/forcing3}};
\endxy-2\cdot \xy
(0,0)*{\includegraphics[scale=0.85]{res/figs/diacath/forcing2a}};
\endxy
\end{equation}
Note that the isotopy invariance 
can be locally displayed via the \textit{isotopy relations}:
\begin{equation}\label{eq-iso}
\text{\textbf{Iso1}}:\quad
\xy
(0,0)*{\includegraphics[scale=0.85]{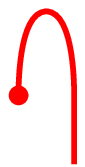}};
\endxy\;=\;
\xy
(0,0)*{\includegraphics[scale=0.85]{res/figs/diacath/iso3}};
\endxy
\;=\;
\xy
(0,0)*{\includegraphics[scale=0.85]{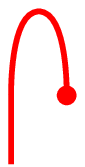}};
\endxy,\quad\quad\text{\textbf{Iso2}}:\quad
\xy
(0,0)*{\includegraphics[scale=0.85]{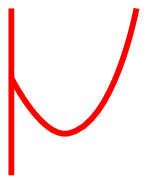}};
\endxy\;=\;
\xy
(0,0)*{\includegraphics[scale=0.85]{res/figs/diacath/iso4}};
\endxy
\;=\;
\xy
(0,0)*{\includegraphics[scale=0.85]{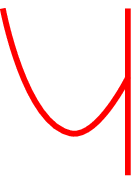}};
\endxy
\end{equation}
together with a horizontal reflection of the two relations.

We also note that all relations 
are homogeneous with respect to 
$l(G)$ and thus, $\D$ inherits 
the grading from $\D_f$. The monoidal product 
$\circh$ also carries over to $\D$ without difficulties.

Moreover, we use the following 
convention: we call elements of 
$\Hom_{\D_f}(x,x^{\prime})$ Soergel 
graphs and, on the other hand, elements 
of $\Hom_{\D}(x,x^{\prime})$ \textit{Soergel diagrams} 
(for all possible $x,x^{\prime}\in\Ob(\D_f)=\Ob(\D)$). 
We call a face of a Soergel diagram \textit{empty} 
if it has no internal floating Soergel diagrams (as for example barbells).
\end{defn}

\begin{ex}\label{ex-soergelgraph1}
An example of two representatives of Soergel diagrams are
\[
\xy
(0,0)*{\includegraphics[scale=0.85]{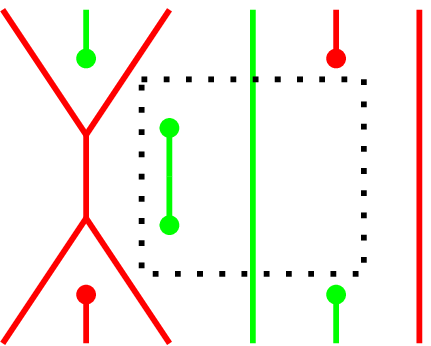}};
\endxy=
2\cdot\xy
(0,0)*{\includegraphics[scale=0.85]{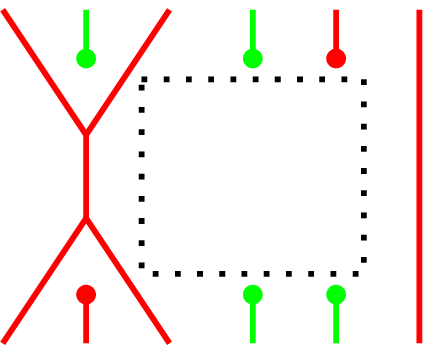}};
\endxy
-\xy
(0,0)*{\includegraphics[scale=0.85]{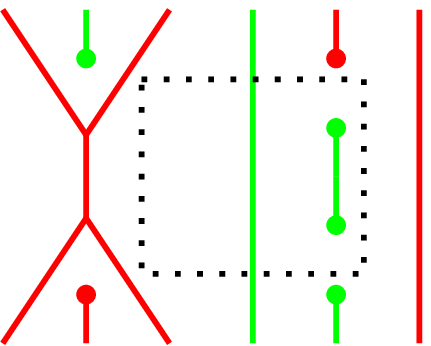}};
\endxy
\]
Here we used the left (green) barbell forcing relation~\eqref{eq-barbforc} on the marked piece.
\end{ex}

\begin{lem}\label{lem-cycle}
A Soergel diagram $G$ with an 
internal cycle which bounds an empty face is zero.\makeqed
\end{lem}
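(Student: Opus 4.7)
My plan is to show $G = 0$ by reducing the cycle $\gamma \subseteq G$ (bounding the empty face $F$) to a configuration in which the Needle relation \eqref{eq-needle} applies. Since every trivalent vertex has all edges of the same color, $\gamma$ is monochromatic; without loss of generality, assume it is red.

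The argument proceeds by induction on the number $n$ of trivalent vertices on $\gamma$. For the base cases $n \in \{0, 1\}$, the cycle is respectively a free red loop enclosing $F$, or a tadpole attached to an external edge $f$. In each case, applying Frob2 \eqref{eq-frob} in reverse (``identity on a strand equals a trivalent vertex followed by a dot on one of its branches'') to an appropriate edge, placing the new dot in the exterior of $F$, yields a local sub-diagram isomorphic to the Needle configuration \eqref{eq-needle} up to the isotopy relations \eqref{eq-iso}. Hence $G = 0$.

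For the inductive step $n \geq 2$, pick two adjacent trivalent vertices $v, v'$ on $\gamma$ joined by a cycle edge $c$, and apply Frob2 in reverse to the external edge of $v$, introducing a new trivalent vertex $w$ with a dotted branch routed into the exterior of $F$. The triple $(v', v, w)$ now forms an associativity configuration of three monochromatic trivalent vertices; applying Frob1 \eqref{eq-frob} rearranges it so that a trivalent vertex bearing the dot migrates onto the cycle edge $c$. A subsequent forward application of Frob2 (``trivalent vertex with a dot on one branch $=$ identity strand'') collapses this configuration, removing one trivalent vertex from $\gamma$ while leaving $F$ empty. The inductive hypothesis then gives $G = 0$.

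The main obstacle will be the associativity rearrangement in the inductive step: Frob1 is a local equality between two specific planar arrangements of three trivalent vertices, so the insertion via Frob2 in reverse and the subsequent Frob1 move must be set up in matching orientations. A careful case analysis, based on the planar positions of the external edges at $v$ and $v'$ together with the inserted vertex $w$, is required to confirm that the Frob1 rearrangement genuinely places the dot on the cycle edge $c$ (rather than returning it to the external edge $f$ or sending it along $v'$'s external edge), so that Frob2 can indeed be used to shrink it.
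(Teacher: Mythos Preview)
Your overall strategy---induction on the number of trivalent vertices on the cycle, reducing to the Needle relation---is the same as the paper's. However, the paper's inductive step is considerably simpler than yours: it applies \textbf{Frob1} \emph{directly} to two adjacent cycle vertices $v,v'$. In the local picture, $v$ and $v'$ joined by the cycle edge $c$, with their two external edges pointing outward and the two remaining cycle edges continuing along $\gamma$, already form one side of the Frob1 identity; the other side has a single trivalent vertex on $\gamma$ whose third edge leaves the cycle and then branches (outside $F$) into the two original external edges. This immediately decreases the number of cycle vertices by one, with no dots and no use of \textbf{Frob2}. Repeating brings you down to the needle, which is zero.

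Your detour---inserting a dotted branch via \textbf{Frob2} in reverse, then invoking \textbf{Frob1} on the triple $(v',v,w)$, then collapsing with \textbf{Frob2}---can be made to work, but it buys you nothing and forces the ``careful case analysis'' you yourself flag as the main obstacle. Note also that your base case $n=1$ \emph{is} the needle, so no preliminary Frob2 move is needed there; only $n=0$ (a bare circle) genuinely requires creating a trivalent vertex before invoking \eqref{eq-needle}. The cleaner route is to drop the Frob2 manipulations entirely from the inductive step and observe that Frob1 alone does the job.
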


\begin{proof}
Use repeatedly the first Frobenius 
relations~\eqref{eq-frob} as illustrated 
below (note that each step reduces the number 
of adjacent edges of the face and we have 
indicated where to use the first Frobenius 
relation \textbf{Frob1}~\eqref{eq-frob} in the last step)
\[
\cdots\;
\overset{\text{\textbf{Frob1}}}{=}
\;\xy
(0,0)*{\includegraphics[scale=0.85]{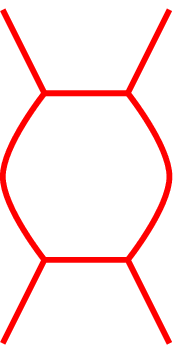}};
\endxy\;
\overset{\text{\textbf{Frob1}}}{=}
\;
\xy
(0,0)*{\includegraphics[scale=0.85]{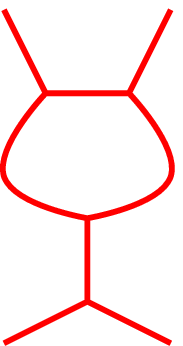}};
\endxy\;
\overset{\text{\textbf{Frob1}}}{=}
\;
\xy
(0,0)*{\includegraphics[scale=0.85]{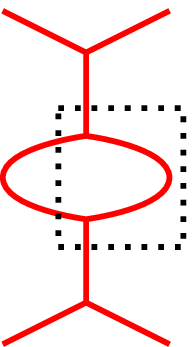}};
\endxy\;
\overset{\text{\textbf{Frob1}}}{=}
\;
\xy
(0,0)*{\includegraphics[scale=0.85]{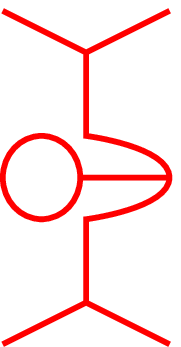}};
\endxy\;
\overset{\text{\textbf{Needle}}}{=}
\;0
\]
until we can use the needle 
relation~\eqref{eq-needle} to see that we get zero.
\end{proof}

A \textit{tree} is a connected Soergel 
graph $G$ with no cycles. We 
call a Soergel diagram \textit{reduced} if 
all of its connected components are trees or 
barbells and all barbells are in the rightmost face.

The following is~\cite[Proposition~5.19]{el1} converted to our conventions.
\begin{prop}\label{prop-soergeldiagrams}
Each Soergel diagram can be written 
as a $\bC$-linear sum of reduced 
Soergel diagrams. Thus, reduced Soergel 
diagrams form spanning sets of the hom-spaces of $\D$.\makeqed
\end{prop}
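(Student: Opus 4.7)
The plan is to prove the statement by induction on a lexicographic complexity measure $(c(G), b(G))$ attached to a Soergel diagram $G$, where $c(G)$ counts the internal cycles of (a representative graph of) $G$ and $b(G)$ counts the barbell components not located in the unique rightmost face $F_r$. A diagram with $c(G)=0=b(G)$ has each connected component either a tree or a barbell and has every barbell in $F_r$, so it is reduced by definition. It suffices to show that if $c(G)>0$ or $b(G)>0$, then $G$ can be rewritten modulo the defining relations of $\D$ as a $\bC$-linear combination of diagrams each strictly smaller in the lexicographic order.

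First I would dispose of cycles. Suppose $c(G)>0$ and pick an innermost cycle $C$; let $F$ be the disk-like face it bounds. Since $C$ is innermost, the interior of $F$ contains only floating sub-diagrams (no boundary endpoints and no nested cycles). I then apply the induction hypothesis to the floating content of $F$, which has strictly smaller complexity than $G$: each floating sub-diagram reduces to a $\bC$-linear combination of disjoint unions of barbells inside $F$, together with terms having even fewer cycles. Each such barbell inside $F$ can be pushed across the boundary $C$ using \textbf{BF1} or \textbf{BF2}; after iteratively doing this for all barbells, the interior of $F$ becomes empty. At that point the cycle $C$ bounds an empty face, so Lemma~\ref{lem-cycle} (the needle-driven cycle killing) forces that term to vanish. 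Hence $G$ is rewritten as a $\bC$-linear combination of diagrams with strictly fewer cycles, lowering $c(G)$.

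Once $c(G)=0$, the diagram has only tree and barbell components, and each face is bounded by a union of tree arcs; in particular, from any non-rightmost face $F$ there is a finite chain of neighboring faces crossing successive edges that leads to $F_r$. If $b(G)>0$, I select a barbell in some $F \neq F_r$ and apply \textbf{BF1} or \textbf{BF2} to the edge separating $F$ from its right neighbor: the RHS is a $\bC$-linear combination whose terms each either delete that barbell outright (the $2\cdot(\text{empty})$ summand), replace it by a barbell of the same or opposite color on the right side of the separating edge, or reproduce an already-simpler configuration. None of these terms introduces a cycle, so $c(G)=0$ is preserved, while the barbell is either removed or relocated strictly rightward. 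Iterating strictly decreases $b(G)$, and the induction terminates with a reduced diagram.

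The main obstacle is arranging the induction so the cycle-reduction step is well-founded: recursing into the floating material of an innermost face is only legitimate if one fixes a secondary complexity (say total number of trivalent vertices plus dots) that is strictly smaller on any proper sub-region than on $G$ itself. Granting this, the proof is essentially mechanical—cycles are killed by combining \textbf{Frob1}, \textbf{Needle}, and barbell evacuation, while leftover barbells are funneled into $F_r$ by repeated barbell forcing. Linearity of the argument over $\bC$ then upgrades the single-diagram statement to the claim that reduced Soergel diagrams span the hom-spaces of $\D$.
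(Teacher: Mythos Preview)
Your proof is correct and uses the same ingredients as the paper—barbell forcing plus Lemma~\ref{lem-cycle}—but applies them in the opposite order: the paper first pushes every barbell into $F_r$ via \textbf{BF1}/\textbf{BF2} and only afterwards notes that any surviving innermost cycle now bounds an empty face, so Lemma~\ref{lem-cycle} kills it directly without any recursion into cycle interiors. Your order works too, at the cost of the extra bookkeeping you flag at the end.

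Two small points in your step~2 are worth tightening. First, the pair $(c(G),b(G))$ does not strictly drop in the ``moved'' summands of \textbf{BF1}/\textbf{BF2} when the receiving face is still $\neq F_r$; you need a tertiary component such as the total face-distance of barbells to $F_r$, which your phrase ``relocated strictly rightward'' already gestures at. Second, your assertion that the ``broken'' summand does not reintroduce a floating component (hence does not raise $b(G)$) is true but not automatic: since $c(G)=0$, the edge you cross lies on a tree with boundary endpoints and separates two distinct faces, so cutting it cannot isolate a boundary-free subtree—otherwise one could walk around that subtree on one side, contradicting that the edge was face-separating. With these refinements your induction is well-founded.
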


\begin{proof}
Note that it follows from a repeated 
application of the Frobenius 
relations~\eqref{eq-frob} that all floating 
trees are barbells and, by combining 
Lemma~\ref{lem-cycle} with \textbf{BF1} and 
\textbf{BF2}~\eqref{eq-barbforc}, all other 
floating Soergel diagrams are $\bC$-linear combinations of barbells.

Thus, we can restrict to Soergel diagrams whose only floating components are barbells.

Now, given a Soergel diagram $G$, we 
can first use \textbf{BF1} and \textbf{BF2}~\eqref{eq-barbforc} 
to express $G$ as a direct sum of Soergel 
diagrams $G_i$ (with $i=1,\dots,k$) with only 
barbells in the rightmost face. By 
Lemma~\ref{lem-cycle} all Soergel diagrams 
$G_i$ with an internal cycle are zero now.
\end{proof}

We point out the following corollary. 
This can be compared to~\cite[Corollary~5.20]{el1} and 
the corresponding statement, in our case for 
$\Tgr$, will later be completely different, 
see Lemma~\ref{lem-noaction}.
For this purpose, denote by $\bC[b_r,b_g]$ 
the graded $\bC$-algebras 
of polynomials in $b_r,b_g$, with the generators $b_r,b_g$ being of degree $2$. 

\begin{cor}\label{cor-endoring}
We have $\bC[b_r,b_g]\cong \Endbf_{\D}(\emptyset)$ as 
graded $\bC$-algebras.\makeqed
\end{cor}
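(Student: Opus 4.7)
The plan is to define a graded $\bC$-algebra homomorphism
\[
\phi\colon \bC[b_r, b_g] \longrightarrow \Endbf_{\D}(\emptyset), \qquad b_r \mapsto \text{red barbell},\ b_g \mapsto \text{green barbell},
\]
and to show that $\phi$ is a graded isomorphism. First I would check well-definedness: each barbell has two dots and so has degree $2$, matching the prescribed grading on $\bC[b_r, b_g]$. The red and green barbells commute as endomorphisms of $\emptyset$, because they are disjoint floating connected components and a boundary-preserving planar isotopy lets us move one past the other; hence already in $\D$ one has $b_r b_g = b_g b_r$. Consequently $\phi$ is a homomorphism of graded $\bC$-algebras.

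Surjectivity I would read off from Proposition~\ref{prop-soergeldiagrams}. Any element of $\Endbf_{\D}(\emptyset)$ is a $\bC$-linear combination of reduced Soergel diagrams $G$ with $b(G) = t(G) = \emptyset$. Every connected component of such a $G$ is floating; the proof of Proposition~\ref{prop-soergeldiagrams} shows that after applying \textbf{Frob1}, \textbf{BF1}, \textbf{BF2} and \textbf{Needle}, the only floating components that survive are barbells. Therefore, up to isotopy, $G$ is a disjoint union of $a$ red and $b$ green barbells for some $a, b \in \bN$, which is exactly $\phi(b_r^a b_g^b)$.

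The main obstacle is injectivity, i.e.\ the linear independence of the monomials $\{b_r^a b_g^b \mid a, b \in \bN\}$ inside $\Endbf_{\D}(\emptyset)$. The most conceptual route is to quote Elias' \emph{double leaves} basis theorem for the dihedral cathedral from~\cite{el1}: specializing that basis to the pair of objects $(\emptyset, \emptyset)$ gives precisely $\{b_r^a b_g^b\}$ as a $\bC$-basis, which finishes the argument. A more hands-on alternative is to exhibit a faithful representation of $\D$: let $R = \bC[x_r, x_g]$ with $\deg x_r = \deg x_g = 2$, realize the standard bimodule categorification of $\D$ over $R$ in which a barbell of color $c$ acts by multiplication by $x_c$, and then deduce the independence of the $b_r^a b_g^b$ from the algebraic independence of $x_r, x_g$ in $R$.

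In either route the combinatorial core is the same: once one knows that the only surviving floating diagrams are (products of) barbells and that their counts $(a, b)$ can be detected by an external action, injectivity follows. Combining well-definedness, surjectivity and injectivity yields the claimed isomorphism of graded $\bC$-algebras $\bC[b_r, b_g] \cong \Endbf_{\D}(\emptyset)$.
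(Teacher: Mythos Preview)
Your proof is correct and largely parallels the paper's own argument: you define the same map $\phi$, check it is a graded algebra homomorphism, and deduce surjectivity from Proposition~\ref{prop-soergeldiagrams} exactly as the paper does.

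The one place where you diverge is injectivity. The paper handles this in a single sentence by observing that ``a bunch of barbells do not satisfy any extra relations (the barbell forcing relations~\eqref{eq-barbforc} do not give anything new)''. The point is that every defining relation of $\D$ --- \textbf{Frob1}, \textbf{Frob2}, \textbf{Needle}, \textbf{BF1}, \textbf{BF2}, \textbf{Iso1}, \textbf{Iso2} --- has, on at least one side of the equation, a local picture containing a through-strand or a trivalent vertex; none of them can be applied to a diagram whose only components are disjoint barbells. Hence two distinct barbell monomials $b_r^a b_g^b$ cannot be related, and the spanning set of Proposition~\ref{prop-soergeldiagrams} is already a basis in this special case. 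This is an elementary local check and avoids any appeal to~\cite{el1}.

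Your two alternatives (double leaves basis, faithful bimodule realization) are certainly valid and more conceptual, but they import substantially heavier machinery than the corollary requires. The trade-off: the paper's argument is self-contained and requires only inspecting the list of relations, whereas your route gives a uniform reason that would generalize to other hom-spaces but at the cost of relying on results proved elsewhere.
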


\begin{proof}
By Proposition~\ref{prop-soergeldiagrams} 
the map that sends $b_r\mapsto$``red barbell'' 
and $b_g\mapsto$``green barbell'' is an isomorphism 
of graded $\bC$-algebras. To see this note that a bunch of 
barbells do not satisfy any extra relations 
(the barbell forcing relations~\eqref{eq-barbforc} do not give anything new).
\end{proof}

\begin{lem}\label{lem-decom1}
Soergel diagrams satisfy
\[
\xy
(0,0)*{\includegraphics[scale=0.85]{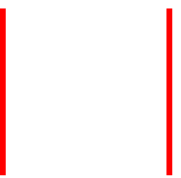}};
\endxy=\frac{1}{2}\cdot\xy
(0,0)*{\includegraphics[scale=0.85]{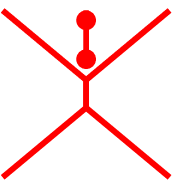}};
\endxy+\frac{1}{2}\cdot\xy
(0,0)*{\includegraphics[scale=0.85]{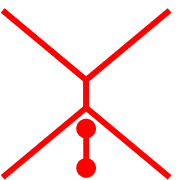}};
\endxy
\quad\text{and}\quad
\xy
(0,0)*{\includegraphics[scale=0.85]{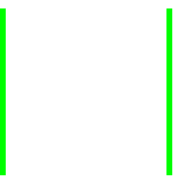}};
\endxy=\frac{1}{2}\cdot\xy
(0,0)*{\includegraphics[scale=0.85]{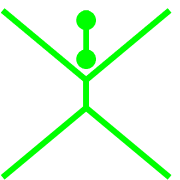}};
\endxy+\frac{1}{2}\cdot\xy
(0,0)*{\includegraphics[scale=0.85]{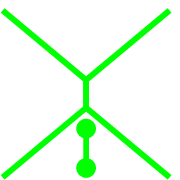}};
\endxy
\hspace{0.6cm}
\raisebox{-0.625cm}{\makeqed}
\hspace{-0.6cm}
\]
\end{lem}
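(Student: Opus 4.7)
The plan is to read the barbell forcing relation \textbf{BF1} from~\eqref{eq-barbforc} as a linear equation in three basis elements and solve it for the middle ``no-barbell'' term. More precisely, \textbf{BF1} has the shape $A = 2C - B$, where $A$ and $B$ are the two Soergel diagrams in which a (red or green) barbell sits to the left, respectively to the right, of a fixed configuration, and $C$ is the corresponding configuration without any barbell. Solving this linear relation for $C$ yields $C = \tfrac{1}{2}A + \tfrac{1}{2}B$, which is exactly the displayed identity once one identifies the three pictures \texttt{reduced1r}, \texttt{reduced2r}, \texttt{reduced3r} with $C$, $A$, $B$ respectively.

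First I would make the correspondence with \textbf{BF1} fully explicit: the right-hand side of the lemma is literally obtained by flipping \textbf{BF1} to $2C = A + B$ and dividing by $2$ in the $\bC$-linear category $\D$. No further diagrammatic manipulation (no Frobenius move, no \textbf{Needle}, and no use of the slightly more intricate \textbf{BF2}) is needed, because \textbf{BF1} already relates exactly these three diagrams.

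Second, for the color-inverted (green) identity on the right, I would simply invoke the fact, built into Definition~\ref{defn-diacath}, that all relations of $\D$ are imposed together with their red$\leftrightarrows$green counterparts. Hence the same rearrangement of the color-inverted \textbf{BF1} gives the second identity. Consequently both identities hold in $\D$, completing the proof.

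The only potential subtlety, and therefore the ``main obstacle'', is purely bookkeeping: one must check that the three diagrams in the lemma match the three diagrams in \textbf{BF1} with the stated orientation of the barbell (left of the strand versus right of it) and the stated color. This is immediate from the pictures but is the sole thing that has to be verified, since the algebraic step is a trivial rearrangement in a $\bC$-linear category.
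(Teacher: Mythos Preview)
Your identification of the three pictures is off. The left-hand diagram is the identity on $rr$ (two parallel red strands), not the ``broken strand'' middle term of \textbf{BF1}; this is forced by the way the lemma is used in Lemma~\ref{lem-decom2}, where it is precisely the statement that the composite ``right$\,\circ\,$left'' equals $\mathrm{id}_{rr}$. Consequently all three diagrams are degree-$0$ endomorphisms of $rr$, whereas the three pictures in \textbf{BF1} are degree-$2$ endomorphisms of $r$, so the lemma cannot be a literal rearrangement of \textbf{BF1}.

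Concretely, the two right-hand diagrams are the merge-then-split ``H'' with a barbell placed in the upper, respectively lower, inner face (these are exactly the composites $\text{reducediso3}\circ\text{reducediso1}$ and $\text{reducediso4}\circ\text{reducediso2}$). In that vertical form the barbell sits in a face that is \emph{not} adjacent to the middle edge, so \textbf{BF1} does not apply to that edge directly. This is why the paper first uses \textbf{Frob1} to pass to the form $(\mathrm{id}\otimes m)\circ(\Delta\otimes\mathrm{id})$, in which the (now diagonal) middle edge separates the two faces containing the barbells. Only then does \textbf{BF1} on that edge, together with \textbf{Frob2} to collapse the broken edge to two parallel strands, yield $\mathrm{id}_{rr}$. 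So the Frobenius move you dismissed is the crux of the argument, not an optional bookkeeping step.
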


\begin{proof}
Use \textbf{Frob1}~\eqref{eq-frob} on the 
right side followed by \textbf{BF1}~\eqref{eq-barbforc} for the middle edge.
\end{proof}

This implies the useful fact that we can focus 
on alternating sequences (the case where the 
sequence is a reduced word in $\D$) of red 
and green if we go to $\Mat(\D)$ (where we use $\circ$ 
for the matrix multiplication).

\begin{lem}\label{lem-decom2}
We have
\[
rr\cong r\langle-1\rangle\oplus r\langle+1\rangle\quad\text{and}\quad gg\cong g\langle-1\rangle\oplus g\langle+1\rangle.
\]
which are isomorphisms in $\Mat(\D)$.\makeqed
\end{lem}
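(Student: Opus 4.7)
The plan is to realise the direct-sum decomposition $rr\cong r\langle -1\rangle\oplus r\langle +1\rangle$ in $\Mat(\D)$ by exhibiting explicit projection/inclusion Soergel diagrams and verifying that they form an inverse matrix pair. The case $gg\cong g\langle -1\rangle\oplus g\langle +1\rangle$ will then be immediate from the color-inverted symmetry of all relations defining $\D$, so I focus on the red case.

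I first construct four morphisms
\[
p_{+}\colon rr \to r\langle +1\rangle,\quad p_{-}\colon rr \to r\langle -1\rangle,\quad i_{-}\colon r\langle -1\rangle \to rr,\quad i_{+}\colon r\langle +1\rangle \to rr.
\]
The merge trivalent vertex has underlying degree $-1$ and so provides a degree-preserving graded morphism $p_+$; dually the split trivalent vertex provides $i_-$. The remaining two morphisms $p_-$ and $i_+$ have underlying degree $+1$, and I take each to be the diagram consisting of a dot on one of the two strands together with the identity on the other. The precise choice of strand and scalar normalisation (likely a factor of $\tfrac{1}{2}$) will have to be made so as to match Lemma~\ref{lem-decom1}. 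Assembling these into $\alpha=\binom{p_-}{p_+}\colon rr\to r\langle -1\rangle\oplus r\langle +1\rangle$ and $\beta=(i_-,i_+)$ in the opposite direction, I must verify $\alpha\circv\beta=\mathrm{id}$ and $\beta\circv\alpha=\mathrm{id}$.

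For $\alpha\circv\beta$, the four entries $p_\sigma\circv i_\tau$ with $\sigma,\tau\in\{+,-\}$ are morphisms in $\End_{\D}(r)$ or $\Hom_{\D}(r\langle\mp 1\rangle, r\langle\pm 1\rangle)$. The diagonal equations $p_+\circv i_+=\mathrm{id}_r=p_-\circv i_-$ reduce to the first Frobenius relation~\eqref{eq-frob} (a trivalent vertex with a dotted branch is the straight strand) and to the isotopy relation~\eqref{eq-iso}, respectively. The off-diagonal vanishings $p_+\circv i_-=0=p_-\circv i_+$ are instant from the needle relation~\eqref{eq-needle}, since in both cases the composite is a dot capping a branch of a trivalent vertex.

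For $\beta\circv\alpha$, I must show $i_+\circv p_+ + i_-\circv p_- = \mathrm{id}_{rr}$ in $\End_{\D}(rr)$. This is precisely the content of the just-proved Lemma~\ref{lem-decom1}: its left-hand side is $\mathrm{id}_{rr}$ and its right-hand side is, up to the absorbed scalar $\tfrac{1}{2}$, the sum of the two diagrams realising $i_+p_+$ and $i_-p_-$. The main obstacle I anticipate is therefore not conceptual but rather combinatorial bookkeeping of scalars and of the choice of dot-side so that the expression coming from Lemma~\ref{lem-decom1} matches $i_+p_++i_-p_-$ on the nose; no relation beyond those already listed in $\D$ is needed.
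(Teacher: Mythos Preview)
Your overall strategy---exhibit explicit projection/inclusion maps and check both composites, invoking Lemma~\ref{lem-decom1} for the $rr\to rr$ side---is exactly the paper's approach. However, your verification of the $2\times 2$ side $\alpha\circv\beta=\mathrm{id}$ contains a genuine error.

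You claim that both off-diagonal entries vanish by the needle relation ``since in both cases the composite is a dot capping a branch of a trivalent vertex''. This description is wrong on two counts. First, ``a dot capping a branch of a trivalent vertex'' is precisely \textbf{Frob2}~\eqref{eq-frob}, which yields the \emph{identity}, not zero; this is in fact what makes your \emph{diagonal} entries $p_{+}\circv i_{+}$ and $p_{-}\circv i_{-}$ equal to $\mathrm{id}_r$ (so neither ``the first Frobenius relation'' nor ``the isotopy relation'' is the right citation there---both are \textbf{Frob2}). Second, and more seriously, the composite $p_{-}\circv i_{+}$ contains \emph{no trivalent vertices at all}: both $p_{-}$ and $i_{+}$ are ``dot $+$ identity'' diagrams, so their composite is either a barbell sitting next to an identity strand (if the dots are on the same side) or a ``broken strand'' with two free dots (if the dots are on opposite sides). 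In $\D$ neither of these is zero, and the needle relation simply does not apply. So with bare dot-plus-identity maps the matrix $\alpha\circv\beta$ is \emph{not} the identity, regardless of how you distribute the factor~$\tfrac12$.

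The paper resolves this by using Lemma~\ref{lem-cycle} \emph{together with} \textbf{BF1}~\eqref{eq-barbforc} for the $2\times 2$ check. In effect one of the degree~$+1$ maps must involve a barbell (e.g.\ a merge with a barbell placed in the middle region between the two input strands), and it is barbell forcing that makes the relevant diagonal entry equal to $\mathrm{id}$ and the remaining off-diagonal vanish. You will not be able to close the argument without invoking \textbf{BF1} somewhere.
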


\begin{proof}
The isomorphism for the red case is induced by
\[
\begin{xy}
  \xymatrix{
      rr \ar[rrrrr]|/-1.5em/{\begin{pmatrix}
 1\cdot\xy
(0,0)*{\includegraphics[scale=0.85]{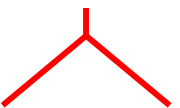}};
\endxy \\
 \frac{1}{2}\cdot\xy
(0,0)*{\includegraphics[scale=0.85]{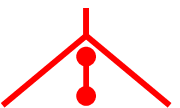}};
\endxy 
\end{pmatrix}}  & & & &  &  r\langle-1\rangle\oplus r\langle+1\rangle \ar[rrrrr]|/1.05em/{\begin{pmatrix}
 \frac{1}{2}\cdot\xy
(0,0)*{\includegraphics[scale=0.85]{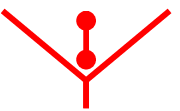}};
\endxy & 1\cdot\xy
(0,0)*{\includegraphics[scale=0.85]{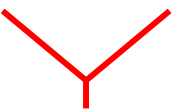}};
\endxy
\end{pmatrix}} & & & & & rr,
  }
\end{xy}
\]
where we 
check (with Lemma~\ref{lem-decom1} for 
``right$\circ$left'' and Lemma~\ref{lem-cycle} 
together with BF1~\eqref{eq-barbforc} for 
``left$\circ$right'') that the two matrices above are inverses.
The green case works analogous.
\end{proof}

\begin{defn}(\textbf{Jones-Wenzl projectors})\label{defn-jwpro}
Denote by $x_{k-tst}$ a 
sequence $x_k\dots x_2x_1$ of length $k$ 
that alternates in the colors red and green 
and starts with $x_1=g$ (thus, ends in $r$ iff $k$ is even).

We define for each $i\geq 0$ the 
$i$\textit{-th green Jones Wenzl} 
projector $\JWg_i$ recursively as the 
element of $\Endbf_{\D}(x_{i-tst})$ obtained 
via the convention that $\JWg_0$ is 
$\bbz\colon \emptyset\to \emptyset$, $\JWg_1$ 
is $\mathrm{id}\colon g\to g$, $\JWg_2$ is 
$\mathrm{id}\colon rg\to rg$ and recursion rule for $i>2$ given by
\[
\xy
(0,0)*{\includegraphics[scale=0.85]{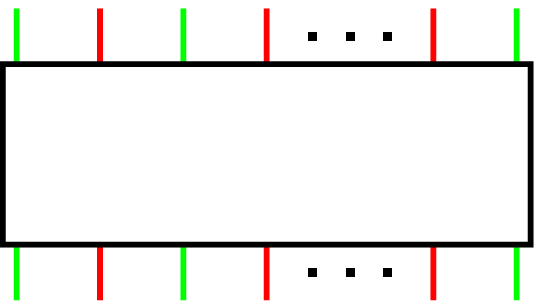}};
(0,0)*{\JWg_{i}};
\endxy=\xy
(0,0)*{\includegraphics[scale=0.85]{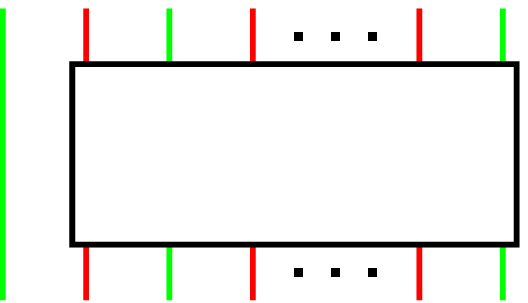}};
(2.5,0)*{\JWg_{i-1}};
\endxy-\frac{i-2}{i-1}\cdot\xy
(0,0)*{\includegraphics[scale=0.85]{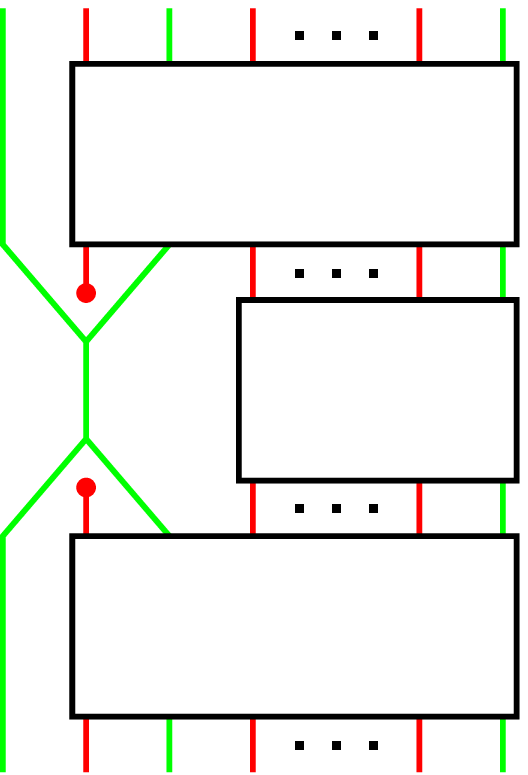}};
(10,0)*{\JWg_{i-3}};
(2.5,20)*{\JWg_{i-1}};
(2.5,-21)*{\JWg_{i-1}};
\endxy
\]
Here we have only illustrated the case for odd $i$.
Similar for even $i$ and red projectors $\JWr_i$.
\end{defn}

\begin{ex}\label{ex-jw}
The first four (green) Jones-Wenzl projectors are
\[
\JWg_0=\bbz,\quad\quad
\JWg_1=\xy
(0,0)*{\includegraphics[scale=0.85]{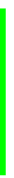}};
\endxy,\quad\quad\JWg_2=\xy
(0,0)*{\includegraphics[scale=0.85]{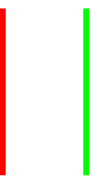}};
\endxy,\quad\quad\JWg_3=\xy
(0,0)*{\includegraphics[scale=0.85]{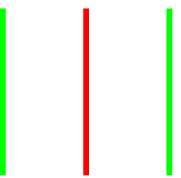}};
\endxy-\frac{1}{2}\xy
(0,0)*{\includegraphics[scale=0.85]{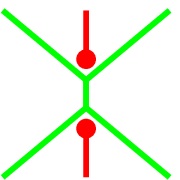}};
\endxy
\]
as follows by an easy calculation.
\end{ex}

For each $i^{\prime}>2$ we call a diagram of the form
\[
\xy
(0,0)*{\includegraphics[scale=0.85]{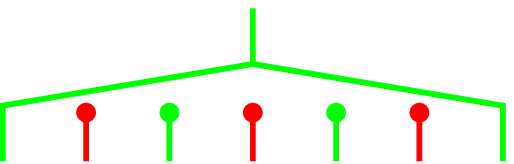}};
\endxy
\]
a \textit{green alternating $i^{\prime}$-pitchfork} 
(here $i^{\prime}=7$). We denote it by $P^g_{i^{\prime}}$ 
and similarly for $P^r_{i^{\prime}}$, which is the 
\textit{$i^{\prime}$-th red alternating pitchfork}. 
We note that $b(P^g_{i^{\prime}})$ as well as 
$b(P^r_{i^{\prime}})$ consist of $i^{\prime}$ 
elements. Moreover, we write $D_i(P^g_{i^{\prime}})$ 
for a bigger diagram with $i\geq i^{\prime}>2$ which somewhere 
at the bottom has a green alternating pitchfork, i.e.
\begin{equation}\label{eq-pitch}
D_i(P^g_{i^{\prime}})=D\circ (\mathrm{id}_x\,P^g_{i^{\prime}}\,\mathrm{id}_{x^{\prime}})
\end{equation}
for some sequences $x,x^{\prime}$ and some diagram $D$. Similarly for red again.

\begin{lem}\label{lem-pitch}
Let $i\geq i^{\prime}>2$. 
Then $D_i(P^g_{i^{\prime}})\circ \JWg_i=0$. Similarly for red.\makeqed
\end{lem}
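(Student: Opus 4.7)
The plan is to induct on $i \geq i' > 2$, using the recursive definition of Jones-Wenzl projectors from Definition~\ref{defn-jwpro} as the main tool. The overall shape is the familiar ``JW absorbs pitchforks'' argument from Temperley-Lieb, adapted to the Soergel-calculus setting via the local relations of $\D$.

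First I would establish the base case $i = i' = 3$ by a direct calculation. Using the explicit formula $\JWg_3 = D_1 - \tfrac{1}{2} D_2$ from Example~\ref{ex-jw}, one pre-composes with the alternating $3$-pitchfork $P^g_3$ and simplifies each summand via the Frobenius relations~\eqref{eq-frob}, isotopy~\eqref{eq-iso}, and the needle relation~\eqref{eq-needle}. The expected outcome is that $P^g_3 \circ D_1$ produces a needle (hence zero by~\eqref{eq-needle}), while $P^g_3 \circ D_2$ reduces, after applying \textbf{Frob2}, to twice the same configuration; the prefactors $1$ and $\tfrac{1}{2}\cdot 2$ then cancel. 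The red version is identical after color-inversion.

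For the inductive step, I would expand $\JWg_i = A_i - \tfrac{i-2}{i-1} B_i$ via the recursion. The term $B_i$ carries $\JWg_{i-1}$ at the very top, where the pitchfork $P^g_{i'}$ acts directly; since $i-1 \geq i'$, the induction hypothesis applied to $D_{i-1}(P^g_{i'}) \circ \JWg_{i-1}$ kills this term outright. The term $A_i$ has $\JWg_{i-1}$ sitting on the leftmost $i-1$ strands with the rightmost strand just an identity. If the pitchfork $P^g_{i'}$ is located entirely within the first $i-1$ strands, then $D_i(P^g_{i'}) \circ A_i$ factors as $(D_{i-1}(P^g_{i'}) \circ \JWg_{i-1}) \circh \mathrm{id}_{x_i}$ and vanishes by induction. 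Otherwise, one of the prongs of the pitchfork involves the rightmost strand, and I would use the local relations~\eqref{eq-frob},~\eqref{eq-iso},~\eqref{eq-barbforc} to slide the offending trivalent vertex off the rightmost strand and into the region covered by $\JWg_{i-1}$, thereby reducing to the previous subcase.

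The hard part will be this sliding argument in the non-trivial subcase of $A_i$: one must rewrite $D_i(P^g_{i'})$ modulo the relations of $\D$ as a linear combination of diagrams whose trivalent vertex lies inside the first $i-1$ strands, while making sure that the barbells produced along the way by \textbf{BF1}/\textbf{BF2}~\eqref{eq-barbforc} can be pushed out to the rightmost face without disturbing the pitchfork structure (this is in the spirit of Proposition~\ref{prop-soergeldiagrams}). Once that bookkeeping is done, the inductive hypothesis closes the argument, and the symmetric statement for red pitchforks $P^r_{i'}$ against $\JWr_i$ follows by the color-inverted version of the whole proof.
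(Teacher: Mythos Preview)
Your base case is miscomputed. Since $D_1 = \mathrm{id}_{grg}$, the composite $P^g_3 \circ D_1$ is simply $P^g_3$ itself, which is a nonzero morphism and contains no needle. The actual mechanism (and the one the paper uses) is that composing $P^g_3$ with the \emph{second} summand $D_2$ yields, after \textbf{BF2}~\eqref{eq-barbforc}, Lemma~\ref{lem-cycle} and \textbf{Frob2}~\eqref{eq-frob}, exactly $2\cdot P^g_3$; hence $P^g_3\circ\JWg_3 = P^g_3 - \tfrac{1}{2}\cdot 2\,P^g_3 = 0$. Your prefactor remark at the end is right, but it is inconsistent with your claim that the first term already vanishes; the two terms must both be nonzero and cancel.

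Your inductive strategy also diverges from the paper's, and the ``hard part'' you flag is where it stays vague. The paper sidesteps your sliding argument entirely by first reducing an arbitrary $i'$-pitchfork to $3$-pitchforks: applying \textbf{Frob2}~\eqref{eq-frob} backwards to $P^g_{i'}$ and then \textbf{BF1}~\eqref{eq-barbforc} expresses it as a sum of diagrams each containing a $P^g_3$, so it suffices to treat $i'=3$. With $i'$ fixed at $3$, the induction on $i$ is clean: if the $3$-pitchfork is not at the far left ($x\neq\emptyset$ in~\eqref{eq-pitch}) it sits entirely over the $\JWg_{i-1}$ subbox of the first recursion term and induction applies; if it is at the far left, one repeats the base-case computation verbatim. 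Your proposed barbell-pushing for the boundary subcase of $A_i$ may be salvageable in principle, but as written it is not a proof: you have not explained why the diagrams produced by \textbf{BF1}/\textbf{BF2} still carry a pitchfork over the smaller projector rather than, say, only dots or broken edges that the inductive hypothesis cannot absorb.
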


\begin{proof}
This follows recursively. 
To see the first step $i=i^{\prime}=3$, we 
compose $P^g_{3}$ with the second term of 
$\JWg_3$ from Example~\ref{ex-jw} and obtain 
by using \textbf{BF2}~\eqref{eq-barbforc} 
and Lemma~\ref{lem-cycle} for the first 
step and \textbf{Frob2}~\eqref{eq-frob} for the second:
\begin{equation*}
-\frac{1}{2}\cdot\xy
(0,0)*{\includegraphics[scale=0.85]{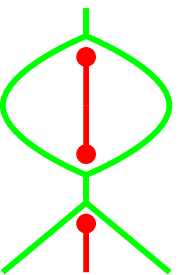}};
\endxy=-\frac{1}{2}\cdot 2\cdot\xy
(0,0)*{\includegraphics[scale=0.85]{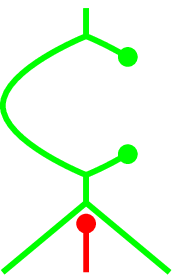}};
\endxy=-\xy
(0,0)*{\includegraphics[scale=0.85]{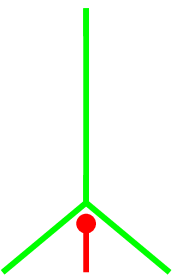}};
\endxy
\end{equation*}
giving minus the composition of $P^g_{3}$ 
with the first term of $\JWg_3$.
Thus, 
$D_3(P^g_{3})\circ \JWg_3=0$.

Now we use induction on $i$ and fix $i^{\prime}=3$. 
If $i>3$ and $D_i(P^g_{3})$ has $x\neq \emptyset$ 
(see~\eqref{eq-pitch}), then the recursion rule 
from Definition~\ref{defn-jwpro} (the $3$-pitchfork 
is on top of the smaller box that corresponds to the 
$i-1$-th Jones-Wenzl projector) shows by induction 
that we get zero. If, on the other hand, $x=\emptyset$, 
then we can 
use a similar argument as above. Hence, 
we always have $D_i(P^g_{3})\circ \JWg_i=0$.

But the case $i^{\prime}=3$ suffices as the following illustrates.
\[
\xy
(0,0)*{\includegraphics[scale=0.85]{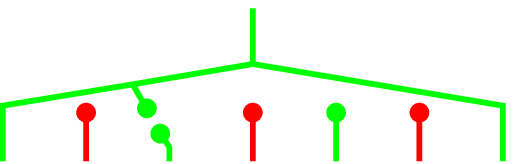}};
\endxy=\frac{1}{2}\cdot\xy
(0,0)*{\includegraphics[scale=0.85]{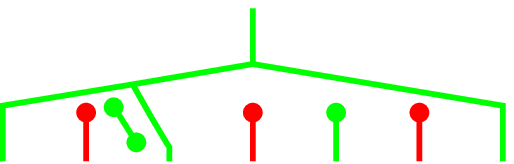}};
\endxy+\frac{1}{2}\cdot\xy
(0,0)*{\includegraphics[scale=0.85]{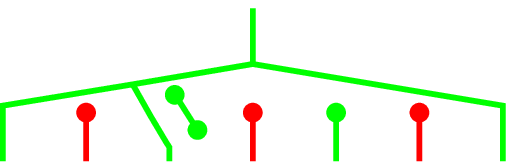}};
\endxy
\]
Here we used \textbf{Frob2}~\eqref{eq-frob} ``backwards'' 
on a bigger alternating pitchfork followed by 
\textbf{BF1}~\eqref{eq-barbforc}. We see now two smaller 
pitchforks and thus, the case $i^{\prime}=3$ 
suffices to verify the lemma.
\end{proof}

Lemma~\ref{lem-pitch} holds, mutatis mutandis, by attaching horizontally 
reflected pitchforks form the bottom as well.
Moreover, it is easy to deduce that the Jones-Wenzl 
projectors are (degree zero) idempotents 
of $x_{i-tst}$ and $x_{i-sts}$, respectively. 
We associate these to the corresponding 
alternating sequences. Furthermore, by Lemma~\ref{lem-decom2}, we can, by going to $\Mat(\D)$, associate to each $x\in\Ob(\D)$ a (shifted) direct sum of Jones-Wenzl 
projectors. We denote this sum that consists of 
these projectors for $x$ by $\JWx$. This motivates the following category.

\begin{defn}\label{defn-matdia}
Denote by $\Mat(\Dk)$ the full subcategory 
of $\Kar(\Mat(\D))$ generated by $\mathrm{im}(\JWx)$ for all $x\in\Ob(\D)$.
\end{defn}
Diagrammatically $\Mat(\Dk)$ is given similarly to 
$\Mat(\D)$, but has some extra relations. For example, 
Lemma~\ref{lem-pitch} says that, whenever we see a
pitchfork at the bottom (or its reflection at the top) 
boundary, then the corresponding Soergel diagram is zero.

\subsection{The quotient \texorpdfstring{$\QD$}{QDinfty}}\label{sub-qd}

We are now able to define the category 
whose combinatorics and diagrammatic 
govern the tilting category $\Tgr$. 
We call it for short \textit{the quotient}.

\begin{defn}(\textbf{The quotient})\label{defn-thequotient}
We denote by $\QD_f$ the $\bC$-linear full 
subcategory of $\D$, called \textit{``free quotient''}, consisting of the following.

\begin{itemize}
\item Objects $\Ob(\QD_f)$ are finite sequences 
$x=x_i\dots x_2x_1$ with $x_k\in\{r,g\}$. Moreover, 
the empty sequence $\emptyset$ is also an object.

\item The hom-space $\Hom_{\QD_f}(x,x^{\prime})$ 
for $x,x^{\prime}\in\Ob(\QD_f)$ is the $\bC$-linear 
span of all Soergel graphs $G$ with $b(G)=x$ and 
$t(G)=x^{\prime}$ whose rightmost face is marked with the \textit{dead-end symbol}.

\item Composition $G^{\prime}\circv G$ is defined 
by glueing $G^{\prime}$ on top of $G$. The dead-end symbol is just a 
marker and behaves as an idempotent under composition 
and can not be moved from the rightmost face.

\item The morphisms in $\Hom_{\QD_f}(x,x^{\prime})$ 
satisfy the relations of the corresponding morphisms 
(\textit{without} the dead-end marker) is 
$\Hom_{\D}(x,x^{\prime})$ given in Definition~\ref{defn-diacath}.

\item The spaces $\Hom_{\QD_f}(x,x^{\prime})$ are 
graded $\bC$-vector spaces with degree given by $l(G)$.
\end{itemize}

The \textit{quotient $\QD$} is the quotient of 
$\QD_f$ by the following extra relations which 
we call \textit{the dead-end relations}:
\begin{equation}\label{eq-dead}
\text{\textbf{DE1}}:\quad
\xy
(0,0)*{\includegraphics[scale=0.85]{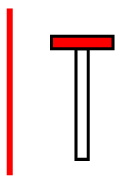}};
\endxy=
0,\quad\quad\text{\textbf{DE2}}:\quad
\xy
(0,0)*{\includegraphics[scale=0.85]{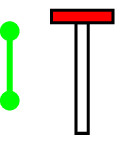}};
\endxy=0
\end{equation}
We stress that we do not allow the 
color inverted counterparts of~\eqref{eq-dead}. 
All relations are homogeneous with 
respect to $l(G)$ and thus, $\QD$ inherits the grading from $\D$. 
We call the morphisms in $\QD$ \textit{marked} Soergel diagrams.
\end{defn}

\begin{ex}\label{ex-soergelgraph2}
An example of a Soergel diagram that 
is zero as a marked Soergel diagrams is
\[
\xy
(0,0)*{\includegraphics[scale=0.85]{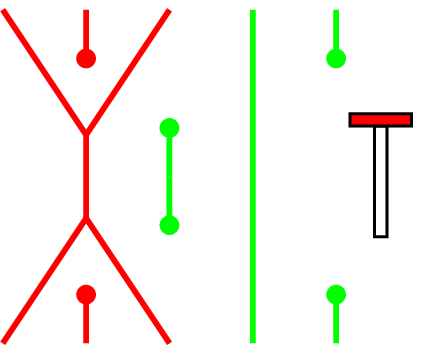}};
\endxy=
2\cdot\xy
(0,0)*{\includegraphics[scale=0.85]{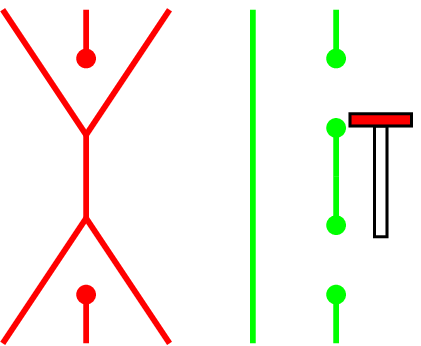}};
\endxy
-\xy
(0,0)*{\includegraphics[scale=0.85]{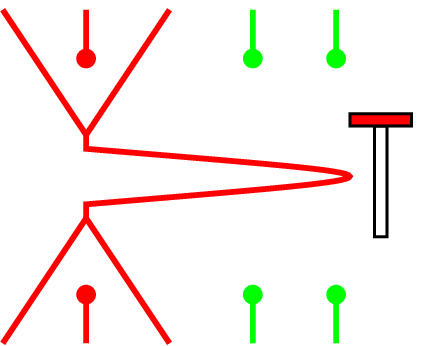}};
\endxy=0
\]
Here the middle diagrams is zero due to \textbf{DE2} and the right-hand due to \textbf{DE1}.
\end{ex}

Let \textit{reduced}, marked Soergel diagrams be the evident analogues of 
reduced Soergel diagrams.

\begin{lem}\label{lem-noaction}
Each marked Soergel diagram can be 
written as a $\bC$-linear sum of marked, 
reduced Soergel diagrams. Thus, marked, 
reduced Soergel diagrams without barbells 
form spanning sets of the hom-spaces of $\QD$. 
The right action of $\Endbf_{\D}(\emptyset)$ by 
placing barbells in the rightmost face is the trivial action.\makeqed
\end{lem}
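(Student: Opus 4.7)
The plan is to run the proof of Proposition~\ref{prop-soergeldiagrams} inside $\QD$ and then use the dead-end relations to kill every barbell that ends up in the rightmost (marked) face.

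First, I would argue that the reduction of Proposition~\ref{prop-soergeldiagrams} transfers verbatim from $\D$ to $\QD$. The dead-end marker is a passive label on the rightmost face that behaves as an idempotent and cannot be moved; every local relation used in that proof (the two Frobenius relations~\eqref{eq-frob}, the needle relation~\eqref{eq-needle}, the barbell-forcing relations~\eqref{eq-barbforc}, and Lemma~\ref{lem-cycle}) is a local surgery on the underlying Soergel graph that leaves the marked rightmost face intact. Therefore, every marked Soergel diagram can be rewritten as a $\bC$-linear combination of marked reduced diagrams, i.e.\ disjoint unions of trees together with a (possibly empty) collection of barbells floating in the rightmost face.

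Second, I would eliminate all remaining barbells in the rightmost face. A red barbell in the marked face vanishes immediately by \textbf{DE1}~\eqref{eq-dead}. For a green barbell, I would apply \textbf{BF1} or \textbf{BF2}~\eqref{eq-barbforc} to force it across the rightmost red edge, trading it for a sum of diagrams whose floating pieces have been converted into short configurations attached directly to the rightmost boundary; each resulting term is then killed by \textbf{DE1} or \textbf{DE2}. The model computation is exactly Example~\ref{ex-soergelgraph2}. Iterating the procedure removes one barbell at a time, so it terminates after finitely many steps. Combining the two steps proves the first two assertions of the lemma simultaneously: every marked Soergel diagram equals a $\bC$-linear combination of marked reduced diagrams without any barbells in the rightmost face. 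The third assertion, that the right action of $\Endbf_{\D}(\emptyset)\cong\bC[b_r,b_g]$ (see Corollary~\ref{cor-endoring}) by placing barbells in the rightmost face is trivial, is then an immediate corollary, since any monomial of positive degree in $b_r,b_g$ inserts at least one barbell in the rightmost face and therefore acts as zero.

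The main obstacle will be the green-barbell step. Unlike red barbells, which \textbf{DE1} handles on its own, eliminating a green barbell requires a genuine interplay between the forcing relations and both dead-end relations; one has to track carefully against which red edge the barbell is forced, and verify that the procedure does not generate new green barbells that themselves remain trapped in the rightmost face. A clean way around this would be an induction on, say, the lexicographic pair (number of barbells in the rightmost face, distance of the innermost green barbell from the rightmost boundary), with the base case supplied by the example computation and the inductive step supplied by one application of a barbell-forcing relation followed by \textbf{DE1}/\textbf{DE2}.
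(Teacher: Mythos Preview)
Your overall strategy matches the paper's: run Proposition~\ref{prop-soergeldiagrams} inside $\QD$ to push all barbells into the marked rightmost face, then invoke the dead-end relations to kill them. The paper's proof says exactly this in one sentence.

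Where you go astray is in your reading of \textbf{DE2}. Relation \textbf{DE2}~\eqref{eq-dead} is precisely the statement that a \emph{green} barbell in the marked rightmost face vanishes. This is made explicit in the proof of Proposition~\ref{prop-quiverrel}, where the diagram $d_1\circ u_0$ (a green barbell with marker, in the notation of Corollary~\ref{cor-homspaces2}) is identified with \textbf{DE2}, and again in the verification of \textbf{DE2} inside Lemma~\ref{lem-welldef}. So green barbells die in one step, just as red barbells die by (a consequence of) \textbf{DE1}. Your ``main obstacle'' paragraph, the forcing-across-a-red-edge manoeuvre, and the proposed lexicographic induction are all unnecessary.

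As written, your manoeuvre also has a genuine gap: it presupposes an edge to force the barbell across, and in $\Hom_{\QD}(\emptyset,\emptyset)$ there is none---a lone green barbell in the marked face has nothing to interact with except \textbf{DE2} itself. More generally, forcing a barbell \emph{out} of the rightmost face (the only available direction once you have already applied Proposition~\ref{prop-soergeldiagrams}) produces terms with barbells back in interior faces, and your induction does not obviously terminate without eventually appealing to \textbf{DE2} directly.
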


\begin{proof}
Use repeatedly \textbf{BF1} and 
\textbf{BF2}~\eqref{eq-barbforc} to 
shift all existing barbells to 
the rightmost region. Then use 
the two dead-end relations \textbf{DE1} 
and \textbf{DE2}~\eqref{eq-dead} to see that 
only terms without barbells are not 
killed. The remaining diagrams are 
reduced by Proposition~\ref{prop-soergeldiagrams}.
\end{proof}

Note that, as a consequence of Lemma~\ref{lem-noaction}, 
a marked, reduced Soergel diagram does 
not have any floating components anymore.

\begin{defn}\label{defn-matdia2}
Denote by $\Mat(\QDk)$ the full 
subcategory of $\Kar(\Mat(\QD))$ generated by $\mathrm{im}(\JWx)$ for all $x\in\Ob(\QD)$.
\end{defn}

Thus, by using Lemma~\ref{lem-decom2} and 
\textbf{DE1}~\eqref{eq-dead}, we usually can focus 
on alternating sequences $x$ that start with $g$ in the following.

Given a marked, reduced Soergel diagram $G$, 
we call two edges $E_1,E_2$ \textit{face sharing}, 
if $E_1$ and $E_2$ are adjacent to some common face. 
Marked, reduced Soergel diagrams $G$ (with color 
alternating boundary) with face sharing edges of 
the same color come in four types, where we, by 
Lemma~\ref{lem-pitch}, ignore diagrams with pitchforks. The first two 
types are
\begin{equation}\label{eq-type12}
\text{\textbf{Type 1}}:\quad
\xy
(0,0)*{\includegraphics[scale=0.85]{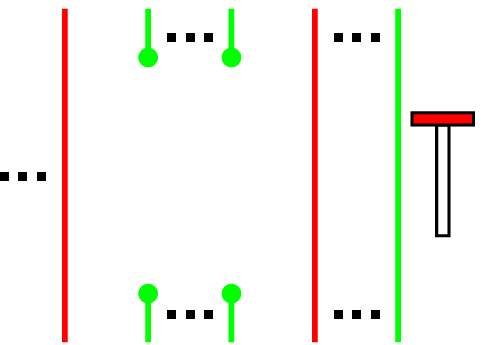}};
(-4.1,8)*{\scriptstyle \underbrace{}};
(-4.1,5.5)*{\scriptstyle k\geq 1};
(-4.1,-8)*{\scriptstyle \overbrace{•}};
(-4.1,-5)*{\scriptstyle k^{\prime}\geq 1};
\endxy,\quad\quad\text{\textbf{Type 2}}:\quad
\xy
(0,0)*{\includegraphics[scale=0.85]{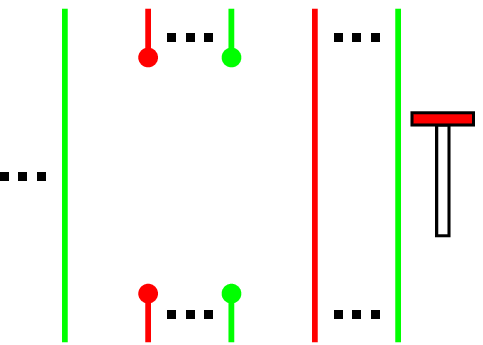}};
(-4.1,8)*{\scriptstyle \underbrace{•}};
(-4.1,5.5)*{\scriptstyle k\geq 2};
(-4.1,-8)*{\scriptstyle \overbrace{•}};
(-4.1,-5)*{\scriptstyle k^{\prime}\geq 2};
\endxy
\end{equation}

where the dots $\cdots$ mean an arbitrary (possible empty) 
diagram in between and we do not require the picture to 
by symmetric. Moreover, $k,k^{\prime}$ are odd in \textbf{Type 1} 
and even in \textbf{Type 2}. We should note that only the first 
edge has to be green, but we allow 
possible color inversion of the rest 
of the picture (that is, after the rightmost dots).
The other two types are
\begin{equation}\label{eq-type34}
\text{\textbf{Type 3}}:\quad
\xy
(0,0)*{\includegraphics[scale=0.85]{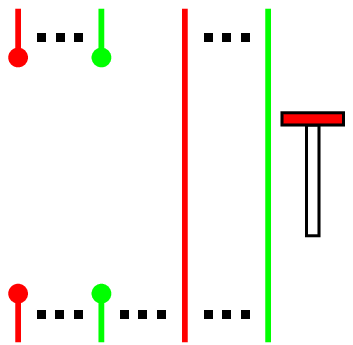}};
(-10,8)*{\scriptstyle \underbrace{•}};
(-10,5.5)*{\scriptstyle k\geq 2};
(-10,-8)*{\scriptstyle \overbrace{•}};
(-10,-5)*{\scriptstyle k^{\prime}\geq 0};
\endxy,\quad\quad\text{\textbf{Type 4}}:\quad
\xy
(0,0)*{\includegraphics[scale=0.85]{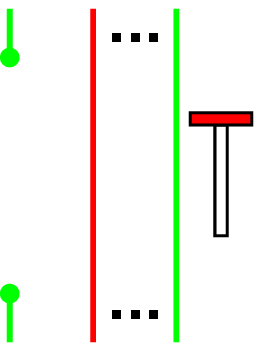}};
\endxy
\end{equation}
The difference between the types 
1,2 and 3,4 is that 1,2 have face sharing 
edges for a non-extremal face, while 
3,4 have face sharing edges for the leftmost 
face. We note that \textbf{Type 3} should 
also include the horizontal reflection (fixing the dead-end symbol) 
of the first diagram in~\eqref{eq-type34}.

\begin{lem}\label{lem-facesharing}
If a marked, reduced Soergel diagram 
$G\in\Hom_{\Mat(\QDk)}(x,x^{\prime})$ has 
two face sharing edges of one of the three types 1,2 or 3, then it is zero.\makeqed
\end{lem}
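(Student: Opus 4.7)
My plan is to treat the three types separately, but in each case the underlying strategy is the same: use the Frobenius relations Frob1 and Frob2 to merge the two face-sharing same-color edges, producing either a cycle bounding an empty face (killed by Lemma~\ref{lem-cycle}) or a floating barbell in the shared face; then use the barbell forcing relations BF1 and BF2 to transport the barbell to the rightmost face; finally apply the dead-end relations DE1 or DE2 (together with Lemma~\ref{lem-noaction}) to conclude that the diagram is zero.

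For \textbf{Type 3} (leftmost face), I expect the argument to be the easiest. Since the shared face is bounded on its left by the outer boundary of the rectangle, and since the diagram is reduced, the shared face contains no floating components (all barbells in a reduced marked Soergel diagram live in the rightmost face, by Lemma~\ref{lem-noaction}). A single application of Frob1 then joins the two face-sharing edges into an internal cycle bounding an empty face, and Lemma~\ref{lem-cycle} finishes the case. The horizontal reflection mentioned in~\eqref{eq-type34} is handled by reflecting the argument.

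For \textbf{Type 1} (non-extremal face, $k,k'$ odd), I first apply Frob1 locally to the two face-sharing green edges. Since $k,k'\geq 1$, this leaves a trivalent ``counit''--``unit'' bubble that collapses, by Frob1 again, to a floating green barbell inside the shared face. I then push this barbell rightward using successive applications of the green barbell forcing relation BF1. Each step produces three terms: one of them keeps moving the barbell to the right, and the other two either close off into a cycle bounding an empty region (zero by Lemma~\ref{lem-cycle}) or contain a pitchfork (zero by Lemma~\ref{lem-pitch} after passing to $\Mat(\QDk)$). Eventually, the moving barbell reaches the rightmost face, where DE2 (the relation that kills a green barbell next to the dead-end marker) annihilates it. The parity assumption on $k,k'$ ensures that the propagated barbell retains its green color throughout, so that DE2 rather than DE1 is the one that applies.

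For \textbf{Type 2} (non-extremal face, $k,k'$ even), the argument is symmetric: the Frobenius collapse produces a red barbell instead of a green one (the even parity of $k,k'$ flips the color of the surrounding vertical edges across the shared face). The red barbell is pushed rightward via BF2, with the same ``pitchfork or cycle or dead-end'' trichotomy for the spurious terms. It is finally killed by DE1.

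The main obstacle I expect is the bookkeeping in Types 1 and 2. Each application of BF1 or BF2 introduces two extra terms (with coefficients $\pm 2$ and $\pm 1$), so after iterating through the $k+k'$ intermediate colors the combinatorics could blow up. The key is to show that every spurious term either (i) contains a bottom or top pitchfork and is killed by Lemma~\ref{lem-pitch} in $\Mat(\QDk)$, (ii) contains an internal cycle around an empty face and is killed by Lemma~\ref{lem-cycle}, or (iii) has its barbell reach the rightmost face with the correct color for DE1 or DE2 to apply. Verifying trichotomy carefully, and matching the barbell color to the correct dead-end relation via the parity of $k,k'$, is where the real work lies.
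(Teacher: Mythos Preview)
Your Type~3 argument has a genuine gap. The relation \textbf{Frob1}~\eqref{eq-frob} is an associativity move between two trivalent vertices of the same color; it does not fuse two separate edges into a cycle. In Type~3 the face-sharing edges are distinct dotted strands terminating in the leftmost face, and no local relation joins them. In the paper Type~3 is in fact the \emph{hardest} case, not the easiest: one argues by induction on the number $j$ of strands separating the face in question from the rightmost face, and at $j=1$ one applies Lemma~\ref{lem-decom1}, kills one resulting term via a boundary pitchfork (Lemma~\ref{lem-pitch}), and pushes the surviving term to the right using barbell forcing until \textbf{DE1}~\eqref{eq-dead} applies. The dead-end relation is essential for Type~3, and your proposal omits it there entirely.

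Conversely, your Types~1 and~2 are over-engineered and the first step does not parse: \textbf{Frob1} cannot be applied to two parallel strands with no trivalent vertex between them, so there is no ``collapse to a floating barbell''. The paper's Type~1 is a one-liner: Lemma~\ref{lem-decom1} (which is \textbf{BF1} rewritten) splits the pair of same-colored strands into two terms, each of which already carries a boundary pitchfork and dies by Lemma~\ref{lem-pitch}; no barbell transport and no dead-end relation are needed. Type~2 is then handled like Type~3, not like Type~1. Finally, your claim that ``the parity of $k,k'$ ensures the propagated barbell retains its color'' is not justified: \textbf{BF2}~\eqref{eq-barbforc} produces terms of \emph{both} colors when a barbell crosses a strand of the opposite color, so controlling which dead-end relation eventually fires would require more bookkeeping than you indicate.
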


\begin{proof}
A case-by-case check. We assume that there is at 
least one such face.

\textbf{Type 1}. This follows via
\[
\xy
(0,0)*{\includegraphics[scale=0.85]{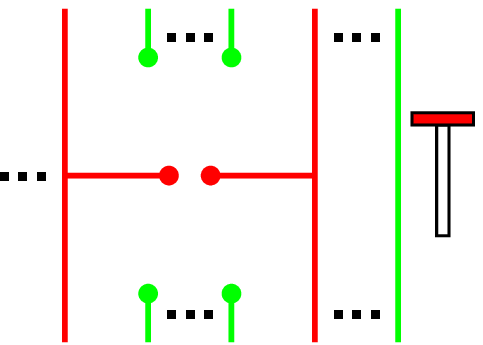}};
\endxy=\frac{1}{2}\cdot \xy
(0,0)*{\includegraphics[scale=0.85]{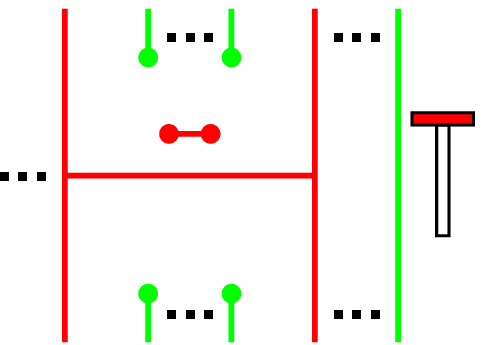}};
(-4,-5)*{\text{\tiny pitchfork}};
\endxy + \frac{1}{2}\cdot \xy
(0,0)*{\includegraphics[scale=0.85]{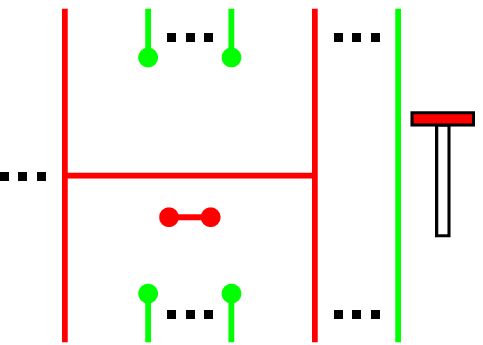}};
(-4,5)*{\text{\tiny pitchfork}};
\endxy
\]
That both diagrams are zero follows from Lemma~\ref{lem-pitch} 
(recalling that we have restricted ourself to alternating sequences of colors), since 
the marked faces above are, by~\eqref{eq-frob}, pitchforks.

\textbf{Type 2}. This follows similarly to \textbf{Type 3} below.

\textbf{Type 3}. Note that the number of bottom edges 
at the end does not matter and we thus, by 
simplicity, assume that there are none of these. 
In addition, we only illustrate the case with 
two dotted edges at the top since all the 
others will be similar. And, without loss 
of generality, we assume that this is the only 
face with adjacent edges of the same color.

We proceed by induction on $j$, where $j>0$ 
is the minimal number of cuts that a line 
drawn from the rightmost face to the face 
in question needs. The case $j=1$ can be deduced via
\[
\xy
(0,0)*{\includegraphics[scale=0.85]{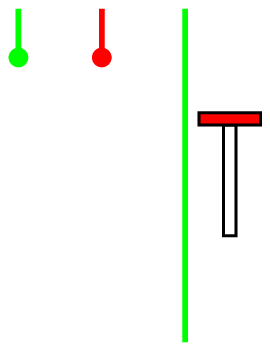}};
\endxy=\xy
(0,0)*{\includegraphics[scale=0.85]{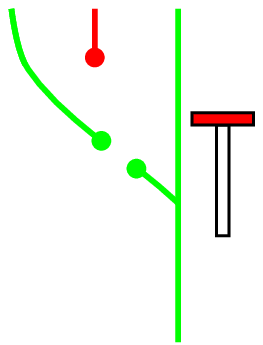}};
\endxy=\frac{1}{2}\cdot \xy
(0,0)*{\includegraphics[scale=0.85]{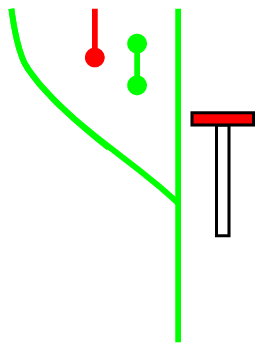}};
\endxy + \frac{1}{2}\cdot \xy
(0,0)*{\includegraphics[scale=0.85]{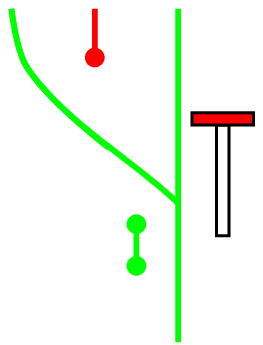}};
\endxy
\]
Note that the rightmost 
diagram is zero by Lemma~\ref{lem-pitch}. Thus, we 
continue as
\[
\frac{1}{2}\cdot\xy
(0,0)*{\includegraphics[scale=0.85]{res/figs/quotient/case3d}};
\endxy=-\frac{1}{2}\cdot\xy
(0,0)*{\includegraphics[scale=0.85]{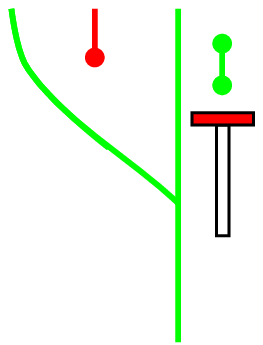}};
\endxy+\xy
(0,0)*{\includegraphics[scale=0.85]{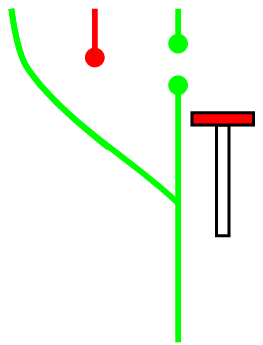}};
\endxy=\xy
(0,0)*{\includegraphics[scale=0.85]{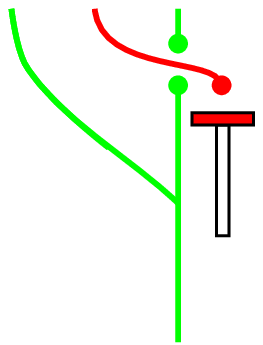}};
\endxy=0
\]
where the last diagram is zero due 
to \textbf{DE1}~\eqref{eq-dead}.

This shows that our starting diagram was 
already zero. The induction $j>1$ works 
now in a similar fashion where we perform 
the same sequence of moves until we are 
in a situation as the last step above. 
Then we can use induction, since we have 
created a face with edges of the same 
color, but ``closer'' to the rightmost face. This finishes the proof.
\end{proof}

\begin{cor}\label{cor-homspaces2}
Let $i,i^{\prime}\in\bN$. Then we have 
the following for $i,i^{\prime}\neq 0$.
\[
\Hom_{\Mat(\QDk)}(x_{i-tst},x_{i^{\prime}-tst})\cong\begin{cases}\bC\bbi\oplus\bC\varepsilon_i,&\text{if }|i-i^{\prime}|=0,\\
\bC u_{i},&\text{if }i^{\prime}-i=1,\\
\bC d_{i},&\text{if }i-i^{\prime}=1,\\
0,&\text{if }|i-i^{\prime}|>1,\end{cases}
\]
with diagrams for $i=1,2,\dots$ 
(and possible color inverted left sides)
\[
\bbi=\xy
(0,0)*{\includegraphics[scale=0.85]{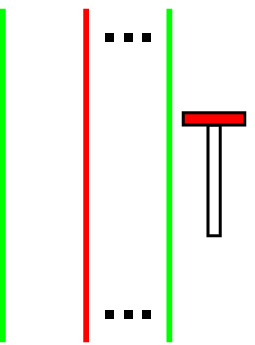}};
(-10,-16)*{\scriptstyle x_i};
(-2,-16)*{\scriptstyle x_{i-1}};
(4.5,-16)*{\scriptstyle x_1};
(-10,15.5)*{\scriptstyle x_i};
(-2,15.5)*{\scriptstyle x_{i-1}};
(4.5,15.5)*{\scriptstyle x_1};
\endxy\;,\;\varepsilon_i=\frac{1}{2^i}\cdot\xy
(0,0)*{\includegraphics[scale=0.85]{res/figs/quotient/case4b}};
(-10,-16)*{\scriptstyle x_i};
(-2,-16)*{\scriptstyle x_{i-1}};
(4.5,-16)*{\scriptstyle x_1};
(-10,15.5)*{\scriptstyle x_i};
(-2,15.5)*{\scriptstyle x_{i-1}};
(4.5,15.5)*{\scriptstyle x_1};
\endxy\;,\; u_{i-1}=\xy
(0,0)*{\includegraphics[scale=0.85]{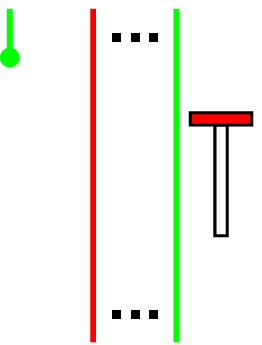}};
(-2,-16)*{\scriptstyle x_{i-1}};
(4.5,-16)*{\scriptstyle x_1};
(-10,15.5)*{\scriptstyle x_i};
(-2,15.5)*{\scriptstyle x_{i-1}};
(4.5,15.5)*{\scriptstyle x_1};
\endxy\;,\; d_i=\frac{1}{2^{i}}\cdot\xy
(0,0)*{\includegraphics[scale=0.85]{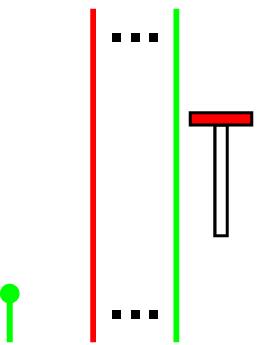}};
(-10,-16)*{\scriptstyle x_i};
(-2,-16)*{\scriptstyle x_{i-1}};
(4.5,-16)*{\scriptstyle x_1};
(-2,15.5)*{\scriptstyle x_{i-1}};
(4.5,15.5)*{\scriptstyle x_1};
\endxy
\]
of degree $l(\bbi)=0$, $l(u_i)=l(d_i)=1$ 
and $l(\varepsilon_i)=2$. In addition, 
$\Hom_{\Mat(\QDk)}(\emptyset,\emptyset)$ 
is one dimensional and spanned by the empty diagram $\bbz$ of degree $l(\bbz)=0$.\makeqed
\end{cor}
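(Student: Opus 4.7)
\medskip

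\noindent\textbf{Proof proposal.} The plan is to prove this by combining the spanning results from Lemmas~\ref{lem-noaction} and~\ref{lem-facesharing} with a careful case analysis, and then establishing non-vanishing by matching the diagrammatic count against the known algebraic answer. First I would reduce to studying marked, reduced Soergel diagrams $G$ sitting between $\JWg_{i-tst}$ and $\JWg_{i'-tst}$ (or the appropriate color variants). By Lemma~\ref{lem-noaction} we may assume $G$ has no floating components, since barbells in the rightmost face are killed by \textbf{DE1}/\textbf{DE2}~\eqref{eq-dead}. By Lemma~\ref{lem-pitch} we may assume $G$ contains no bottom or top pitchfork on an alternating sequence, since these are killed once composed with the surrounding Jones--Wenzl projectors. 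By Lemma~\ref{lem-facesharing}, $G$ also cannot have a face with two adjacent same-color edges of \textbf{Type 1}, \textbf{Type 2}, or \textbf{Type 3}; the only surviving same-color adjacency is \textbf{Type 4} (living at the dead end).

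Next I would classify the reduced diagrams surviving these constraints. Because both boundaries are alternating and the only same-color faces permitted sit in the leftmost region as \textbf{Type 4}, such a $G$ looks like a family of strands running from bottom to top, potentially with a single ``turnaround'' at the left. Tracing strands from right to left, the first $\min(i,i')-1$ strands on the right are forced to be straight identity lines (any other local structure would create a forbidden pitchfork, or a non-extremal same-color face of \textbf{Type 1}, \textbf{2} or~\textbf{3}). What happens at the left depends on $|i-i'|$: if $|i-i'|>1$ then no consistent completion exists (any completion forces a face of \textbf{Type 1}, \textbf{2}, or \textbf{3}, or a pitchfork), giving $0$; if $|i-i'|=1$ we either cap off with a dot at the bottom giving $u_{i-1}$ or at the top giving $d_i$; and if $i=i'$ we have either the identity $\bbi$ or the \textbf{Type 4} turnaround, which is the diagram $\varepsilon_i$. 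The normalizing factors $1/2^i$ come from unwinding the chosen diagrams using Lemma~\ref{lem-decom1} and \textbf{Frob1}~\eqref{eq-frob} so that composition yields the algebra relations $d_{i+1}u_i = u_{i-1}d_i = \varepsilon_i$ and $\varepsilon_i^2=0$.

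To pin down the dimensions (and in particular non-vanishing), I would appeal to the following comparison. Proposition~\ref{prop-isoofalg1} and its infinite version Proposition~\ref{prop-isoofalg2} show that the same abstract quiver, with precisely the basis $\bbi, u_i, d_i, \varepsilon_i$ and the relations of Definition~\ref{def-ksquiver1}, describes $\End_{\Uq}(\Tn{\leq\bbm})\cong Q_m$. The enumeration in the previous paragraph produces an upper bound for $\dim \Hom_{\Mat(\QDk)}(x_{i-tst},x_{i'-tst})$ matching exactly $\dim \Hom_{Q_m}(\bbi,\bbi^{\prime})$. On the other hand, the assignment sending the generators $\bbi,u_i,d_i,\varepsilon_i \in Q_m$ to the marked diagrams displayed in the statement respects all defining relations of $Q_m$ (the quadratic relations $u_{i}u_{i-1}=d_id_{i+1}=0$ come from Lemma~\ref{lem-pitch} after one Frobenius move, the commutation $d_{i+1}u_i=u_{i-1}d_i$ comes from \textbf{Frob1}~\eqref{eq-frob}, and the dead-end relation $d_1u_0=0$ is exactly \textbf{DE1}~\eqref{eq-dead}). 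This gives a ring/linear injection the other way, forcing equality of dimensions and confirming the explicit diagrams in the statement. The degree computations $l(\bbi)=0$, $l(u_i)=l(d_i)=1$, $l(\varepsilon_i)=2$ are immediate from Definition~\ref{defn-soergeldia}, and the case $\emptyset$ is trivial since by \textbf{DE1} every non-empty marked Soergel graph on $\emptyset$ vanishes.

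The main obstacle I expect is the combinatorial classification step: after stripping away floating barbells and pitchforks, one must systematically rule out all diagrams with ``interior'' same-color faces, especially when such a face can be pushed toward the dead end through a sequence of \textbf{Frob1}/\textbf{Frob2} and \textbf{BF}~\eqref{eq-barbforc} moves before one can invoke \textbf{DE1}/\textbf{DE2} or Lemma~\ref{lem-facesharing}. The induction used in the proof of Lemma~\ref{lem-facesharing} handles the local obstruction, but ensuring that the global classification closes off with exactly the diagrams $\bbi,\varepsilon_i,u_i,d_i$ and nothing more requires verifying that the recursive reduction always terminates at either a killed diagram or one of these four normal forms. Once this is under control, the dimension match against $Q_m$ provides a clean finish without having to compute an explicit separating pairing by hand.
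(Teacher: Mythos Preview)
Your reduction and classification argument is essentially the paper's approach: use Lemma~\ref{lem-noaction} to remove barbells, Lemma~\ref{lem-pitch} to kill pitchforks at the boundaries, and Lemma~\ref{lem-facesharing} to eliminate Types~1--3, leaving only $\bbi,\varepsilon_i,u_i,d_i$ as candidate spanning diagrams. The paper then separates $\bbi$ from $\varepsilon_i$ by degree, exactly as one would expect.

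The genuine gap is in your non-vanishing step. You construct a map $Q_m\to\Mat(\QDk)$ by sending quiver generators to the displayed diagrams and observe that the defining relations of $Q_m$ hold in the target. That only produces a \emph{homomorphism}; it does not give injectivity. To know the map is injective you would need to rule out \emph{additional} relations in $\Mat(\QDk)$, which is precisely the statement that the displayed diagrams are non-zero and is what you are trying to prove. Combined with your upper bound, a homomorphism in this direction (necessarily surjective onto the span of the candidate diagrams) tells you nothing new: a surjection from a space of dimension $n$ onto a space of dimension at most $n$ does not force equality.

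The paper resolves this the other way around: it builds the functor $\mathcal D_\infty\colon\Mat(\QDk)\to\Tgrl$ (Lemma~\ref{lem-welldef}) and identifies the four diagrams with the $\Uq$-intertwiners $\bbi,u_i,d_i,\varepsilon_i$ of Proposition~\ref{prop-link}, which are already known to be non-zero. This is spelled out in Remark~\ref{rem-nonzero}. So the lower bound comes from a functor \emph{out of} the diagram category into something under control, not from a map into it. (Two small side remarks: the relation $d_1u_0=0$ is \textbf{DE2}, not \textbf{DE1}; and the quiver relations among the diagrams are checked in the paper as Proposition~\ref{prop-quiverrel}, consistent with what you sketch.)
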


\begin{proof}
A case-by-case check using Lemma~\ref{lem-facesharing}. That $\Hom_{\Mat(\QDk)}(\emptyset,\emptyset)\cong\bC\bbz$ follows from \textbf{DE1} and \textbf{DE2}~\eqref{eq-dead} since no floating diagrams can exist.

If $|i-i^{\prime}|>1$, then each Soergel diagram has a pitchfork or is of \textbf{Type 3}~\eqref{eq-type34}. Both are zero by Lemmas~\ref{lem-pitch} and~\ref{lem-facesharing}.
If $|i-i^{\prime}|=1$, then, by Lemma~\ref{lem-facesharing} again, only diagrams of the form $u_i$ or $d_i$ as above can be non-zero and these are clearly non-zero.
If $|i-i^{\prime}|=0$, then, again by Lemma~\ref{lem-facesharing}, only diagrams of the form $\bbi$ or $\varepsilon_i$ as above can be non-zero (which they clearly are).

All these diagrams are linearly independent for degree reasons.
\end{proof}

\begin{rem}\label{rem-nonzero}
Let $\mathcal C$ be a category 
given by generators and relations. Usually 
it is very hard to see ``by hand'' that the 
calculus provided by $\mathcal C$ does not collapse. A way to 
show that this is not the case is to represent $\mathcal C$ 
on a category ``under control'', e.g. on module categories.

We formally have to do the same to see that the four maps from above are indeed non-zero. 
What we use in our case is Lemma~\ref{lem-welldef} which identifies 
the four maps with the evident $\Uq$-intertwiners (we point out that 
we do not need Corollary~\ref{cor-homspaces2} to prove Lemma~\ref{lem-welldef}).
\end{rem}

\begin{prop}\label{prop-quiverrel}
We have
\begin{gather*}
u_{i}\circ u_{i-1}=0=d_i\circ d_{i+1},\;i=1,2,\dots,\quad\quad d_{i+1}\circ u_i=\varepsilon_i=u_{i-1}\circ d_{i},\;i=1,2,\dots
\\
d_1\circ u_0=0, 
\end{gather*}
where we use the notation from Corollary~\ref{cor-homspaces2}.\makeqed
\end{prop}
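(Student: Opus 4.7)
The plan is to read off each relation from the explicit diagrammatic representatives of $\bbi$, $u_i$, $d_i$, $\varepsilon_i$ listed in Corollary~\ref{cor-homspaces2}, leaning on that corollary itself to collapse the heavy lifting into the identification of scalars.

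First, the two nilpotency relations $u_i\circ u_{i-1}=0$ and $d_i\circ d_{i+1}=0$ are almost free: the source and target of each composite are the alternating sequences $x_{(i-1)\text{-}tst}$ and $x_{(i+1)\text{-}tst}$, whose lengths differ by two, so by Corollary~\ref{cor-homspaces2} the hom-space vanishes and the composite must be zero. One can of course see this diagrammatically as well: stacking $u_i$ on top of $u_{i-1}$ (resp.\ $d_{i+1}$ on top of $d_i$) produces two face-sharing edges of the same color of \textbf{Type 1} in~\eqref{eq-type12}, which is zero by Lemma~\ref{lem-facesharing}.

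For the dead-end relation $d_1\circ u_0=0$, the composite is a morphism $\emptyset\to\emptyset$ of degree $2$; Corollary~\ref{cor-homspaces2} tells us that $\Hom_{\Mat(\QDk)}(\emptyset,\emptyset)$ is concentrated in degree $0$ and spanned by $\bbz$, so the composite is forced to be zero. Diagrammatically, stacking $d_1$ on top of $u_0$ yields a single green barbell sitting in the only (and hence rightmost) face, which is precisely the configuration killed by \textbf{DE2} in~\eqref{eq-dead}.

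The substantive relation is $d_{i+1}\circ u_i=\varepsilon_i=u_{i-1}\circ d_i$. The degree-$2$ part of $\End_{\Mat(\QDk)}(x_{i\text{-}tst})$ is, by Corollary~\ref{cor-homspaces2}, one-dimensional and spanned by $\varepsilon_i$, so each of the two composites is \emph{a priori} some scalar multiple of $\varepsilon_i$. It therefore suffices to compute the coefficient. The plan is to stack the pictures of Corollary~\ref{cor-homspaces2} and then apply one of the Frobenius relations~\eqref{eq-frob} at the newly-formed cap (for $d_{i+1}\circ u_i$) or cup (for $u_{i-1}\circ d_i$) to straighten the diagram into the chosen representative of $\varepsilon_i$. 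The single factor of $2$ produced by a Frobenius move is exactly what is needed so that the normalization $\tfrac{1}{2^{i+1}}$ on $d_{i+1}$ combines with the trivial prefactor on $u_i$ to give $\tfrac{1}{2^i}$, matching the prefactor of $\varepsilon_i$. The analogous calculation works for $u_{i-1}\circ d_i$.

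The main obstacle is purely bookkeeping: one has to track the scalar at each Frobenius move and confirm, using Lemmas~\ref{lem-pitch} and~\ref{lem-facesharing}, that no competing non-reduced term survives after the straightening. Since Corollary~\ref{cor-homspaces2} already pins the answer down up to scalar, verifying a single well-chosen local simplification suffices, and there is no danger of the calculus collapsing (see Remark~\ref{rem-nonzero}).
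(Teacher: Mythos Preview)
Your strategy of invoking Corollary~\ref{cor-homspaces2} to pin each composite down to a one-dimensional (or zero) hom-space is correct and efficient; for the nilpotency and dead-end relations it is arguably cleaner than the paper's approach, which instead stacks the diagrams and reads off zero directly from Lemma~\ref{lem-facesharing}. (A small correction to your side remark: the configuration produced by $u_i\circ u_{i-1}$ is of \textbf{Type~3} rather than \textbf{Type~1}---the two dotted edges have \emph{different} colors $x_{i+1},x_i$, and the same-color face-sharing happens at the leftmost face with the next strand over---but this does not affect your main argument.)

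The genuine slip is in the scalar computation for $d_{i+1}\circ u_i=\varepsilon_i=u_{i-1}\circ d_i$. None of $u_i$, $d_i$, $\varepsilon_i$ contain a trivalent vertex: each is just identity strands plus a single dot. So the Frobenius relations~\eqref{eq-frob} are not applicable here, there is no ``cap'' or ``cup'', and in any case the Frobenius relations carry no scalars at all. For $u_{i-1}\circ d_i$ no manipulation is required: stacking the two pictures \emph{literally} reproduces the representative of $\varepsilon_i$, with prefactor $\tfrac{1}{2^i}\cdot 1=\tfrac{1}{2^i}$. For $d_{i+1}\circ u_i$, stacking produces a barbell of color $x_{i+1}$ sitting immediately to the left of the $x_i$-colored strand; the relevant move is \textbf{BF2} from~\eqref{eq-barbforc}, whose ``broken'' term carries the coefficient $2$ while the other two terms die by Lemma~\ref{lem-facesharing}. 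That is exactly the paper's computation, and it is where the factor of $2$ you need actually originates. Once you replace ``Frobenius'' by ``\textbf{BF2}'' your argument goes through.
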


\begin{proof}
The relation $d_1\circ u_0$ is \textbf{DE2}~\eqref{eq-dead}. 
For $u_{i}\circ u_{i-1}=0$ the corresponding diagrams are
\[
\xy
(0,0)*{\includegraphics[scale=0.85]{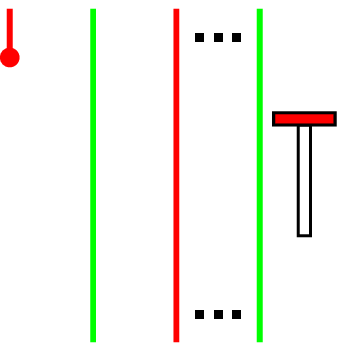}};
\endxy
\circ
\xy
(0,0)*{\includegraphics[scale=0.85]{res/figs/quotient/case4c}};
\endxy=\xy
(0,0)*{\includegraphics[scale=0.85]{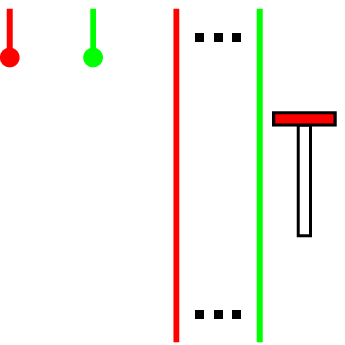}};
\endxy=0
\]
The relation $d_i\circ d_{i+1}=0$ follows similar and 
$u_{i-1}\circ d_{i}=\varepsilon_i$ can be verified directly as
\[
\xy
(0,0)*{\includegraphics[scale=0.85]{res/figs/quotient/case4c}};
\endxy
\circ\frac{1}{2^{i}}\cdot
\xy
(0,0)*{\includegraphics[scale=0.85]{res/figs/quotient/case4d}};
\endxy=\frac{1}{2^{i}}\cdot\xy
(0,0)*{\includegraphics[scale=0.85]{res/figs/quotient/case4b}};
\endxy
\]
while $d_{i+1}\circ u_i=\varepsilon_i$ follows via
\[
\frac{1}{2^{i+1}}\cdot
\xy
(0,0)*{\includegraphics[scale=0.85]{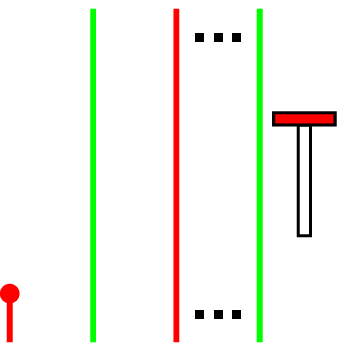}};
\endxy
\circ
\xy
(0,0)*{\includegraphics[scale=0.85]{res/figs/quotient/case4e}};
\endxy=\frac{1}{2^{i+1}}\cdot\xy
(0,0)*{\includegraphics[scale=0.85]{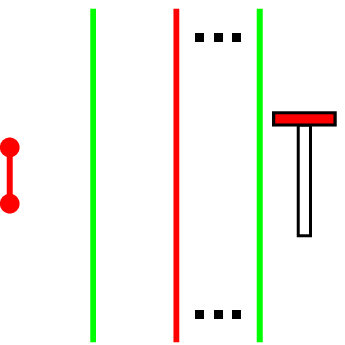}};
\endxy=\frac{1}{2^{i}}\cdot\xy
(0,0)*{\includegraphics[scale=0.85]{res/figs/quotient/case4b}};
\endxy
\]
where we use \textbf{BF2}~\eqref{eq-barbforc} 
together with Lemma~\ref{lem-facesharing} 
to see that the only surviving term is 
the ``broken'' one from \textbf{BF2}~\eqref{eq-barbforc}. This finishes the proof.
\end{proof}

\subsection{\texorpdfstring{$\QD$}{QDinfty} and the tilting category \texorpdfstring{$\Tgr$}{Tgr}}\label{sub-diatil}

Given a sequence $x\in\Ob(\QD)$ of the 
form $x_i\dots x_2x_1$ with $x_k\in\{r,g\}$ 
we denote by $\tilde{\ell}(x)$ the 
\textit{number of color changes} 
(including the first from empty to green), 
e.g. $\tilde{\ell}(g)=\tilde{\ell}(gg)=1$, 
$\tilde{\ell}(ggrgrgrr)=6$ and $\ell(x)=\tilde{\ell}(x)$ iff $x$ is alternating.

\begin{defn}(\textbf{Diagrammatic presentation})\label{defn-functor}
Define a $\bC$-linear functor
\[
\mathcal{D}_{\infty}\colon \Mat(\QDk)\to\Tgrl
\]
of graded categories by the following convention.

On objects (we treat degree 
shifted objects in the evident way):
\begin{itemize}
\item Send the empty sequence $\emptyset$ to $\Tn{\bbz}$. If $x\in\Ob(\QD)$ is of the 
form $x_i\dots x_2x_1$ with $x_k\in\{r,g\}$, then define
\[
\mathcal{D}_{\infty}(x)=p_{\Tn{\lambda_{\tilde{\ell}(x)}}}\circ\Theta_{x_i}\circ\dots\circ\Theta_{x_2}\circ\Theta_{x_1}\Tn{\bbz},
\]
where we use the convention 
$r=s, g=t$ and $p_{\Tn{\lambda_{\tilde{\ell}(x)}}}$ projects 
to the $\Tn{\lambda_{\tilde{\ell}(x)}}$ part.

\item Send a general $x\in\Ob(\Mat(\QDk))$ to the direct sum of the images of its components.
\end{itemize}

All marked Soergel diagrams are 
compositions of diagrams of the form 
$D^{\prime}GD$ with identity diagrams 
$D$ and $D^{\prime}$ and a generator 
$G$ in between. On morphisms:
\begin{itemize}
\item We describe the image of marked Soergel 
diagrams \textit{inductively from right to left}. 
That is, we describe what the functor does if 
one has already an identity diagram 
$D\colon x\to x$ and $\mathcal{D}_{\infty}(D)\colon M_k\to M_k$ 
(with $\mathcal{D}_{\infty}(x)\cong M_k$) and one adds a generator to the \textit{left}.

\item By construction of the image on objects 
and Corollaries~\ref{cor-theta} and~\ref{cor-funcgrad3}, 
$M_k$ will be just one $\Tn{\lambda_k}$ with 
some multiplicity and grading shifts and some 
$k\in\{0,\dots,i\}$ giving two cases, namely 
$k=\tilde{l}(x)$ even or odd. That is, we have
\[
M_k=\Tn{\lambda_k}\langle s_1\rangle\oplus \cdots\oplus \Tn{\lambda_k}\langle s_{k^{\prime}}\rangle.
\]
Note that the translation functors $\Theta_s$ 
and $\Theta_t$ act on the summands as in 
Corollaries~\ref{cor-theta} and~\ref{cor-funcgrad3} 
separately. To simplify notation, we write 
$\Theta_t(M_k)=M_{k+1}$ if $k$ is even and 
$\Theta_t(M_k)=M_{k}\langle-1\rangle\oplus M_{k}\langle+1\rangle=\tilde{M}_k$ 
if $k$ is odd and vice versa for $\Theta_s$. Moreover, 
in the case $k=0$ we send any red generator to zero.

\item The inductive description has four cases. 
We call these \textit{green-even}, \textit{green-odd}, 
\textit{red-even} and \textit{red-odd} where we only 
give the list for the first two since the other two 
are similar with exchanged conventions for even and 
odd. Each case has five sub-cases (for the five generators) 
giving twenty cases in total. We write $D_{0}$ (even) and 
$D_{1}$ (odd) for the two different cases. 
Since the $M_k$'s could already consist of multiple, shifted 
copies of $\Tn{\bbi}$'s, the entries in the matrices below 
are shorthand notations for matrices \textit{themselves}.

\item Basic pieces: send the empty 
sequence $\bbz$ to $\mathrm{id}\colon\Tn{\bbz}\to\Tn{\bbz}$. 
Moreover, for a green identity we assign
\[
\xy
(0,0)*{\includegraphics[scale=0.85]{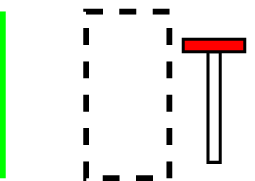}};
(0.5,0)*{D_{0}};
\endxy\mapsto\mathrm{id}\colon M_{k+1}\to M_{k+1},\quad\quad\xy
(0,0)*{\includegraphics[scale=0.85]{res/figs/functor/functor6}};
(0.5,0)*{D_{1}};
\endxy\mapsto\begin{pmatrix}
 \mathrm{id} & 0\\
 0 & \mathrm{id}
\end{pmatrix}\colon \tilde M_{k}\to \tilde M_{k}
\]
For the up dotted edge we assign 
(recalling the rescaling $\tilde{\varepsilon}_k=2^k\varepsilon_k$)
\[
\xy
(0,0)*{\includegraphics[scale=0.85]{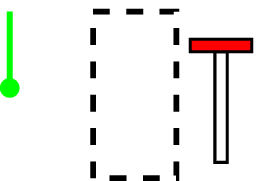}};
(0.5,0)*{D_{0}};
\endxy\mapsto u_k\colon M_{k}\to M_{k+1},\quad\quad\xy
(0,0)*{\includegraphics[scale=0.85]{res/figs/functor/functor7}};
(0.5,0)*{D_{1}};
\endxy\mapsto\begin{pmatrix}
 \tilde{\varepsilon}_k\\
 \mathrm{id}
\end{pmatrix}\colon M_{k}\to \tilde M_{k}
\]
For the down dotted edge we assign 
(recalling the rescaling $\tilde{d}_k=2^kd_k$)
\[
\xy
(0,0)*{\includegraphics[scale=0.85]{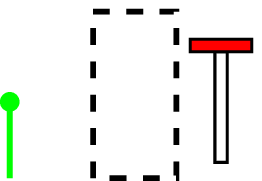}};
(0.5,0)*{D_{0}};
\endxy\mapsto \tilde{d}_{k+1}\colon M_{k+1}\to M_{k},\quad\quad\xy
(0,0)*{\includegraphics[scale=0.85]{res/figs/functor/functor8}};
(0.5,0)*{D_{1}};
\endxy\mapsto\begin{pmatrix}
 \mathrm{id} & \tilde{\varepsilon}_k
\end{pmatrix}\colon \tilde M_{k}\to M_{k}
\]
For the merges and splits in 
the even case we assign
\[
\xy
(0,0)*{\includegraphics[scale=0.85]{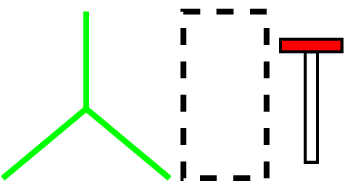}};
(4.5,0)*{D_{0}};
\endxy\mapsto \begin{pmatrix}
 0 & \mathrm{id}
\end{pmatrix}\colon \tilde M_{k+1}\to M_{k+1},\quad\quad\xy
(0,0)*{\includegraphics[scale=0.85]{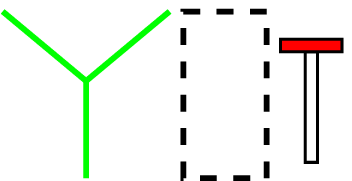}};
(4.5,0)*{D_{0}};
\endxy\mapsto\begin{pmatrix}
 \mathrm{id}\\ 0
\end{pmatrix}\colon M_{k+1}\to \tilde M_{k+1}
\]
For the merges in the odd case we assign
\[
\xy
(0,0)*{\includegraphics[scale=0.85]{res/figs/functor/functor9}};
(4.5,0)*{D_{1}};
\endxy\mapsto \begin{pmatrix}
 0 & 0 & \mathrm{id} & 0\\ 0 & 0 & 0 & \mathrm{id}
\end{pmatrix}\colon \tilde{\tilde{M}}_{k}\to \tilde M_{k}
\]
(with $\tilde{\tilde{M}}_{k}=M_k\langle-2\rangle\oplus M_k\langle0\rangle\oplus M_k\langle0\rangle\oplus M_k\langle+2\rangle$) and last but not least
\[
\xy
(0,0)*{\includegraphics[scale=0.85]{res/figs/functor/functor10}};
(4.5,0)*{D_{1}};
\endxy\mapsto \begin{pmatrix}
 \mathrm{id} & 0\\ 0 & \mathrm{id}\\ 0& 0\\0& 0
\end{pmatrix}\colon \tilde M_k\to\tilde{\tilde{M}}_{k}.
\]

\item ``Fill up the generators to the left'': for each 
red identity strand to the left of the local generators 
from above apply a $\Theta_s$ to the elements of the 
matrices component-wise and likewise for green strands 
and $\Theta_t$. This can be made explicit as explained in Example~\ref{ex-howtofunctor}.

\item If $f$ is a homomorphism $f\in\Hom(\QD)$, 
then decompose it into generators and define 
$\mathcal{D}_{\infty}(f)$ to be the composition of its local pieces.
\end{itemize}
We extend linearly for a general $F\in\Hom(\Mat(\QDk))$.
\end{defn}

\begin{ex}\label{ex-functor}
It is easy to see 
how the assignment is 
for alternating sequences. For example
\[
\xy
(0,0)*{\includegraphics[scale=0.85]{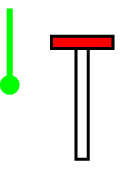}};
\endxy\mapsto u_0\colon \Tn{\bbz}\to \Tn{\bbo},\quad\quad\xy
(0,0)*{\includegraphics[scale=0.85]{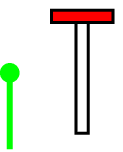}};
\endxy\mapsto \tilde{d}_1\colon\Tn{\bbo}\to\Tn{\bbz}
\]
and similar for all other diagrams of this kind. Furthermore, we have
\begin{gather*}
\begin{aligned}
\xy
(0,0)*{\includegraphics[scale=0.85]{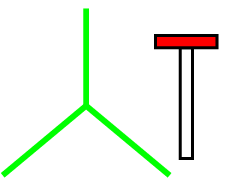}};
\endxy &\mapsto \begin{pmatrix}
 0 & \mathrm{id}
\end{pmatrix}\colon \Tn{\bbo}\langle-1\rangle\oplus \Tn{\bbo}\langle+1\rangle\to \Tn{\bbo}
\\
\xy
(0,0)*{\includegraphics[scale=0.85]{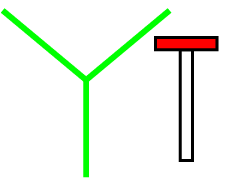}};
\endxy &\mapsto \begin{pmatrix}
 \mathrm{id}\\ 0
\end{pmatrix}\colon\Tn{\bbo}\to\Tn{\bbo}\langle-1\rangle\oplus \Tn{\bbo}\langle+1\rangle
\end{aligned}
\end{gather*}
where we see that the degree 
is preserved. Moreover, the entries in the matrices 
of the list above are in general already matrices. 
For example, the matrix that corresponds to (here $M_1=\Tn{\bbo}$)
\[
\xy
(0,0)*{\includegraphics[scale=0.85]{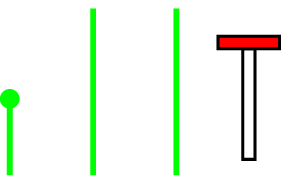}};
\endxy\mapsto \begin{pmatrix}
 \mathrm{id} & \tilde{\varepsilon}_1 & 0 & 0\\ 0& 0&\mathrm{id} & \tilde{\varepsilon}_1
\end{pmatrix}\colon\tilde{\tilde{M_1}}\to\tilde{M_1}
\]
Likewise for the horizontally flipped diagram.
\end{ex}

\begin{lem}\label{lem-welldef}
$\mathcal{D}_{\infty}\colon \Mat(\QDk)\to\Tgrl$ is well-defined, $\bC$-linear and
degree preserving.\makeqed
\end{lem}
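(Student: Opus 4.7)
My plan is to verify the three assertions of the lemma in turn: $\bC$-linearity, degree preservation, and compatibility with all defining relations of $\Mat(\QDk)$. The $\bC$-linearity is trivial since $\mathcal{D}_\infty$ is declared to extend linearly on hom-spaces. The other two properties require a case analysis that is long but largely mechanical, with only one genuine obstacle.

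For degree preservation, I would run through the twenty local cases in Definition~\ref{defn-functor} one by one and compare $l(G)$ on the left with the degree of the target matrix, read using the shifts on its domain and codomain. The univalent vertices (dots, degree $+1$) are sent to $u_k$ or $\tilde{d}_{k+1}$, both of degree $1$ by Corollary~\ref{cor-homspaces2}, possibly sitting in a matrix position whose shift difference is again $1$. The trivalent vertices (degree $-1$) go to matrices whose only nonzero entries are identities or $\tilde{\varepsilon}_k$'s, and a quick accounting of the shifts $\langle -2\rangle, \langle 0\rangle, \langle 0\rangle, \langle +2\rangle$ appearing in $\tilde{\tilde M}_k$ versus $\langle -1\rangle, \langle +1\rangle$ in $\tilde M_k$ shows that each entry sits in a position whose shift difference equals its own degree, so the resulting matrix is homogeneous of degree $-1$. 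The identity strands ``fill-up'' clause is harmless because $\Theta_s$ and $\Theta_t$ are graded functors by Corollary~\ref{cor-funcgrad3}.

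Well-definedness requires me to show that the image of every defining relation of $\QD$ (and every extra relation created by passing to $\Mat(\QDk)$) is an identity in $\Tgrl$. I would organize this into five groups:
\begin{itemize}
\item \emph{Isotopy relations} (Iso1, Iso2): immediate, since the images of the basic local pieces have been built using the biadjunction data $(u_k, d_{k+1}, \varepsilon_k)$ that itself satisfies the straightening identities inside $\End_{\Uq}(\Tn{\leq\bbm})$ by Proposition~\ref{prop-link}.
\item \emph{Frobenius and needle relations}: direct matrix computations using Proposition~\ref{prop-quiverrel}, which has already established the composition identities $d_{i+1}\circ u_i = \varepsilon_i = u_{i-1}\circ d_i$, $u_i\circ u_{i-1}=0$, $d_i\circ d_{i+1}=0$; these are exactly the ingredients needed for Frob1, Frob2 and the needle.
\item \emph{Dead-end relations} (DE1, DE2): these are automatic from the definition, which sends any red generator to zero when $k=0$ and forces $d_1\circ u_0=0$.
\item \emph{Jones--Wenzl and Karoubi relations} passing from $\Mat(\D)$ to $\Mat(\QDk)$: here Corollary~\ref{cor-homspaces2} and Lemma~\ref{lem-facesharing} already match the hom-spaces of $\Mat(\QDk)$ between alternating sequences with the KS quiver, so there is nothing further to check once the generators have been handled.
\item \emph{Barbell forcing} (BF1, BF2): this is the main obstacle.
\end{itemize}

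For the barbell forcing, the point is that a barbell in an internal face of a marked Soergel diagram becomes, after the assignment, a copy of $\tilde{\varepsilon}_k = 2^k\varepsilon_k$ inserted into some matrix entry; the factors of $2^k$ from the rescaling in Corollary~\ref{cor-homspaces2} must cancel exactly against the coefficients $1, 2, -1, -2$ appearing in BF1 and BF2. I would verify this by picking the minimal nontrivial configuration, expanding each of BF1 and BF2 in both the green-even/odd and red-even/odd cases, and reading off the resulting $2\times 2$ matrix identities. The fact that the same scalar $1/2^k$ was chosen in Corollary~\ref{cor-homspaces2} for both $d_k$ and $\varepsilon_k$ is precisely what makes BF1 and BF2 compatible with the $\eta = \mathrm{id}$ versus $\eta = 0$ dichotomy used in Proposition~\ref{prop-profunc}; it is reasonable to expect the identity to balance, but one has to check the signs by hand. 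Once this calculation succeeds for the ``base'' local pieces, the general case reduces to it by the inductive ``fill-up from the left'' clause, because tensoring by $\Theta_s$ or $\Theta_t$ is a graded functor and therefore commutes with the insertion of $\tilde{\varepsilon}_k$ in a matrix entry up to the evident reshuffling.
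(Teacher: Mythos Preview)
Your overall strategy is correct and matches the paper's case-by-case verification, but two points deserve comment.

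For the Frobenius and isotopy relations you cite Proposition~\ref{prop-quiverrel}, but that result lives in the \emph{source} category $\Mat(\QDk)$; what you need are identities in the \emph{target} $\Tgrl$, which come from Proposition~\ref{prop-link}. In fact the Frobenius and needle relations do not really use the quiver relations at all: the matrices assigned to merges and splits have entries $0$ and $\mathrm{id}$ (even case) or $0$, $\mathrm{id}$, $\tilde\varepsilon_k$ (odd case), so \textbf{Frob1}, \textbf{Frob2} and the needle reduce to bare matrix-algebra identities. The paper sidesteps all of this by observing that $\Theta_s$ and $\Theta_t$ are graded self-adjoint (part (a) of Corollary~\ref{cor-funcgrad3}) and invoking the general principle that self-adjoint functors automatically carry a Frobenius structure; this handles \textbf{Frob1}, \textbf{Frob2}, \textbf{Iso1}, \textbf{Iso2} in one stroke. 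Your direct-check route works too, but your justification of the isotopy relations via ``biadjunction data $(u_k,d_{k+1},\varepsilon_k)$ from Proposition~\ref{prop-link}'' is imprecise: that proposition gives specific intertwiners between indecomposable tiltings, not the unit and counit of an adjunction of functors.

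Your treatment of the barbell forcing relations is essentially what the paper does: an explicit $2\times 2$ matrix check in a minimal configuration, followed by the observation that identity strands on the left amount to applying $\Theta_s$ or $\Theta_t$ componentwise (as in Example~\ref{ex-howtofunctor}), which preserves the identity because these are functors. The paper carries out one case of \textbf{BF1} and one of \textbf{BF2} and leaves the remaining colour and parity variants to the reader, exactly as you propose.
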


\begin{proof}
It is immediate that $\mathcal{D}_{\infty}$ is $\bC$-linear and degree preserving.

Moreover, note that, by part (b) of 
Corollary~\ref{cor-theta} and part (c) of 
Corollary~\ref{cor-funcgrad3}, the assignment is 
well-defined on objects $x$, since we send $x$ to a 
repeated application of $\Theta_s$ and $\Theta_t$ to 
the trivial $\Uq$-module $\Tn{\bbz}$ together with a 
projection onto the leading  
factor $\Tn{\bbi}$. Thus, without taking relations into account, 
we get a well-defined functor from $\Mat(\D_f)$ to $\Tgrl$ 
since our assignment for the generating Soergel diagrams are all $\Uq$-intertwiners.

Thus, the main part is to show that 
the relations are satisfied. This is 
now a case-by-case check where we only 
do a few as examples and leave the rest to the reader.

\textbf{Frobenius and isotopy relations}: 
we do not have to check the Frobenius relations 
\textbf{Frob1} and \textbf{Frob2}~\eqref{eq-frob} and the isotopy relations \textbf{Iso1} and \textbf{Iso2}~\eqref{eq-iso}, since, 
by part (a) of Corollary~\ref{cor-funcgrad3}, we see that $\Theta_s$ and $\Theta_t$ 
are (graded) self-adjoint functors. 
By a more general principle such functors always give rise to a Frobenius structure, see e.g.~\cite[Lemma~3.4]{mueg}.

\textbf{Needle}: for simplicity of 
notation, we assume that we are in the even case. We show
\[
\mathcal D_{\infty}\left(\xy
(0,0)*{\includegraphics[scale=0.85]{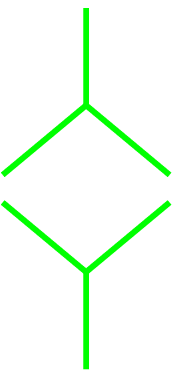}};
(0,-0.5)*{\circ};
\endxy\right)=0
\]
This follows from
\[
\xymatrix{
  M_{k+1}  \ar[rr]|/0.0em/{\begin{pmatrix}
  \mathrm{id} \\ 0
\end{pmatrix}}  &   & \tilde M_{k+1}  \ar[rr]|/0.0em/{\begin{pmatrix}
  0 & 
\mathrm{id}
\end{pmatrix}}  & &  M_{k+1}\\
}
\]
The odd case is similar. 
This clearly implies the needle relation.

\textbf{The dead-end relations 1+2}: \textbf{DE1} follows by construction. The \textbf{DE2} relation is
\[
\mathcal D_{\infty}\left(\xy
(0,0)*{\includegraphics[scale=0.85]{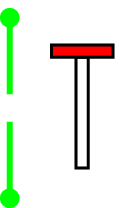}};
(-4.1,-0.2)*{\circ};
\endxy\right)=0
\]
This follows from
\[
\xymatrix{
  \Tn{\bbz}  \ar[r]^/0.0em/{u_0}  & \Tn{\bbo}  \ar[r]^/0.0em/{\tilde{d}_1}  &  \Tn{\bbz}\\
}
\]
which is the dead-end relation in the tilting case.

\textbf{Barbell forcing 1}: we only do one case (the one with equal colors) 
and leave the other (similar) case to the reader. 
Again, for simplicity of notation, we focus on
\[
\mathcal D_{\infty}\left(\xy
(0,0)*{\includegraphics[scale=0.85]{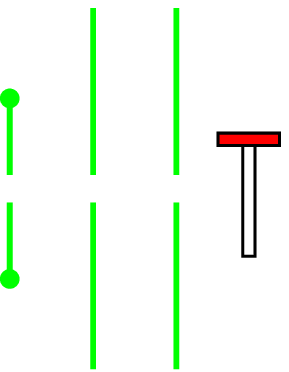}};
(-4.1,-0.25)*{\circ};
\endxy\right)=2\cdot \mathcal D_{\infty}\left(\xy
(0,0)*{\includegraphics[scale=0.85]{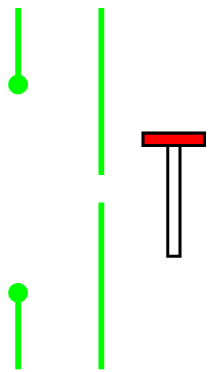}};
(-0.5,-0.25)*{\circ};
\endxy\right)-\mathcal D_{\infty}\left(\xy
(0,0)*{\includegraphics[scale=0.85]{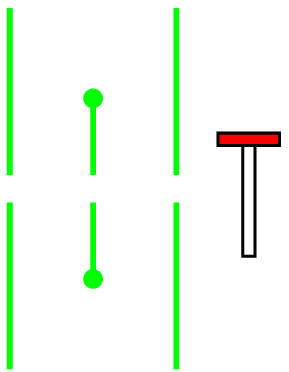}};
(-4.4,-0.25)*{\circ};
\endxy\right)
\]
The general case follows completely similar. 
We have already explained in Example~\ref{ex-functor} 
how to obtain the left two matrices. Thus, we obtain
\[
\begin{pmatrix}
 \mathrm{id} & \varepsilon_1 & 0 & 0\\ 0& 0&\mathrm{id} & \tilde{\varepsilon}_1
\end{pmatrix}\circ \begin{pmatrix}
 \varepsilon_1 & 0\\ \mathrm{id} & 0\\0& \tilde{\varepsilon}_1\\ 0 & \mathrm{id}
\end{pmatrix}=\begin{pmatrix}
 2\tilde{\varepsilon}_1 & 0\\ 0& 2\tilde{\varepsilon}_1
\end{pmatrix}
\]
for the left side. The middle (without the factor $2$) can be directly read off as
\[
\begin{pmatrix}
 \tilde{\varepsilon}_1\\ \mathrm{id}
\end{pmatrix}\circ \begin{pmatrix}
 \mathrm{id} & \tilde{\varepsilon}_1
\end{pmatrix}=\begin{pmatrix}
 \tilde{\varepsilon}_1 & 0\\ \mathrm{id}& \tilde{\varepsilon}_1
\end{pmatrix}
\]
The rightmost term is the result of 
applying $\Theta_t$ to $2\tilde{\varepsilon}_1$ 
(which is the composite of the diagram for the 
first two strands). This can be computed as 
explained in Section~\ref{sec-quiver}. Thus, we obtain
\[
\begin{pmatrix}
 2\tilde{\varepsilon}_1 & 0\\ 0& 2\tilde{\varepsilon}_1
\end{pmatrix}=2\cdot \begin{pmatrix}
 \tilde{\varepsilon}_1 & 0\\ \mathrm{id}& \tilde{\varepsilon}_1
\end{pmatrix}- \begin{pmatrix}
 0 & 0\\ 2\cdot\mathrm{id}& 0
\end{pmatrix}
\]

\textbf{Barbell forcing 2}: this is very similar to \textbf{BF1} 
(only two terms are different) and we only sketch it 
here. As before we only consider one particular case 
(the same as above for \textbf{BF1}) and leave the others for the reader to verify. 
The different two terms are now
\[
\mathcal D_{\infty}\left(\xy
(0,0)*{\includegraphics[scale=0.85]{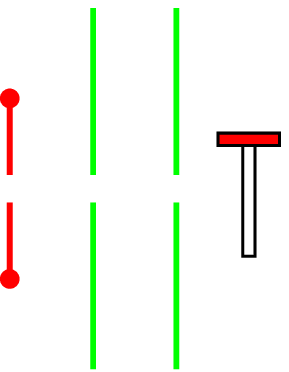}};
(-4.1,-0.25)*{\circ};
\endxy\right),\quad\quad\mathcal D_{\infty}\left(\xy
(0,0)*{\includegraphics[scale=0.85]{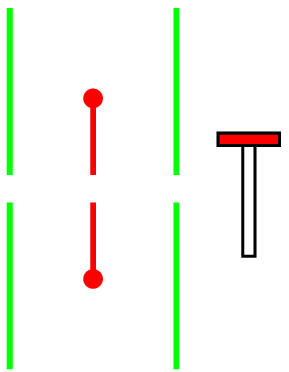}};
(-4.4,-0.25)*{\circ};
\endxy\right)
\]
These two cases give the matrices
\[
\begin{pmatrix}
 2\cdot\tilde{\varepsilon}_1 & 0\\ 0& 2\cdot\tilde{\varepsilon}_1
\end{pmatrix}\quad\text{and}\quad\begin{pmatrix}
 0 & 0\\ 2\cdot\mathrm{id}& 0
\end{pmatrix}
\]
and the equation \textbf{BF2}~\eqref{eq-barbforc} then reads as
\[
\begin{pmatrix}
 2\cdot\tilde{\varepsilon}_1 & 0\\ 0& 2\cdot\tilde{\varepsilon}_1
\end{pmatrix}=\begin{pmatrix}
 0 & 0\\ 2\cdot\mathrm{id}& 0
\end{pmatrix}+2\begin{pmatrix}
 \tilde{\varepsilon}_1 & 0\\ \mathrm{id}& \tilde{\varepsilon}_1
\end{pmatrix}- 2\begin{pmatrix}
 0 & 0\\ 2\cdot\mathrm{id}& 0
\end{pmatrix}
\]
This finishes the proof.
\end{proof}

\begin{thm}(\textbf{Diagram categories for $\Tgrl$})\label{thm-diafortilt}
The functor $\mathcal{D}_{\infty}\colon \Mat(\QDk)\to\Tgrl$ is an equivalence of graded categories.\makeqed
\end{thm}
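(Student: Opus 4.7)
The plan is to verify the two standard ingredients separately: essential surjectivity and fully faithfulness. Well-definedness, $\bC$-linearity and the degree-preserving property are already provided by Lemma~\ref{lem-welldef}, so we may freely use those.

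For essential surjectivity, we observe that by Lemma~\ref{lem-tiltcat}(b) together with Corollary~\ref{cor-gradings2}, every object of $\Tgrl$ is (uniquely up to permutation and shift) a finite direct sum of the form $\bigoplus_k \Tn{\bbi_k}\langle s_k\rangle$, and since $\Mat(\QDk)$ is closed under finite direct sums and shifts, it suffices to realise each indecomposable $\Tn{\bbi}$ as the image of some object. Taking $x_{i-tst}$ to be the alternating sequence of length $i$ starting with $g$, Definition~\ref{defn-functor} and Corollary~\ref{cor-funcgrad3}(c) together give, inductively on $i$, that the $\Tn{\bbi}$-component of the iterated translation of $\Tn{\bbz}$ is isomorphic to $\Tn{\bbi}$ itself, so $\mathcal{D}_{\infty}(x_{i-tst})\cong \Tn{\bbi}$. (A dead-end case $i=0$ is handled by the assignment $\emptyset\mapsto\Tn{\bbz}$.)

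For fully faithfulness, I first reduce to alternating sequences. Using Lemma~\ref{lem-decom2} together with \textbf{DE1}~\eqref{eq-dead}, every $x\in\Ob(\Mat(\QDk))$ decomposes, inside the Karoubi envelope $\Kar(\Mat(\QD))$, as a shifted direct sum of the Jones--Wenzl idempotents $\JWx$ associated to alternating sequences $x_{i-tst}$ or $x_{i-sts}$; by additivity it is therefore enough to show that $\mathcal{D}_{\infty}$ induces an isomorphism
\[
\Hom_{\Mat(\QDk)}(x_{i-tst},x_{i'-tst})\cong\Hom_{\Uq}(\Tn{\bbi},\Tn{\bbi'})
\]
in each degree, and analogously for the $sts$-sequences. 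Both sides have been computed explicitly and coincide as graded vector spaces: the left side by Corollary~\ref{cor-homspaces2}, the right side by Corollary~\ref{cor-homspaces} combined with the grading conventions from Proposition~\ref{prop-refine}. Both admit the same basis labels $\bbi, u_i, d_i, \varepsilon_i$ in matching degrees $0,1,1,2$.

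It remains to check that $\mathcal{D}_{\infty}$ sends the diagrammatic basis elements of Corollary~\ref{cor-homspaces2} to the $\Uq$-intertwiners of the same name from Proposition~\ref{prop-link}, up to a nonzero scalar. The assignments for the one-strand pieces are literally $u_0\mapsto u_0$, $\tilde d_1\mapsto \tilde d_1$ (cf.\ Example~\ref{ex-functor}); for the higher diagrams one uses the inductive ``fill up to the left'' rule of Definition~\ref{defn-functor}, which applies $\Theta_t$ or $\Theta_s$ componentwise. Under this rule, the diagrammatic $u_{i-1}$ becomes an iterated translation of the generator $u_0$, and by Proposition~\ref{prop-quiverrel} versus Proposition~\ref{prop-link} the images satisfy exactly the same quiver relations, so they must coincide with $u_i,d_i,\varepsilon_i$ up to the explicit rescalings $\tilde d_i=2^i d_i$, $\tilde\varepsilon_i=2^i\varepsilon_i$ already built into Definition~\ref{defn-functor}. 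In particular the induced map is a degree-preserving linear isomorphism on each basic hom-space, and hence, by additivity and the Karoubi-closure reduction above, on all hom-spaces of $\Mat(\QDk)$.

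The main obstacle is Step~3, the bookkeeping check that the inductive ``fill up the generators to the left'' rule genuinely produces the maps $u_i,d_i,\varepsilon_i$ out of the one-strand generators; this is where the rescaling factors $2^i$ and the even/odd case split of Definition~\ref{defn-functor} have to be tracked carefully. Once this is confirmed, essential surjectivity, matching bases in matching degrees, and matching relations combine to give the equivalence.
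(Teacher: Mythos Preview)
Your proposal is correct and follows essentially the same route as the paper: reduce essential surjectivity to the indecomposables $\Tn{\bbi}$ via Krull--Schmidt and hit them with $x_{i-tst}$, then reduce fully faithfulness to alternating sequences via Lemma~\ref{lem-decom2} and compare the explicit bases from Corollaries~\ref{cor-homspaces} and~\ref{cor-homspaces2}. Your extra bookkeeping in ``Step~3'' (tracking the rescalings $\tilde d_i,\tilde\varepsilon_i$ through the fill-up rule) makes explicit what the paper leaves as a one-line assertion, and your aside about $sts$-sequences is harmless since those are zero objects by \textbf{DE1}.
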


\begin{proof}
We have to show that $\mathcal{D}_{\infty}$ is 
essentially surjective, full and faithful.

\textbf{Essentially surjective}: we have to show 
that, given some arbitrary object $M\in\Ob(\Tgrl)$, 
then there is an object $x\in\Ob(\Mat(\QDk))$ such 
that $\mathcal{D}_{\infty}(x)\cong M$. To see this, 
note that, by part (c) of Proposition~\ref{prop-tilt}, 
it is enough to verify this for indecomposable 
$\Uq$-modules $\Tn{\bbi}$ in the ungraded setting 
(using Proposition~\ref{prop-grading} the same is still true in the graded setting).
Now, because of our construction and part 
(b) of Corollary~\ref{cor-theta}, we see that 
$\mathcal{D}_{\infty}(x_{i-tst})\cong\Tn{\bbi}$ 
which shows that the functor is essentially surjective.

\textbf{Fully faithful}: by Lemmas~\ref{lem-decom2} and~\ref{lem-welldef} 
it is enough to show that
\[
\Hom_{\Mat(\QDk)}(x,x^{\prime})\cong\Hom_{\Tgrl}(\mathcal{D}_{\infty}(x),\mathcal{D}_{\infty}(x^{\prime}))
\]
holds as graded $\bC$-vector spaces for alternating sequences $x=x_{i-tst}$ and $x^{\prime}_{i^{\prime}-tst}$.
We have already computed both sides: the 
right side was computed in 
Corollary~\ref{cor-homspaces} and the 
left in Corollary~\ref{cor-homspaces2}. 
By our construction, the (graded) isomorphism 
is induced by the assignment 
$\bbi\mapsto\bbi$, $u_i\mapsto u_i$, 
$d_i\mapsto \tilde{d}_i$ and 
$\varepsilon_i\mapsto\tilde{\varepsilon}_i$ 
whenever this makes sense.
\end{proof}

\begin{cor}\label{cor-diafrotilt}
The category $\Mat(\QDk)$ is idempotent complete.\makeqed
\end{cor}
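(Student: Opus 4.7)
The plan is to transfer idempotent completeness across the equivalence of Theorem~\ref{thm-diafortilt}. Since $\mathcal{D}_{\infty}\colon\Mat(\QDk)\to\Tgrl$ is an equivalence of graded categories, and idempotent completeness is preserved under equivalences, it suffices to show that $\Tgrl$ itself is idempotent complete. This reduces the combinatorial/diagrammatic question to a representation-theoretic one for which we already have the relevant tools.

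First I would verify that the ungraded category $\T_{\lambda}$ is idempotent complete. Given any $M\in\Ob(\T_{\lambda})$ and any idempotent $e\in\End_{\Uq}(M)$, one has the standard decomposition $M\cong e(M)\oplus(\mathrm{id}_M-e)(M)$ as $\Uq$-modules (the two summands are genuine $\Uq$-submodules since $e$ is a $\Uq$-intertwiner). By Proposition~\ref{prop-basicp}~(b), both summands are again $\Uq$-tilting, and by Lemma~\ref{lem-block} they remain in the block $\T_{\lambda}$. Hence $e$ splits in $\T_{\lambda}$.

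Next I would lift the argument to the graded setting. By Corollary~\ref{cor-gradings2}, every object of $\T_{\lambda}$ is gradable, every $\Uq$-intertwiner is gradable, and the grading on each indecomposable $\Tn{\bbi}$ is unique up to shift. For a degree-preserving idempotent $e\colon M\to M$ in $\Tgrl$, the subspaces $e(M)$ and $(\mathrm{id}_M-e)(M)$ inherit gradings as homogeneous subspaces of $M$ (they are kernels and images of degree-preserving maps), and the associated inclusions and projections are degree preserving. So the ungraded splitting refines automatically to a graded splitting in $\Tgrl$, witnessing idempotent completeness there.

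The ``hard part'' is really not in the corollary itself but in the substantial work that precedes it: the hom-space computation of Corollary~\ref{cor-homspaces2} together with the well-definedness of $\mathcal{D}_{\infty}$ established in Lemma~\ref{lem-welldef}, which combine to give Theorem~\ref{thm-diafortilt}. Once the equivalence is in hand, idempotent completeness of $\Mat(\QDk)$ follows formally from idempotent completeness of $\Tgrl$, which in turn rests only on Proposition~\ref{prop-basicp}~(b) and Corollary~\ref{cor-gradings2}. No further diagrammatic computation is required.
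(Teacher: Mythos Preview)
Your proposal is correct and follows the same route as the paper: transfer idempotent completeness across the equivalence of Theorem~\ref{thm-diafortilt} from $\Tgrl$. One small caveat: your graded step only treats degree-preserving idempotents, whereas an idempotent in $\End_{\Tgrl}(M)$ need not be homogeneous; the paper sidesteps this by invoking the Krull--Schmidt property of $\Tgrl$ (every object is a finite sum of shifted $\Tn{\bbi}$'s with local endomorphism rings), which dispatches arbitrary idempotents at once, so you could simply replace your explicit splitting argument by that observation.
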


\begin{proof}
This follows from Theorem~\ref{thm-diafortilt} 
because in $\Tgrl\cong\T_{\lambda}$ every module 
decomposes into a direct sum of (shifted copies of) $\Tn{\bbi}$'s. 
\end{proof}

\begin{cor}(\textbf{Diagram categories for the center})\label{cor-diafrocenter}
The graded $\bC$-algebra $\End^{\mathrm{fs}}_{\mathrm{gr}}(\mathrm{id})$ of 
natural transformations in $\Endo(\Tgrl)$ of the 
identity functor $\mathrm{id}$ is given by the 
diagonal part of finitely supported (only a finite number of non-zero entries) matrices
\begin{equation}\label{eq-fcenter}
F\colon \emptyset\oplus g\oplus rg\oplus grg\oplus rgrg\oplus\dots\to\emptyset\oplus g\oplus rg\oplus grg\oplus rgrg\oplus\dots
\end{equation}
of marked Soergel diagrams with no identity 
diagram entry. In contrast, 
$\End^{\mathrm{fs}}_{\Uq}(\Tn{\bbin})\cong \Ai$ is 
given by all such matrices and not just the diagonal part.\makeqed
\end{cor}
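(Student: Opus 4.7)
The plan is to transport both sides of the claim to the diagrammatic category via the equivalence $\mathcal{D}_{\infty}$ of Theorem~\ref{thm-diafortilt}. Under $\mathcal{D}_{\infty}$, each indecomposable $\Tn{\bbi}$ corresponds to the alternating sequence $x_{i-tst}$, so any $\Uq$-intertwiner in $\End^{\mathrm{fs}}_{\Uq}(\Tn{\bbin}) = \bigoplus_{i,j} \Hom_{\Uq}(\Tn{\bbi},\Tn{\bbi^{\prime}})$ becomes a finitely supported matrix whose $(i^{\prime},i)$-entry lies in $\Hom_{\Mat(\QDk)}(x_{i-tst},x_{i^{\prime}-tst})$. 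Combining this with the explicit basis description in Corollary~\ref{cor-homspaces2} and with Proposition~\ref{prop-isoofalg2} will immediately yield the second assertion: all finitely supported matrices of marked Soergel diagrams give $\Ai$.

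For the first assertion I would invoke Corollary~\ref{cor-center} to identify $\End^{\mathrm{fs}}_{\mathrm{gr}}(\mathrm{id})$ with the finitely supported part of $Z(\Ai)$, and then compute this center by hand. Writing a general element as
\[
z \;=\; \sum_i \alpha_i\,\bbi \;+\; \sum_i \beta_i\,u_i \;+\; \sum_i \gamma_i\,d_i \;+\; \sum_i \delta_i\,\varepsilon_i,
\]
the condition $\bbj z = z \bbj$ for all $j$ kills the off-diagonal coefficients $\beta_i, \gamma_i$, since $z\bbj$ retains only the paths starting at $\bbj$ while $\bbj z$ retains only those ending there, and the up/down arrows have distinct source and target. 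Commutation with $u_j$ then gives $\alpha_{j+1} u_j = \alpha_j u_j$, using the easy identities $u_j \varepsilon_j = u_j u_{j-1} d_j = 0$ and $\varepsilon_{j+1} u_j = u_j d_{j+1} u_j = u_j \varepsilon_j = 0$. Hence all $\alpha_i$ agree, and the finite-support hypothesis forces this common value to be zero. One concludes $z = \sum_i \delta_i \varepsilon_i$ with only finitely many $\delta_i \neq 0$.

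Translated back via $\mathcal{D}_{\infty}$ and Corollary~\ref{cor-homspaces2}, such a $z$ is precisely a finitely supported \emph{diagonal} matrix whose nonzero entries are scalar multiples of the loop diagram $\varepsilon_i$; in particular, no identity diagram $\bbi$ appears. This matches~\eqref{eq-fcenter}, while the second equation $\End^{\mathrm{fs}}_{\Uq}(\Tn{\bbin}) \cong \Ai$ allows all matrices (diagonal and off-diagonal) with arbitrary marked Soergel diagram entries.

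The only real subtlety is bookkeeping: one must remember that the ``global identity'' $\sum_i \bbi$, which is central in the unital completion of $\Ai$, is explicitly excluded by the finite-support condition, and this is exactly the reason the identity diagrams $\bbi$ may not appear as entries of $F$. Alternatively, Corollary~\ref{cor-center} can be avoided by checking naturality of $\eta \colon \mathrm{id} \to \mathrm{id}$ directly against the generating morphisms $u_i, d_i$ of $\Tgrl$ (whose existence and uniqueness are guaranteed by Proposition~\ref{prop-link} and Corollary~\ref{cor-homspaces}); the resulting constraint $\alpha_{i+1} = \alpha_i$ leads to the same conclusion.
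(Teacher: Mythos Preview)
Your proof is correct and follows essentially the same route as the paper: transport along the equivalence $\mathcal{D}_{\infty}$ of Theorem~\ref{thm-diafortilt}, invoke Corollary~\ref{cor-center} to identify $\End^{\mathrm{fs}}_{\mathrm{gr}}(\mathrm{id})$ with $Z(\Ai)$, and then describe the center. The only difference is that the paper argues more tersely (the $\bbi$ are not central; the $\varepsilon_i$ compose with everything to zero and hence are central), whereas you carry out the commutator computation explicitly by testing against $\bbj$ and $u_j$; your version is a faithful expansion of the same argument.
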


\begin{proof}
An alternating sequence of red and green of 
length $i$ corresponds under the equivalence 
from Theorem~\ref{thm-diafortilt} to $\Tn{\bbi}$, 
since, by Proposition~\ref{prop-quiverrel}, the 
quiver relations are satisfied. This module 
corresponds under the isomorphisms from 
Corollary~\ref{cor-center} to ${}_iP$. Thus, the 
claim follows by Theorem~\ref{thm-diafortilt} and 
Proposition~\ref{prop-endograd2}, since (graded) 
equivalent categories have (graded) isomorphic 
centers and the observation that an $F$ as 
in~\eqref{eq-fcenter} is a natural transformation 
iff it commutes with all other such $F$'s 
(compare also to~\eqref{eq-natural}). That 
the center of the matrix ring is as stated 
above is then an easy to deduce fact: the 
$\bbi$ are not central (as already noted 
after Definition~\ref{def-ksquiver1}), while 
the $\varepsilon_i$ compose with everything to zero 
(and are therefore in the center).
\end{proof}

\begin{rem}\label{rem-overfinitefields}
Relaxing the conditions 
to work over another field and not $\bC$ is actually a 
problem in Section~\ref{sec-tilting} since the 
indecomposable tilting modules and their hom-spaces 
are much more complicated in positive characteristic. 
For the KS quiver algebras from Section~\ref{sec-quiver} working over 
$\bZ$ is not a huge problem, while for the diagrammatic category 
$\QD$ we need to work over $\bQ$.
\end{rem}

\subsection{Diagram categories for \texorpdfstring{$\Endo(\Tgrl)$}{Endo(Tgrl)}}\label{sub-projdia}

We denote by $\overline{\Mat}(\mathcal C)$ 
literally the same category as $\Mat(\mathcal C)$, 
but we allow \textit{countable} direct sums as 
objects and \textit{finitely supported} matrices as morphisms.
We carefully distinguish between 
red $r$ and green $g$ on one hand 
and $s,t\in W_l$ on the other. We 
think of an \textit{alternating} 
sequence (of length $i\geq 0$) of the 
former $x_{i-grg}$ to correspond (under 
Theorem~\ref{thm-diafortilt}) to $\Tn{\bbi}$ and 
of \textit{any} sequence of the latter $x=x_i\dots x_2x_1$ 
to correspond to $\Theta_x=\Theta_{x_i}\circ\cdots\circ\Theta_{x_1}$. 
Using this interpretation, we are going to make 
Proposition~\ref{prop-endograd2} explicit.
We define $\vec{\infty}$ to be the sequence of alternating $x_{i-grg}$ sequences
\[
\vec{\infty}=(\dots,rgrg,grg,rg,g,\emptyset),\;\vec{\infty}_i=x_{i-grg}\quad\text{for}\quad i=0,1,\dots,
\]
where we read the vector from right to left.

Moreover, given a finite sequence $x=x_i\dots x_2x_1$ with $x_i\in\{s,t\}$ we define
\[
x\cdot \vec{\infty}=(\dots,xrgrg,xgrg,xrg,xg,x),\;(x\cdot \vec{\infty})_i=xx_{i-grg}\quad\text{for}\quad i=0,1,\dots,
\]
where each entry is given by concatenation. In addition, we define
\[
xx_{i-grg}=\bigoplus_{i^{\prime}}x_{i^{\prime}-grg}^{m_{i^{\prime}}}\langle s_{i^{\prime}}\rangle
\]
where the multiplicity 
$m_{i^{\prime}}$ is the 
multiplicity of $\Tn{\bbi^{\prime}}$ in 
$\Theta_x(\Tn{\bbi})$ given inductively 
by part c of Corollary~\ref{cor-funcgrad3} 
(from where we also get the shifts $s_{i^{\prime}}$). 
Thus, we see $xx_{i-grg}$ as an object 
$\Ob(\overline{\Mat}(\QDk))$. Note that 
$xx_{0-grg}=0$ if $x$ starts with $r$ because of $\Theta_s(\Tn 0)=0$.

\begin{defn}\label{defn-diaforend1}
Denote by $\Mati(\QDk)$ the $\bC$-linear,
 graded category consisting of:
\begin{itemize}
\item Objects $\Ob(\Mati(\QDk))$ are 
finite sequences $x=x_i\dots x_2x_1$ 
with $x_i\in\{s,t\}$ (plus shifts). 
Moreover, the empty sequence $\emptyset$ is also an object.

\item The space of morphisms $\Hom_{\Mati(\QDk)}(x,x^{\prime})$ 
for $x,x^{\prime}\in\Ob(\Mati(\QDk))$ is the $\bC$-linear 
span of finitely supported (only a finite number of non-zero entries) matrices
\[
F=(F_{k^{\prime}k})_{k,k^{\prime}\in\bN}\in\Hom_{\overline{\Mat}(\QDk)}(x\cdot \vec{\infty},x^{\prime}\cdot \vec{\infty})
\]
of marked Soergel graphs $F_{k^{\prime}k}\colon (x\cdot \vec{\infty})_k\to (x^{\prime}\cdot \vec{\infty})_{k^{\prime}}$.

\item The diagonal part of $F$ consists 
of matrices $F^i\colon (x\cdot \vec{\infty})_i\to (x^{\prime}\cdot \vec{\infty})_i$.

\item Composition of morphisms is multiplication of matrices.

\end{itemize}
The spaces $\Hom_{\Mati(\QDk)}(x,x^{\prime})$ 
are graded $\bC$-vector spaces where the degree is 
given by $l(F_{k^{\prime}k})$ (the degree or length 
from Definition~\ref{defn-diacath}) for each entry of matrices $F$.
\end{defn}

Denote by $\Endo_{\Theta}^{\mathrm{fs}}(\Tgrl)$ the full 
subcategory of $\Endo(\Tgrl)$ consisting of 
only compositions of $\Theta_s$ and $\Theta_t$ (together with the 
condition of being finitely supported). 
Note that, by Proposition~\ref{prop-profunc}, 
we have
\[
\Kar(\Endo_{\Theta}^{\mathrm{fs}}(\Tgrl))\cong\Endo^{\mathrm{fs}}(\Tgrl).
\] 
Moreover, we denote by $\Mati(\QDk)_c$ the 
subcategory of $\Mati(\QDk)$ whose hom-spaces are
\begin{equation}\label{eq-homfunctors}
\Hom_{\Mati(\QDk)_c}(x,x^{\prime})=\{F\in\Hom_{\Mati(\QDk)}(x,x^{\prime})\mid FG=G^{\prime}F\},
\end{equation}
for $G\in\Hom_{\Mati(\QDk)}(x,x)$ and 
$G^{\prime}\in\Hom_{\Mati(\QDk)}(x^{\prime},x^{\prime})$ 
such that entry-wise there exists an $\Uq$-intertwiner 
$f\colon M\to M^{\prime}$ with 
$\mathcal{D}_{\infty}(G_{i^{\prime}i})=\Theta_x(f)$ and $\mathcal{D}_{\infty}(G^{\prime}_{i^{\prime}i})=\Theta_{x^{\prime}}(f)$. 
The reader may check that $\Mati(\QDk)_c$ is really a subcategory.

\begin{defn}\label{defn-diaforend2}
We define a functor
\[
\mathcal{D}^{\infty}\colon\Endo_{\Theta}^{\mathrm{fs}}(\Tgrl)\to\Mati(\QDk)_c
\]
on objects (we treat shifts again in the evident way) and morphisms as
\[
\Theta_x\mapsto x,\quad\text{and}\quad\eta\colon\Theta_x\to\Theta_{x^{\prime}},\,\eta_{\Tn{\bbi}}\colon\Theta_x\Tn{\bbi}\to\Theta_{x^{\prime}}\Tn{\bbi} \mapsto \mathrm{diag}(F)=(F^i)_{i\in\bN}\colon x\to x^{\prime},
\]
where $\mathrm{diag}(F)$ is a diagonal matrix 
consisting of the various $F^i$'s and, for 
each $\bbi$, the matrix of marked Soergel 
diagrams $F^i$ is component-wise given by 
$\bbi\mapsto\bbi$, $u_i\mapsto u_i$, 
$d_i\mapsto d_i$ and $\varepsilon_i\mapsto\varepsilon_i$ 
for all suitable indices $i$ and marked Soergel diagrams as in Corollary~\ref{cor-homspaces2}.
\end{defn}

\begin{lem}\label{lem-welldefagain}
The functor $\mathcal{D}^{\infty}\colon\Endo_{\Theta}^{\mathrm{fs}}(\Tgrl)\to\Mati(\QDk)_c$ 
is a well-defined functor between $\bC$-linear, graded categories.\makeqed
\end{lem}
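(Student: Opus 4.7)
The plan is to verify the well-definedness, functoriality, $\bC$-linearity, and grading-compatibility of $\mathcal{D}^{\infty}$ separately, transporting everything from the equivalence $\mathcal{D}_{\infty}\colon\Mat(\QDk)\to\Tgrl$ of Theorem~\ref{thm-diafortilt}. First, on objects there is nothing to check beyond the fact that $\Theta_x$ is sent to the sequence $x\in\Ob(\Mati(\QDk)_c)$; the tacit interpretation is that $\Theta_x$, applied block-wise to the $\Tn{\bbi}$'s, decomposes exactly according to $x\cdot\vec{\infty}$ by part (c) of Corollary~\ref{cor-funcgrad3}, so $x$ indeed encodes the ambient object over which matrices of natural transformation components are organized.

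On morphisms, given a natural transformation $\eta\colon\Theta_x\to\Theta_{x^{\prime}}$, each component $\eta_{\Tn{\bbi}}\colon \Theta_x(\Tn{\bbi})\to\Theta_{x^{\prime}}(\Tn{\bbi})$ is, by Theorem~\ref{thm-struktursatz2} and Corollary~\ref{cor-homspaces}, a finite matrix of $\Uq$-intertwiners between sums of (shifted) $\Tn{\bbi^{\prime}}$'s. By Theorem~\ref{thm-diafortilt} together with the explicit basis of $\Hom_{\Mat(\QDk)}$ given in Corollary~\ref{cor-homspaces2}, each such intertwiner has a unique preimage as a $\bC$-linear combination of the marked Soergel diagrams $\bbi$, $u_i$, $d_i$, $\varepsilon_i$; this provides the matrix entries $F^i_{k^{\prime}k}$ in a canonical way. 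The direct sum over $i\in\bN$ packages these into a diagonal matrix $\mathrm{diag}(F)$ in $\Hom_{\overline{\Mat}(\QDk)}(x\cdot\vec{\infty},x^{\prime}\cdot\vec{\infty})$, and finite support of $\mathrm{diag}(F)$ follows from the fact that $\eta$ belongs to the finitely supported subcategory $\Endo^{\mathrm{fs}}_{\Theta}(\Tgrl)$.

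The substantive step, and the main obstacle, is showing that $\mathrm{diag}(F)$ actually lands in $\Mati(\QDk)_c$, i.e.\ satisfies the commutativity condition~\eqref{eq-homfunctors}. The key observation is that the naturality of $\eta$ exactly translates into this condition: for any $\Uq$-intertwiner $f\colon\Tn{\bbi}\to\Tn{\bbi^{\prime}}$, the naturality square
\[
\Theta_{x^{\prime}}(f)\circ\eta_{\Tn{\bbi}}=\eta_{\Tn{\bbi^{\prime}}}\circ\Theta_x(f)
\]
commutes in $\Tgrl$. Applying the equivalence $\mathcal{D}_{\infty}$, which by construction sends $\Theta_{s\text{ or }t}$ to the corresponding functor of tensoring with the red/green strand (compare Example~\ref{ex-howtofunctor}), turns this into the matrix identity $FG=G^{\prime}F$ where $G$ and $G^{\prime}$ are the diagrammatic images of $\Theta_x(f)$ and $\Theta_{x^{\prime}}(f)$. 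Since $\mathcal{D}_{\infty}$ is fully faithful and degree preserving, equality in $\Tgrl$ pulls back to equality in $\Mati(\QDk)$, so~\eqref{eq-homfunctors} is satisfied. Conversely, the required $f$ for the definition of $\Mati(\QDk)_c$ is exactly an arbitrary element of $\Hom_{\Uq}(\Tn{\bbi},\Tn{\bbi^{\prime}})$, and it suffices by Corollary~\ref{cor-homspaces} to check this against the four basis elements $\mathrm{id},u_i,d_i,\varepsilon_i$.

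Finally, functoriality (preservation of identities and composition) is immediate since $\mathcal{D}^{\infty}$ is defined by reading off the diagonal blocks $\eta_{\Tn{\bbi}}$ of natural transformations and translating them into diagrams via the fixed assignment $\bbi\mapsto\bbi$, $u_i\mapsto u_i$, $d_i\mapsto d_i$, $\varepsilon_i\mapsto\varepsilon_i$, and composition of natural transformations is component-wise composition of $\Uq$-intertwiners, which matches matrix multiplication of diagrammatic entries. The $\bC$-linearity is inherited component-wise from the $\bC$-linearity of the inverse of $\mathcal{D}_{\infty}$ on each hom-space. Preservation of the grading is equally automatic: the degrees of $\bbi,u_i,d_i,\varepsilon_i$ as marked Soergel diagrams are $0,1,1,2$, matching exactly the degrees of the corresponding $\Uq$-intertwiners under the standard grading established in Proposition~\ref{prop-grading} and Corollary~\ref{cor-gradings2}. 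This closes all cases and yields the lemma.
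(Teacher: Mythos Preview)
Your proof is correct and follows essentially the same approach as the paper: both arguments observe that $\bC$-linearity and degree preservation are immediate, that the components $\eta_{\Tn{\bbi}}$ decompose as matrices of the basic intertwiners $\bbi,u_i,d_i,\varepsilon_i$, and that the naturality square for $\eta$ is precisely what yields the commutativity condition~\eqref{eq-homfunctors}. The paper's version is terser and invokes Proposition~\ref{prop-quiverrel} directly for well-definedness of the component-wise assignment, whereas you route this through the equivalence $\mathcal{D}_{\infty}$ of Theorem~\ref{thm-diafortilt}; both are equivalent justifications.
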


\begin{proof}
The $\bC$-linearity is clear. 
In addition, the assignment is clearly degree preserving 
and well-defined component-wise (since the quiver relations are satisfied, 
see Proposition~\ref{prop-quiverrel}).

To see that it is well-defined note that 
$\eta_{\Tn{\bbi}}$ is a matrix consisting 
of $\Uq$-intertwiners between the direct 
summands of $\Theta_x\Tn{\bbi}$ and 
$\Theta_{x^{\prime}}\Tn{\bbi}$ (for each $\Tn{\bbi}$). 
These are sums of $\bbi,u_i,d_i$ and $\varepsilon_i$ 
by the isomorphism of Proposition~\ref{prop-isoofalg2}. 
To see that the relations are satisfied, note that, 
given any $\Uq$-intertwiner $f\colon M\to M^{\prime}$, 
the naturality of a transformation $\eta\colon \Theta_x\to \Theta_{x^{\prime}}$ says that
\begin{equation}\label{eq-natural}
\raisebox{0.9cm}{\begin{xy}
  \xymatrix{
      \Theta_x M \ar[r]^{\Theta_x f} \ar[d]_{\eta_M}    &   \Theta_x M^{\prime} \ar[d]^{\eta_{M^{\prime}}}  \\
      \Theta_{x^{\prime}} M \ar[r]_{\Theta_{x^{\prime}} f}             &   \Theta_{x^{\prime}} M^{\prime}  
  }
\end{xy}}
\end{equation}
commutes. Thus, the matrices coming from our assignment 
satisfy the condition~\eqref{eq-homfunctors}.
\end{proof}

\newpage

\begin{thm}(\textbf{Diagram categories for $\Endo(\Tgrl)$})\label{thm-diafortilt1}
The functor $\mathcal{D}^{\infty}$ is a graded equivalence.\makeqed
\end{thm}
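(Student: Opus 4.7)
My plan mirrors the proof of Theorem~\ref{thm-diafortilt}: I will verify that $\mathcal{D}^{\infty}$ is essentially surjective, degree preserving, and fully faithful. Essential surjectivity is immediate on objects from the definition, since every object of $\Mati(\QDk)_c$ is by construction a finite sequence $x$ in $\{s,t\}$ (up to shifts), and $\mathcal{D}^{\infty}(\Theta_x)=x$ by design.

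For fullness and faithfulness I combine Proposition~\ref{prop-endograd2} with Theorem~\ref{thm-diafortilt}. First, Proposition~\ref{prop-endograd2} provides a graded $\bC$-vector space isomorphism
\[
\Hom^{\mathrm{fs}}_{\Endo(\Tgrl)}(\Theta_x,\Theta_{x^{\prime}})\cong \Hom^{\mathrm{fs}}_{\AipModgrAi}(\bVi\Theta_x(\Tn{\bbin}),\bVi\Theta_{x^{\prime}}(\Tn{\bbin})).
\]
Unwinding the right hand side using Corollary~\ref{cor-homspaces} and Proposition~\ref{prop-isoofalg2}, an element is nothing but a finitely supported matrix whose $(i^{\prime},i)$-entry is a $\Uq$-intertwiner between the appropriate direct summands of $\Theta_x(\Tn{\bbi})$ and $\Theta_{x^{\prime}}(\Tn{\bbi^{\prime}})$, spanned by the basic intertwiners $\bbi,u_i,d_i,\varepsilon_i$. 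Applying Theorem~\ref{thm-diafortilt} component-wise replaces every such basic intertwiner with its diagrammatic counterpart (a marked Soergel diagram of matching degree), which gives the same list of generators appearing in the entries of a matrix $F\in\Hom_{\Mati(\QDk)}(x,x^{\prime})$.

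The main obstacle, and the heart of the argument, is to show that the constraint cutting out $\Mati(\QDk)_c$ inside $\Mati(\QDk)$ in~\eqref{eq-homfunctors} corresponds exactly to the naturality square~\eqref{eq-natural}. For one direction, given a natural transformation $\eta$, the naturality condition~\eqref{eq-natural} applied to every $\Uq$-intertwiner $f\colon\Tn{\bbi}\to\Tn{\bbi^{\prime}}$ between indecomposables forces $\mathrm{diag}(F)\cdot G=G^{\prime}\cdot\mathrm{diag}(F)$ with $G=\mathcal{D}^{\infty}(\Theta_x(f))$ and $G^{\prime}=\mathcal{D}^{\infty}(\Theta_{x^{\prime}}(f))$, which is precisely~\eqref{eq-homfunctors}. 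Conversely, using that the basic intertwiners $\bbi,u_i,d_i,\varepsilon_i$ already generate all $\Hom_{\Uq}$-spaces by Corollary~\ref{cor-homspaces} and Proposition~\ref{prop-isoofalg2}, the commutativity condition for all such generating $f$ extends by $\bC$-linearity and the Krull--Schmidt property of $\Tgrl$ to full naturality on arbitrary $M\in\Ob(\Tgrl)$, so that the image of $\mathcal{D}^{\infty}$ hits every element of $\Hom_{\Mati(\QDk)_c}(x,x^{\prime})$.

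Degree preservation and faithfulness are then automatic: $\mathcal{D}_{\infty}$ is degree preserving by Lemma~\ref{lem-welldef}, and the induced map on hom-spaces is by construction the sum over all $i\in\bN$ of the diagrammatic identifications from Theorem~\ref{thm-diafortilt}, each of which is already known to be a graded $\bC$-vector space isomorphism. The only remaining bookkeeping is to check that the ``finitely supported'' conditions on both sides match up — which is exactly why one works with $\Endo_{\Theta}^{\mathrm{fs}}(\Tgrl)$ and $\Mati(\QDk)_c$ in the first place — and this follows from the definition of $\End^{\mathrm{fs}}_{\Uq}(\Tn{\bbin})$ used in Proposition~\ref{prop-isoofalg2}.
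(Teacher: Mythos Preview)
Your proof is correct and follows essentially the same approach as the paper's: essential surjectivity from the definition of objects, faithfulness from Proposition~\ref{prop-endograd2} combined with Theorem~\ref{thm-diafortilt}, and fullness by matching the commutativity constraint~\eqref{eq-homfunctors} against the naturality square~\eqref{eq-natural}. The paper's own proof is a terse three-line sketch of exactly these three points; you have simply unpacked the details (the two directions of the naturality/commutativity correspondence, the role of the generating intertwiners from Corollary~\ref{cor-homspaces}, and the finite-support bookkeeping) more explicitly.
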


\begin{proof}
As in Theorem~\ref{thm-diafortilt}, we have to show that $\mathcal{D}^{\infty}$ is 
essentially surjective, full and faithful.

That $\mathcal{D}^{\infty}$ is essentially surjective 
follows from the definition of the objects in 
$\Mati(\QDk)_d$. That it is faithful is a 
direct consequence of Proposition~\ref{prop-endograd2} 
combined with Theorem~\ref{thm-diafortilt}. 
That $\mathcal{D}^{\infty}$ is full is just a 
direct comparison of~\eqref{eq-homfunctors} and~\eqref{eq-natural}. 
\end{proof}

\begin{rem}\label{rem-possibleextension}
A possible 
generalization of our diagrammatic categories could follow from work of 
Elias and Libedinsky on Coxeter groups, 
see~\cite{el2}. The
underlying Coxeter group $W$ will for $A_n$ be the affine Weyl group, that is, the one given by the cyclic Dynkin diagram with $n+1$-nodes
\[
\xy 0;/r.17pc/:
(-5,8.66)*+{\bullet}="1";
(-5,-8.66)*+{\bullet}="2";
(10,0)*+{{\color{green}\bullet}}="3";
{\ar@{-} "1";"2" };
{\ar@{-} "2";"3" };
{\ar@{-} "3";"1" };
(0,-12.5)*{n=2};
\endxy\hspace*{1.5cm}
\xy 0;/r.17pc/:
(0,10)*+{\bullet}="1";
(-10,0)*+{\bullet}="2";
(0,-10)*+{\bullet}="3";
(10,0)*+{{\color{green}\bullet}}="4";
{\ar@{-} "1";"2" };
{\ar@{-} "2";"3" };
{\ar@{-} "3";"4" };
{\ar@{-} "4";"1" };
(0,-12.5)*{n=3};
\endxy\hspace*{1.5cm}
\xy 0;/r.17pc/:
(3.09,9.51)*+{\bullet}="1";
(-8.09,5.88)*+{\bullet}="2";
(-8.09,-5.88)*+{\bullet}="3";
(3.09,-9.51)*+{\bullet}="4";
(10,0)*+{{\color{green}\bullet}}="5";
{\ar@{-} "1";"2" };
{\ar@{-} "2";"3" };
{\ar@{-} "3";"4" };
{\ar@{-} "4";"5" };
{\ar@{-} "1";"5" };
(0,-12.5)*{n=4};
\endxy\hspace*{1.5cm}
\xy 0;/r.17pc/:
(5,8.66)*+{\bullet}="1";
(-5,8.66)*+{\bullet}="2";
(-10,0)*+{\bullet}="3";
(-5,-8.66)*+{\bullet}="4";
(5,-8.66)*+{\bullet}="5";
(10,0)*+{{\color{green}\bullet}}="6";
{\ar@{-} "1";"2" };
{\ar@{-} "2";"3" };
{\ar@{-} "3";"4" };
{\ar@{-} "4";"5" };
{\ar@{-} "5";"6" };
{\ar@{-} "1";"6" };
(0,-12.5)*{n=5};
\endxy\hspace*{1.5cm}\cdots
\]
where the green (rightmost) nodes indicate the affine nodes.
\end{rem}

\bibliographystyle{plainurl}
\bibliography{dia-tiltings}
\end{document}